\documentclass[11pt, a4paper]{amsart}
\usepackage{amsmath,amssymb,amsthm,mathtools,wasysym,calc,verbatim,tikz,url,hyperref,mathrsfs,cite,fullpage,bbm}
\usepackage{thm-restate}
\usepackage[noabbrev,capitalize,nameinlink]{cleveref}
\usepackage[shortlabels]{enumitem}

\pagestyle{plain}

% For partial derivatives

% Integral dx

% Alias for the Solution section header

% Probability commands: Expectation, Variance, Covariance, Bias

%Hi professor. You are in the wrong place. click on the arrow to the left. it will give you a menu with a list of files. click on main.tex. 
%Operators

\DeclareMathOperator{\cof}{\cof}

% More Useful commands

%Theorem enviorment stuff
\theoremstyle{plain}

\topmargin 0cm
\oddsidemargin 0cm
\evensidemargin 0cm
\textwidth 15cm 
\textheight 21cm

\setlength{\oddsidemargin}{0in}
\setlength{\evensidemargin}{0in}
\setlength{\textwidth}{6.5in}
\setlength{\topmargin}{-0.25in}
\setlength{\textheight}{9in}

\usepackage{amsmath, amsthm, amssymb}
\usepackage{hyperref}
\usepackage{mathrsfs}
\usepackage{asymptote}
\usepackage{graphicx}
\usepackage{theoremref}

\theoremstyle{plain}
\newtheorem{theorem}{Theorem}
\newtheorem{lemma}{Lemma}
\newtheorem{proposition}{Proposition}
\newtheorem{corollary}{Corollary}

\theoremstyle{definition}
\newtheorem{definition}{Definition}
\theoremstyle{remark}
\newtheorem{remark}{Remark}

\numberwithin{equation}{section}
\numberwithin{lemma}{section}
\numberwithin{proposition}{section}
\numberwithin{corollary}{section}
\numberwithin{remark}{section}
\numberwithin{definition}{section}
\numberwithin{claim}{section}

\usepackage[margin=1.0in]{geometry}

%\usepackage{fancyhdr}
%\pagestyle{fancy}
%\lhead{206} %Left Header
%\rhead{James Leng} %Right Header
%\cfoot{} %Center Foo

\begin{document}
\title[Improved quantitative equidistribution of nilsequences]{Efficient equidistribution of periodic nilsequences and applications}
\author{James Leng}
\address{Department of Mathematics, UCLA, Los Angeles, CA 90095, USA}
\email{jamesleng@math.ucla.edu}

\maketitle

\begin{abstract}
This is a companion paper to \cite{Len2}. We deduce an equidistribution theorem for periodic nilsequences and use this theorem to give two applications in arithmetic combinatorics. The first application is quasi-polynomial bounds for a certain complexity one polynomial progression, improving the iterated logarithm bound previusly obtained. The second application is a proof of the quasi-polynomial $U^4[N]$ inverse theorem. In work with Sah and Sawhney, we obtain improved bounds for sets lacking nontrivial $5$-term arithmetic progressions.
\end{abstract}

\section{Introduction}
In \cite{Len2}, the author gave a proof of improved bounds for the equidistribution of nilsequences. The author found that the Ratner-type factorization theorem was inefficient for quantitative higher order Fourier analysis. In attempting to salvage that theorem, the author proved an equidistribution theorem for a $G_{(s)}$-vertical character. An informal version of that equidistribution theorem is as follows (the precise statement can be found in \cite[Theorem 3]{Len2}).
\begin{theorem}[Informal Equidistribution Theorem]
    Let $G/\Gamma$ is a nilmanifold and $F(g(n)\Gamma)$ is a nilsequence with $F$ a $G_{(s)}$-vertical character with nonzero frequency $\xi$. Suppose 
    $$\left|\mathbb{E}_{n \in [N]} F(g(n)\Gamma)\right| \ge \delta.$$
    Then $F$ is ``morally" a step $\le s - 1$ nilsequence with ``good bounds."
\end{theorem}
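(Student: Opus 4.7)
The plan is to combine a quantitative Leibman-type equidistribution theorem with a Weyl analysis along the top-step central direction $G_{(s)}$. Since $F$ is a $G_{(s)}$-vertical character of nonzero frequency $\xi$, we have $F(zx) = e(\xi(z)) F(x)$ for every $z \in G_{(s)}$ and $x \in G/\Gamma$. The goal is to show that the hypothesis $|\mathbb{E}_{n \in [N]} F(g(n)\Gamma)| \ge \delta$ forces the top-degree Taylor coefficient of $g$ to be quantitatively rational, after which $F$ factors, piecewise along arithmetic progressions, through the step $s-1$ quotient $G/G_{(s)}$.

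First I would invoke the quantitative Leibman theorem on $g$: since $F$ is bounded and the average has size $\delta$, the orbit $(g(n)\Gamma)_{n \in [N]}$ fails to be $\delta$-equidistributed, producing a horizontal character of complexity $\delta^{-O(1)}$ whose composition with $g$ has small $C^{\infty}[N]$-norm. Next I would isolate the vertical contribution: with the polynomial expansion $g(n) = \prod_{i=0}^{s} \exp\bigl(\binom{n}{i} g_i\bigr)$ and $g_s \in G_{(s)}$, the central piece contributes the phase $e(\xi(g_s) \binom{n}{s})$ to $F(g(n)\Gamma)$. A quantitative Weyl inequality applied to the hypothesis then forces $\xi(g_s)$ to lie within $N^{-s}$ of a rational $p/q$ with $q \le \delta^{-O(1)}$.

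Using this rationality, I would split $g(n) = g'(n)\gamma(n)$, where $\gamma(n)$ is periodic of period $q$ and takes values in a finite orbit on $G_{(s)}/(G_{(s)} \cap \Gamma)$, while $g'(n)$ has its top coefficient paired trivially with $\xi$. Partitioning $[N]$ into arithmetic progressions of common difference $q$ makes $\gamma(n)$ constant on each piece, so up to explicit phases $F(g(n)\Gamma)$ descends to a function on the step $s-1$ nilmanifold $G/G_{(s)}$, realizing $F$, piecewise, as a step $\le s-1$ nilsequence. This is precisely the informal conclusion.

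The main obstacle is preserving quasi-polynomial dependence on $\delta$ through this reduction. A naive implementation would invoke a Ratner-type factorization on each residue class and recurse, compounding into iterated-logarithmic losses, which is exactly the inefficiency flagged in \cite{Len2}. The essential gain must come from executing the top-step reduction in a single pass while tracking the denominator $q$ and the arithmetic progressions as global parameters, i.e., from working with periodic nilsequences intrinsically so that the Weyl bound on $\xi(g_s)$ converts polynomially, rather than iteratively, into the sub-nilmanifold reduction.
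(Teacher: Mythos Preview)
Your proposal misidentifies where the top-step obstruction lives, and consequently the factorization you describe does not yield the conclusion. You write the central contribution as $e\bigl(\xi(g_s)\binom{n}{s}\bigr)$ coming from the single Taylor coefficient $g_s \in G_{(s)}$, but in a nilpotent group the degree-$s$ part of $F(g(n)\Gamma)$ also receives contributions from iterated commutators of the lower-degree coefficients $g_1,\dots,g_{s-1}$. In the two-step case this is already visible: with $\psi(g(n)) = (\alpha_1 n,\dots,\alpha_{d-1}n,P(n))$ one has $F(g(n)\Gamma) = e\bigl(-\sum_{i<j} C_{[i,j]}\alpha_i n[\alpha_j n] + P(n)\bigr)$, and the bracket terms $\alpha_i n[\alpha_j n]$ are genuine degree-$2$ obstructions that no Weyl bound on $P$ alone can detect. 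So forcing $\xi(g_s)$ to be rational and stripping off $\gamma(n)$ does not make the sequence descend to $G/G_{(s)}$; the commutator data in $g_1,\dots,g_{s-1}$ is still there.

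The paper's route is accordingly quite different. One applies van der Corput to land on $G^\square = G \times_{G_2} G$, where the derivative nilsequence becomes lower-degree; after Fourier expansion the problem reduces to a Diophantine statement of the shape $\|a\cdot\{\alpha h\} + \beta\|_{\mathbb{R}/\mathbb{Z}} \approx 0$ for many $h$. The key input is then the \emph{refined bracket polynomial lemma} (\thref{periodicrefined}, \thref{magicargument}), which in a single application produces simultaneously the horizontal characters $\eta_j$ with $\|\eta_j\cdot\alpha\|=0$ and the orthogonal vectors $w_i$ with $|w_i\cdot a|$ small. Unwinding, the $\eta_j$ cut out a subgroup $G' = \bigcap \ker\eta_j$ on which $\xi$ annihilates $s$-fold commutators; that is the precise meaning of ``morally step $\le s-1$.'' This single-pass extraction of both $\eta_j$ and $w_i$ is exactly what avoids the iterative Ratner-type factorization and gives the good bounds; your proposal correctly names that inefficiency but does not supply a mechanism to circumvent it.
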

Here, ``good bounds" denotes bounds whose exponents are polynomial in the dimension of the nilmanifold. Since nilsequences may be Fourier expanded into nilcharacters via \cite[Lemma A.7]{Len2}, it follows that such a theorem can recover an equidistribution theorem for all nilsequences with good bounds. In this article, we shall consider the analogue of this equidistribution theorem for periodic nilsequences along with applications of that theorem.
\subsection{Main Results}
Our first main result is the following equidistribution theorem with better bounds (see Section 2 and \cite[Section 2]{Len2} for undefined notions).
\begin{restatable}{theorem}{mainresulta}\thlabel{mainresult1}
Let $\delta \in (0, 1/10)$, $N > 100$ prime, and $G/\Gamma$ be a nilmanifold of step $s$, degree $k$, dimension $d$, and complexity at most $M$. Furthermore, let $F(g(n)\Gamma)$ be a periodic nilsequence modulo $N$ with $F$ a $G_{(s)}$ vertical character with frequency $\xi$ with $|\xi| \le M/\delta$. Suppose
$$|\mathbb{E}_{n \in \mathbb{Z}/N\mathbb{Z}} F(g(n)\Gamma)| \ge \delta.$$
Then either $N \ll (M/\delta)^{O_k(d^{O_k(1)})}$ or there exists horizontal characters $\eta_1, \dots, \eta_r$ of size at most $(M/\delta)^{O_k(d^{O_k(1)})}$ with $1 \le r \le \mathrm{dim}(G/[G, G])$ such that
$$\|\eta_i \circ g\|_{C^\infty[N]} = 0$$
and such that any $s$ elements $w_1, \dots, w_s \in G' := \bigcap_{i = 1}^r \mathrm{ker}(\eta_i)$ satisfies
$$\xi([[[w_1, w_2], w_3],\dots, w_s]) = 0.$$
\end{restatable}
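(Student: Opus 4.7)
The plan is to deduce the theorem from the non-periodic equidistribution theorem \cite[Theorem 3]{Len2} and then leverage the $N$-periodicity of $F(g(n)\Gamma)$ together with the primality of $N$ to sharpen the smoothness bounds from ``small'' to ``exactly zero.''

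First, I would apply \cite[Theorem 3]{Len2} directly to the nilsequence $F(g(n)\Gamma)$ on $[N]$; the hypothesis $|\EE_{n\in\ZZ/N\ZZ} F(g(n)\Gamma)|\ge \delta$ transfers to the same lower bound over $[N]$ by $N$-periodicity. This produces horizontal characters $\eta_1,\dots,\eta_r$ of size at most $(M/\delta)^{O_k(d^{O_k(1)})}$ with $\|\eta_i\circ g\|_{C^\infty[N]}\le (M/\delta)^{O_k(d^{O_k(1)})}$, along with the vanishing condition $\xi([[\cdots[w_1,w_2],w_3],\dots,w_s])=0$ on $G'=\bigcap_i \mathrm{ker}(\eta_i)$. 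The commutator condition is therefore inherited verbatim, and only the upgrading of $\|\eta_i\circ g\|_{C^\infty[N]}$ to zero remains.

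Next, I would exploit periodicity together with primality of $N$ to upgrade the smoothness bound. Writing $\eta_i\circ g(n)\equiv \sum_{j=1}^k \alpha_{i,j}\binom{n}{j}\pmod 1$, the above bound places each $\alpha_{i,j}$ within $(M/\delta)^{O_k(d^{O_k(1)})}/N^j$ of a rational with denominator at most $(M/\delta)^{O_k(d^{O_k(1)})}$. To conclude $\alpha_{i,j}\in\ZZ$, the strategy is to pass to a finite cover of $G/\Gamma$ (of controlled complexity) on which the $N$-periodicity of $F(g(n)\Gamma)$ can be rephrased as the relation $g(N)\in\Gamma$ modulo the vertical center that is acted on trivially by $\xi$. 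Applying the horizontal character $\eta_i$ then yields integrality constraints on the $\alpha_{i,j}$ whose remaining denominators are at most $(M/\delta)^{O_k(d^{O_k(1)})}$; by primality of $N$, these denominators are coprime to $N^j$, and the two incompatible approximations force $\alpha_{i,j}\in\ZZ$ whenever $N>(M/\delta)^{O_k(d^{O_k(1)})}$, which is precisely the non-trivial branch of the stated dichotomy.

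The main technical obstacle lies in the second step. The factorization produced by \cite[Theorem 3]{Len2} is abstract and does not obviously interact with the periodicity of the original nilsequence, so one must carefully show that $N$-periodicity of $F\circ g$ transfers into an integrality constraint on each $\eta_i\circ g$ while keeping track of complexity. An alternative route, which may be cleaner if this interaction turns out to be delicate, is to replay the proof of \cite[Theorem 3]{Len2} within the periodic setting, replacing its quantitative Weyl-type inequalities and pigeonholing steps with their exact counterparts afforded by $N$-periodicity and the primality of $N$.
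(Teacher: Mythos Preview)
Your strategy of black-boxing \cite[Theorem~3]{Len2} is the same as the paper's, but the paper exploits periodicity in a much cleaner way that sidesteps the difficulty you flag in your second step. Rather than applying \cite[Theorem~3]{Len2} once on $[N]$ and then trying to upgrade the smoothness bound, the paper observes that periodicity gives $|\EE_{n\in[LN]}F(g(n)\Gamma)|\ge\delta$ for \emph{every} positive integer $L$, applies \cite[Theorem~3]{Len2} on $[LN]$ to obtain characters $\eta_i$ (of size bounded independently of $L$) with $\|\eta_i\circ g\|_{C^\infty[LN]}\le (M/\delta)^{O_k(d^{O_k(1)})}$, pigeonholes on the finitely many possible tuples $(\eta_1,\dots,\eta_r)$ to fix them along an infinite set of $L$'s, and then sends $L\to\infty$. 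Since $(LN)^j\|\alpha_{i,j}\|_{\RR/\ZZ}$ stays bounded as $L\to\infty$, every coefficient $\alpha_{i,j}$ with $j\ge 1$ is forced into $\ZZ$, and the commutator condition on $G'$ is inherited unchanged.

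Your proposed upgrading step, by contrast, has a genuine gap at the linear coefficient. Periodicity of $\eta_i\circ g\pmod 1$ only tells you that each coefficient $\alpha_{i,j}$ lies in $\tfrac{1}{N}\ZZ$ (up to bounded factorial denominators), not that it has denominator bounded by $(M/\delta)^{O_k(d^{O_k(1)})}$ as your write-up suggests. Combining $\alpha_{i,j}\in\tfrac{1}{N}\ZZ$ with $\|\alpha_{i,j}\|_{\RR/\ZZ}\le C/N^j$ does force $\alpha_{i,j}\in\ZZ$ for $j\ge 2$ once $N>C$, but for $j=1$ you only learn $\alpha_{i,1}=m/N$ with $|m|\le C$, and nothing in your argument rules out $m\neq 0$. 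The finite-cover manoeuvre does not help: any horizontal character already descends to a map $G/\Gamma\to\RR/\ZZ$, so $N$-periodicity of $g(n)\Gamma$ yields exactly $N$-periodicity of $\eta_i\circ g\pmod 1$ and nothing stronger about the linear term. Your alternative route of replaying the whole proof of \cite[Theorem~3]{Len2} in the periodic setting would work, but the limit trick above is a two-line replacement.
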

\begin{remark}
This theorem is proven by black-boxing \cite[Theorem 3]{Len2} (via an argument due to Candela and Sisask \cite{CS14}; see also, \cite{Len22, Kuc21, Kuc23}). However one main motivation of the article is to dicuss the proof in 3 simpler cases (with self contained proofs) which were crucial in developing the proof in \cite{Len2}.
\end{remark}
Our second main result is the following.
\begin{theorem}\thlabel{mainresult2}
Let $N$ be a large prime, $P, Q \in \mathbb{Z}[x]$ be linearly independent with $P(0) = Q(0) = 0$, and let $A \subseteq \mathbb{Z}_N$ be a subset such that $A$ contains no configuration of the form $(x, x + P(y), x + Q(y), x + P(y) + Q(y))$ with $y \neq 0$. Then
$$|A| \ll_{P, Q} N\exp(-\log^{c_{P, Q}}(N)).$$
\end{theorem}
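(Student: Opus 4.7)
My plan is to run a density-increment argument on arithmetic progressions, in the spirit of the Peluse--Prendiville approach to polynomial configurations. Suppose $A \subseteq \mathbb{Z}_N$ has density $\alpha$ and contains only the trivial configurations ($y = 0$). Writing the counting operator
\[
\Lambda(f_0,f_1,f_2,f_3) := \mathbb{E}_{x,y \in \mathbb{Z}_N}\, f_0(x)\, f_1(x+P(y))\, f_2(x+Q(y))\, f_3(x+P(y)+Q(y)),
\]
the hypothesis gives $\Lambda(1_A,1_A,1_A,1_A) \le \alpha/N$, which together with the decomposition $1_A = \alpha + (1_A-\alpha)$ in each slot and pigeonholing yields an off-diagonal lower bound $|\Lambda(1_A - \alpha, 1_A, 1_A, 1_A)| \gg \alpha^{O(1)}$ once $\alpha \gg N^{-c_{P,Q}}$.

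The next step is to deploy a PET Cauchy--Schwarz scheme tailored to $(x, x+P(y), x+Q(y), x+P(y)+Q(y))$ in order to bound this quantity by a power of a Gowers norm $\|1_A - \alpha\|_{U^s[N]}$, for some $s$ determined by $\deg P$ and $\deg Q$. Because the configuration has true complexity one in the linear variable $x$ and $P, Q$ are linearly independent, a finite PET tree suffices; quantifying it carefully gives $\|1_A - \alpha\|_{U^s[N]} \gg \alpha^{O_{P,Q}(1)}$.

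I then invoke the quasi-polynomial $U^s$ inverse theorem from \cite{Len2} (the $U^4$ case being the other main result of this paper) to obtain a step-$(s-1)$ periodic nilsequence $F(g(n)\Gamma)$ of complexity $\alpha^{-O_{P,Q}(1)}$ that correlates with $1_A - \alpha$. Fourier expanding $F$ along the $G_{(s-1)}$-vertical torus and applying \thref{mainresult1} to each frequency, either $N$ is already small (and we are done), or the top-step vertical character must vanish on all commutators of an explicit finite-index subgroup cut out by horizontal characters of polynomial size. This is one round of degree lowering: it replaces the step-$(s-1)$ correlation by a step-$(s-2)$ correlation on a subnilmanifold, at the cost of only a polynomial factor in $\alpha$. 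Iterating $s-1$ times reduces to correlation with a single linear character $e(\xi n / N)$, from which a standard argument extracts an arithmetic progression on which the density of $A$ has grown from $\alpha$ to $\alpha + \alpha^{O_{P,Q}(1)}$.

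Iterating this density increment $O_{P,Q}(\log(1/\alpha))$ times while tracking the quasi-polynomial shrinkage of $N$ at each step produces the advertised bound $|A| \ll_{P,Q} N \exp(-\log^{c_{P,Q}} N)$. The principal obstacle is the degree-lowering loop: each pass must lose only polynomially in $\alpha^{-1}$ rather than exponentially in the dimension of the nilmanifold, and this is precisely the improvement that the polynomial-exponent bound in \thref{mainresult1} delivers. Without this, the iteration collapses to the iterated-logarithm regime, recovering only the prior state of the art.
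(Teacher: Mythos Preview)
Your outline has a genuine gap at the degree-lowering step, and the overall architecture differs from the paper's.

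\textbf{The gap.} \thref{mainresult1} is a statement about $|\mathbb{E}_n F(g(n)\Gamma)|$ being large, not about $|\mathbb{E}_n f(n)F(g(n)\Gamma)|$ being large. After the inverse theorem hands you a correlation $|\langle 1_A-\alpha, F(g(\cdot)\Gamma)\rangle|\gg\alpha^{O(1)}$ with a step-$(s-1)$ nilcharacter, you cannot invoke \thref{mainresult1}: the nilsequence need not have large mean, and correlation with a step-$(s-1)$ nilsequence does not in general imply correlation with a step-$(s-2)$ one (this would collapse the Gowers hierarchy). Degree lowering requires \emph{extra} structure in the function beyond mere $U^s$-largeness. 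In this problem that extra structure comes from the parallelogram shape of the pattern, and your proposal never uses it. Separately, the quasi-polynomial $U^{s+1}$ inverse theorem for general $s$ is \cite{LSS24b}, postdating this paper; only the $U^4$ case is proved here.

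\textbf{What the paper actually does.} There is no density increment. The paper proves directly that
\[
\Lambda(f,g,k,p)=\Lambda^1(f,g,k,p)+O\bigl(\exp(-\log^{c_{P,Q}} N)\bigr),
\]
where $\Lambda^1$ averages over generic $(y,z)$ instead of $(P(y),Q(y))$; since $\Lambda^1(1_A,1_A,1_A,1_A)=\|1_A\|_{U^2}^4\ge\alpha^4$, the bound on $|A|$ follows immediately. The asymptotic is deduced (via a Fourier-side decomposition of $p$) from an inverse-type theorem: $|\Lambda(f,g,k,p)|\ge\delta$ forces $\|p\|_{U^2}\gg\exp(-\log^{O_{P,Q}(1)}(1/\delta))$. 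This in turn follows from degree lowering on the \emph{dual function} $\mathcal{D}(f,g,k)(x)=\mathbb{E}_y f(x-P(y)-Q(y))g(x-Q(y))k(x-P(y))$: one shows $\|\mathcal D\|_{U^{s+1}}\ge\delta\Rightarrow\|\mathcal D\|_{U^s}\gg\exp(-\log^{O(1)}(1/\delta))$. After Sanders' $U^3$ inverse theorem produces a two-step correlation, the parallelogram structure yields an average of the specific shape
\[
\mathbb{E}_n\, F_1(g(P(n))\Gamma)\,F_2(g(Q(n))\Gamma)\,\overline{F_3(g(P(n)+Q(n))\Gamma)}\,e(\alpha P(n)+\beta Q(n)),
\]
and it is to \emph{this} expression (a genuine nilsequence average on a subgroup of $G^3$) that the two-step equidistribution theorem \thref{twostepcase}/\thref{twosteppolynomial} is applied, forcing $g$ into an abelian subgroup. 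That is where the quantitative gain over \cite{Len22} enters, and it uses only the two-step case, never the general $s$-step \thref{mainresult1} nor any $U^{s+1}$ inverse theorem beyond $U^3$.
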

This will be proven using a similar method as in \cite{Len22}, with input from \thref{mainresult1} rather than \cite{GT12}. Our final result is the quasi-polynomial $U^4(\mathbb{Z}/N\mathbb{Z})$ inverse theorem.
\begin{restatable}{theorem}{mainresultc}\thlabel{mainresult4}
Suppose $f\colon \mathbb{Z}/N\mathbb{Z} \to \mathbb{C}$ is a one-bounded function with
$$\|f\|_{U^4(\mathbb{Z}/N\mathbb{Z})} \ge \delta.$$
Then there exists a nilmanifold $G/\Gamma$ with degree at most $3$, dimension $\log(1/\delta)^{O(1)}$, complexity $O(1)$, and a nilsequence $F(g(n)\Gamma)$ on the nilmanifold with $F$ $1$-Lipschitz such that
$$|\mathbb{E}_{n \in [N]} f(n) F(g(n)\Gamma)|  \ge \exp(-\log(1/\delta)^{O(1)}).$$
\end{restatable}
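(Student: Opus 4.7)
The plan is to follow the Gowers-Cauchy-Schwarz / derivative approach to inverse theorems (as developed by Green-Tao-Ziegler and refined by Manners and Jamneshan-Shalom-Tao), but feeding in the periodic equidistribution \thref{mainresult1} at every step where previous arguments used Ratner-type factorizations. The target quasi-polynomial bound is exactly what one gets from the $U^3$-inverse theorem plus one clean equidistribution iteration, provided no tower-type losses are incurred during linearization in the derivative parameter.

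\textbf{Step 1 (passing to $U^3$ derivatives).} Starting from $\|f\|_{U^4(\ZZ/N\ZZ)} \ge \delta$, apply the identity $\|f\|_{U^4}^{16} = \EE_{h \in \ZZ/N\ZZ}\|\Delta_h f\|_{U^3}^{8}$ to conclude that for a set of $h$ of density $\gg \delta^{O(1)}$ one has $\|\Delta_h f\|_{U^3(\ZZ/N\ZZ)} \ge \delta^{O(1)}$. Invoke the already-known quasi-polynomial $U^3$-inverse theorem (Manners / Jamneshan-Shalom-Tao) for each such $h$ to produce a periodic degree-$2$ nilsequence $\chi_h(n) = F_h(g_h(n)\Gamma_h)$ on a nilmanifold of dimension $\log(1/\delta)^{O(1)}$ and complexity $O(1)$ with $|\EE_n \Delta_h f(n)\,\overline{\chi_h(n)}| \ge \exp(-\log(1/\delta)^{O(1)})$.

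\textbf{Step 2 (standardization and linearization in $h$).} Pigeonhole on the isomorphism type of the nilmanifold and on the top-degree (vertical) frequency to put all the $\chi_h$, for $h$ in a set of density $\gg \exp(-\log(1/\delta)^{O(1)})$, on a common nilmanifold $G/\Gamma$ of degree $2$. Then run the additive-combinatorial bracket argument: by Cauchy-Schwarz one shows the family $\chi_{h_1}\cdot \chi_{h_2}\cdot \overline{\chi_{h_1+h_2}}$ has large $U^2$-biased correlation with $\Delta_{h_1}\Delta_{h_2} f$, and hence the polynomial sequence $g_{h_1}g_{h_2}g_{h_1+h_2}^{-1}$ is non-equidistributed. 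Apply \thref{mainresult1} to this polynomial sequence on $G/\Gamma$: the theorem forces the top-degree coefficients to depend bi-linearly in $(h_1,h_2)$ modulo lower-order corrections, with polynomial-in-dimension, hence quasi-polynomial-in-$\delta$, losses.

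\textbf{Step 3 (symmetry and extraction of a step-$3$ nilsequence).} The bilinearity from Step 2 means the family $h\mapsto \chi_h$ is approximately a symmetric $2$-cocycle on $\ZZ/N\ZZ$. Integrate this cocycle (as in the GTZ/Manners symmetry argument, in the form used in \cite{Len2}) to produce a degree-$3$ nilpotent group $\tilde G$, a lattice $\tilde\Gamma$, and a polynomial sequence $\tilde g(n)$ on $\tilde G/\tilde\Gamma$ of dimension $\log(1/\delta)^{O(1)}$ and complexity $O(1)$, together with a $1$-Lipschitz function $F$ such that $\Delta_h F(\tilde g(n)\tilde\Gamma)$ approximates $\chi_h(n)$ for most $h$. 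Averaging in $h$ and unwinding the derivative then gives $|\EE_n f(n)\overline{F(\tilde g(n)\tilde\Gamma)}| \ge \exp(-\log(1/\delta)^{O(1)})$, as required.

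\textbf{Main obstacle.} The crux is Step 2: extracting bilinearity in $h$ of the nilcharacter $\chi_h$ with only quasi-polynomial losses. Prior arguments lost an iterated logarithm here because the Ratner-type factorization for the sequence $g_{h_1}g_{h_2}g_{h_1+h_2}^{-1}$ produced a tower of subnilmanifolds, each contributing a polynomial loss; \thref{mainresult1} short-circuits this by reading off the bracket relation directly from the vertical character, and keeping track of the fact that the resulting horizontal characters $\eta_i$ can be chosen with quasi-polynomial size uniformly in $h_1,h_2$ is the delicate bookkeeping step. Everything else (the symmetry/cocycle integration and the final unwinding) is by now essentially standard.
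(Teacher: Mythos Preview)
Your high-level outline matches the paper's strategy (follow \cite{GTZ11}, replacing Ratner-type factorizations with the improved equidistribution theorem), and Steps 1 and 3 are essentially correct. However, Step 2 as you describe it has a genuine gap: a single application of \thref{mainresult1} to the sequence $g_{h_1}g_{h_2}g_{h_1+h_2}^{-1}$ does \emph{not} directly yield bilinearity of the top coefficients in $(h_1,h_2)$. What the equidistribution theorem gives you, for each fixed additive quadruple $(h_1,h_2,h_3,h_4)$ with large correlation, is only that the combined polynomial sequence lands in an isotropic subgroup---i.e.\ certain horizontal characters $\eta$ satisfy $\eta(\xi_{h_1}+\xi_{h_2}-\xi_{h_3}-\xi_{h_4})+\eta_*(\xi_*)\equiv 0$. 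Turning this ``many additive quadruples are preserved by $h\mapsto\eta(\xi_h)$'' into genuine (bracket-)linearity in $h$ requires an additive-combinatorial step you have omitted: Balog--Szemer\'edi--Gowers followed by Sanders' quasi-polynomial Freiman theorem, to pass to a dense subset on which the map is a Freiman homomorphism and hence agrees with a bracket-linear function on a Bohr set (this is the content of the paper's \thref{bracketlinear} and \thref{linearizationstep}).

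Moreover, the paper splits your Step 2 into two separate applications of the two-step equidistribution theorem (\thref{twostepcase}, not the general \thref{mainresult1}): first a \emph{sunflower} step (\thref{sunflower}) which, after pigeonholing on the isotropic subspace and on auxiliary parameters $(h_2,h_3,h_2',h_4')$, shows the $h$-dependent phases satisfy $\omega(\Xi_h^G,\Xi_h^G)=0$; and only then a \emph{linearization} step, which uses this isotropy to reduce the problem to the 1\% linear equation above and invoke BSG+Sanders. The careful pigeonholing in the sunflower step (choosing $(h_2,h_3)$ and $(h_2',h_4')$ so that many $h_1$ simultaneously satisfy relations in two different projections $V_{123},V_{124}$) is precisely what avoids the iteration that cost \cite{GTZ11} its extra losses, and is not captured by ``apply \thref{mainresult1} once and read off bilinearity.'' Without these two ingredients your Step 2 does not go through.
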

Since the initial version of this paper, this result and work with Sah and Sawhney on improved bounds for $5$-term arithmetic progressions \cite{LSS24} have been generalized in work with Sah and Sawhney \cite{LSS24b} to the quasi-polynomial $U^{s + 1}[N]$ inverse theorem and sets lacking nontrivial $k$-term arithmetic progressions \cite{LSS24c}. The article can thus be treated as a ``stepping-stone" for the more general \cite{Len2} and \cite{LSS24b}.

\subsection{Discussion on the proof of the $U^4(\mathbb{Z}/N\mathbb{Z})$ inverse theorem}
\thref{mainresult4} will be proven using a very similar method as in \cite{GTZ11}, with input from \thref{twostepcase} instead of \cite{GT12} which simplifies the argument somewhat. The advantage of the approach of \cite{GTZ11} is that it avoids the use of ``$1\%$ quadratics" that Gowers and Milicevic \cite{GM17} and Manners \cite{Man18} consider. We instead consider a ``$1\%$ linear" equation, which is substantially simpler. \\\\
Our primary improvement over \cite{GTZ11} is the ``sunflower" and ``linearization" steps which correspond to Sections 7 and 8 of that article. In \cite{GTZ11}, the authors prove these steps by invoking the \emph{Ratner-type factorization theorem}, which given a nilmanifold of dimension $d$ involves $O(d)$ iterations of the Leibman-type theorem of Green and Tao \cite[Theorem 2.9]{GT12}. The sunflower and linearization steps are also proven iteratively, with the number of iterations being $O(d)$. Altogether, their proof involves $O(d^2)$ many iterations of the quantitative Leibman theorem of \cite{GT12}. We are able to do each of the ``sunflower" and ``linearization" steps in one single application of \thref{twostepcase}. The proofs given here are refinements of the proofs given in \cite{GTZ11}. For instance, the proof of the sunflower step (\thref{sunflower}) relies on the Furstenberg-Weiss commutator argument used in \cite{GTZ11} (but adapted to the setting of \cite[Theorem 8]{Len2}) and the proof of linearization step (\thref{linearizationstep}) is similar to that of \cite{GTZ11} in that it also relies on the Balog-Szemer\'edi-Gowers theorem and a Freiman type theorem, though we use the refinement of the theory due to Sanders \cite{San12b}.
\subsection{Organization of the paper}
In Section 2, we will define notation we need in addition to the notation in \cite{Len2}. Sections 3-5 will delve into examples and simple cases of the proof of \thref{mainresult1}, in order to motivate the proof of \cite[Theorem 3]{Len2}. In Sections 3, we will prove an equidistribution theorem for degree two periodic bracket polynomials. In Section 4, we will prove the degree two periodic case of \thref{mainresult1}. In Section 5, we will prove the full two-step case of \thref{mainresult1}. We will prove \thref{mainresult1} in Section 6 (by black-boxing \cite[Theorem 3]{Len2}). We will prove \thref{mainresult2} in Section 7, and \thref{mainresult4} in Section 8.\\\\
In Appendix A, we will deduce some auxiliary lemmas, used in Sections 3 and 4. In Appendix B, we will collect and prove some lemmas on bracket polynomials; these will be useful in Sections 3 and 8. Finally, in Appendix C, we will include a different proof than the one suggested in Section 3 of a key lemma of the equidistribution theory: the \emph{refined bracket polynomial lemma}. We believe this proof is somewhat more intuitive and offers more motivation for the statement of the lemma than the proof presented in \cite{Len2}, which is cleaner. 
\subsection{Acknowledgements}
We would like to thank Terry Tao for advisement and for suggesting a simpler proof of the refined bracket polynomial lemma than the proof the author initially came up with, which is present in Appendix C. We would also like to thank Ashwin Sah and Mehtaab Sawhney for their interest in the author's work and being helpful and extremely supportive of the author. In particular, the persistent questions of Ashwin Sah and Mehtaab Sawhney in the author's work led the author to realize a mistake in Section 8 in an earlier version of the document. When the author presented a fix to them, they pointed out that there is a more efficient way to ``Pigeonhole" in the proof of \thref{sunflower} which lead to the quasi-polynomial bounds in \thref{mainresult4} instead of bounds of the shape of $\exp(-\exp(O(\log\log(1/\delta)^2)))$ that the author had in a previous version. We are immensely grateful to them for communicating this point with the author and allowing the author to write up their argument here. We would in addition like to thank Ben Green and Sarah Peluse for helpful discussions. The author is supported by an NSF Graduate Research Fellowship Grant No. DGE-2034835.

\section{Notation}
We shall use the notation in \cite{Len2}, with a few differences, which we shall describe below. 
\begin{definition}[Periodic nilsequences]
Given a nilmanifold $G/\Gamma$ and an integer $N$, a polynomial sequence $g \in \text{poly}(\mathbb{Z}, G)$, and a Lipschitz function $F: G/\Gamma \to \mathbb{C}$, we say $F(g(n)\Gamma)$ is a \emph{periodic nilsequence modulo $N$} if $g(n + N) \Gamma = g(n)\Gamma$ for each $n \in \mathbb{Z}$.
\end{definition}
\begin{definition}[Smoothness norms]
Given a polynomial $p\colon \mathbb{Z} \to \mathbb{R}$ with $p(n) = \sum_{i = 0}^d \alpha_i n^i$, we write
$$\|p\|_{C^\infty[N]} = \sup_{1 \le i \le d} N^i\|\alpha_i\|_{\mathbb{R}/\mathbb{Z}}.$$
\end{definition}
The reason we work with the above definition of $C^\infty$ norm rather than the definition in \cite{Len2} involving binomial coefficients is that this definition is better adapted to polynomials $p\colon \mathbb{Z}/N\mathbb{Z} \to \mathbb{T}$, which appear in the analysis of periodic nilsequences. \\\\
We shall also use $c(\delta)$ as in \cite{Len2} as any quantity $\gg (\delta/M)^{O_{k, \ell}(d)^{O_{k, \ell}(1)}}$; here, since we will be working with single parameter nilsequences, we will always have $\ell = 1$.

\section{Bracket polynomial heuristics}
In this section, we shall deduce an equidistribution theorem for a periodic degree two bracket polynomial. Such a polynomial is of the form
$$e(\phi(n)) := e(-(\alpha_1 n[\beta_1 n] + \cdots + \alpha_d n[\beta_d n]) + P(n))$$
with $\phi(n + N) \equiv \phi(n) \pmod{1}$ and $\alpha_i, \beta_i$ are rational with denominator $N$. Such a function is \emph{not} a nilsequence but can be written as $F(g(n)\Gamma)$ where $F\colon G/\Gamma \to \mathbb{C}$ is only piecewise Lipschitz on a degree two nilmanifold $G/\Gamma$. To realize this, we require the following definitions.
\begin{definition}[Elementary two-step nilmanifold]
We shall define the \emph{elementary two-step nilmanifold} of dimension $2d + 1$. Define
$$G = \begin{pmatrix} 1 & \mathbb{R} & \cdots & \cdots  & \mathbb{R} \\ 0 &1 & \cdots & 0 & \mathbb{R} \\ 0 & 0 & 1 & \cdots & \mathbb{R} \\
0 & 0 & 0 & \ddots & \vdots \\
0 & 0& 0 & \cdots  & 1 \end{pmatrix}$$
and
$$\Gamma  = \begin{pmatrix} 1 & \mathbb{Z} & \cdots & \cdots  & \mathbb{Z} \\ 0 &1 & \cdots & 0 & \mathbb{Z} \\ 0 & 0 & 1 & \cdots & \mathbb{Z} \\
0 & 0 & 0 & \ddots & \vdots \\
0 & 0& 0 & \cdots  & 1 \end{pmatrix}$$
equipped with the lower central series filtration. We write this in coordinates as
$$(x_1, \dots, x_d, y_1, \dots, y_d, z).$$
We furthermore define the \emph{horizontal component} as
$$\psi_{\mathrm{horiz}}(x_1, \dots, x_d, y_1, \dots, y_d, z) := (\vec{x}, \vec{y})$$
and the \emph{vertical component} as $\psi_{\mathrm{vert}}(x_1, \dots, x_d, y_1, \dots, y_d, z) := z$.
\end{definition}
We also define an elementary bracket quadratic below.
\begin{definition}[Elementary bracket quadratic]
Consider $G/\Gamma$ an elementary two-step nilmanifold of dimension $2d + 1$. Given real numbers $\alpha_1, \dots, \alpha_d, \beta_1, \dots, \beta_d$, and $P$ a quadratic polynomial, we define the \emph{elementary polynomial sequence} associated to $(\vec{\alpha}, \vec{\beta}, P)$ via $g(n) = (\alpha_1n, \dots, \alpha_d n, \beta_1n \cdots, \beta_d n, P(n))$.
Note that $G/\Gamma$ has the fundamental domain of $(-1/2, 1/2]^{2d + 1}$ via the map 
$$(x_1, \dots, x_d, y_1, \dots, y_d, z) \mapsto (\{x_1\}, \dots, \{x_d\}, \{y_1\}, \dots, \{y_d\}, \{z - \vec{x} \cdot [\vec{y}]\}).$$
We can thus define the function $F: G/\Gamma \to \mathbb{C}$ as
$$F((x_1, \dots, x_d, y_1, \dots, y_d, z)\Gamma) = e(-\sum_{i = 1}^d x_i [y_i] + z).$$
We see that
$$F(g(n)\Gamma) = e(-\alpha_1 n[\beta_1 n] + \cdots + \alpha_d n[\beta_d n] + P(n));$$
we define an \emph{elementary bracket quadratic} associated to $(\vec{\alpha}, \vec{\beta}, P)$ as $F(g(n)\Gamma)$. We say that the two-step bracket quadratic is \emph{periodic} modulo $N$ if $g(n + N) \Gamma = g(n)$. 
\end{definition}
We next define the asymmetric bilinear form of an elementary two-step nilmanifold.
\begin{definition}[Bilinear form of an elementary two-step nilmanifold]
Given an elementary two-step nilmanifold $G/\Gamma$ of dimension $d$, we can define the associated \emph{asymmetric bilinear|} form $\omega\colon (\mathbb{R}^{2d})^2 \to \mathbb{R}$ as follows. If $u_1 = (\vec{x}, \vec{y}), u_2 ((\vec{z}, \vec{w}) \in (\mathbb{R}^{2d})^2$, we define
$$\omega(u_1, u_2) =  \vec{x} \cdot \vec{w} - \vec{y} \cdot \vec{z}.$$  
\end{definition}
Finally, we require the following lemma, whose proof we omit (see \thref{periodic}).
\begin{lemma}\thlabel{periodiclemma}
If $F(g(n)\Gamma)$ is an $N$-periodic elementary bracket quadratic, and if $g(n) = (\vec{\alpha}n, \vec{\beta}n, P)$ with $P = an^2 + bn + c$, then $\vec{\alpha}, \vec{\beta}$ are rational with denominator $N$, and $a$ is rational with denominator $2N$.
\end{lemma}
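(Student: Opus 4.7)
The plan is to translate periodicity $g(n+N)\Gamma = g(n)\Gamma$ into the pointwise condition $g(n)^{-1} g(n+N) \in \Gamma$ for every $n \in \mathbb{Z}$, compute the element on the left explicitly using the group law of the elementary two-step nilmanifold, and then read off the denominator bounds for $\vec{\alpha}$, $\vec{\beta}$, and $a$ coordinate by coordinate. This converts periodicity into a polynomial identity in $n$ modulo $\mathbb{Z}$, from which the claimed denominators fall out by matching coefficients.

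First I would record the group law in coordinates. Carrying out matrix multiplication in $G$ gives the Heisenberg-type product
$$(\vec{x}, \vec{y}, z) \cdot (\vec{x}', \vec{y}', z') = (\vec{x} + \vec{x}', \vec{y} + \vec{y}', z + z' + \vec{x} \cdot \vec{y}'),$$
and in particular $(\vec{x}, \vec{y}, z)^{-1} = (-\vec{x}, -\vec{y}, -z + \vec{x} \cdot \vec{y})$. Substituting $g(n) = (\vec{\alpha} n, \vec{\beta} n, P(n))$ and simplifying, a short computation yields
$$g(n)^{-1} g(n+N) \;=\; \bigl(N \vec{\alpha},\, N \vec{\beta},\, P(n+N) - P(n) - n N\, \vec{\alpha} \cdot \vec{\beta}\bigr),$$
and this element must lie in $\Gamma$, i.e.\ have all integer coordinates, for every $n \in \mathbb{Z}$.

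The horizontal part immediately gives $N \vec{\alpha}, N \vec{\beta} \in \mathbb{Z}^d$, which is the first conclusion that $\vec{\alpha}$ and $\vec{\beta}$ are rational with denominator $N$. For the vertical part I would expand $P(n+N) - P(n) = 2aNn + aN^2 + bN$, so the vertical coordinate becomes the affine polynomial
$$\bigl(2aN - N\, \vec{\alpha} \cdot \vec{\beta}\bigr) n + \bigl(aN^2 + bN\bigr)$$
in $n$. For this to take integer values on all of $\mathbb{Z}$, both the linear coefficient $2aN - N \vec{\alpha}\cdot\vec{\beta}$ and the constant $aN^2 + bN$ must themselves be integers. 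Since from the horizontal step $N\vec{\alpha}\cdot\vec{\beta}$ is already a rational number with denominator divisible only by primes dividing $N$, the linear-coefficient equation $2aN \equiv N \vec{\alpha}\cdot\vec{\beta} \pmod{\mathbb{Z}}$ pins down the denominator of $a$ and gives the last claim.

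There is no analytic obstacle here; the work is essentially bookkeeping, with the only mildly subtle point being the propagation of the horizontal denominator bound into the linear-coefficient equation for $a$ through the cross term $N \vec{\alpha} \cdot \vec{\beta}$. The main conceptual content is the very first step: recognizing that the Heisenberg-type group law converts the orbit-periodicity into a polynomial identity in $n$ mod $\mathbb{Z}$, after which separating coefficients is automatic.
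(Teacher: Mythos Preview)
Your approach---computing $g(n)^{-1}g(n+N)$ explicitly via the Heisenberg-type group law and reading off integrality coordinate by coordinate---is correct and considerably more direct than the paper's route, which omits the proof here and defers to the general \thref{periodic} in the appendix (that lemma works on an arbitrary nilmanifold and passes through the auxiliary group $G^\square = G\times_{G_2} G$). Your computations of the product, the inverse, and the resulting vertical coordinate $(2aN - N\,\vec{\alpha}\cdot\vec{\beta})\,n + (aN^2+bN)$ are all right, and the horizontal conclusion $N\vec{\alpha}, N\vec{\beta}\in\mathbb{Z}^d$ is immediate.

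There is, however, a genuine gap in your last sentence. From $2aN - N\,\vec{\alpha}\cdot\vec{\beta}\in\mathbb{Z}$ together with $N\,\vec{\alpha}\cdot\vec{\beta}\in\tfrac{1}{N}\mathbb{Z}$ you can only conclude $2aN\in\tfrac{1}{N}\mathbb{Z}$, i.e.\ that $a$ has denominator dividing $2N^2$, not $2N$. This is not a repairable omission: take $N=3$, $d=1$, $\alpha=\beta=1/3$, $a=2/9$, $b=c=0$; then $g(n)^{-1}g(n+3)=(1,1,n+2)\in\Gamma$, so the sequence is $3$-periodic, yet $2Na=4/3\notin\mathbb{Z}$. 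What your argument genuinely establishes is that $N(2a-\vec{\alpha}\cdot\vec{\beta})\in\mathbb{Z}$, and this is exactly the quantity that matters downstream: it is the coefficient of $nh$ appearing after van der Corput in the proof of \thref{twostepbracket}. So your method and computation are sound; only the final clause, where you assert the stated $2N$ bound without justification, overreaches---and the overreach reflects a slight imprecision in the lemma's phrasing rather than any defect in your reasoning.
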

We are now ready to state the equidistribution theorem for degree two bracket polynomials.
\begin{theorem}\thlabel{twostepbracket}
Let $\delta \in (0, 1/10)$ and $N > 10$ a prime, and $G/\Gamma$ an elementary two-step nilmanifold of dimension $2d + 1$ with bilinear form $\omega$, and $F(g(n)\Gamma)$ an elementary two-step bracket quadratic. Suppose
$$|\mathbb{E}_{n \in [N]} F(g(n)\Gamma)| \ge \delta.$$
Then one of the following holds.
\begin{itemize}
    \item $N \ll \delta^{-O(d^{O(1)})}$;
    \item or there exists some $0 \le r \le 2d$ and $w_1, \dots, w_r$ and $\eta_1, \dots, \eta_{2d - r}$ all linearly independent vectors in $\mathbb{Z}^{2d}$ with $\langle w_i, \eta_j \rangle = 0$ and such that
    $$\|\eta_i \cdot \psi_{\mathrm{horiz}}(g)\|_{C^\infty[N]} = 0 \text{ and } \|\omega(w_i, \psi_{\mathrm{horiz}}(g))\|_{C^\infty[N]} = 0.$$
\end{itemize}
\end{theorem}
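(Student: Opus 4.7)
The plan is to reduce the two-step bracket quadratic to a step-one bracket-linear problem via a single application of van der Corput, extract approximate rational structure in the derivatives, and then invoke additive-combinatorial tools to convert the $h$-parameterized family of relations into the bilinear conclusion; primality of $N$ combined with \thref{periodiclemma} finally upgrades approximate to exact vanishing of $C^\infty[N]$.

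Setting $\phi(n) := P(n) - \sum_{i=1}^d \alpha_i n[\beta_i n]$ so that $F(g(n)\Gamma) = e(\phi(n))$, Cauchy--Schwarz against shifts $h \in [H]$ with $H \asymp N$ gives $|\mathbb{E}_n e(\phi(n+h)-\phi(n))| \gg \delta^{O(1)}$ for a density-$\delta^{O(1)}$ set $\mathcal{H}\subseteq[H]$. The identity $[\beta(n+h)] = [\beta n]+[\beta h]+\epsilon$ with $\epsilon = \mathbbm{1}[\{\beta n\}+\{\beta h\}\ge 1]$ rewrites the derivative as
\[
\phi(n+h)-\phi(n) = c(h)\,n + d(h) + \sum_i \gamma_i(h)\,[\beta_i n] - \sum_i \alpha_i(n+h)\,\epsilon_i(n,h),
\]
where $c(h) = 2ah-\sum_i \alpha_i[\beta_i h]$ and $\gamma_i(h) = -\alpha_i h$. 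Fourier expanding each $\epsilon_i(n,h)$ in $\beta_i n$ (with a standard Fej\'er kernel truncation to absorb the discontinuity) and applying the bracket-linear equidistribution lemmas from the appendices produces, for each $h\in\mathcal{H}$, integers $k_0(h),\dots,k_d(h) \ll \delta^{-O(1)}$ such that the effective leading coefficient $c(h)+\sum_i k_i(h)\beta_i$ is $\delta^{-O(1)}/N$-close to $\mathbb{Z}$; via $[\beta_i h] = \beta_i h - \{\beta_i h\}$ this is a relation bilinear in $h$ and $(\vec\alpha,\vec\beta)$.

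The main obstacle, which I expect to be the technical heart of the argument, is converting this $h$-parameterized family of approximate rational relations into the non-parametric bilinear structure stated in the conclusion. For this I would apply a Sanders-style quantitative Freiman--Ruzsa argument \cite{San12b} to $\mathcal{H}$: the collection of symplectic linear functionals whose values on $\mathcal{H}$ lie near $\mathbb{Z}$ generates a sublattice of $\mathbb{Z}^{2d}$ of corank at most $r$, a basis of which supplies the vectors $w_1,\dots,w_r$; the horizontal characters $\eta_1,\dots,\eta_{2d-r}$ are then produced as a basis of a complementary, orthogonal sublattice. Keeping the dependence on $d$ polynomial rather than exponential requires the refined pigeonhole indicated in the acknowledgments. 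Finally, since $\vec\alpha,\vec\beta \in \tfrac{1}{N}\mathbb{Z}^d$ by \thref{periodiclemma} and $N$ is prime, the resulting $C^\infty[N]$-norms take integer values in $[0,N/2]$; the bound $\delta^{-O(d^{O(1)})}$ then forces either $N\ll\delta^{-O(d^{O(1)})}$, yielding the first alternative, or, via surjectivity of $\eta\mapsto\eta\cdot(\vec\alpha,\vec\beta)\bmod\mathbb{Z}$, we may shift each $\eta_i,w_j$ by bounded lattice vectors to force the norms to be exactly zero, yielding the second.
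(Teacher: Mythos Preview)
Your opening move---van der Corput followed by bracket-polynomial expansion of the derivative---matches the paper, and the observation that primality of $N$ together with \thref{periodiclemma} eventually upgrades approximate relations to exact ones is also correct. The gap is in the middle step, where you propose a ``Sanders-style quantitative Freiman--Ruzsa argument'' to pass from the $h$-parameterized relations to the vectors $w_i,\eta_j$.

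After Fourier-expanding the lower-order terms (and, crucially, pigeonholing to a single $h$-independent frequency), the paper arrives at a single relation of the form
\[
\|a\cdot\{\alpha h\} + \gamma h + \beta\|_{\mathbb{R}/\mathbb{Z}} = 0
\]
for $\gg \delta^{O(d^{O(1)})}N$ values of $h$, where $a,\alpha\in\mathbb{R}^{2d}$ are fixed vectors with $a\cdot y = \omega(\psi_{\mathrm{horiz}}(g),y)$ and $\alpha = \psi_{\mathrm{horiz}}(g)$. There is no sumset or approximate-group structure on $\mathcal{H}$ to which Freiman--Ruzsa would apply; the problem is purely Diophantine. The tool the paper actually uses is the \emph{refined bracket polynomial lemma} (\thref{periodicrefined} and its corollary \thref{bracketpolynomialcorollary}), a geometry-of-numbers argument via Minkowski's first theorem that directly produces the $w_i$ and $\eta_j$ with the required orthogonality and size bounds. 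Your description of ``symplectic linear functionals whose values on $\mathcal{H}$ lie near $\mathbb{Z}$ generating a sublattice'' does not correspond to any actual mechanism: there is one bracket-linear relation, not a family of linear functionals, and its structure is extracted by finding a short lattice vector in a tube around $a$, not by additive combinatorics on $\mathcal{H}$. Your final step of ``shifting $\eta_i,w_j$ by bounded lattice vectors'' is also not how exactness is achieved; it comes for free from the periodic refined bracket lemma once $a,\alpha$ have denominator $N$.
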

\begin{remark}
Observing that $\omega$ is nondegenerate on the first $2d$ coordinates, we see that $\omega$ is a \emph{symplectic form} on $\mathbb{R}^{2d}$. This theorem then states that $\psi_{\mathrm{horiz}}(g)$ lies on a \emph{isotropic subspace} $V$ of $\omega$; that is, a subspace $V$ with $\omega(V, V) = 0$. To see this, note that if $v, w \in V$, it suffices to show that $\omega(v, w) = 0$. Since $\eta_i$ annihilates $v$, it may be spanned by $w_j$'s, and because $\omega(w_j, w) = 0$, it follows that $\omega(v, w) = 0$. 
\end{remark}
\subsection{Proof of \thref{twostepbracket}}
Let
$$g(n) = (\alpha_1 n, \dots, \alpha_d n, \beta_1 n, \dots, \beta_d n, P(n)).$$
Since
$$\alpha_i n[\beta_i n] = (\{\alpha_i \} + [\alpha_i])n[(\{\beta_i\} + [\beta_i])n] = \{\alpha_i\}n[\{\beta_i n\}] + \{\alpha_i\}[\beta_i]n^2 \pmod{1},$$
we may reduce to the case where $|\alpha_i|, |\beta_i| \le \frac{1}{2}$ by modifying $P$. Applying the van der Corput inequality, there exists $\delta^{O(1)}N$ many $h \in [N]$ such that
\begin{equation}\label{vandercorputbracket}
|\mathbb{E}_{n \in [N]} e(\phi(n + h) - \phi(n))| \ge \delta^{O(1)}.    
\end{equation}

We next observe that if $\alpha$ and $\beta$ are arbitrary, then
$$\alpha (n + h)[\beta(n + h)] - \alpha n[\beta n] = \alpha n[\beta h] + \alpha h[\beta n] + [\text{Lower order terms}]$$
where ``lower order terms" denote a sum of $O(1)$ terms of the form $\{\alpha n\}\{\beta n\}, \{\alpha h\}\{\beta n\}$, $\{\alpha h\}\{\beta h\}$, $\{\alpha n\}\{\beta h\}$. The key observation is that each of the functions 
$$e(\{\alpha n\}\{\beta n\}), e(\{\alpha h\}\{\beta n\}), \text{ and } e(\{\alpha n\}\{\beta h\})$$
are functions on $\mathbb{T}^2$ in $(\{\alpha n\}, \{\beta n\})$, $(\{\alpha h\}, \{\beta n\})$, and $(\{\alpha n\}, \{\beta h\})$, respectively, and thus may be Fourier expanded into terms such as (say) $e(k_1\{\alpha n\} + k_2\{\beta n\}) = e(k_1 \alpha n + k_2 \beta n)$. Note that we have ``removed" a bracket. Such a heuristic is made precise in \thref{onevarfouriercomplexity} and \thref{bilinearfouriercomplexity}. \\\\
We may further analyze
$$\alpha n[\beta h] = \alpha n (\beta h - \{\beta h\}), \alpha h[\beta n] = \{\alpha h\}(\beta n - \{\beta n\})$$
and so
$$\alpha (n + h)[\beta(n + h)] - \alpha n[\beta n] = \alpha \beta nh + [(\alpha n, \beta n), (\{\alpha h\}, \{\beta h\})] + [\text{Lower order terms}]$$
where $[(x, y), (z, w)] = xw - yz$. Thus, letting $\beta$ denote the coefficient the quadratic term of the vertical component of $g$, we have
\begin{equation}\label{bracketcommutator}
 |\mathbb{E}_{n \in [N]} e(2\beta nh -n\omega(\psi_{\mathrm{horiz}}(g), \{\psi_{\mathrm{horiz}}(g)\}) + \text{[Lower order terms]})| \ge \delta^{O(1)}.
\end{equation}
By \thref{onevarfouriercomplexity}, \thref{bilinearfouriercomplexity}, we thus have for real $\gamma$ of denominator $N$ that
$$|\mathbb{E}_{n \in \mathbb{Z}_N} e(\gamma nh -n\omega(\psi_{\mathrm{horiz}}(g), \{\psi_{\mathrm{horiz}}(g)\}) + \gamma n)| \ge \delta^{O(d^{O(1)})}.$$
By \thref{periodiclemma} and letting $\alpha = \psi_{\mathrm{horiz}}(g)$ and $a$ equal to vector induced by $a \cdot y = \omega(\psi_{\mathrm{horiz}}(g), y)$, we have
\begin{equation}\label{refinedhypothesis}
\|a \cdot \{\alpha h\} + \beta + \gamma h\|_{\mathbb{R}/\mathbb{Z}} = 0.    
\end{equation}
for $\delta^{O(d^{O(1)})}N$ many $h \in [N]$.
In the next subsection, we prove a crucial Diophantine approximation result for this hypothesis.
\subsection{The refined bracket polynomial lemma}
We have the following lemma.
\begin{lemma}[Refined bracket polynomial lemma]\thlabel{periodicrefined}
    Let $\delta \in (0, 1/10)$ and $N > 100$ be prime. Suppose $a, \alpha \in \mathbb{R}^{d}$, $a$ and $\alpha$ are rational with denominator $N$, $|a| \le M$, and
    $$\|\beta + a \cdot \{\alpha h\}\|_{\mathbb{R}/\mathbb{Z}} = 0$$
    for $\delta N$ many $h \in [N]$. The either $N \ll (\delta/d^{d}M)^{-O(1)}$ or $K/N \ge 1/10$ or else there exists linearly independent $w_1, \dots, w_r$ and $\eta_1, \dots, \eta_{d - r}$ in $\mathbb{Z}^{d}$ with size at most $(\delta/d^{d} M)^{-O(1)}$ such that $\langle w_i, \eta_j \rangle = 0$,
    $$\|\eta_j \cdot \alpha\|_{\mathbb{R}/\mathbb{Z}} = 0, \text{ and } |w_i \cdot a| = \frac{(\delta/M)^{-O(1)}d^{O(d)}K}{N}.$$
\end{lemma}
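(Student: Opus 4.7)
The plan is to pass from the nonlinear hypothesis (involving the fractional part) to a linear-algebraic condition over $\mathbb{F}_N$, and then extract the structural data via a standard geometry-of-numbers argument. Write $H := \{h \in [N] : \|\beta + a \cdot \{\alpha h\}\|_{\mathbb{R}/\mathbb{Z}} = 0\}$, so $|H| \ge \delta N$, and write $a = u/N$, $\alpha = v/N$ with $u, v \in \mathbb{Z}^d$ (entrywise reduced mod $N$). The identity $\{\alpha_i h\} = (v_i h \bmod N)/N$ rewrites the hypothesis as the mod-$N^2$ congruence
$$N^2\beta + \sum_i u_i(v_i h \bmod N) \equiv 0 \pmod{N^2}\qquad (h \in H).$$
Taking two distinct elements of $H$ and reducing modulo $N$ already forces $u \cdot v \equiv 0 \pmod N$, i.e.\ $a \cdot \alpha \in \tfrac{1}{N}\mathbb{Z}$; this is one linear relation but is by itself too weak.

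The main step is a carry pigeonhole over additive quadruples. For $h_1 + h_2 = h_3 + h_4$ with all $h_i \in H$, the quasi-cocycle identity $\{x\} + \{y\} - \{x+y\} \in \{0, 1\}$ applied coordinate-wise gives $\{\alpha h_1\} + \{\alpha h_2\} - \{\alpha h_3\} - \{\alpha h_4\} = \epsilon \in \{-1, 0, 1\}^d$, and the hypothesis then forces $a \cdot \epsilon \in \mathbb{Z}$. Cauchy--Schwarz produces $\gg \delta^3 N^3$ additive quadruples in $H$; pigeonholing on the $3^d$ possible values of $\epsilon$ gives $\gg \delta^3 N^3 / 3^d$ quadruples with a common carry $\epsilon_0$. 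Applying Balog--Szemer\'edi--Gowers to this collection produces a subset $H' \subset H$ of density $\gg \delta/d^{O(d)}$ with genuine additive structure (a long arithmetic progression or Bohr set) on which $h \mapsto \{\alpha h\}$ is nearly a homomorphism, thereby localising $\alpha$ modulo $\mathbb{Z}^d$ to a rational subspace. From this the $\eta_1, \ldots, \eta_r$ are extracted as short generators of the annihilator lattice $\Lambda_\alpha = \{\eta \in \mathbb{Z}^d : \eta \cdot \alpha \in \mathbb{Z}\}$ via Minkowski's second theorem, with sizes $\le (M/\delta)^{O(1)} d^{O(d)}$, and the complementary $w_1, \ldots, w_{d-r}$ are chosen so that $w_i \cdot a$ is small (using the relation $u \cdot v \equiv 0 \pmod N$ projected onto the complement of $\Lambda_\alpha \otimes \mathbb{Q}$).

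The main obstacle is the carry-pigeonhole step: because $\{\cdot\}$ is not a group homomorphism, the $3^d$ possible carry vectors directly obstruct additive structure, and the $d^{O(d)}$ factor in the final bound is the unavoidable cost of this combinatorial pigeonhole. Carrying out this step while retaining enough additive-combinatorial structure to run the geometry-of-numbers extraction at the end is the heart of the argument, and is precisely the subtlety that the alternative proof in Appendix~C and the cleaner proof of \cite{Len2} are designed to finesse.
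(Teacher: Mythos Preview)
Your proposal identifies a correct piece of structure --- over additive quadruples $h_1+h_2=h_3+h_4$ in $H$ one has $\{\alpha h_1\}+\{\alpha h_2\}-\{\alpha h_3\}-\{\alpha h_4\}\in\{-1,0,1\}^d$ and hence $a\cdot\epsilon\in\mathbb{Z}$ --- but the argument does not close, and you effectively concede as much in your last paragraph. After pigeonholing to a common carry $\epsilon_0$ and running Balog--Szemer\'edi--Gowers, you obtain a set $H'$ of small doubling; you then assert that this ``localises $\alpha$ to a rational subspace'' and that short generators of $\Lambda_\alpha=\{\eta:\eta\cdot\alpha\in\mathbb{Z}\}$ can be found by Minkowski's second theorem. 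Neither step is justified: small doubling of $H'$ says nothing directly about $\alpha$, and Minkowski on $\Lambda_\alpha$ only yields vectors of size $\ll 1$ once you already know the successive minima are small, which is precisely what is in question. More seriously, the conclusion demands $w_i$ with $|w_i\cdot a|$ small \emph{and} $\langle w_i,\eta_j\rangle=0$; this orthogonality is the entire content of the word ``refined'', and nothing in a generic BSG-plus-lattice argument manufactures the required coupling between the directions annihilating $\alpha$ and the directions along which $a$ is small.

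By contrast, the paper's proof of this lemma is a one-line reduction: since the hypothesis has $\|\beta+a\cdot\{\alpha h\}\|_{\mathbb{R}/\mathbb{Z}}=0$ exactly, one applies \thref{magicargument} with $K=1/N^2$. The content therefore lives in the proof of \thref{magicargument} (either in \cite{Len2} or in Appendix~C here), and that proof is an iteration rather than an additive-combinatorial argument. At each step one uses Minkowski's \emph{first} theorem to locate an integer vector $\eta$ inside a narrow tube about the current $a$, so that $\eta$ is nearly parallel to $a$. Eliminating a coordinate via this $\eta$ replaces $a$ by a vector whose entries are cross-products $\eta_{j'}a_i-\eta_i a_{j'}$, which are small precisely because of the near-parallelism, and which are automatically orthogonal to $\eta$. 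The $w_i$ emerge at the end of the iteration as these accumulated cross-products. It is this geometric mechanism --- not BSG --- that simultaneously delivers the size bound on $w_i\cdot a$ and the orthogonality $\langle w_i,\eta_j\rangle=0$, and your sketch offers no substitute for it.
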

\begin{remark}
The name ``refined bracket polynomial lemma" is derived from the analogous ``bracket polynomial lemma" of \cite[Proposition 5.3]{GT12}.
\end{remark}
This lemma can be proved via the following lemma by applying $K = 1/N^2$.
\begin{lemma}\thlabel{magicargument}
    Let $\delta \in (0, 1/10)$ and $N > 100$ be prime. Suppose $a, \alpha \in \mathbb{R}^{d}$ and $\alpha$ is rational with denominator $N$, $|a| \le M$, and
    $$\|\beta + a \cdot \{\alpha h\}\|_{\mathbb{R}/\mathbb{Z}} < K/N$$
    for $\delta N$ many $h \in [N]$. The either $N \ll (\delta/d^{d}M)^{-O(1)}$ or $K/N \ge 1/10$ or else there exists linearly independent $w_1, \dots, w_r$ and $\eta_1, \dots, \eta_{d - r}$ in $\mathbb{Z}^{d}$ with size at most $(\delta/d^{d} M)^{-O(1)}$ such that $\langle w_i, \eta_j \rangle = 0$,
    $$\|\eta_j \cdot \alpha\|_{\mathbb{R}/\mathbb{Z}} = 0, \text{ and } |w_i \cdot a| = \frac{(\delta/M)^{-O(1)}d^{O(d)}K}{N}.$$
\end{lemma}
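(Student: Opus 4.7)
The plan is to transform the hypothesis into a statement about lattice points in $\mathbb{Z}^d$ and then resolve it via a geometry-of-numbers argument on a pair of dual lattices.

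I would first write $\alpha = \vec{p}/N$ with $\vec{p} \in \mathbb{Z}^d$, noting that since $N$ is prime either $\vec{p} \equiv 0 \pmod{N}$ (in which case $\alpha \in \mathbb{Z}^d$ and the conclusion is trivial with $r = 0$ and $\eta_j$ the standard basis) or some coordinate of $\vec{p}$ is a unit modulo $N$, so the coordinatewise reduction $\vec{p}(h) \in \{0, \ldots, N-1\}^d$ satisfies $\{\alpha h\} = \vec{p}(h)/N$. For $h$ in the hypothesized set $B$ of size $\ge \delta N$, write $\beta + a \cdot \{\alpha h\} = n(h) + \theta(h)$ with $n(h) \in \mathbb{Z}$ and $|\theta(h)| < K/N$. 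Since $|a \cdot \{\alpha h\}| \le dM$ and we may assume $|\beta| \le 1/2$, the integer $n(h)$ ranges over at most $O(dM)$ values; pigeonhole isolates a subset $B_1 \subseteq B$ of size $\gtrsim \delta N/(dM)$ on which $n(h) = n_0$ is constant, and differencing eliminates $\beta$ and $n_0$ to yield
\[
|a \cdot (\vec{p}(h_1) - \vec{p}(h_2))| < 2K \qquad \text{for all } h_1, h_2 \in B_1.
\]

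The differences $\vec{p}(h_1) - \vec{p}(h_2)$ sit in the lattice $\Lambda := \mathbb{Z}\vec{p} + N\mathbb{Z}^d \subseteq \mathbb{Z}^d$ of index $N^{d-1}$, whose dual $\Lambda^\vee := \{\eta \in \mathbb{Z}^d : \eta \cdot \vec{p} \equiv 0 \pmod{N}\}$ has index $N$ and whose every element automatically satisfies $\|\eta \cdot \alpha\|_{\mathbb{R}/\mathbb{Z}} = 0$. Thus the $\eta_j$'s in the conclusion must be chosen as short vectors of $\Lambda^\vee$, and the $w_i$'s in the orthogonal complement of their span. To produce both simultaneously I would run a Minkowski/LLL reduction on $\Lambda$ equipped with the weighted norm $\|v\|_* := \max(|v|,\, N |a \cdot v|/K)$: basis elements of small weighted norm yield the $w_i$'s (after clearing denominators), and complementary dual vectors in $\Lambda^\vee$ yield the $\eta_j$'s. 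A final Gram--Schmidt orthogonalisation, at a cost of $d^{O(d)}$, enforces $\langle w_i, \eta_j \rangle = 0$.

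The main obstacle is producing the polynomial size bound $(\delta/d^d M)^{-O(1)}$, in contrast to the Minkowski default of $N^{1/d}$. The mechanism is the anomalous density of $\Lambda$ inside the strip $\{|a \cdot v| < 2K\} \cap \{|v| \le N\}$: the preceding step has located $\gtrsim (\delta/dM)^{O(1)} N$ lattice points there, far exceeding the generic count of $O(K/|a|)$. Standard geometry-of-numbers then forces the ``heavy'' directions of $\Lambda$, those on which $|a \cdot v|$ fails to be $O(K)$, to have duals in $\Lambda^\vee$ spanned by vectors of polynomial size, via a Dirichlet-type simultaneous approximation applied along the high-density directions; short integer relations populate $\Lambda^\vee$ precisely when the strip is dense. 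Iterating this dichotomy at most $d$ times peels off at each step either an $\eta_j$ or a $w_i$ and terminates with the desired splitting $r + (d-r) = d$ and the claimed bound $|w_i \cdot a| \le (\delta/M)^{-O(1)} d^{O(d)} K/N$.
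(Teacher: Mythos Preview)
Your lattice framing is sound: the identification of $\Lambda^\vee = \{\eta \in \mathbb{Z}^d : \eta \cdot \vec{p} \equiv 0 \pmod{N}\}$ as the natural home for the $\eta_j$'s is exactly right, and the initial pigeonhole-and-difference reduction is correct. However, the proposal has a genuine gap at its core. The passage from ``anomalously many points of $\Lambda$ in the slab $\{|a \cdot v| < 2K\}\cap\{|v|\le N\}$'' to ``short integer vectors in $\Lambda^\vee$ of size $(\delta/d^dM)^{-O(1)}$, together with complementary $w_i$'s satisfying $|w_i\cdot a|\ll K/N$'' is the entire content of the lemma, and you do not supply it. Generic Minkowski applied to $\Lambda^\vee$ (covolume $N$) yields vectors of length $\sim N^{1/d}$, as you yourself note; the assertion that slab-density forces short dual relations ``via a Dirichlet-type simultaneous approximation'' is not a standard fact, your dichotomy is never stated, and there is no analysis of how bounds propagate through the $d$ iterations --- which is exactly the place where careless arguments go doubly exponential in $d$. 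The final Gram--Schmidt step is also not innocent: integer orthogonalisation costs a determinant, and you must track that this determinant stays polynomial.

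The paper itself cites \cite[Lemma 3.4]{Len2} for the sharp form and gives an alternative proof with slightly weaker bounds in Appendix~C; that argument is architecturally quite different from yours. There the hypothesis is first converted into an exponential-sum inequality, and the single iteration step is: apply Minkowski's first theorem not to $\Lambda^\vee$ directly but to a \emph{tube} of width $\sim \delta/(3^d dM)$ pointed along $a$, producing an integer $\eta$ with $\|\eta\cdot\alpha\|_{\mathbb{R}/\mathbb{Z}} = 0$ that lies close to the direction of $a$. This tube geometry is the decisive idea you are missing: writing $\eta = t a + O(\text{tube width})$, the elimination $\tilde a_i := \eta_{j'} a_i - \eta_i a_{j'}$ satisfies $|\tilde a_i| \le 2|a|\cdot(\text{tube width}) \le 1$, so the size parameter $M$ does \emph{not} grow through the iteration. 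Error terms picked up at each step are bracket expressions in $n,h$ handled by Fourier-complexity lemmas, and after at most $d$ steps the $\eta^k$'s form a triangular system whose orthogonal complement is spanned by explicit $w_i$'s built inductively. Your sketch contains no analogue of the tube observation, and without it (or a genuine substitute) there is no mechanism preventing the bounds from exploding.
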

The \thref{magicargument} is proved in \cite[Lemma 3.4]{Len2}. We will give another proof of this lemma with worse bounds in Appendix C. Although our bounds there are worse, they are sufficient for \thref{twostepbracket} and for any application of the refined bracket polynomial lemma in (the one-variable case of) \cite{Len2}. A corollary of this lemma is the following.
\begin{corollary}\thlabel{bracketpolynomialcorollary}
    Let $\delta \in (0, 1/10)$ and $N > 100$ be prime. Suppose $a, \alpha \in \mathbb{R}^d$ are rationals with $N$, $|a| \le M$, $\beta, \gamma \in \mathbb{R}$ with $\beta$ rational with denominator $N$, and
    $$\|\gamma + a \cdot \{\alpha h\} + \beta h\|_{\mathbb{R}/\mathbb{Z}} = 0$$
    for $\delta N$ many $h \in [N]$. Then either $N \ll (d\delta/M)^{-O(d)^{O(1)}}$ or there exists linearly independent $w_1, \dots, w_r$ and $\eta_1, \dots, \eta_{d - r}$ such that $\langle w_i, \eta_j \rangle = 0$ and
    $$\|\eta_j \cdot \alpha\|_{\mathbb{R}/\mathbb{Z}} = 0, \hspace{0.1in} \|w_i \cdot a\|_{\mathbb{R}/\mathbb{Z}} = 0.$$
\end{corollary}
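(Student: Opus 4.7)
The plan is to reduce \thref{bracketpolynomialcorollary} to \thref{periodicrefined} by absorbing the linear term $\beta h$ via an augmentation to dimension $d+1$. Set $\tilde{\alpha} := (\alpha, \beta) \in \mathbb{Q}^{d+1}$ and $\tilde{a} := (a, 1) \in \mathbb{Q}^{d+1}$; both are rational with denominator $N$, and $|\tilde{a}| \le M + 1$. Since $\{\beta h\} \equiv \beta h \pmod{1}$ for every integer $h$, the original hypothesis rewrites as $\|\gamma + \tilde{a} \cdot \{\tilde{\alpha} h\}\|_{\mathbb{R}/\mathbb{Z}} = 0$ for $\delta N$ many $h \in [N]$, placing us squarely within the setting of \thref{periodicrefined}.

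Applying that lemma in dimension $d+1$ with $K = 1/N^2$, we either obtain $N \ll (d\delta/M)^{-O(d)^{O(1)}}$ (matching the first alternative of the desired conclusion) or produce pairwise orthogonal, linearly independent systems $\tilde{w}_1, \dots, \tilde{w}_{\tilde{r}}$ and $\tilde{\eta}_1, \dots, \tilde{\eta}_{d+1-\tilde{r}}$ in $\mathbb{Z}^{d+1}$ of polynomially bounded size with $\|\tilde{\eta}_j \cdot \tilde{\alpha}\|_{\mathbb{R}/\mathbb{Z}} = 0$ and $|\tilde{w}_i \cdot \tilde{a}| \le (\delta/M)^{-O(1)}(d+1)^{O(d+1)}/N^3$. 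Writing $\tilde{w}_i = (w_i, c_i)$ with $c_i \in \mathbb{Z}$, the expression $\tilde{w}_i \cdot \tilde{a} = w_i \cdot a + c_i$ has denominator dividing $N$; for $N$ exceeding a polynomial threshold (again absorbed into the first case of the conclusion) this forces $\tilde{w}_i \cdot \tilde{a} = 0$, so $w_i \cdot a = -c_i \in \mathbb{Z}$, giving $\|w_i \cdot a\|_{\mathbb{R}/\mathbb{Z}} = 0$ as required.

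The delicate step is extracting horizontal characters $\eta_j$ on $\alpha$, since writing $\tilde{\eta}_j = (\eta_j, e_j)$ only yields $\eta_j \cdot \alpha + e_j \beta \in \mathbb{Z}$, and the $e_j \beta$ term need not vanish mod $\mathbb{Z}$. To fix this, I would apply Hermite normal form to the lattice $\Lambda := \tilde{W} \cap \mathbb{Z}^{d+1}$, where $\tilde{W}$ is the real span of the $\tilde{\eta}_j$, and let $n\mathbb{Z}$ denote the projection of $\Lambda$ to the last coordinate. The case $n = 0$ is ruled out, as it would force $\tilde{W} \subset \mathbb{R}^d \times \{0\}$, contradicting $\tilde{a} \in \tilde{V}^{\perp} = \tilde{W}$ (which has last coordinate $1$). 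Hence $n \ge 1$, and we may choose a basis of $\Lambda$ consisting of one vector $(\eta^*, n)$ together with $d - \tilde{r}$ vectors $\tilde{\eta}'_j = (\eta'_j, 0)$ with vanishing last coordinate. The latter give exactly the $d - \tilde{r}$ horizontal characters $\eta'_j \in \mathbb{Z}^d$ with $\eta'_j \cdot \alpha \in \mathbb{Z}$ and $w_i \cdot \eta'_j = \tilde{w}_i \cdot \tilde{\eta}'_j = 0$; the projected $w_i$'s themselves remain linearly independent over $\mathbb{Z}$ because $e_{d+1} \notin \tilde{V}$ (otherwise $e_{d+1} \cdot \tilde{a} = 1$ would have to vanish). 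Pairwise orthogonality plus the count $\tilde r + (d - \tilde r) = d$ then gives a full basis decomposition of $\mathbb{Q}^d$ as required.

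The main bookkeeping obstacle is tracking sizes through the Hermite reduction, but a rank-$k$ sublattice of $\mathbb{Z}^{d+1}$ generated by vectors of norm at most $B$ admits a Hermite basis of norm at most $B^{O(d)}$ by standard determinant estimates, which fits comfortably inside the $(d\delta/M)^{-O(d)^{O(1)}}$ size budget in the conclusion.
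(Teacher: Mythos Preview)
Your argument follows the same route as the paper: augment to dimension $d+1$ via $\tilde a=(a,1)$, $\tilde\alpha=(\alpha,\beta)$, apply \thref{periodicrefined}, then project back to $\mathbb R^d$. The paper carries out the projection by hand---given one $\tilde\eta_1=(\mu_1,\nu_1)$ with $\nu_1\neq 0$, it forms $\nu_j\mu_1-\nu_1\mu_j$ for the remaining $j$---whereas you package the same elimination as a Hermite reduction. Both are sound and produce the same output. Your observation that $\tilde a\in\tilde W$ (from $\tilde w_i\cdot\tilde a=0$) forces some $\tilde\eta_j$ to have nonzero last coordinate is in fact a small simplification: the paper treats the ``all $\nu_j=0$'' case separately, even though by this argument it is vacuous.

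One small slip: take $\Lambda$ to be the $\mathbb Z$-span of the given $\tilde\eta_j$, not the saturated lattice $\tilde W\cap\mathbb Z^{d+1}$. The latter can strictly contain the former, and a generic element of $\tilde W\cap\mathbb Z^{d+1}$ is only a \emph{rational} combination of the $\tilde\eta_j$, so it need not pair integrally with $\tilde\alpha$. With $\Lambda$ taken as the $\mathbb Z$-span, your Hermite basis vectors are genuine integer combinations of the $\tilde\eta_j$ and inherit $\tilde\eta_j'\cdot\tilde\alpha\in\mathbb Z$; the size bound via Hermite reduction and the rest of your argument go through unchanged.
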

\begin{proof}
    We define $\tilde{a} = (a, 1) \in \mathbb{R}^{d + 1}$ and $\tilde{\alpha} = (\alpha, \beta) \in \mathbb{R}^{d + 1}$. Invoking \thref{periodicrefined}, there exists $w_1, \dots, w_{r}$ and $\eta_1, \dots, \eta_{d + 1 - r}$ such that $|w_i(\tilde{a})| = 0$, $\|\eta_j(\tilde{\alpha})\|_{\mathbb{R}/\mathbb{Z}} = 0$. We denote $w_i = (u_i, v_i)$ and $\eta_j = (\mu_j, \nu_j)$ where $u_i \in \mathbb{R}^d$ and $\mu_j \in \mathbb{R}^d$ for each $i$ and $j$. Suppose $\nu_1 \neq 0$. Let $\tilde{\eta_j} = \nu_j \mu_1 - \mu_j \nu_1$. We see that $\|\tilde{\eta_j}(\mu)\|_{\mathbb{R}/\mathbb{Z}} = 0$. We claim that the $\tilde{\eta_j}$'s are independent of each other. Suppose there exists some $a_i$ such that
    $$\sum_{i \neq 1} a_i (\nu_i \mu_1 - \mu_i \nu_1) = 0.$$
    We can rewrite this sum as
    $$\mu_1\left(\sum_{i \neq 1} a_i\nu_i\right) + \sum_{i \neq 1} (-a_i\nu_1)\mu_i = 0.$$
    Letting these coefficients of $\mu_i$ be $c_i$, we see that
    $$\sum_i c_i\nu_i = \nu_1\left(\sum_{i \neq 1} a_i \nu_i \right) - \sum_{i \neq 1} a_i\nu_1\nu_i = 0.$$
    Thus, each of these coefficients are zero, and since $\nu_1$ is nonzero, $a_i = 0$. Thus, $\tilde{\eta_j}$'s are independent of each other. We next claim that $\tilde{\eta_j}$ are orthogonal to the $u_i$'s. This is because
    $$\tilde{\eta_j} \cdot u_i = \nu_j \mu_1 \cdot u_i - \nu_1 \mu_j \cdot u_i$$
    $$\eta_j \cdot w_i = \mu_j \cdot u_i + \nu_j \cdot v_i = 0$$
    $$\eta_1 \cdot w_i = \mu_1 \cdot u_i + \nu_1 \cdot v_i = 0$$
    so subtracting the second and third equations gives that the first equation is equal to zero. Finally, we claim that the $u_i$'s are linearly independent of each other. To see this, note that $(u_i, v_i)$ are orthogonal to $(\tilde{\eta_j}, 0)$ and $(\mu_1, \nu_1)$. Since $(0, 1)$ is not orthogonal to $(\mu_1, \nu_1)$, it follows that $(u_i, v_i)$ cannot span $(0, 1)$, so $(u_i, v_i), (0, 1)$ are linearly independent of each other, which implies that $u_i$ are linearly independent of each other. \\\\
    If $\nu_i \neq 0$ for some $i$, we let $\nu_i$ play the role of $\nu_1$ in the above argument. If $\nu_i = 0$ for all $i$, we observe that $\mu_j$ are all linearly independent and that $\|\mu_j \cdot \alpha\|_{\mathbb{R}/\mathbb{Z}} = 0$. In addition, since $v_i \cdot 1 \in \mathbb{Z}$, we have $\|w_i \cdot \tilde{a}\|_{\mathbb{R}/\mathbb{Z}} = \|u_i \cdot a\|_{\mathbb{R}/\mathbb{Z}} = 0$. Hence, choosing a linearly independent subset of the $u_i$'s, we finish.
\end{proof}
We are now ready to finish the proof of \thref{twostepbracket}.
\subsection{Finishing the proof}
We return to (\ref{refinedhypothesis}). Applying \thref{bracketpolynomialcorollary}, we obtain $w_1, \dots, w_r, \eta_1, \dots, \eta_{d - r}$ such that
$$\|w_i \cdot a\|_{\mathbb{R}/\mathbb{Z}} = 0 \text{ and }\|\eta_j \cdot \alpha\|_{\mathbb{R}/\mathbb{Z}} = 0.$$
Unwinding the definitions of $a$ and $\alpha$ gives \thref{twostepbracket}.

\section{The degree two nilsequence case}
Having discussed bracket polynomial heuristics, we now turn to the degree-two two-step nilsequence case. Our proof follows that of \cite{GT12} which applies the van der Corput inequality and performs an analysis of the resulting nilsequence on the group
$$G^\square := G \times_{G_2} G := \{(g', g) \in G^2: g'g^{-1} \in G_2\}.$$
Properties of $G^\square$ can be found in \cite[Lemma A.3, Lemma A.4]{Len2}. The reader is encouraged to observe parallels between this section and the previous section. We now state the main theorem of this section:
\begin{theorem}\thlabel{twostepcase}
Let $N$ be a prime, $\delta \in (0, 1/10)$, and $G/\Gamma$ a two-step nilmanifold of dimension $d$, complexity $M$, equipped with the lower central series filtration. Furthermore, let $F(g(n)\Gamma)$ be a periodic nilsequence modulo $N$ on $G/\Gamma$ with $F$ a $1$-Lipschitz vertical character of nonzero frequency $|\xi| \le M/\delta$. Suppose
$$|\mathbb{E}_{n \in [N]} F(g(n)\Gamma)| \ge \delta.$$
Then either $N \ll (\delta/M)^{-O(d)^{O(1)}}$ or there exists some integer $d_{\mathrm{horiz}} \ge r \ge 0$ and elements $w_1, \dots, w_r \in \Gamma/(\Gamma \cap [G, G])$ and horizontal characters $\eta_1, \dots, \eta_{d_{\mathrm{horiz}} - r}$ all bounded by $(\delta/M)^{-O(d)^{O(1)}}$ such that 
\begin{itemize}
\item $\psi_{\mathrm{horiz}}(w_i)$'s are linearly independent of each other and $\eta_j$'s are linearly independent of each other and $\langle \eta_j, w_i \rangle = 0$ for all $i$ and $j$.
\item We have
$$\|\xi([w_i, g])\|_{C^\infty[N]} = 0$$
$$\|\eta_j \circ g\|_{C^\infty[N]} = 0.$$
\end{itemize}
\end{theorem}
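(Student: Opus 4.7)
The plan is to mirror the bracket polynomial argument of the previous section, replacing the explicit coordinate expansion $\alpha(n+h)[\beta(n+h)] - \alpha n[\beta n]$ by the group-theoretic framework of the product group $G^\square = G \times_{G_2} G$. The key structural input is that, because $G$ is two-step and $F$ is a $G_{(s)}$-vertical character with frequency $\xi$, the product function $F \otimes \overline{F}$ on $G^\square/\Gamma^\square$ is trivial on the diagonal $\Delta(G_2)$, so it has one fewer dimension of top-degree oscillation than $F$ itself, and effectively behaves like a degree-one nilsequence in the second variable.

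First I would apply the van der Corput inequality to the hypothesis to obtain $\delta^{O(1)} N$ values of $h \in [N]$ for which
$$|\mathbb{E}_{n \in [N]} F(g(n+h)\Gamma) \overline{F(g(n)\Gamma)}| \ge \delta^{O(1)}.$$
The sequence $(g(n+h), g(n))$ is a polynomial sequence on $G^\square$ and the product function is a nilsequence on $G^\square/\Gamma^\square$ whose vertical character is trivial on $\Delta(G_2)$. A Baker--Campbell--Hausdorff expansion of $g(n+h)g(n)^{-1}$ in a Mal'cev basis shows that the phase of the correlation splits as a pure exponential involving a commutator bracket $\xi([\psi_{\mathrm{horiz}}(g(n)), \psi_{\mathrm{horiz}}(g(h))])$ plus a linear-in-$h$ correction, plus a piecewise-Lipschitz remainder supported on the product torus in the horizontal coordinates of $g(n)$ and $g(h)$, analogously to equation (\ref{bracketcommutator}). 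The remainder is then Fourier expanded on the relevant tori and pigeonholed (at a cost of $(M/\delta)^{O(d^{O(1)})}$), exactly as in the bracket polynomial case.

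After this Fourier expansion and a second pigeonhole in $h$, the hypothesis becomes: for $c(\delta) N$ many $h \in [N]$,
$$\|\beta + a \cdot \{\alpha h\} + \gamma h\|_{\mathbb{R}/\mathbb{Z}} = 0,$$
where $\alpha$ is the horizontal projection of the linear coefficient of $g$, $a$ is the vector representing the linear functional $y \mapsto \xi([y, \cdot])$ on $G/[G,G]$ via duality with the horizontal torus, and $\beta, \gamma$ are controlled scalars absorbing the remaining Fourier frequencies. This is exactly the input of Corollary \ref{bracketpolynomialcorollary}, whose application produces linearly independent $w_1, \dots, w_r$ and $\eta_1, \dots, \eta_{d_{\mathrm{horiz}} - r}$ in $\mathbb{Z}^{d_{\mathrm{horiz}}}$ with $\langle w_i, \eta_j \rangle = 0$, $\|\eta_j \cdot \alpha\|_{\mathbb{R}/\mathbb{Z}} = 0$, and $\|w_i \cdot a\|_{\mathbb{R}/\mathbb{Z}} = 0$. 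Lifting the $\eta_j$ to horizontal characters on $G$ and the $w_i$ through the Mal'cev basis to elements of $\Gamma/(\Gamma \cap [G,G])$ translates the two conclusions into $\|\eta_j \circ g\|_{C^\infty[N]} = 0$ and $\|\xi([w_i, g])\|_{C^\infty[N]} = 0$, as required.

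The main obstacle is the bookkeeping in the BCH / $G^\square$ step: one must carefully separate the ``pure commutator'' contribution from the piecewise-Lipschitz parts so that the Fourier complexity losses and the resulting scalar equation genuinely match the hypothesis of Corollary \ref{bracketpolynomialcorollary}, with all implicit constants kept polynomial in $M/\delta$ and only polynomial-in-$d$ many losses. A secondary, essentially routine task is translating the linear-algebraic output of the corollary back through the Mal'cev basis so that the $\eta_j$ are honest horizontal characters on $G$ and the $w_i$ yield genuine commutator identities in $G$ without blowing up the claimed height bounds.
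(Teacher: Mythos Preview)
Your proposal is correct and follows essentially the same route as the paper: van der Corput, passage to $G^\square$ where the diagonal $G_2^\triangle$-invariance makes the problem effectively abelian, Fourier expansion and pigeonhole to reach a scalar equation of the form $\|\beta + a\cdot\{\alpha h\} + \gamma h\|_{\mathbb{R}/\mathbb{Z}}=0$, and finally \thref{bracketpolynomialcorollary}. The paper's write-up differs only in that it phrases the BCH step as an explicit descent to the one-step quotient $\overline{G^\square}=G^\square/G_2^\triangle$ followed by Fourier expansion via \cite[Lemma A.6]{Len2}, and it inserts a preliminary change of variables $n\mapsto M_1 n$ (with $M_1$ clearing the denominators of the commutator matrix $C-C^t$) so that $a,\alpha,\beta$ genuinely have denominator $N$ before invoking the corollary --- exactly the bookkeeping issue you flag at the end.
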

It's worth noting that the subgroup $\tilde{G} = \{g \in G: \eta_j(g) = 0, [w_i, g] = \mathrm{id}_G \forall i, j\}$ is an abelian subgroup of $G$ since given two elements $g, h \in \tilde{G}$, we see that since $\eta_j(h) = 0$ and $w_i$ and $\eta_j$ are orthogonal, it follows that the horizontal component of $h$ can be spanned by the $w_i$'s, so to verify that $[g, h] = 0$, it just suffices to verify that $[w_i, g] = \mathrm{id}_G$, which is true by definition. In fact, by \cite[Lemma A.9]{Len2}, each abelian rational subgroup of $G$ is a subgroup of some group of this form. Combining this lemma with \thref{factorization}, we obtain the following.
\begin{corollary}\thlabel{twostepcor}
Let $N$ be a prime, $0 < \delta < \frac{1}{10}$, $G/\Gamma$ be a two-step nilmanifold of dimension $d$, complexity $M$, and equipped with the standard filtration. Furthermore, let $F(g(n)\Gamma)$ be a periodic nilsequence modulo $N$ on $G/\Gamma$ with $F$ a $1$-Lipschitz vertical character of nonzero frequency $|\xi| \le M/\delta$. Suppose $G/\Gamma$ has a one-dimensional vertical torus, and
$$|\mathbb{E}_{n \in [N]} F(g(n)\Gamma)| \ge \delta.$$
Then either $N \ll (\delta/M)^{-O(d)^{O(1)}}$ or we can write $g(n) = \epsilon(n)g_1(n)\gamma(n)$ where $\epsilon$ is constant, $g_1(n)$ lies on an abelian subgroup of $G$ with rationality $(\delta/M)^{-O(d)^{O(1)}}$ and the image of $\gamma$ lies inside $\Gamma$.
\end{corollary}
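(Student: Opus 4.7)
The plan is to combine \thref{twostepcase} with the Ratner-type factorization theorem \thref{factorization} referenced in the statement. First, I would apply \thref{twostepcase} to the nilsequence $F(g(n)\Gamma)$: either we fall into the trivial regime $N \ll (\delta/M)^{-O(d)^{O(1)}}$ and are done, or we extract horizontal characters $\eta_1, \dots, \eta_{d_{\mathrm{horiz}}-r}$ together with lattice elements $w_1, \dots, w_r \in \Gamma/(\Gamma \cap [G,G])$, all of size at most $(\delta/M)^{-O(d)^{O(1)}}$, satisfying the orthogonality relations $\langle \eta_j, w_i \rangle = 0$ together with the vanishing smoothness identities $\|\eta_j \circ g\|_{C^\infty[N]} = 0$ and $\|\xi([w_i, g])\|_{C^\infty[N]} = 0$.

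Next, I identify the natural target abelian subgroup
$$\tilde{G} := \{g \in G : \eta_j(g) = 0 \text{ for all } j,\ [w_i, g] = \mathrm{id}_G \text{ for all } i\}.$$
As noted in the paragraph between \thref{twostepcase} and the corollary, $\tilde{G}$ is abelian: any $h \in \tilde{G}$ has horizontal projection orthogonal to the $\eta_j$'s and hence in the span of $\{\psi_{\mathrm{horiz}}(w_i)\}$, so the two-step commutator $[g,h]$ reduces bilinearly to a product of $[w_i, g]$'s, each of which vanishes by definition of $\tilde{G}$. Moreover, $\tilde{G}$ is a rational subgroup of complexity $(\delta/M)^{-O(d)^{O(1)}}$ since it is cut out by a controlled number of characters of bounded size; by \cite[Lemma A.9]{Len2}, this $\tilde{G}$ is of the form required by the abelian factorization.

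Finally, the two vanishing $C^\infty[N]$ conditions produced by \thref{twostepcase} are exactly the hypotheses needed to apply \thref{factorization} with target subgroup $\tilde{G}$: the conditions $\|\eta_j \circ g\|_{C^\infty[N]} = 0$ control deviation of $g$ from the kernel of the horizontal characters, and $\|\xi([w_i, g])\|_{C^\infty[N]} = 0$ controls deviation of $g$ from the centralizer of $w_i$ in the direction detected by $\xi$. Feeding these into \thref{factorization} yields the decomposition $g(n) = \epsilon(n) g_1(n) \gamma(n)$ with $\epsilon$ constant, $\gamma(n) \in \Gamma$, and $g_1(n)$ valued along an abelian rational subgroup of rationality $(\delta/M)^{-O(d)^{O(1)}}$.

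I expect the main technical subtlety to be the gap between the pointwise condition $[w_i, g] = \mathrm{id}_G$ used to define $\tilde{G}$ and the weaker character-level condition $\|\xi([w_i, g])\|_{C^\infty[N]} = 0$ that is actually supplied by \thref{twostepcase}; however, because $F$ is a $\xi$-vertical character, only the $\xi$-projection of commutators affects the nilsequence, and \thref{factorization} is engineered precisely to exploit this. The remaining work—propagating the complexity bound $(\delta/M)^{-O(d)^{O(1)}}$ from $\eta_j, w_i$ through the definition of $\tilde{G}$ and into the output of \thref{factorization}—is then a routine Cramer's rule computation on the defining linear system.
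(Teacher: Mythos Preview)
Your approach is correct and matches the paper's one-line deduction: apply \thref{twostepcase}, identify the abelian subgroup $\tilde{G}$, then invoke \thref{factorization}. Two points sharpen it into a complete proof.

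First, the ``gap'' you flag between $[w_i,g]=\mathrm{id}_G$ and $\|\xi([w_i,g])\|_{C^\infty[N]}=0$ dissolves precisely because of the one-dimensional vertical torus hypothesis, which you do not explicitly use. Since $G$ is two-step, $g\mapsto [w_i,g]$ is a homomorphism into $[G,G]$ vanishing on $[G,G]$; since $[G,G]$ is one-dimensional and $\xi$ is nonzero, $\xi$ is an isomorphism $[G,G]\to\mathbb{R}$, so $g\mapsto \xi([w_i,g])$ is a genuine horizontal character (it sends $\Gamma$ to $\mathbb{Z}$ because $w_i\in\Gamma$ and $\xi$ is a vertical frequency), and $\xi([w_i,g])=0$ is equivalent to $[w_i,g]=\mathrm{id}_G$. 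Hence you may simply adjoin the characters $\xi([w_i,\cdot])$ to the $\eta_j$ and feed the whole collection into \thref{factorization}; there is no need for \thref{factorization} to be ``engineered'' for anything beyond horizontal characters.

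Second, \thref{factorization} as stated only produces a $\gamma$ that is $(ML)^{O_k(d^{O_k(1)})}$-rational, not one with image in $\Gamma$. To obtain $\gamma(n)\in\Gamma$ as the corollary asserts, you must follow \thref{factorization} with \thref{removerational} (exactly as in the proof of \thref{twosteppolynomial}), which either forces $N\ll (\delta/M)^{-O(d)^{O(1)}}$ or upgrades the factorization so that $\gamma'$ takes values in $\Gamma$.
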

\begin{proof}[Proof of \thref{twostepcase}.]
%Write out the preliminary reductions!!!
We first make a few preliminary reductions. By \cite[Lemma 2.1]{Len2}, we may reduce to the case that $g(0) = \mathrm{id}_G$ and $|\psi(g(1))| \le \frac{1}{2}$. \\\\
Suppose 
$$|\mathbb{E}_{n \in [N]} F(g(n)\Gamma)| \ge \delta.$$
Using the van der Corput inequality, we see that there are $\delta^{O(1)}N$ many $h$'s such that for each such $h$,
\begin{equation}\label{vandercorputnilsequence}
|\mathbb{E}_{n \in [N]} F(g(n + h)\Gamma) \overline{F(g(n)\Gamma)}| \ge \delta^{O(1)}.
\end{equation}
We recall (once again) from \cite[Definition 4.1, 4.2]{Len2} the definitions
$$G \times_{G_2} G = \{(g', g): g'g^{-1} \in G_2\} = G^\square$$
$$\Gamma \times_{\Gamma \cap G_2} \Gamma = \{(\gamma, \gamma'): \gamma'\gamma^{-1} \in G_2\} := \Gamma^\square$$
and $g_2(n) = g(n)g(1)^{-n}$. By defining $F_h(x, y) = F(\{g(1)^h\}x) \overline{F(y)}$, the nonlinear part $g_2(n) = g(n)g(1)^{-n}$, and $g_h(n) = (\{g(1)^h\}^{-1} g_2(n + h) g(1)^n\{g(1)^h\}, g_2(n)g(1)^{n})$, we see that
$$|\mathbb{E}_{n \in [N]} F_h(g_h(n)\Gamma)| \ge \delta^{O(1)}.$$
One can verify that equipping $G^\square$ with the lower central series filtration, $g_h \in \mathrm{poly}(\mathbb{Z}, G^\square)$, and that $F_h(g_h(n)\Gamma^\square)$ is a periodic nilsequence. Since $F_h$ is invariant under $G_2^\triangle$, the diagonal subgroup of $G_2^2$, and since $[G^\square, G^\square] = G_2^\triangle$ it follows that $F_h$ descends to a function $\overline{F_h}$ on $\overline{G^\square} := G^\square/G_2^\triangle$, which is a one-step nilpotent group. By \cite[Lemma A.6]{Len2} we may approximate $\overline{F_h}$ by
$$\overline{F_h} = \sum_{|\eta| \le c(\delta)^{-1}} F_{h, \eta} + O_{L^\infty}(c(\delta))$$
with $F_{h, \eta}$ $c(\delta)^{-1}$-Lipschitz vertical characters with frequency $\eta$. \\\\
We now analyze characters on $\overline{G^\square}$. Such a character lifts to a horizontal character on $G^\square$ which annihilates $G_2^\triangle$. By \cite[Lemma A.3]{Len2} that we may decompose $\eta(g_2g_1, g_1) = \eta(g_1, g_1) + \eta(g_2, id) = \eta^1(g_1) + \eta^2(g_2)$ with $g_1 \in G$ and $g_2 \in G_2$ where $\eta^1$ and $\eta^2$ are horizontal characters of size at most $c(\delta)^{-1}$. In order to emphasize that $\eta^2$ ``lies in the $x_2 - x_1$ direction", we shall write $\eta^2$ as $\eta^2 \otimes \overline{\eta^2}$. Since $F$ is a frequency $\xi$ on $G_2$, we expect $F_{h, \eta}$ to also be of frequency $\xi \otimes \overline{\xi}$ on $G_2^2/G_2^\triangle$. This may not necessarily be true as written above, but we may average over $G_2^2/G_2^\triangle$ as follows:
$$\overline{F_h}(x) = \int_{G_2^2/G_2^\triangle} \tilde{F}_h(g_2x\Gamma)e(-\xi \otimes \overline{\xi}(g_2))dg_2 = \sum_{|\eta'| \le c(\delta)^{-1}} F_{h, \eta'} + O_{L^\infty}(c(\delta)).$$
The point is that $(\eta' - \xi \otimes \overline{\xi})(G_2^2) = 0$. Here, note that we have abusively lifted $\eta'$ to a horizontal character on $G \times_{G_2} G_2/G_2^\triangle$ and $\xi \otimes \overline{\xi}$ to a character on $G_2^2$ that annihilates $G_2^\triangle$. By applying the Pigeonhole principle, there exists one frequency $\eta'$ independent of $h$ such that for $c(\delta)N$ many $h \in \mathbb{Z}/N\mathbb{Z}$,
\begin{equation}\label{twostepcharacter}
|\mathbb{E}_{n \in \mathbb{Z}/N\mathbb{Z}} F_{\eta'}(g(n)\Gamma)| \ge c(\delta).
\end{equation}
Since $G$ was two-step, $G \times_{G_2} G$ is one-step, so decomposing $\eta'(g', g) = \eta_1(g) + \xi(g'g^{-1})$, (\ref{twostepcharacter}) is equivalent to the fact that
\begin{equation}\label{bracketform}
|\mathbb{E}_{n \in [N]} e(n\eta_1(g(1)) + \xi(g_2(n + h)g_2(n)^{-1}) + \xi([\{g(1)^h\}, g(1)^n]))| \ge (\delta/M)^{O(d)^{O(1)}}
\end{equation}
for $c(\delta)N$ many elements $h \in [N]$. \\\\
We claim that $\xi([g(1), [g(1)^h])$ is rational with denominator $N$. To see this, note that 
$$\xi([g(1), \{g(1)^h\}]) = \xi([g(1), [g(1)^h]]),$$
and since $\xi([g(1), [g(1)^h]])^N = \xi([g(1)^N, [g(1)^h]]) = 0$, it follows that $\xi([g(1), [g(1)^h])$ is rational with denominator $N$. From (\ref{bracketform}) and \thref{periodic}, we have for some $\beta, \gamma \in \mathbb{R}$ with $\beta$ rational with denominator $AN$ for some $A = O_k(1)$,
$$\|\beta n + \gamma + \xi([g(1), \{g(1)^h\}])\|_{\mathbb{R}/\mathbb{Z}} = 0.$$
We can write 
$$\xi([g(1), \{g(1)^h\}]) = \langle (C - C^t)g(1), \{g(1)^h\} \rangle = \langle a, \{\alpha h\} \rangle$$
where $C - C^t$ is the antisymmetric matrix representing the commutator identity in the horizontal torus. Note that $C - C^t$ has height at most $M$. This suggests that we should've made a change of variables $n \mapsto M_1n$ in (\ref{vandercorputnilsequence}) (using the fact that $M_1$ has a modular inverse mod $N$ since $N$ is prime) for some $M_1 \le AM^d$ which is a multiple of the denominator of each entry of $C - C^t$ and $A$,
$$|\mathbb{E}_{n \in [N]} F(g(M_1n)\Gamma)| \ge \delta.$$
Thus, after applying the change of variables, we can assume that $a$, $\alpha$, and $\beta$ have denominator $N$. Applying \thref{bracketpolynomialcorollary} and noticing that $C - C^t$ is antisymmetric, we obtain $w_i$'s and $\eta_j$'s which are linearly independent, $\eta_j(w_i) = 0$, $\|M_1\xi([w_i, g])\|_{\mathbb{R}/\mathbb{Z}} = 0$, and $\|\eta_j(g)\|_{\mathbb{R}/\mathbb{Z}} = 0$. Using the fact that $N$ is prime, we obtain $\|\xi([w_i, g])\|_{\mathbb{R}/\mathbb{Z}} = 0$. This completes the proof of \thref{twostepcase}.    
\end{proof}

\section{The two-step polynomial sequence case}
In the previous section, we observed that in the two-step case, applying van der Corput once landed us in the group $\overline{G^\square}$, which was one-step. In general, this may not be true, even in the two-step polynomial sequence case. This is exhibited by the following example in bracket polynomial formalism. Consider
$$\alpha n^2 [\beta n].$$
Differentiating once in $h$ gives a top term of
$$2\alpha nh[\beta n] + \alpha n^2[\beta h].$$
Here, we see that the term $2\alpha nh[\beta n]$ is still a ``bracket term" in $n$. Hence, our proof divides into two cases.
\begin{itemize}
    \item The first case is what happens when $\overline{G^\square}$ is one-step. This would correspond to \thref{centralcase} and in bracket polynomial formalism, what happens when the bracket polynomial is of the form
    $$\sum_{i = 1}^d \alpha_i n[\beta_i n] + P(n)$$
    for $P$ a (not necessarily degree two) polynomial.
    \item The second case is what happens when $\overline{G^\square}$ is not one-step. We have not isolated a specific lemma for that case, for its proof occupies much of \thref{twosteppolynomial}. In bracket polynomial formalism, this corresponds to a bracket polynomial of the form
    $$\sum_{i = 1}^d P_i(n)[Q_i(n)] + R(n)$$
    where $P_i, Q_i$ are polynomials with at least one with degree larger than one and $R$ is a polynomial.
\end{itemize}
We now state the main theorem of this section.
\begin{theorem}\thlabel{twosteppolynomial}
Let $\delta \in (0, 1/10)$, $N > 100$ prime, and $G/\Gamma$ a two-step nilmanifold of dimension $d$, complexity $M$, and degree $k$. Furthermore, let $F(g(n)\Gamma)$ be a periodic nilsequence modulo $N$ on $G/\Gamma$ with $F$ a $1$-Lipschitz vertical character with nonzero frequency $|\xi| \le M/\delta$. Suppose
$$|\mathbb{E}_{n \in [N]} F(g(n)\Gamma)| \ge \delta.$$
Then either $N \ll (\delta/M)^{-O_k(d)^{O_k(1)}}$ or else there exists some integer $d_{\mathrm{horiz}} \ge r \ge 1$ elements $w_1, \dots, w_r \in \Gamma$ with $\psi_{horiz}(w_1), \dots, \psi_{horiz}(w_r)$ linearly independent, linearly independent horizontal characters $\eta_1, \dots, \eta_{d_{horiz} - r}$ with $|w_i|, |\eta_j| \le (\delta/M)^{-O_k(d)^{O_k(1)}}$, $\langle \eta_j, w_i \rangle = 0$, and
$$\|\xi([w_i, g])\|_{C^\infty[N]} , \|\eta_j \circ g\|_{C^\infty[N]} = 0.$$
\end{theorem}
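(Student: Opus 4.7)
The plan is to follow the strategy of \thref{twostepcase} -- one round of van der Corput, passage to $G^\square / \Gamma^\square$, vertical Fourier decomposition and pigeonhole, then \thref{bracketpolynomialcorollary} -- while handling the additional difficulty that, in the degree-$k$ setting, the polynomial sequence $g$ can have nontrivial higher-degree components inside $G_2$. After applying \cite[Lemma 2.1]{Len2} to reduce to $g(0) = \mathrm{id}_G$ with bounded Taylor coefficients, van der Corput gives $\delta^{O(1)} N$ many $h \in [N]$ with
$$|\mathbb{E}_{n \in [N]} F(g(n+h)\Gamma) \overline{F(g(n)\Gamma)}| \ge \delta^{O(1)}.$$
As in Section 4, I would lift this to a periodic nilsequence $F_h(g_h(n)\Gamma^\square)$ on $G^\square = G \times_{G_2} G$, use the fact that $F_h$ descends to the abelian quotient $\overline{G^\square} = G^\square / G_2^\triangle$, perform vertical Fourier decomposition after averaging over $G_2^2/G_2^\triangle$, and pigeonhole to extract a single horizontal character $\eta'$ on $\overline{G^\square}$ of complexity at most $c(\delta)^{-1}$ for which
$$|\mathbb{E}_{n \in [N]} e(\eta'(g_h(n)))| \ge c(\delta)$$
holds for $c(\delta) N$ many $h \in [N]$.

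Now $\eta'(g_h(n))$ is a polynomial in $n$ that need not be linear, since $g$ may have degree-$\ge 2$ pieces in $G_2$. As the section preamble indicates, this produces a dichotomy in bracket-polynomial language: either $\eta'(g_h(n))$ is of central form $\sum_i \alpha_i n[\beta_i n] + P(n)$, in which case a change of variables $n \mapsto M_1 n$ (invertible mod $N$ by primality) clears the denominators of the antisymmetric commutator matrix and \thref{bracketpolynomialcorollary} concludes directly; or it contains genuinely higher-degree bracket pieces $\sum_i P_i(n)[Q_i(n)] + R(n)$. In the second case I would induct on $k$: isolate the top-degree $G_2$-part of $g$, apply a second van der Corput and pass to a further $G^\square$-type extension using \cite[Lemma A.3, Lemma A.4]{Len2}, and feed the result into the inductive hypothesis. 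After $O(k)$ iterations the problem reduces to the central case, and lifting the data produced by \thref{bracketpolynomialcorollary} back to $G$ yields $w_i \in \Gamma$ and horizontal characters $\eta_j$ with $\|\xi([w_i, g])\|_{C^\infty[N]} = \|\eta_j \circ g\|_{C^\infty[N]} = 0$.

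The main obstacle is the second case of the dichotomy. Each van der Corput step produces cross-terms of the shape flagged in the section preamble ($2\alpha n h[\beta n] + \alpha n^2[\beta h] + \cdots$), and the technical heart of the argument is to show that these assemble into a bracket form compatible with \thref{bracketpolynomialcorollary}, while the complexity losses across the $O(k)$ iterations remain polynomial in $d$ rather than exponential (which is exactly what replaces the iterative Ratner-style unwinding of \cite{GT12}). Carefully tracking the vertical frequency $\xi$ through the iterations, so that the final commutator condition $\|\xi([w_i, g])\|_{C^\infty[N]} = 0$ holds exactly (and not merely approximately), is the subtlest point.
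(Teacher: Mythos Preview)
Your plan contains a genuine gap at the first step: you assert that $F_h$ descends to an \emph{abelian} quotient $\overline{G^\square} = G^\square / G_2^\triangle$, and then extract a single horizontal character $\eta'$. This is exactly what fails in the degree-$k$ setting, as the section preamble warns. In Section~4 one had $G_2 = [G,G]$ and $[G^\square, G^\square] = G_2^\triangle$, so the quotient was genuinely one-step; for a general degree-$k$ filtration on a two-step group, $G_2$ can strictly contain $[G,G]$, and the correct quotient (by $G_k^\triangle$, not $G_2^\triangle$) only drops the degree from $k$ to $k-1$. It is not abelian, so you cannot reduce to a single character $\eta'(g_h(n))$ and then run a dichotomy on its bracket-polynomial shape.

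The paper organises the argument differently, and the organisation is the content. First it reduces (via \cite[Lemma 2.3]{Len2}) to $G_{(2)}$ one-dimensional. Then the case split is \emph{structural}, on whether $\xi([G_2,G]) = 0$, and it is made \emph{before} any van der Corput. If $\xi([G_2,G]) = 0$ (\thref{centralcase}), one Fourier expands along the full $G_2$-torus rather than $G_k$, and then the relevant quotient is indeed abelian and the Section~4 argument goes through with minor changes. If $\xi([G_2,G]) = \mathbb{R}$, the key observation is that one-dimensionality of $G_{(2)}$ forces $[G_2,G] = [G,G]$; after modifying the filtration to push $G_{(2)}$ up to level $k$, one van der Corput step lands in a degree-$(k-1)$ nilmanifold $\overline{G^\square}$ on which $\tilde F_h$ is still a $(\overline{G^\square})_{(2)}$-nilcharacter of frequency $\xi\otimes\bar\xi$. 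This is precisely what lets the induction hypothesis return information about $\xi$ rather than about some auxiliary frequency, and it is the mechanism your sketch is missing. The decomposition of the resulting $w_i = (u_iv_i, u_i)$ and $\eta_j = (\eta_j^1, \eta_j^2)$, followed by a factorisation into a subgroup $\tilde G$ with $[\tilde G, \tilde G_2] = 0$ and a final appeal to \thref{centralcase}, is how the two cases are stitched together; your proposed ``second van der Corput on the top-degree $G_2$-part'' does not supply this structure.
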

It turns out that the proof of the two-step polynomial sequence case breaks down naturally into two cases: one where $\xi([G_2, G]) = 0$ and one where $\xi([G_2, G]) = \mathbb{R}$. This is analogous to the two cases considered in \cite[Section 4]{Len2}. We start with the case of when $\xi([G_2, G]) = 0$ first.
\begin{lemma}\thlabel{centralcase}
Let $\delta \in (0, 1/10)$, $N > 100$ prime, and $G/\Gamma$ be a two-step nilmanifold of dimension $d$, complxity $M$, and degree $k$ with $\xi([G_2, G]) = 0$. Furthermore, let $F(g(n)\Gamma)$ be a periodic nilsequence modulo $N$ on $G/\Gamma$ with $F$ a $1$-Lipschitz vertical character with nonzero frequency $|\xi| \le M/\delta$. Suppose
$$|\mathbb{E}_{n \in [N]} F(g(n)\Gamma)| \ge \delta.$$
Then either $N \ll (M/\delta)^{O_k(d^{O_k(1)})}$ or else there exists some integer $d_{\mathrm{horiz}} \ge r \ge 1$, elements $w_1, \dots, w_r \in \Gamma$ with $\psi_{\mathrm{horiz}}(w_1), \dots, \psi_{\mathrm{horiz}}(w_r)$ linearly independent, and linearly independent horizontal characters $\eta_1, \dots, \eta_{d_{horiz} - r}$ with $|w_i|, |\eta_j| \le (M/\delta)^{O_k(d^{O_k(1)})}$, $\langle \eta_j, w_i \rangle = 0$, and
$$\|\xi([w_i, g])\|_{C^\infty[N]} , \|\eta_j \circ g\|_{C^\infty[N]} = 0.$$
\end{lemma}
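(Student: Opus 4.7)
The plan is to adapt the argument from \thref{twostepcase}, with the hypothesis $\xi([G_2,G])=0$ playing the role that two-stepness played there. After reducing via \cite[Lemma 2.1]{Len2} to $g(0)=\mathrm{id}_G$ and $|\psi(g(1))| \le 1/2$, I would apply the van der Corput inequality to obtain, for $\delta^{O(1)} N$ many $h\in[N]$,
$$|\mathbb{E}_{n \in [N]} F(g(n+h)\Gamma) \overline{F(g(n)\Gamma)}| \ge \delta^{O(1)}.$$
Following the construction in the proof of \thref{twostepcase}, I would lift this to a nilsequence $F_h(g_h(n)\Gamma^\square)$ on $G^\square = G \times_{G_2} G$, where $F_h(x,y) = F(\{g(1)^h\}x)\overline{F(y)}$ carries vertical character $\xi\otimes\overline{\xi}$ on $G_2 \times G_2$.

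The key use of the hypothesis is the following. The character $F_h$ descends to $\overline{G^\square} := G^\square/G_2^\triangle$, and a direct commutator calculation (using that $G$ is two-step, so $[G,G]$ is central) shows that the image of $[G^\square, G^\square]$ in $\overline{G^\square}$ coincides with the off-diagonally embedded copy of $[G_2, G]$. Since $\xi$ kills $[G_2, G]$ by hypothesis, the induced character on $\overline{G^\square}$ factors through the abelianization --- despite the filtration degree being $k>2$, the vertical character of interest sees only a one-step quotient, exactly the simplification that powered \thref{twostepcase}. I can then Fourier expand via \cite[Lemma A.6]{Len2}, average over $G_2^2/G_2^\triangle$, and pigeonhole to a single frequency $\eta'$ (independent of $h$) so that $|\mathbb{E}_n F_{\eta'}(g_h(n)\Gamma^\square)| \ge c(\delta)$ for $c(\delta) N$ many $h$. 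Decomposing $\eta'(g',g) = \eta_1(g) + \xi(g'g^{-1})$ via \cite[Lemma A.3]{Len2} and unwinding, this becomes
$$\left|\mathbb{E}_{n \in [N]} e\bigl(R_h(n) + n\xi([g(1), \{g(1)^h\}])\bigr)\right| \ge c(\delta),$$
where $R_h(n)$ is a polynomial in $n$ of degree at most $k$ absorbing the contributions of $\eta_1\circ g$ and of $\xi(g_2(n+h)g_2(n)^{-1})$ coming from the higher-order component $g_2(n) = g(n)g(1)^{-n}\in G_2$.

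Invoking periodic equidistribution (as in \thref{periodic}) on $\mathbb{Z}/N\mathbb{Z}$ to clear the higher-degree rational coefficients of $R_h$, and then performing a change of variables $n\mapsto M_1 n$ to clear the denominators introduced by the antisymmetric commutator matrix representing $\xi([g(1),\cdot])$ on the horizontal torus (exactly as at the end of the proof of \thref{twostepcase}), I would extract the condition
$$\|\beta + \gamma h + \xi([g(1), \{g(1)^h\}])\|_{\mathbb{R}/\mathbb{Z}} = 0$$
for $c(\delta)N$ many $h$, to which \thref{bracketpolynomialcorollary} applies and yields $w_i$'s and $\eta_j$'s with the required orthogonality and vanishing properties; primality of $N$ then removes the $M_1$ multiplier from the final commutator conditions. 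The step I expect to be the hardest is verifying the claim in the middle paragraph that $F_h$ descends through the abelianization of $\overline{G^\square}$ and, relatedly, that the higher-degree polynomial terms contributing to $R_h(n)$ really only yield polynomial-in-$n$ phases with no extra bracket structure: the filtration degree $k>2$ means $\overline{G^\square}$ is generally non-abelian, and showing that every obstructive commutator factor falls inside $\ker(\xi)$ is precisely where $\xi([G_2,G])=0$ does the heavy lifting, justifying the decision to split the proof of \thref{twosteppolynomial} into this case and its complement.
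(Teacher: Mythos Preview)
Your overall strategy matches the paper's, but there is a genuine gap in the descent step. You claim that $F_h$ descends to $\overline{G^\square}:=G^\square/G_2^\triangle$; this requires $F_h$ to be $G_2^\triangle$-invariant, which in turn requires $F$ to be a $G_2$-vertical character. In \thref{twostepcase} this held automatically because there the filtration was the lower central series, so $G_2=[G,G]=G_{(2)}$. Here the filtration has degree $k$, so typically $G_2\supsetneq [G,G]$, and the hypothesis only gives $F$ as a $G_{(2)}=[G,G]$-vertical character with frequency $\xi$. Consequently $F_h$ is only $[G,G]^\triangle$-invariant, and the quotient by $G_2^\triangle$ is not justified as written. (Relatedly, your assertion that $F_h$ ``carries vertical character $\xi\otimes\overline{\xi}$ on $G_2\times G_2$'' already presumes an extension of $\xi$ from $[G,G]$ to $G_2$ that has not been made.)

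The paper repairs this by inserting one extra step \emph{before} van der Corput, which it explicitly flags as the ``one key difference'' from the proof of \thref{twostepcase}: Fourier expand $F$ along the $G_2$-torus, writing $F=\sum_{|\beta|\le c(\delta)^{-1}}F_\beta+O(c(\delta))$ with each $F_\beta$ a $G_2$-vertical character satisfying $(\beta-\xi)(G_{(2)})=0$. After pigeonholing to a single $F_\beta$, the resulting $F_h$ genuinely is $G_2^\triangle$-invariant, and your commutator computation together with $\xi([G_2,G])=0$ then shows that the induced character on $\overline{G^\square}$ is trivial on $[G^\square,G^\square]/G_2^\triangle$, exactly as you intended. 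Alternatively, one can bypass the $G_2$-expansion entirely: a direct check shows that for any commutator $([b,d]\cdot m,\,[b,d])\in[G^\square,G^\square]$ with $m\in[G,G_2]$, the phase $F_h$ picks up is $e(\xi(m))=1$, so $F_h$ descends straight to the abelianization $G^\square/[G^\square,G^\square]$ without ever passing through $G^\square/G_2^\triangle$. Either route leads to the endgame you describe (Fourier expand on the abelian quotient, isolate the bracket-linear relation in $h$, then apply \thref{bracketpolynomialcorollary}), but as written your proof is missing precisely the step that makes the descent legitimate.
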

\begin{proof}
By \cite[Lemma 2.2]{Len2}, we may reduce to the case of when $g(0) = 1$ and $|\psi(g(1))| \le 1/2$. We proceed similarly as the proof of \thref{twostepcase} with one key difference. Instead of Fourier expanding along the vertical torus, we Fourier expand along the $G_2$-torus. By \cite[Lemma A.6]{Len2}, we may decompose
$$F = \sum_{|\alpha| \le c(\delta)^{-1}} F_\alpha + O(c(\delta))$$
where $\alpha$ is a $G_2$-vertical frequency. By averaging over $G_{(2)}$, we may write
$$F(x) = \int F(g_2x)e(-\xi(g_2)) dg_2 = \sum_{|\beta| \le c(\delta)^{-1}} F_\beta + O(c(\delta))$$
where crucially, $(\beta - \xi)(G_{(2)}) = 0$. \\\\
Once again, we apply the van der Corput inequality
$$|\mathbb{E}_{n \in [N]} F(g(n)\Gamma)\overline{F(g(n + h)\Gamma)}| \ge \delta^2$$
for $\delta^2N$ many $h \in [N]$. Then letting $g_2(n) := g(n)g(1)^{-n}$ be the nonlinear part of $g$, we denote
$$g_h(n) = (\{g(1)^h\}^{-1} g_2(n + h)g(1)^n\{g(1)^h\}, g(n))$$
and $F_h(x, y) = \overline{F}(\{g(1)^h\}x)F(y)$. By \cite[Lemma A.3]{Len2}, we see that $g_h$ lies inside $G^\square$ and $F_h(g_h(n))$ descends to $\tilde{F}_h(\tilde{g}_h(n)\overline{\Gamma^\square})$ a nilsequence on $\overline{G^\square}$. Since $g(n)\Gamma$ is periodic modulo $N$, it follows that $g_h(n)\Gamma \times \Gamma$ is periodic modulo $N$ and thus $g_h(n)\Gamma \times_{G_2 \cap \Gamma} \Gamma$ is also periodic modulo $N$, and so $\tilde{g}_h(n)\overline{\Gamma^\square}$ is periodic modulo $N$. The hypothesis then rearranges to
$$|\mathbb{E}_{n \in [N]} \tilde{F_h}(\tilde{g_h}(n)\overline{\Gamma^\square})| \ge \delta^2$$
for $\delta^2N$ many $h \in [N]$. Making a change of variables for some integer $1 \le M_1 \le (10^kk)!M^d$
$$|\mathbb{E}_{n \in [N]} \tilde{F_h}(\tilde{g_h}(M_1n)\overline{\Gamma^\square})| \ge \delta^2$$
By \cite[Lemma A.3]{Len2}, it follows that $F_h$ has Lipschitz norm at most $M^{O(1)}$ on $G^\square$, $G^\square$ is abelian; we now repeat a similar procedure as in the proof of \thref{twostepcase},approximating $\tilde{F}_h$ by $F_{\eta, h}$ where $(\eta - \xi \otimes \overline{\xi})(G_{(2)}) = 0$, Pigeonholing in $\eta$ so it is $h$-independent.\\\\
Again, we decompose via \cite[Lemma A.3]{Len2} $\eta(g_2g_1, g_1) = \eta^1(g_1) + \eta^2(g_2)$ with $g_1 \in G$ and $g_2 \in G_2$. It thus follows that
$$|\mathbb{E}_{n \in \mathbb{Z}/N\mathbb{Z}} e(\eta^1(g(M_1n)) + \eta^2(g_2(M_1n + h)) - \eta^2(g_2(M_1n)) + \xi([g(1)^{M_1n}, \{g(1)^h\}]))| \ge c(\delta)$$
for $c(\delta)N$ many $h \in \mathbb{Z}/N\mathbb{Z}$. Thus, by classical results in Diophantine approximation (e.g., \cite[Lemma A.11]{Len2}),
$$\|\eta^1(g(M_1n)) + \eta^2(g_2(M_1n + h)) - \eta^2(g_2(M_1n)) + \xi([g(1)^{M_1n}, \{g(1)^h\}])\|_{C^\infty[N]} = 0.$$
Expanding $\eta^2(g_2(M_1n + h))$ out, applying the hypothesis, and applying Vinogradov's lemma to eliminate the coefficients of $nh^i$ for $i > 1$ (see also \cite[Lemma 4.2]{Len2}), we see that there exists $\beta$ and $\gamma$ such that
$$\|\beta h + \xi([g(1)^{M_1n}, \{g(1)^h\}]) + \gamma\|_{\mathbb{R}/\mathbb{Z}} = 0$$
The point of making the change of variables is so that by \thref{periodic}, the coefficients of $g_2(M_1 \cdot)$ have denominator $N$, and $\xi([g(1)^{M_1n}, \{g(1)^h\}])$ consists of $\langle an, \{\alpha h\} \rangle$ where $a$ and $\alpha$ have denominator $N$. Applying \thref{bracketpolynomialcorollary} and using the fact that $N$ is prime yields $w_i$'s and $\eta_j$'s which satisfy the conclusions of the lemma.
\end{proof}
We now address the case of when $\xi([G_2, G]) = \mathbb{R}$. The key fact to keep in mind in the below proof is that if $G_{(2)}$ is one-dimensional, then $[G_2, G] = [G, G]$.
\begin{proof}[Proof of \thref{twosteppolynomial}]
By \cite[Lemma 2.3]{Len2}, we may assume that $G_{(2)}$ is one-dimensional. If $G_2$ lies in the center of $G$, we may apply \thref{centralcase} to finish. \\\\
At this point, the proof of \cite{GT12} Fourier expands $F$ into $G_k$-vertical characters and reduces to a nilsequence on $\overline{G^\square}$ of degree $k - 1$. In our proof, we wish to preserve information of $F$ being a $G_{(2)}$-vertical character in some way. This can be done by modifying the filtration. For $k \ge \ell > 2$, we replace $G_\ell$ with $G_\ell G_{(2)}$. Then since $G_{(2)}$ is in the center of $G$, this gives a filtered nilmanifold; by \cite[Lemma B.12]{Len2}, there exists a Mal'cev basis which makes this filtered nilmanifold of complexity at most $c(\delta)^{-1}$. \\\\
As in previous arguments, at the cost of replace $\delta$ with $c(\delta)$ and increasing the Lipschitz constant of $F$ to $c(\delta)^{-1}$, we may replace $F$ with a $G_k$-vertical character of frequency $\eta$ with $(\eta - \xi)(G_{(2)}) = 0$. \\\\
By the van der Corput inequality, we have for $c(\delta)N$ many $h$'s
$$|\mathbb{E}_{n \in [N]} F(g(n + h)\Gamma) \overline{F(g(n)\Gamma)}| \ge c(\delta).$$
Defining
$$g_h(n) = (\{g(1)^h\}^{-1} g_2(n + h)g(1)^n\{g(1)^h\}, g_2(n)g(1)^n)$$
and $F_h(x, y) = F(\{g(1)^h\}x)\overline{F(y)}$, it follows that
$$|\mathbb{E}_{n \in [N]} F_h(g_h(n))| \ge \delta^{O(1)}.$$
As before, by \cite[Lemma A.3, Lemma A.4]{Len2}, $g_h$ is a periodic polynomial sequence on $G^\square$. By \cite[Lemma A.3]{Len2}, this group has the filtration $(G \times_{G_2} G)_i = G_i \times_{G_{i + 1}} G_i$ with $(G \times_{G_2} G)_k = G_k^\triangle$. However, $F_h$ is $G_k^\triangle$-invariant, so descends via a quotient by $G_k^\triangle$ to a degree $d - 1$ nilsequence $\tilde{F}_h(\overline{g_h}(n)\overline{\Gamma^\square})$. The observation to make is that $\tilde{F}_h$ is a nilcharacter of frequency $\xi \otimes \overline{\xi}$ on $\overline{G^\square}$. This is because $[G_2, G] = [G, G]$ so $(\overline{G^\square})_{(2)} = G_{(2)} \times G_{(2)}/G_{k}^\triangle$. Applying the induction hypothesis, we have for linearly independent horizontal characters $\eta_1, \dots, \eta_{d'}$ and elements $w_1, \dots, w_{d_{horiz, \overline{G^\square}} - d'} \in \overline{\Gamma^\square}$ such that $\psi_{horiz, \overline{G^\square}}(w_1), \dots, \psi_{horiz, \overline{G^\square}}(w_{d_{horiz, \overline{G^\square}} - d'})$ are linearly independent and for each $i, j$, and $c(\delta)N$ many $h \in [N]$ that
$$\|\xi \otimes \overline{\xi}([w_i, \tilde{g_h}(n)])\|_{C^\infty[N]} = 0$$
$$\|\eta_j \circ \tilde{g_h}(n)\|_{C^\infty[N]} = 0.$$
where $\tilde{g_h}$ is the projection of $g_h$ to $G^\square$. Abusively lifting $\eta_j$ to be horizontal characters on $G^\square$ and lifting $w_j$ to be elements in $\Gamma^\square$ and appending elements $z_1, \dots, z_\ell$ in $G_k^\triangle \cap \Gamma^\square$ to the set of $w_i$'s such that $\overline{\exp(z_1)}, \dots, \overline{\exp(z_\ell)}$ span $G_k^\triangle/[G^\square, G^\square]$, we see that
\begin{itemize}
    \item since $G_k^\triangle/[G^\square, G^\square]$ can be naturally identified with its Lie algebra which can be identified via the Mal'cev coordinates as the orthogonal complement within $G_k^\triangle$ of $[G^\square, G^\square]$, that we can take $z_1, \dots, z_\ell$ to be size at most $c(\delta)^{-1}$, and thus $w_1, \dots, w_{d_{horiz, G^\square} - d'}$ also has size at most $c(\delta)^{-1}$;
    \item Since $\eta_1, \dots, \eta_{d'}$ annihilates $G_k^\square$, they span the annihilators of $w_1, \dots, w_{d_{horiz, G^\square} - d'}$.
\end{itemize}
Hence, we have for each $i, j$
$$\|\xi \otimes \overline{\xi}([w_i, g_h(n)])\|_{C^\infty[N]} = 0$$
$$\|\eta_j \circ g_h(n)\|_{C^\infty[N]} = 0.$$
By \cite[Lemma A.3]{Len2}, we may write $w_i = (u_iv_i, u_i)$ and decompose $\eta_j(g', g) = \eta_j^1(g) + \eta_j^2(g'g^{-1})$ so
\begin{align*}
\eta_j(g_h(n)) &= \eta_j^1 (g(n)) + \eta_j^2(g_2(n + h)g_2(n)^{-1}) \\
\eta_j(w_i) &= \eta_j^1(u_i) + \eta_j^2(v_i) = 0 \\
\xi \otimes \overline{\xi}([w_i, g_h(n)]) &= \xi \otimes \xi^{-1}(([u_i, g(n)], [u_iv_i, g(n)][u_iv_i, g_2(n + h)g_2(n)^{-1}]) \\
&= \xi([v_i, g(n)] + [u_iv_i, g_2(n + h)g_2(n)^{-1}])
\end{align*}
Not here we are crucially using that $[G_2, G] = [G, G]$, so $\beta_j$ annihilates $[G, G]$. By expanding out the polynomial $[u_iv_i, g_2(n + h)g_2(n)^{-1}]$, and grouping coefficients, and applying a polynomial Vinogradov-type lemma (e.g., \cite[Lemma A.11]{Len2}; see also \cite[Lemma 4.2]{Len2}), it follows that 
$$\|\eta_j^1 \circ g\|_{C^\infty[N]}, \|\xi([v_i, g])\|_{C^\infty[N]}, \|\eta_j^2 \circ g_2\|_{C^\infty[N]}, \|\xi([u_iv_i, g_2])\|_{C^\infty[N]} = 0.$$
Note that here we must use the fact that $N$ is prime and the fact that $g_2$ has horizontal component with denominator $N$ to eliminate the binomial coefficients that come from expanding out $g_2(n + h)g_2(n)^{-1}$. Let $\tilde{G} = \{g \in G: \eta_j^1 \circ g = 0, [v_i, g] = 0\}$ and $\tilde{G}_2 = \{g \in \tilde{G} \cap G_2, \eta_j^2(g) = 0, [u_iv_i, g] = 0\}$. We note that $[G, G] \subseteq \tilde{G}$ and $G_2$ and also that $\eta_j^2([G, G]) = 0$. Hence, (abusing 
notation) the sequence of subgroups $\tilde{G}_i = \tilde{G}_2 \cap G_i$ for $i > 2$ and $\tilde{G}_0 = \tilde{G}_1 = \tilde{G}$ form a filtration. \\\\
We claim that $[\tilde{G}, \tilde{G}_2] = 0$. To show this, we let $\tilde{H} = \tilde{G}^\square$ and we claim that $\tilde{H}$ is Abelian. To see this, note that each element of $\tilde{H}$ of the form $(gg_2, g)$ satisfies $\eta_j^1(g) + \eta_j^2(g_2) = 0$ and $[v_i, g] + [u_iv_i, g_2] = 0$. Since $\eta_j^1(u_i) + \eta_j^2(v_i) = 0$ for each $i, j$, it follows that $(gg_2, g)$ can be generated by $(u_iv_i, u_i)$ modulo $[G, G]^2$. However, for any other $(hh_2, h)$ in $\tilde{H}$, we have $\xi \otimes \overline{\xi}[(u_iv_i, u_i), (hh_2, h)] = 0$. Hence $\tilde{H}$ is Abelian. Finally, we have $\xi \otimes \overline{\xi}([(g, g), (hg_2, h)]) = \xi([g, g_2]) = 0$ whenever $g \in \tilde{G}$ and $g_2 \in \tilde{G}_2$. This shows that $\xi([\tilde{G}, \tilde{G}_2]) = 0$ and since $[G, G]$ is one-dimensional, $[\tilde{G}, \tilde{G}_2] = 0$. Thus, by \thref{factorization} and \thref{removerational}, we may write $g(n) = g'(n)\gamma'(n)$  where $\gamma'(n)$ has image in $\Gamma$ and $g'(n)$ has image in $G'$. We see that
$$g'(1)^n \equiv g(1)^n\gamma'(1)^n \pmod{[G, G]}$$
and by abusing notation,
$$g'_2(n) \equiv g_2(n)\gamma'_2(n) \pmod{[G, G]}.$$
Since $\eta_j$ and $\xi([u_iv_i, \cdot])$ both annihilate $[G, G]$, we see that
$$\|\eta_j \circ g'_2\|_{C^\infty[N]}, \|\xi([u_iv_i, g'_2])\|_{C^\infty[N]} = 0.$$
Thus, by \thref{factorization2}, and \thref{removerational}, we can write $g(n)\Gamma = g_1(n)\Gamma$ with $g_1 \in \text{poly}(\mathbb{Z}, \tilde{G})$. Finally, we apply \thref{centralcase} and \cite[Lemma A.9]{Len2} to $g_1(n)$ to find linearly independent $\alpha_1, \dots, \alpha_{d'}$ of size at most $c(\delta)^{-1}$ with the property that
$$\|\alpha_i\circ g\|_{C^\infty[N]} = \|\alpha_i \circ g_1\|_{C^\infty[N]} = 0$$
and such that if $w, w'$ are elements in $\Gamma/([G, G] \cap \Gamma)$ that are orthogonal to each of the $\alpha_i$'s, then $\xi([w, w']) = 0$. To satisfy the second conclusion, we invoke \thref{factorization} and \thref{removerational} to find a factorization $g_1(n) = g_1'(n)\gamma(n)$ where $g_1'$ lies in the kernel of $\alpha_i$ for all $i$ and $\gamma$ lies in $\Gamma$. It follows that $g_1'$ lies in the subspace generated by orthogonal elements to $\alpha_i$'s. Thus, for any $w \in \Gamma/(\Gamma \cap [G, G])$ orthogonal to all of the $\alpha_i$'s, it follows that
$$\|\xi([w, g_1'(n)\Gamma])\|_{C^\infty[N]} = \|\xi([w, g_1(n)])\|_{C^\infty[N]} = \|\xi([w, g(n)])\|_{C^\infty[N]} = 0.$$
By \cite[Lemma A.8]{Len2}, we may choose elements $w_1, \dots, w_{d_{horiz} - d'}$ inside $\Gamma$ whose size is at most $c(\delta)^{-1}$ and such that their projections to $\Gamma/([G, G] \cap \Gamma)$ are linearly independent and are orthogonal to the $\alpha_i$. To verify size bounds, we must verify that the projection of the Mal'cev coordinates of $w_i$ to the orthogonal complexity of $[\mathfrak{g}, \mathfrak{g}]$ is bounded by $c(\delta)^{-1}$. This follows from invoking \cite[Lemma A.7]{Len2} to construct a $c(\delta)^{-1}$-rational basis for the orthogonal to $[\mathfrak{g}, \mathfrak{g}]$ and constructing $c(\delta)^{-1}$-rational basis for $[\mathfrak{g}, \mathfrak{g}]$ and rewriting the Mal'cev coordinates of $w_i$ in terms of linear combinations of these bases and projecting to the dimensions where $[\mathfrak{g}, \mathfrak{g}] = 0$. By Cramer's rule we can ensure that if we write $w_i$ terms of these linear combinations, all components are rational with height at most $c(\delta)^{-1}$.
\end{proof}

\section{The general periodic case}
Recall the following theorem.
\mainresulta*

\begin{proof}
As the direct proof of this theorem is no shorter than the proof in \cite{Len2}, we shall only give a proof of this assuming \cite[Theorem 3]{Len2}. We see from hypothesis that
$$|\mathbb{E}_{n \in [MN]} F(g(n)\Gamma)| \ge \delta$$
for each $M$. Invoking \cite[Theorem 3]{Len2}, we find $\eta_1, \dots, \eta_r$ such that
$$\|\eta_i \circ g\|_{C^\infty[MN]} \le c(\delta)^{-1}$$
and such that $s$ elements $w_1, \dots, w_s \in G' := \bigcap_{i = 1}^r \text{ker}(\eta_i)$ satisfies
$$\xi([w_1, \dots, w_s]) = 0.$$
Sending $M$ to infinity, we see that
$$\|\eta_i \circ g\|_{C^\infty[MN]} = 0$$
as desired.
\end{proof}

\section{The complexity one polynomial Szemer\'edi theorem}
We now deduce \thref{mainresult2}. Fix $P(x)$ and $Q(x)$ to be linearly independent polynomials. For functions $f, g, h, k\colon \mathbb{Z}/N\mathbb{Z} \to \mathbb{C}$, define 
$$\Lambda(f, g, k, p) := \mathbb{E}_{x, y} f(x)g(x + P(y))k(x + Q(y))p(x + P(y) + Q(y)).$$
and 
$$\Lambda^1(f, g, j, p) := \mathbb{E}_{x, y, z} f(x)g(x + y)k(x + z)p(x + y + z).$$
We will show the following:
\begin{theorem}\thlabel{asymptotic}
There exists some $c_{P, Q}$ such that for any one-bounded $f, g, k, p$, we have
$$\Lambda(f, g, k, p) = \Lambda^1(f, g, k, p) + O\left(\frac{1}{\exp(\log^{c_{P, Q}}(N))}\right).$$
\end{theorem}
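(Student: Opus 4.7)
The plan is to follow the degree-lowering strategy of \cite{Len22} (itself inspired by Peluse and Peluse--Prendiville), substituting the quasi-polynomial \thref{mainresult1} for the weaker equidistribution input of \cite{GT12}. At a high level, I would first reduce $|\Lambda - \Lambda^1|$ to a control by the $U^s[N]$-norm of a mean-zero function in one of the four slots for some $s = s(\deg P, \deg Q)$, then iteratively lower this degree $s$ back to $s = 2$ using \thref{mainresult1}, and finally close the argument by Fourier analysis, invoking the linear independence of $P, Q$ to control $\mathbb{E}_y\, e(c_1 P(y) + c_2 Q(y))$ via Weyl.

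A PET-induction argument (Cauchy--Schwarz in $x$, then iterated van der Corput differencing in $y$) establishes a constant $s = s(\deg P, \deg Q)$ and a bound
$$|\Lambda(f - \mathbb{E} f, g, k, p)|,\; |\Lambda(f, g - \mathbb{E} g, k, p)|,\;\ldots \;\ll \;\|\cdot\|_{U^s[N]}^{c},$$
where the marked slot is the mean-zero one, and similarly $|\Lambda^1(\cdots)| \ll \|\cdot\|_{U^2[N]}^{c'}$ in each slot. Telescoping the identity $\Lambda(f,g,k,p) - \Lambda^1(f,g,k,p)$ by successively replacing each slot by its mean, it suffices to upgrade any lower bound $\|p - \mathbb{E} p\|_{U^s[N]} \ge \delta$ (and similarly in the other three slots) into the quasi-polynomial bound $\delta \le \exp(-\log^{c_{P,Q}}(N))$.

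The heart of the argument is the degree-lowering step. Given $\|p - \mathbb{E} p\|_{U^s[N]} \ge \delta$, I apply the $U^s$ inverse theorem (\thref{mainresult4} for $s = 4$, and its generalization from \cite{LSS24b}) to obtain a degree-$(s-1)$ nilsequence $F(g(n)\Gamma)$ of dimension $\log(1/\delta)^{O(1)}$ with
$$\left|\mathbb{E}_{n \in [N]} (p(n) - \mathbb{E} p)\, F(g(n)\Gamma)\right| \ge \exp(-\log^{O(1)}(1/\delta)).$$
Decomposing $F$ into its $G_{(s-1)}$-vertical Fourier components, I reduce to a nonzero vertical character $\xi$. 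The PET expansion used to bound $\Lambda$ by $U^s$ can be unwound to rewrite the correlation of $p - \mathbb{E} p$ with $F(g(n)\Gamma)$ as a weighted average of $\Lambda$-type periodic nilsequences in the variable $y$; applying \thref{mainresult1} then forces $\xi$ to annihilate the top-step commutator structure on the horizontal subgroup cut out by the $\eta_i$, which is exactly the assertion that $F$ is effectively a degree-$(s-2)$ nilsequence. Thus the $U^s$-largeness of $p - \mathbb{E}p$ upgrades to $U^{s-1}$-largeness with only quasi-polynomial loss.

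Iterating this degree-lowering $s - 2$ times collapses the obstruction into a $U^2$ Fourier statement, so $p - \mathbb{E} p$ must correlate with a character $e(\alpha n)$. Substituting back into $\Lambda$ and averaging in $y$ produces exponential sums $\mathbb{E}_y\, e(c_1 P(y) + c_2 Q(y))$ which, by Weyl equidistribution and the linear independence of $P$ and $Q$, are $O(N^{-c})$ unless both $c_1$ and $c_2$ vanish; the matching estimate for $\Lambda^1$ follows from standard $U^2$ analysis. The main obstacle is the degree-lowering step: one must faithfully translate the PET-derived correlation of $p - \mathbb{E} p$ with a high-step nilsequence into the hypothesis of \thref{mainresult1} on a periodic nilsequence of the right form, so that the structural conclusion on the vertical character $\xi$ can be read off cleanly; tracking the quasi-polynomial losses through $s - 2$ iterations of the inverse theorem and \thref{mainresult1} then determines the final constant $c_{P,Q}$.
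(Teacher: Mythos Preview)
Your proposal has a genuine structural gap at the very outset. After telescoping, you claim it suffices to show that $\|p - \mathbb{E}p\|_{U^s} \ge \delta$ forces $\delta \le \exp(-\log^{c}N)$. But $p$ is an \emph{arbitrary} one-bounded input: taking $p(n) = e(\alpha n^{s-1})$ already gives $\|p - \mathbb{E}p\|_{U^s} \asymp 1$, so no such bound can hold. The same obstruction kills the proposed degree-lowering: there is no mechanism by which a correlation of an arbitrary $p$ with a degree-$(s-1)$ nilsequence ``unwinds'' into $y$-averages over $P,Q$; the polynomial structure lives in the \emph{configuration}, not in $p$.

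The paper's route (following \cite{Len22}, Peluse, Peluse--Prendiville) fixes this by degree-lowering not $p$ but the \emph{dual function}
$$\mathcal{D}(f,g,k)(x) = \mathbb{E}_y\, f(x - P(y) - Q(y))\,g(x - Q(y))\,k(x - P(y)),$$
which genuinely carries the $y$-polynomial structure. One first observes $|\Lambda(f,g,k,p)| \le |\Lambda(\overline{f},\overline{g},\overline{k},\mathcal{D}(f,g,k))|^{1/2} \ll \|\mathcal{D}(f,g,k)\|_{U^s}^{c}$, and then iteratively proves $\|\mathcal{D}(f,g,k)\|_{U^{s+1}} \ge \delta \Rightarrow \|\mathcal{D}(f,g,k)\|_{U^s} \gg \exp(-\log^{O(1)}(1/\delta))$. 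Here the inverse theorem is applied to the \emph{multiplicative derivatives} $\Delta_{\vec h}\mathcal{D}(f,g,k)$, and only the $U^3$ inverse theorem (Sanders) is ever needed, not the general $U^s$ theorem; the resulting two-step nilsequence in the derivative, evaluated along $(P(y),Q(y),P(y)+Q(y))$, is exactly the setting for the improved two-step equidistribution (\thref{twostepcase} via \thref{mainresult1}), which is where the quasi-polynomial saving enters. Once $\|\mathcal{D}(f,g,k)\|_{U^2}$ is large one obtains $\|p\|_{U^2}$ large (\thref{polynomialinverse}), and the passage from this inverse statement to \thref{asymptotic} is not a telescoping but an arithmetic regularity decomposition $p = p_a + p_b + p_c$ as in \cite[Lemma~4.2]{Len22}.
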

To see how this implies \thref{mainresult2}, suppose the set $A$ has no nontrivial configuration of the form $(x, x + P(y), x + Q(y), x + P(y) + Q(y))$. Then
$$|\Lambda(1_A, 1_A, 1_A, 1_A)| \le \frac{1}{N}.$$
We have $|\Lambda^1(1_A, 1_A, 1_A, 1_A)| = \|1_A\|_{U^2(\mathbb{Z}/N\mathbb{Z})}^4 \ge \|1_A\|_{U^1(\mathbb{Z}/N\mathbb{Z})}^4 = \alpha^4$
where $\alpha$ is the density of $A$ in $\mathbb{Z}/N\mathbb{Z}$. \thref{asymptotic} then implies that
$$\alpha \ll O\left(\frac{1}{\exp(\log^{c_{P, Q}}(N))}\right)$$
as desired. \\\\
The proof of \thref{asymptotic} will closely follow the proof of \cite[Theorem 3]{Len22}. To prove this theorem, we prove the following inverse-type theorem:
\begin{theorem}\thlabel{polynomialinverse}
Suppose $|\Lambda(f, g, k, p)| \ge \delta$. Then either $N \ll \exp(\log^{O_{P, Q}(1)}(1/\delta))$ or $\|p\|_{U^2} \gg \exp(-\log^{O_{P, Q}}(1/\delta))$.   
\end{theorem}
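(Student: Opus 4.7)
The plan is to follow the degree-lowering strategy of \cite{Len22}, with \thref{mainresult1} substituted for the Green--Tao equidistribution theorem of \cite{GT12}. This substitution is what converts the iterated-logarithmic losses of \cite{Len22} into quasi-polynomial ones: each iteration of the degree-lowering argument in \cite{Len22} is bottlenecked by the Ratner-type factorization theorem used in \cite{GT12}, and our improvement here is exactly that \thref{mainresult1} avoids that factorization.

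\emph{Step 1 (PET and an initial $U^{s+1}$ bound on $p$).} A routine PET-induction, followed by $O_{P,Q}(1)$ applications of Cauchy--Schwarz in $x$ and $y$, gives
$$|\Lambda(f,g,k,p)|^{O_{P,Q}(1)} \ll_{P,Q} \|p\|_{U^{s+1}[N]}^{O_{P,Q}(1)}$$
for some integer $s = s(P,Q) \ge 1$. Combined with the quasi-polynomial $U^{s+1}[N]$ inverse theorem (the case $s = 3$ is \thref{mainresult4}; the general case is the main result of \cite{Len2, LSS24b}), this produces a nilmanifold $G/\Gamma$ of degree $\le s$, complexity $O_{P,Q}(1)$, dimension $\log^{O_{P,Q}(1)}(1/\delta)$, and a $G_{(s)}$-vertical character $F$ of frequency $\xi$, such that
$$\Bigl|\mathbb{E}_{n \in [N]} p(n) F(g(n)\Gamma)\Bigr| \gg \exp\bigl(-\log^{O_{P,Q}(1)}(1/\delta)\bigr).$$

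\emph{Step 2 (Degree lowering via \thref{mainresult1}).} This is the crux. Substitute the above correlation back into the original expression against $\Lambda(f,g,k,p)$, and apply a further round of Cauchy--Schwarz in $y$ that transfers the nilsequence onto the ``low-complexity'' slots, producing after another PET step an average of derivatives $F(g(n+h)\Gamma)\overline{F(g(n)\Gamma)}$ along progressions dictated by $P$ and $Q$. After changing variables (possible because $N$ is prime) to pass from averages over $[N]$ to averages over $\mathbb{Z}/N\mathbb{Z}$, invoke \thref{mainresult1} on these derivative sequences. This yields horizontal characters $\eta_1, \dots, \eta_r$ of size $\exp(\log^{O(1)}(1/\delta))$ with $\|\eta_i \circ g\|_{C^\infty[N]} = 0$, and the vanishing of the top-order commutator $\xi([w_1, \dots, w_s]) = 0$ for all $w_1, \dots, w_s \in G' := \bigcap_i \ker \eta_i$. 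Consequently $F(g(\cdot)\Gamma)$ descends, up to a quasi-polynomial loss, to a nilsequence of degree $\le s-1$ on the subnilmanifold associated to $G'$.

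\emph{Step 3 (Iteration).} Iterating Step 2 at most $s-1$ times lowers the degree of the controlling nilsequence from $s$ to $2$, at which point the standard $U^2[N]$ inverse theorem (Fourier analysis on $\mathbb{Z}/N\mathbb{Z}$) converts correlation with a degree $2$ nilsequence into $\|p\|_{U^2[N]} \gg \exp(-\log^{O_{P,Q}(1)}(1/\delta))$, completing the proof. The main obstacle is Step 2: one must execute the Cauchy--Schwarz so that, after PET and reduction to $\mathbb{Z}/N\mathbb{Z}$, the hypothesis of \thref{mainresult1} is verified with only quasi-polynomial losses in $\delta$, and the horizontal-character data produced by \thref{mainresult1} genuinely forces a drop in the nilsequence degree rather than merely producing lower-order remainders. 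The linear independence of $P$ and $Q$ enters here as the complexity-one condition guaranteeing that the top-degree piece of $F(g(\cdot)\Gamma)$ actually cancels after PET. Bookkeeping the losses uniformly across the $O_{P,Q}(1)$ iterations, and repeatedly transferring between $[N]$- and $\mathbb{Z}/N\mathbb{Z}$-averages, is the principal technical burden.
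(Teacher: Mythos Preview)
Your proposal has two genuine problems.

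\textbf{Circularity.} Step 1 invokes the quasi-polynomial $U^{s+1}$ inverse theorem for the value $s = s_{P,Q}$ produced by PET, citing \thref{mainresult4} and \cite{LSS24b}. But \thref{mainresult4} handles only $s=3$, and the general case in \cite{LSS24b} is later work that builds on this paper. Since $s_{P,Q}$ can be arbitrarily large depending on the degrees of $P,Q$, you cannot in general appeal to any inverse theorem beyond $U^3$ without circularity. The paper avoids this entirely: its degree-lowering argument (\thref{degreelowering}) uses only the Sanders $U^3$ inverse theorem (\thref{bracketpolynomialu3}), applied not to $p$ but to multiplicative derivatives $\Delta_{h_1,\dots,h_{s-2}} \mathcal{D}(f,g,k)$ of the dual function.

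\textbf{The gap in Step 2.} Even granting a higher inverse theorem, your Step 2 does not work as written. From $|\langle p, F(g(\cdot)\Gamma)\rangle| \ge \epsilon$ you cannot simply ``substitute back into $\Lambda$'' and Cauchy--Schwarz your way to a bare average $|\mathbb{E}_n F(g(n)\Gamma)|$ suitable for \thref{mainresult1}: that theorem needs the nilsequence itself to have large mean, not large correlation with an unknown $p$. What makes the paper's argument go through is that one works with the dual function $\mathcal{D}(f,g,k)(x) = \mathbb{E}_y f(x-P(y)-Q(y))g(x-Q(y))k(x-P(y))$. After applying the $U^3$ inverse theorem to its derivatives, one reopens the definition of $\mathcal{D}$ and applies Cauchy--Schwarz in $f,g,k$; this produces an average of the form
\[
\bigl|\mathbb{E}_{n} F_1(g(P(n))\Gamma)\,F_2(g(Q(n))\Gamma)\,\overline{F_3(g(P(n)+Q(n))\Gamma)}\,e(\alpha P(n)+\beta Q(n))\bigr|
\]
which involves only two-step nilsequences and no unknown functions. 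This is exactly the hypothesis of the paper's Lemma~7.1 (the quantitative improvement of \cite[Lemma 6.1]{Len22}), which then feeds into \thref{twosteppolynomial} rather than the full \thref{mainresult1}. The linear independence of $P$ and $Q$ is used precisely inside that lemma to separate the relations on the horizontal characters. Your sketch omits the dual-function mechanism and therefore never reaches a situation where an equidistribution theorem can be applied.
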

Let us assume for a moment that we can prove \thref{polynomialinverse}. We shall deduce \thref{asymptotic}. For arbitrary one-bounded $p$, we invoke \cite[Lemma 4.2]{Len22} to decompose $p = p_a + p_b + p_c$ with 
$$\|\hat{p_a}\|_{\ell^1} \le \epsilon_1^{-1}, \|p_b\|_{L^1} \le \epsilon_2, \|p_c\|_{L^\infty} \le \epsilon_3^{-1}, \|\hat{p_c}\|_{L^\infty} \le \epsilon_4$$
where $\epsilon_1, \dots, \epsilon_4$ will be chosen later and satisfy $\epsilon_1\epsilon_4^{-1} + \epsilon_2^{-1}\epsilon_3 \le \frac{1}{2}$. We thus have
$$\Lambda(f, g, k, p) = \Lambda(f, g, k, p_a)  + \Lambda(f, g, k, p_b)+ \Lambda(f, g, k, p_c),$$
$$\Lambda(f, g, k, p_a) = \Lambda^1(f, g, k, p_a) + O(N^{-\delta}\epsilon_1^{-1}),$$
and
$$|\Lambda(f, g, k, p_b)| \le \epsilon_2.$$
To control $|\Lambda(f, g, k, p_c)$, we invoke \thref{polynomialinverse} which states that either
$$|\Lambda(f, g, k, p_c)| \ll \epsilon_3^{-1} \exp(-\log^{c_{P, Q}}(N))$$
or
\begin{align*}
|\Lambda(f, g, k, p_c)| &\le \epsilon_3^{-1} |\lambda(f, g, k, \epsilon_3 p_c)| \\
&\le \epsilon_3^{-1}\exp(-\log^{C_{P, Q}}(\|\epsilon_3 \hat{p_c}\|_{L^\infty}^{-1/2})) \\
&= \epsilon_3^{-1} \exp(-\log^{C_{P, Q}}(\epsilon_3^{-1/2} \epsilon_4^{-1/2})).
\end{align*}
Choosing $\epsilon_1 = N^{\alpha}$, $\epsilon_3 = \exp(-\log^{c'_{P, Q}}(N))$, $\epsilon_4 = N^{-\alpha'}$ for $\alpha' > \alpha$, we see that 
$$\exp(-\log^{C_{P, Q}}(\epsilon_3^{-1/2} \epsilon_4^{-1})) \ll_\epsilon \exp(-(\alpha'/2 - \epsilon)^{C_{P, Q}}\log^{C_{P, Q}}(N)).$$
Thus, choosing $\epsilon_3$ to be larger than both $\exp(-\log^{c_{P, Q}}(N))$ and $\exp(-(\alpha' - \epsilon)^{C_{P, Q}}\log^{C_{P, Q}}(N))$, and $\epsilon_2$ to be less than $\epsilon_3$ but also of the form $\exp(-\log^{1/O_{P, Q}(1)}(N))$, we have the desired estimate of
$$\Lambda(f, g, k, p) = \Lambda^1(f, g, k, p_a) + O(\exp(-\log^{c_{P, Q}}(N))).$$
The decomposition we chose also allows us to prove the same estimates of $\Lambda^1$, with $\Lambda$ replaced with $\Lambda^1$, and one of the estimates being
$$|\Lambda^1(f, g, k, p_c)| \le \epsilon_3^{-1/2}\|\hat{p_c}\|_{L^\infty}^{1/2} \le \epsilon_3^{-1/2}\epsilon_4^{1/2} \ll_\epsilon N^{-\alpha'/2 + \epsilon}.$$
Thus, we can also show that
$$\Lambda^1(f, g, k, p) = \Lambda^1(f, g, k, p_a) + O(\exp(-\log^{c_{P, Q}}(N)))$$
which gives
$$\Lambda(f, g, k, p) = \Lambda^1(f, g, k, p) + O(\exp(-\log^{c_{P, Q}}(N)))$$
as desired. It thus remains to prove \thref{polynomialinverse}.
\subsection{Proof of \thref{polynomialinverse}}
We will show the following:
\begin{proposition}\thlabel{degreelowering}
Given functions $f, g, k: \mathbb{Z}/N\mathbb{Z} \to \mathbb{C}$, we define
$$\mathcal{D}(f, g, k)(x) = \mathbb{E}_y f(x - P(y) - Q(y)) g(x - Q(y)) k(x - P(y)).$$
Let $s \ge 2$. If $f, g, k: \mathbb{Z}/N\mathbb{Z} \to \mathbb{C}$ are one-bounded and
$$\|\mathcal{D}(f, g, k)\|_{U^{s + 1}(\mathbb{Z}/N\mathbb{Z})} \ge \delta,$$
then either $\delta \ll \exp(-\log^{1/O_{P, Q}(1)}(N))$ or $\|f\|_{U^s(\mathbb{Z}/N\mathbb{Z})} \gg \exp(-\log^{O_{P, Q}}(1/\delta))$.
\end{proposition}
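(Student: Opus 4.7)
The plan is to follow the PET-style degree-lowering paradigm of \cite{Len22}, with the periodic equidistribution Theorem \ref{mainresult1} of this paper taking the place of \cite[Theorem 2.9]{GT12}. The first step is to apply the quasi-polynomial $U^{s+1}(\mathbb{Z}/N\mathbb{Z})$-inverse theorem (available via \cite{Man18} for general $s$, and via \thref{mainresult4} in the sharp $s=3$ case) to $\mathcal{D}(f,g,k)$, producing a periodic nilsequence $F(g(n)\Gamma)$ of degree $s$, dimension $\log^{O(1)}(1/\delta)$, complexity $O(1)$, and bounded Lipschitz constant, such that
$$\left|\mathbb{E}_n \mathcal{D}(f,g,k)(n)\overline{F(g(n)\Gamma)}\right| \ge \epsilon := \exp(-\log^{O_{P,Q}(1)}(1/\delta)).$$
Fourier-decomposing $F$ along the vertical $G_{(s)}$-torus (as in \cite[Lemma A.6]{Len2}) and pigeonholing reduces to the case where $F$ is a $G_{(s)}$-vertical character with nonzero frequency $\xi$ of bounded height.

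Unfolding $\mathcal{D}$ and substituting $n = x + P(y) + Q(y)$ rewrites the correlation as
$$\left|\mathbb{E}_{x,y} f(x)\,g(x+P(y))\,k(x+Q(y))\,\overline{F(g(x+P(y)+Q(y))\Gamma)}\right| \ge \epsilon.$$
Squaring in the $y$-average (via Cauchy--Schwarz) and shifting $y \mapsto y+h$ eliminates $g$ and $k$ modulo one-bounded factors in the standard PET manner; after pigeonholing over $h$, the resulting expression takes the form of a nilsequence in $(x,y)$ on the commutator manifold $G^{\square}/\Gamma^{\square}$ whose effective $G_{(s)}^{\square}$-vertical character is $\xi\otimes\overline{\xi}$, paralleling the derivations in the proofs of \thref{twostepcase} and \thref{twosteppolynomial}. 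Applying Theorem \ref{mainresult1} in the $y$-variable to this vertical character---using the nondegeneracy supplied by the linear independence of $P$ and $Q$---produces horizontal characters $\eta_j$ and elements $w_i \in G'=\bigcap_j \ker\eta_j$ of size at most $\exp(\log^{O_{P,Q}(1)}(1/\delta))$ satisfying $\xi([w_1,\dots,w_s])=0$ and $\|\eta_j \circ g\|_{C^{\infty}[N]} = 0$. The factorization machinery of \cite{Len2} then forces $F(g(n)\Gamma)$ to descend to a degree-$(s-1)$ nilsequence of polylogarithmic complexity.

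Substituting this degree-$(s-1)$ nilsequence back into the correlation and undoing the outer Cauchy--Schwarz yields correlation of $f$ with a degree-$(s-1)$ nilsequence, from which the converse direction of the $U^s$-inverse theorem gives $\|f\|_{U^s(\mathbb{Z}/N\mathbb{Z})} \gg \exp(-\log^{O_{P,Q}(1)}(1/\delta))$, as required. The main obstacle lies in the equidistribution/factorization step: after the PET reduction, one must carefully identify the resulting coupled orbit in $(x,y)$ as a polynomial orbit on $G^{\square}$ of controlled complexity to which Theorem \ref{mainresult1} can be applied, and the subsequent extraction of $G$-level horizontal data from $G^{\square}$-level vertical information requires delicate bookkeeping parallel to the maneuvers used to close \thref{twosteppolynomial}.
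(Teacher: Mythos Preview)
Your plan has a genuine gap at the very first step. You invoke a ``quasi-polynomial $U^{s+1}(\mathbb{Z}/N\mathbb{Z})$-inverse theorem (available via \cite{Man18} for general $s$)'', but Manners's bounds are not quasi-polynomial: they are of shape $\exp(\delta^{-O_s(1)})$ (or worse) in the relevant complexity parameters, not $\exp(\log^{O(1)}(1/\delta))$. The quasi-polynomial inverse theorem for general $s$ is only established in the later work \cite{LSS24b}, so it is unavailable here. Since the proposition is stated for arbitrary $s \ge 2$ and the target bound is $\exp(-\log^{O_{P,Q}(1)}(1/\delta))$, your opening move already loses the required quantitative control for $s \ge 3$.

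The paper's argument avoids this entirely, and this is the whole point of ``degree lowering.'' Rather than invoking an $s$-step inverse theorem, one differences $s-2$ times so that the hypothesis $\|\mathcal{D}(f,g,k)\|_{U^{s+1}} \ge \delta$ becomes a statement about $U^3$-norms of multiplicative derivatives $\Delta_{\vec{h}}\mathcal{D}(f,g,k)$. One then applies only the Sanders $U^3$ inverse theorem, which does give quasi-polynomial bounds, producing for each $\vec{h}$ a \emph{two-step} nilsequence $F_{\vec{h}}(m_{\vec{h}}(x)\Gamma)$ of dimension $\log^{O(1)}(1/\delta)$. The equidistribution input is then the two-step \thref{twosteppolynomial} (packaged in the lemma replacing \cite[Lemma 6.1]{Len22}), not the general \thref{mainresult1}, applied to the joint sequence $(g(P(n)), g(Q(n)), g(P(n)+Q(n)))$; linear independence of $P,Q$ forces the orbit onto an abelian subgroup, after which Fourier expansion converts the two-step obstruction into a genuine phase and yields $\|\mathcal{D}(f,g,k)\|_{U^s} \gg \exp(-\log^{O_{P,Q}(1)}(1/\delta))$. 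Your sketch of a direct $s$-step factorization on $G^\square$ is therefore both overcomplicated and, absent a quasi-polynomial $U^{s+1}$ inverse theorem, unworkable at the stated precision.
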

First, assuming this is true, by \cite[Lemma 5.1, Lemma 5.2]{Len22}, for some $s = s_{P, Q}$, we have
$$|\Lambda(f, g, k, p)| \le \|p\|_{U^s(\mathbb{Z}/N\mathbb{Z})} + O(N^{-1/O_{P, Q}(1)}).$$
Thus, 
$$|\Lambda(f, g, k, p)| \le \left(\mathbb{E}_x |\mathcal{D}(f, g, k)|^2\right)^{1/2} = |\Lambda(\overline{f}, \overline{g}, \overline{k}, \mathcal{D}(f, g, k))|^{1/2} \le \|\mathcal{D}(f, g, k)\|_{U^s(\mathbb{Z}/N\mathbb{Z})}^{O(1)} + O(N^{-\zeta})$$
for some $\zeta$ that depends only on $P$ and $Q$. Thus, if we can show \thref{degreelowering}, this will in turn imply by an iterative argument \thref{polynomialinverse}. For the remainder of the argument, we will now indicate how to make improvements in \cite{Len22}. The first improvement we apply is the Sanders $U^3$ inverse theorem (see \cite[Appendix A]{Len22b} for how to deduce the improved $U^3$ inverse theorem from \cite{San12b}), where we end up with
\begin{equation}\label{u3application}
\mathbb{E}_{h_1, \dots, h_{s - 2}} |\langle \mathcal{D}_h(f, g, k), F_{\vec{h}}(m_h(x)\Gamma) \rangle| \gg \exp(-\log^{O(1)}(1/\delta)) 
\end{equation}
instead of the inferior
$$\mathbb{E}_{h_1, \dots, h_{s - 2}} |\langle \mathcal{D}_h(f, g, k), F_{\vec{h}}(m_h(x)\Gamma) \rangle|^8 \gg \exp(-\delta^{-O(1)}).$$
We will then set $\epsilon = \exp(-\log^{O(1)}(1/\delta))$ and continue as usual in the argument until \cite[Lemma 6.1]{Len22}. We now highlight the improvement to \cite{Len22}. Instead of invoking \cite[Lemma 6.1]{Len22}, we invoke the following quantitative improvement to \cite[Lemma 6.1]{Len22}:
\begin{lemma}
Let $\delta \in (0, 1/10)$, $N > 100$ be prime, and $G/\Gamma$ be a two-step nilmanifold of dimension $d$, complexity $M$, and degree $k$ equipped with the standard filtration and let $g(n) \in \mathrm{poly}(\mathbb{Z}, G)$ with $g(n)\Gamma$ periodic modulo $N$. Furthermore, let $F_1, F_2, F_3$ be $1$-Lipschitz functions on $G/\Gamma$ with the same nonzero frequency $\xi$. If
$$|\mathbb{E}_{n \in [N]} F_1(g(P(n))\Gamma)F_2(g(Q(n))\Gamma)\overline{F_3(g(P(n) + Q(n))\Gamma)}e(\alpha P(n) + \beta Q(n))| \ge \delta$$
for some frequencies $\alpha, \beta \in \widehat{\mathbb{Z}/N\mathbb{Z}}$, then either $N \ll (\delta/M)^{-O_{P, Q}(d^{O_{P, Q}(1)})}$ or there exists $w_1, \dots, w_r \in \Gamma$ with $\psi_{\mathrm{horiz}}(w_1), \dots, \psi_{\mathrm{horiz}}(w_r)$ linearly independent and linearly independent horizontal characters $\eta_1, \dots, \eta_{d - 1 - r}$ such that $|w_i|, |\eta_j| \le (\delta/M)^{-O_{P, Q}(d^{O_{P, Q}(1)})}$, $\langle \eta_j, w_i \rangle = 0$ for all $i, j$, and
$$\|\xi([w_i, g])\|_{C^\infty[N]} = \|\eta_j \circ g\|_{C^\infty[N]} = 0.$$
\end{lemma}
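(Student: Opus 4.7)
The plan is to bundle the three evaluations of $g$ and the auxiliary phase into a single periodic nilsequence on a product nilmanifold, and then feed that nilsequence directly into \thref{twosteppolynomial}. Precomposing $g$ with $P$, $Q$, and $P+Q$ inflates the degree by $O_{P,Q}(1)$, and introducing a one-dimensional abelian coordinate for the phase $e(\alpha P(n) + \beta Q(n))$ inflates the dimension and complexity only by small $O_{P,Q}(1)$ factors, which is permissible for the quoted bounds.

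Concretely, I would first invoke \cite[Lemma 2.3]{Len2} to reduce to the case $\dim G_{(2)} = 1$, which in particular explains the count $d-1-r$ in the conclusion since then $d_{\mathrm{horiz}}(G) = d-1$. Then set $\tilde{G} := G \times G \times G \times \mathbb{R}$ with the product lower central series filtration (so $\tilde{G}_2 = G_2^3 \times \{0\}$ and the $\mathbb{R}$ factor is central and lives in degree $1$), lattice $\tilde{\Gamma} := \Gamma^3 \times \mathbb{Z}$, and polynomial sequence
$$\tilde{g}(n) := \big(g(P(n)),\; g(Q(n)),\; g(P(n)+Q(n)),\; \alpha P(n) + \beta Q(n)\big),$$
together with test function $\tilde{F}(x_1,x_2,x_3,t) := F_1(x_1)F_2(x_2)\overline{F_3(x_3)}\,e(t)$. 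Since $F_1, F_2, F_3$ share the single nonzero vertical frequency $\xi$, the product $\tilde{F}$ is a $\tilde{G}_2$-vertical character with nonzero frequency $\tilde{\xi} = (\xi,\xi,-\xi)$, and the hypothesis rewrites exactly as $|\mathbb{E}_{n \in [N]} \tilde{F}(\tilde{g}(n)\tilde{\Gamma})| \ge \delta$.

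Next I apply \thref{twosteppolynomial} to the nilsequence $\tilde{F}(\tilde{g}(n)\tilde{\Gamma})$ on $\tilde{G}/\tilde{\Gamma}$, which has dimension $3d+1$, complexity $O_{P,Q}(M)$, and degree $O_{P,Q}(k)$. This produces either the desired bound on $N$ or horizontal characters $\tilde{\eta}_j$ on $\tilde{G}$ and elements $\tilde{w}_i \in \tilde{\Gamma}$ of controlled size satisfying $\|\tilde{\eta}_j \circ \tilde{g}\|_{C^\infty[N]} = 0$ and $\|\tilde{\xi}([\tilde{w}_i, \tilde{g}])\|_{C^\infty[N]} = 0$. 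A horizontal character on $\tilde{G}$ decomposes canonically as $\tilde{\eta}(x_1,x_2,x_3,t) = \eta^{(1)}(x_1) + \eta^{(2)}(x_2) + \eta^{(3)}(x_3) + \nu t$ with $\nu \in \mathbb{Z}$, and similarly $\tilde{w}_i$ decomposes into four components. Inserting the definition of $\tilde{g}$, expanding $g\circ P$, $g\circ Q$, and $g\circ (P+Q)$ as polynomial sequences on $G$, and collecting coefficients of the distinct monomials in $n$ via a multivariate Vinogradov-type lemma (e.g., \cite[Lemma A.11]{Len2}) would then let me separate the contributions coming from the three different shifts; the linear independence of $P$ and $Q$ is exactly what guarantees the resulting system of equations in $\eta^{(1)}, \eta^{(2)}, \eta^{(3)}, \nu$ has enough independent constraints to force the desired cancellations $\|\eta_j \circ g\|_{C^\infty[N]} = 0$ and $\|\xi([w_i, g])\|_{C^\infty[N]} = 0$ on $G$ itself.

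The main obstacle will be the bookkeeping in this final translation step: one has to verify that after projecting the $\tilde{\eta}_j$ to their $G$-components enough of them remain linearly independent (rather than being absorbed into the phase coordinate $\nu t$), and symmetrically that the $\tilde{w}_i$ yield independent $w_i$ on $G$ after discarding the commutator-inert phase direction. Both of these rely crucially on the linear independence of $P$ and $Q$ (which is where that hypothesis enters), on the one-dimensionality of $G_{(2)}$ (which prevents spurious three-way cancellations of the frequency in $\tilde{\xi} = (\xi,\xi,-\xi)$), and on the primality of $N$ to clear binomial denominators when expanding $g(P(n))$, $g(Q(n))$, $g(P(n)+Q(n))$. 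I expect this linear-algebra dance between the four summands defining $\tilde{\eta}$ and the polynomial structure of $\tilde{g}$ to constitute the bulk of the technical work, while the application of \thref{twosteppolynomial} itself is essentially black-box.
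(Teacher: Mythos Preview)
Your overall strategy—package the three $g$-evaluations and the phase into a single product nilsequence, apply \thref{twosteppolynomial}, then use the linear independence of $P,Q$ to separate—matches the paper's, but the paper makes two simplifications that streamline the extraction step you flag as the bulk of the work.

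First, rather than adjoining an extra $\mathbb{R}$ coordinate for $e(\alpha P(n)+\beta Q(n))$, the paper absorbs these phases into the vertical component of $g(P(n))$ and $g(Q(n))$ (using that the $F_i$ are vertical characters with common nonzero frequency $\xi$); since the conclusion concerns only $\psi_{\mathrm{horiz}}(g)$, one may then take $\alpha=\beta=0$. In your setup the $\nu$-component of each $\tilde\eta_j$ survives into the extracted equations on $G$ as terms $\nu_j\alpha$, $\nu_j\beta$, which must be argued away separately.

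Second, and more substantively, the paper does not work in the full $G^3$ but in the subgroup
\[
H=\{(g_1,g_2,g_3)\in G^3:\ g_1g_2g_3^{-1}\in[G,G]\},
\]
whose horizontal torus has dimension $2d_{\mathrm{horiz}}$ rather than $3d_{\mathrm{horiz}}$. The sequence $(g(P(n)),g(Q(n)),g(P(n)+Q(n)))$ already lies in $H$, and after checking $[H,H]=[G,G]^3$ one applies \thref{twosteppolynomial} there. Horizontal characters on $H$ then decompose as pairs $(\alpha_j,\beta_j)$ and lattice elements as $(u_i,v_i,u_iv_i)$; the $P,Q$ independence gives directly $\alpha_j(g)\equiv\beta_j(g)\equiv 0$ and $\xi([u_i,g])\equiv\xi([v_i,g])\equiv 0$, and a short argument using $\langle(\alpha_j,\beta_j),(u_i,v_i)\rangle=0$ shows the resulting subgroup of $G$ is abelian, so \cite[Lemma~A.9]{Len2} finishes. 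In your $G^3\times\mathbb{R}$ setup the extracted conditions on $G$ emerge as $(\eta^{(1)}+\eta^{(3)})(g)$, $(\eta^{(2)}+\eta^{(3)})(g)$, $\xi([w^{(1)}-w^{(3)},g])$, $\xi([w^{(2)}-w^{(3)},g])$, together with $d_{\mathrm{horiz}}$ trivial relations encoding the constraint already built into $H$; reassembling these into the required orthogonal families $(w_i),(\eta_j)$ on $G$ and verifying abelianness is feasible but is exactly the bookkeeping that the paper's choice of $H$ eliminates.
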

\begin{proof}
The first part of the argument is similar to the first part of the argument in \cite[Lemma 6.1]{Len22}. We first use the fact that $F$ is a nilcharacter of nonzero frequency to absorb $\alpha P(n)$ and $\beta Q(n)$ to the vertical component of $g(P(n))$ and $g(Q(n))$, respectively to obtain $g_1(P(n))$ and $g_2(Q(n))$. Since the conclusion only depends on the horizontal component of $g$, and since the horizontal component of $g_1$ and $g_2$ agree with $g$, it follows that we can assume that both $g_1$ and $g_2$ are $g$ and $\alpha, \beta = 0$. Let $H$ denote the subgroup of $G^3$ consisting of elements $\{(g_1, g_2, g_3): g_1g_2g_3^{-1} \in [G, G]\}$. We claim that $[H, H] = [G, G]^3$. By definition, we see that for $h \in [G, G]$ that $(1, h, h)$ and $(h, 1, h)$, and $(h, h, h^4)$ lies inside $[H, H]$ (the last fact is true because $[(g_1, g_1, g_1^2), (h_1, h_1, h_1^2)] = ([g_1, h_1], [g_1, h_1], [g_1, h_1]^4)$). This yields that $(1, 1, h^2)$ lies inside $[H, H]$, and because of connectedness and simple connectedness, it follows that $(1, 1, h) \in [H, H]$. We can verify from there that $[H, H] = [G, G]^3$. \\\\ 
We were given the polynomial sequence 
$$(g(P(n)), g(Q(n)), g(P(n) + Q(n)))$$
on $H$. Since $F_i$ are nilcharacters of frequency $\xi$ on $H$, $F_1 \otimes F_2 \otimes F_3$ is a nilcharacter on $H$ of frequency $(\xi, \xi, -\xi)$. Taking a quotient of $H$ by the kernel of $(\xi, \xi, -\xi)$, which is $(x, y, x + y)$, we obtain that the center is of the form $(x, x, -x)$ with $(x, y, z)$ being projected to $(x + y - z)(1, 1, 1)$. Let $H_1$ denote the subgroup with the one dimensional vertical directions. Applying \thref{twosteppolynomial}, we obtain $w_1, \dots, w_r$ and $\eta_1, \dots, \eta_{d - r}$ such that $\langle w_i, \eta_j \rangle = 0$ and $\eta_j \circ (g(P(n)), g(Q(n)), g(P(n) + Q(n))) \equiv 0 \pmod{1}$, and $\xi([w_i, g(P(n)), g(Q(n)), g(P(n) + Q(n))]) \equiv 0 \pmod{1}$. Denoting $\eta_j = (\alpha_j, \beta_j)$ and $w_i = (u_i, v_i, u_iv_i)$ and the action $\eta_j(w_i) := \alpha_j(u_i) + \beta_j(v_i)$, we see that 
$$\|\xi([v_i, g(P(n))]) + \xi([u_i, g(Q(n))])\|_{C^\infty[N]} \equiv 0 \pmod{1}$$
$$\|\alpha_j(g(P(n))) + \beta_j(g(Q(n)))\|_{C^\infty[N]} \equiv 0 \pmod{1}.$$
Since $P$ and $Q$ are linearly independent, it follows that there exists some coefficients $c_k x^k$, $c_\ell x^\ell$ of $P$, and $d_k x^k$ and $d_\ell x^\ell$ of $Q$ such that $c_k d_\ell - d_k c_\ell \neq 0$. Thus, the conditions become
$$c_k\xi([u_i, g(1)]) + d_k\xi([v_i, g(1)]) \equiv 0 \pmod{1}$$
$$c_\ell\xi([u_i, g(1)]) + d_\ell\xi([v_i, g(1)]) \equiv 0 \pmod{1}$$
which implies since $\alpha$ has denominator $N$ which is prime that $\xi([u_i, g(1)]) \equiv 0\pmod{1}$ and $\xi([v_i, g(1)]) \equiv 0 \pmod{1}$. Similarly, we have $\alpha_j(g(1)) \equiv 0 \pmod{1}$ and $\beta_j(g(1)) \equiv 0 \pmod{1}$. Let $\tilde{G} := \{g \in G: \xi([v_i, g]) = 0, \xi([u_i, g]) = 0, \alpha_j(g) = 0, \beta_j(g) = 0 \forall i, j\}$. We claim that $\tilde{G}$ is abelian, from whence the Lemma would follow from an application of \cite[Lemma A.9]{Len2}. This amounts to showing that for any $g, h \in \tilde{G}$ that $[g, h] = \mathrm{id}_G$. For such $g$, $(g, g)$ is annihilated by $(\alpha_j, \beta_j)$, and since $\alpha_j(u_i) + \beta_j(v_i) = 0$, it follows that $(g, g)$ can be written as a combination of $(u_i, v_i)$ modulo $[G, G]^2$. It follows that $[(g, g), (h, h)] = \mathrm{id}_G$, and thus $[g, h] = \mathrm{id}_G$. 
\end{proof}
to obtain that the image of $m_h$ under the kernel of $\xi$ lies in an abelian subnilmanifold of rationality at most $\epsilon^{-O(r)^{O(1)}}$ where this time $r = \log^{O(1)}(1/\epsilon)$. We can then Fourier expand $F_{\vec{h}}(m_h(x)\Gamma)$ in \ref{u3application} as in the argument after \cite[Lemma 6.1]{Len22} to eventually obtain
$$\|p\|_{U^s(\mathbb{Z}/N\mathbb{Z})} \gg \epsilon^{O(r)^{O(1)}}$$
which gives the desired estimate for \thref{degreelowering}. This completes the proof of \thref{asymptotic}.

%By \cite[Lemma 4.1]{Len22}, we have $\Lambda(f, g, k, p) = \Lambda^1(f, g, k, p) + O(\|\hat{p}\|_{L^1}N^{-\gamma})$ for some absolute $\gamma > 0$ that only depends on $P$ and $Q$ but does not depend on $f, g, k, p$. For arbitrary one-bounded $p$, 

\section{The $U^4(\mathbb{Z}/N\mathbb{Z})$ inverse theorem}
We now prove \thref{mainresult4}. We restate it for the reader's convenience.
\mainresultc*
The hypothesis implies that
$$\mathbb{E}_h \|\Delta_h f\|_{U^3}^8 \ge \delta^{16}.$$
%Appendix A of \cite{Len22b} will be updated later to include what I want.
An application of the inverse theorem of Sanders \cite{San12b} combined with the argument of \cite[Theorem 10.9]{GT08b} (see also \cite[Appendix A]{Len22b}) gives the following:
\begin{theorem}\thlabel{bracketpolynomialu3}
There exists some real number $c > 0$ with the following property: if $c > \eta > 0$ and $f\colon \mathbb{Z}/N\mathbb{Z} \to \mathbb{C}$ be one-bounded with
$$\|f\|_{U^3(\mathbb{Z}/N\mathbb{Z})} \ge \eta.$$
Then there exists a constant $C > 0$, a subset $S \subseteq \widehat{\mathbb{Z}/N\mathbb{Z}}$ with $|S| \le \log(1/\eta)^{C}$ and a phase $\phi\colon \mathbb{Z}/N\mathbb{Z} \to \mathbb{R}$ such that
$$\phi(n) = \sum_{\alpha, \beta \in S} a_{\alpha, \beta} \{\alpha \cdot n\}\{\beta \cdot n\} + \sum_{\alpha \in S} a_\alpha \{\alpha \cdot n\}$$
with $\alpha_i, \beta_i \in S$ and $a_i \in \mathbb{R}$ and
$$|\mathbb{E}_{n \in [N]} f(n)e(\phi(n))| \ge \exp(-\log(1/\eta)^C).$$
\end{theorem}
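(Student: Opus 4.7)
The plan is to apply Sanders' quasi-polynomial inverse theorem for the $U^3$ norm, which yields correlation with a $2$-step nilsequence on a nilmanifold of polylogarithmic dimension, and then convert this nilsequence into the desired bracket polynomial phase following the approach of \cite[Theorem 10.9]{GT08b}. Concretely, Sanders' theorem (in nilsequence form) produces a $2$-step nilmanifold $G/\Gamma$ of complexity $O(1)$ and dimension at most $\log(1/\eta)^C$, together with a polynomial sequence $g \in \mathrm{poly}(\mathbb{Z}, G)$ and a $1$-Lipschitz function $F \colon G/\Gamma \to \mathbb{C}$ such that
$$|\mathbb{E}_{n \in [N]} f(n) \overline{F(g(n)\Gamma)}| \ge \exp(-\log(1/\eta)^C).$$

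Next, I would Fourier expand $F$ along the vertical torus to reduce to the case where $F$ is a $1$-Lipschitz vertical character of some frequency $\xi$, at the cost of replacing $\eta$ by a quantity of the form $\exp(-\log(1/\eta)^{O(1)})$. I would then use the explicit Mal'cev coordinate representation of $g$: writing $g(n)$ in Mal'cev coordinates adapted to the lower central series, the horizontal components are linear functions $\alpha_i n$ modulo $1$ for frequencies $\alpha_1, \dots, \alpha_{d_{\mathrm{horiz}}}$, while the vertical component, after applying the Baker--Campbell--Hausdorff formula and reducing to the Mal'cev fundamental domain, picks up bracket quadratic corrections of the form $\{\alpha_i n\}\{\alpha_j n\}$ arising from the commutator structure of $G$. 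Combining these pieces, $F(g(n)\Gamma) = e(\phi(n))$ with $\phi$ of precisely the claimed shape, where the set $S = \{\alpha_1, \dots, \alpha_{d_{\mathrm{horiz}}}\}$ has size at most $d \le \log(1/\eta)^C$.

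The main obstacle will be passing cleanly from the vertical character nilsequence to the bracket polynomial representation without introducing extraneous frequencies outside $S$. This is handled by performing the BCH computation in a fixed Mal'cev basis so that only the horizontal frequencies of $g$ feed into the resulting bracket terms, as in the template of \cite[Theorem 10.9]{GT08b}. The linear portion $a_\alpha\{\alpha n\}$ then absorbs both the constant (non-commutator) component of the vertical expression and stray contributions from fractional parts, using $\{\alpha n\} = \alpha n - \lfloor \alpha n\rfloor$ to move integer multiples through the exponential. These manipulations, together with bookkeeping of Lipschitz constants through the vertical Fourier expansion, are worked out in detail in \cite[Appendix A]{Len22b}, and all losses in exponents are absorbed into the constant $C$.
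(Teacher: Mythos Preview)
Your proposal is correct and follows essentially the same approach as the paper: the paper does not give a detailed proof but simply states that the result follows from ``an application of the inverse theorem of Sanders \cite{San12b} combined with the argument of \cite[Theorem 10.9]{GT08b} (see also \cite[Appendix A]{Len22b}),'' which is precisely the route you outline. Your sketch expands on this a bit more than the paper does, but the structure---Sanders' quasi-polynomial input, conversion to a $2$-step nilsequence of polylogarithmic dimension, and then the Mal'cev/BCH bracket-polynomial extraction as in \cite{GT08b}---matches exactly.
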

This implies that for a family of degree two periodic bracket polynomials $\chi_h(n)$ with at most $\log(1/\delta)^{O(1)}$ many bracketed phases that for a subset $H \subseteq \mathbb{Z}_N$ with $|H| \ge \delta^{O(1)}N$ that for any $h \in H$,
$$|\mathbb{E}_n \Delta_h f(n)\chi_h(n)| \ge \delta^{\log(1/\delta)^{O(1)}}.$$
We will now use \cite[Proposition 6.1]{GTZ11}:
\begin{proposition}\thlabel{additivequadruples}
Let $f_1, f_2 \colon \mathbb{Z}/N\mathbb{Z} \to \mathbb{C}$ be one-bounded and $H \subseteq \mathbb{Z}/N\mathbb{Z}$ a set of cardinality $\eta N$ such that for each $h \in H$,
$$|\mathbb{E}_{n \in \mathbb{Z}/N\mathbb{Z}} f_1(n)f_2(n + h)\chi_h(n)| \ge \delta.$$
Then for at least $\eta^8 \delta^4 N^3/2$ many quadruples $(h_1, h_2, h_3, h_4)$ in $H$ satisfying $h_1 + h_2 = h_3 + h_4$,
$$|\mathbb{E}_{n \in \mathbb{Z}/N\mathbb{Z}} \chi_{h_1}(n)\chi_{h_2}(n + h_1 - h_4)\overline{\chi_{h_3}(n)\chi_{h_4}(n + h_1 - h_4)}| \gg \eta^4 \delta^2.$$
\end{proposition}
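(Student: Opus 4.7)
The plan is to eliminate the unknown functions $f_1, f_2$ from the hypothesis by two applications of Cauchy--Schwarz, leaving a sum over additive quadruples $(h_1, h_2, h_3, h_4) \in H^4$ with $h_1+h_2=h_3+h_4$ in which only the bracket phases $\chi_{h_i}$ appear; a pigeonhole then extracts the required many ``good'' quadruples.

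First I would aggregate the hypothesis: for each $h \in H$ choose a unit phase $c_h$ with $c_h \cdot \mathbb{E}_n f_1(n) f_2(n+h)\chi_h(n) \ge \delta$ and average over $h \in \mathbb{Z}/N\mathbb{Z}$ to obtain $|\mathbb{E}_{n,h}\, 1_H(h) c_h f_1(n) f_2(n+h) \chi_h(n)| \ge \eta\delta$. A Cauchy--Schwarz in $n$, absorbing the bounded $f_1$, would give $\eta^2\delta^2 \le \mathbb{E}_n |\mathbb{E}_h 1_H(h) c_h f_2(n+h) \chi_h(n)|^2$. Expanding the square introduces a dummy variable $h'$; the substitution $m = n+h'$, $k = h-h'$ rewrites the right-hand side as $\mathbb{E}_{m,k} f_2(m+k) \overline{f_2(m)} \cdot G(m,k)$, where
$$G(m,k) := \mathbb{E}_{h'} 1_H(h') 1_H(h'+k) c_{h'+k} \overline{c_{h'}} \chi_{h'+k}(m - h') \overline{\chi_{h'}(m - h')}$$
depends only on the bracket phases.

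Next I would apply a second Cauchy--Schwarz in $(m, k)$, using $\|f_2\|_\infty \le 1$, to obtain $\eta^4\delta^4 \le \mathbb{E}_{m, k} |G(m, k)|^2$. Expanding this square introduces a second dummy $h''$; the relabeling
$$h_1 := h' + k, \quad h_3 := h', \quad h_4 := h'' + k, \quad h_2 := h'', \quad n := m - h_3$$
turns the implicit constraint into $h_1+h_2=h_3+h_4$ and gives the identity $m - h'' = n + h_1 - h_4$, matching the arguments appearing in the statement. After passing to absolute values (the phases are unit modulus) one obtains
$$\sum_{\substack{h_1,\dots,h_4 \in H \\ h_1+h_2=h_3+h_4}} \Bigl| \mathbb{E}_n \chi_{h_1}(n) \chi_{h_2}(n+h_1-h_4) \overline{\chi_{h_3}(n) \chi_{h_4}(n+h_1-h_4)} \Bigr| \ge \eta^4 \delta^4 N^3.$$
Since the number of $H$-quadruples with $h_1+h_2=h_3+h_4$ is at most $|H|^3 = \eta^3 N^3$, a standard pigeonhole comparing this lower bound with the trivial contribution of ``small'' quadruples extracts a polynomial-in-$(\eta, \delta)$ fraction of these quadruples on which the inner $\chi$-average exceeds a polynomial-in-$(\eta, \delta)$ threshold, reproducing the claimed quantitative statement.

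The main obstacle will be bookkeeping: the two Cauchy--Schwarz steps each introduce a dummy $h$-variable, and the changes of variables must be aligned so that the four resulting $h_i$'s satisfy the additive constraint $h_1+h_2=h_3+h_4$ and the arguments of the four $\chi$'s collapse precisely to $n$ (for $\chi_{h_1}, \chi_{h_3}$) and to $n + h_1 - h_4$ (for $\chi_{h_2}, \chi_{h_4}$). Crucially, the proof uses no property of $\chi_h$ beyond being one-bounded---the bracket-polynomial structure, and the work of earlier sections, enters only in the analysis of the four-way $\chi$-correlation produced by this proposition in subsequent steps of the argument.
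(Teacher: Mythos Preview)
Your proposal is correct and follows essentially the same route as the paper's proof: both eliminate $f_1$ and $f_2$ by two applications of Cauchy--Schwarz, arriving at a sum over additive quadruples $h_1+h_2=h_3+h_4$ in $H$ involving only the $\chi_{h_i}$, then pigeonhole. The only cosmetic difference is the starting point: the paper first squares the inner average (obtaining $\mathbb{E}_h 1_H(h)|\mathbb{E}_n\cdots|^2 \ge \eta\delta^2$) and then applies Cauchy--Schwarz twice, whereas you introduce unit phases $c_h$ to aggregate before the two Cauchy--Schwarz steps; this yields marginally sharper exponents ($\eta^4\delta^4$ in place of $\eta^4\delta^8$) but is otherwise the same argument.
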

\begin{proof}
We follow \cite[Proposition 6.1]{GTZ11}. We extend $\chi_h$ to be zero for $h \not\in H$. The condition implies that
$$\mathbb{E}_{h} 1_H(h)|\mathbb{E}_n f_1(n)f_2(n + h)\chi_h(n)|^2 \gg \delta^2\eta.$$
Expanding out, we obtain
$$\mathbb{E}_{h} 1_H(h) \mathbb{E}_{n, n'} f_1(n)\overline{f_1(n')}f_2(n + h)\overline{f_2(n' + h)} \chi_h(n)\overline{\chi_h(n')} \gg \delta^2 \eta.$$
Making a change of variables $h = m - n$, $n' = n + k$, we obtain
$$\mathbb{E}_{m, n, k} 1_H(m - n)f_1(n)\overline{f_1(n + k)} f_2(m)\overline{f_2}(m + k) \Delta_{k}\chi_{m - n}(n) \gg \delta^2 \eta.$$
Applying Cauchy-Schwarz twice, we obtain
$$\mathbb{E}_{k, m, n, m', n'} 1_H(m - n)\Delta_k \chi_{m - n}(n)$$
$$\overline{1_H(m' - n)\Delta_k \chi_{m' - n}(n)1_H(m - n') \Delta_k \chi_{m - n'}(n')} 1_H(m' - n')\Delta_k \chi_{m' - n'}(n') \gg \eta^4\delta^8.$$
This can be rewritten as
$$\mathbb{E}_{h_1 + h_2 = h_3 + h_4} |1_H(h_1)\chi_{h_1}(n)1_H(h_2)\chi_{h_2}(n + h_1 - h_4)\overline{1_H(h_3)\chi_{h_3}(n)1_H(h_4)\chi_{h_4}(n + h_1 - h_4)}|^2 \gg \eta^4\delta^8$$
as desired.
\end{proof}
We thus have that for at least $\delta^{O(\log(1/\delta))^{O(1)}} N^3$ many additive quadruples, that is quadruples $(h_1, h_2, h_3, h_4)$ with $h_1 + h_2 = h_3 + h_4$, we have
$$|\mathbb{E}_n \chi_{h_1}(n)\chi_{h_2}(n + h_1 - h_4)\overline{\chi_{h_3}(n)\chi_{h_4}(n + h_1 - h_4)}| \ge \delta^{\log(1/\delta)^{O(1)}}.$$
The remainder of this section is devoted to the following.
\begin{itemize}
\item[1.] Defining nilcharacters and giving constructions of three-step nilpotent Lie groups. We will show that we can take $\chi_h(n) = F(g_h(n)\Gamma)$ for some \emph{Fourier expanded nilcharacter} $F(g_h(n)\Gamma)$ (to be defined in Section 8.1). This occupies Sections 8.1 and 8.2
\item[2.] Analyzing the above inequality to glean structure out of $\chi_h$ on some dense subset of $\mathbb{Z}_N$. This will involve a ``sunflower-type decomposition", and a ``linearization argument." These are analogous to \cite[Section 7, Section 8]{GTZ11}. This argument occupies Sections 8.3 and 8.4 and is almost all of the improvement over \cite{GTZ11}.
\item[3.] Insert this structure to deduce the inverse theorem. This will comprise of the ``symmetry and integration steps." This is analogous to \cite[Section 9]{GTZ11} and we essentially follow their argument. This argument will take place in Section 8.5.
\end{itemize}
Before we proceed, we shall specify some notation. 
\begin{definition}[Lower order terms]\thlabel{lowerorderterms}
The quantity $[\text{Lower order terms}]$ denotes a sum of $d^{O(1)}$ quantities of the following form:
\begin{itemize}
    \item $a\{\alpha_h n\}\{\beta_h n\}$ or $a\{\alpha h\}\{\beta n\}$ or $a\{\alpha h_i\}\{\beta n\}$
    \item $a_h\{\alpha_h n\}$ or $a_{h_i}\{\alpha_{h_i} n\}$
    \item $\alpha h$
\end{itemize}
where all instances of $\alpha, \beta \in \mathbb{R}$ are rational with denominator $N$ and $|a| \le \exp(\log(1/\delta)^{O(1)})$.    
\end{definition}
The point of lower order terms is that they can be eliminated via \thref{onevarfouriercomplexity} and \thref{bilinearfouriercomplexity}. We will also use the following shorthand.
\begin{definition}[Equal up to lower order terms]\thlabel{shorthandequal}
We say that $a \equiv b$ if $a = be([\text{Lower order terms}])$.
\end{definition}

\subsection{Bracket polynomials and Fourier expanded nilcharacters}
In this section, we give precise definitions for notation for bracket polynomials we work with. We refer the reader to \cite[Section 2]{Len2} for various notions of Mal'cev bases.
\begin{definition}[Periodic fourier expanded nilcharacter]
Given a degree two two-step nilmanifold $G/\Gamma$ with one-dimensional vertical torus, we define a \emph{Fourier expanded nilcharacter}, which we will shorten as \emph{nilcharacter} on $G/\Gamma$ as follows. Write $\mathcal{X}= \{X_1, \dots, X_{d - 1}, Y\}$ as the Mal'cev basis for $G/\Gamma$. Then letting $g(n)$ a polynomial sequence on $G/\Gamma$ with $g(n)\Gamma$ periodic modulo $N$ and $\psi(g(n)) = (\alpha_1 n, \dots, \alpha_{d - 1}n,P(n))$ we let\footnote{We note by calculations done in \cite[Appendix B]{GT08} that this is indeed a function on $G/\Gamma$.}
$$F(g(n)\Gamma) = e(-k \sum_{i < j} C_{[i, j]}\alpha_i n[\alpha_j n] + k P(n))$$
where $C_{[i, j]}Y = [X_i, X_j]$ with $C_{[i, j]}$ an integer bounded by $Q$. We denote $k$ to be the \emph{frequency} of $F$. We define $\omega = k[\cdot, \cdot]$ as the \emph{associated asymmetric bilinear form} on $\text{Span}(X_1, \dots, X_{d - 1})$ to $F$.    
\end{definition}
As the proof will perform many ``change of bases", we require the following lemma.
\begin{lemma}\thlabel{changeofvar}
Let $Q \ge 2$ and $G/\Gamma$ be a two-step nilmanifold with Malcev basis $\mathcal{X} = \{X_1, \dots, X_{d - 1}, Y\}$ of complexity $Q$. Let $\mathcal{X}' = \{X_1', \dots, X_{d - 1}', Y\}$ be a basis of $\mathfrak{g}$ with $X_i'$ a $Q$-integer combination of elements in $\mathcal{X}$. Then the following hold.
\begin{itemize}
    \item Letting $\tilde{\Gamma} = \{\exp(t_1X_1')\exp(t_2X_2') \cdots \exp(t_dX_d') \exp(s Y): t_1, \dots, t_d, s \in \mathbb{Z}\}$, $G/\tilde{\Gamma}$ is a nilmanifold equipped with a Mal'cev basis $\mathcal{X}'$ of complexity $Q^{O(d^{O(1)})}$.
    \item If $F(g(n)\Gamma)$ is a Fourier-expanded nilcharacter on $G/\Gamma$, then there exists a periodic Fourier-expanded nilcharacter $\tilde{F}(\tilde{g}(n)\tilde{\Gamma})$ $F(g(2n)\Gamma) = F(\tilde{g}(n)\tilde{\Gamma})e(\text{[Lower order terms]})$ with frequency $k = 4$.
\end{itemize}
\end{lemma}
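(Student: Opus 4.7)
The plan is to prove both assertions by direct computation, exploiting the fact that $G$ is two-step with one-dimensional vertical torus, so that every Lie bracket lies in $\mathbb{R} Y$.

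For part one, I would first verify that $\tilde{\Gamma}$ is a discrete cocompact subgroup of $G$. The structure constants in the new basis satisfy $[X_i', X_j'] = \sum_{k, l} A_{ik} A_{jl} [X_k, X_l] = C'_{[i,j]} Y$, where $C'_{[i,j]} \in \mathbb{Z}$ with $|C'_{[i,j]}| \le d^2 Q^3$ since the change-of-basis coefficients $A_{ik}$ and the original structure constants $C_{[k,l]}$ are integers of size at most $Q$. Using the two-step BCH identity $\exp(A)\exp(B) = \exp([A,B])\exp(B)\exp(A)$ (valid as $[A,B]$ is central), one rearranges a product of two elements of $\tilde{\Gamma}$ into the canonical form $\exp(t_1 X_1') \cdots \exp(t_{d-1} X_{d-1}') \exp(sY)$, picking up only integer multiples of $Y$ from the commutators, so $\tilde{\Gamma}$ is closed under multiplication and inversion. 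The Mal'cev basis axioms for $\mathcal{X}'$ on $G / \tilde{\Gamma}$ then follow from the definition, with $G_2 \cap \tilde{\Gamma} = \exp(\mathbb{Z} Y)$, and the complexity bound $Q^{O(d^{O(1)})}$ comes from Cramer's rule applied to $A^{-1}$, whose entries have height at most $Q^{O(d)}$.

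For part two, I would write $g(n)$ in Lie algebra coordinates via iterated BCH,
$$\log g(n) = \sum_i \alpha_i n\, X_i + \Big( P(n) + \tfrac{1}{2} \sum_{i < j} C_{[i,j]} \alpha_i \alpha_j n^2 \Big) Y,$$
and substitute $X_i = \sum_j (A^{-1})_{ij} X_j'$ to obtain coordinates with respect to the new basis. The new horizontal coefficients $\alpha'_j = \sum_i (A^{-1})_{ji} \alpha_i$ are rational with denominator dividing $N \det(A)$, bounded by $N Q^{O(d)}$. Replacing $n$ by $2n$ turns the BCH cross-term $\tfrac{1}{2} C'_{[i,j]} \alpha'_i \alpha'_j (2n)^2 = 2 C'_{[i,j]} \alpha'_i \alpha'_j n^2$ into an integer-compatible expression, so one can convert to second-kind Mal'cev coordinates in $\mathcal{X}'$ without half-integer issues, define $\tilde{g}(n)$ from these coordinates, and define $\tilde{F}$ by the Fourier-expanded nilcharacter formula with $k = 4$, where the factor $4$ absorbs the doubling $n \mapsto 2n$ appearing in both occurrences of $n$ inside $\alpha_i n[\alpha_j n]$. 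The discrepancy between $F(g(2n)\Gamma)$ and $\tilde{F}(\tilde{g}(n)\tilde{\Gamma})$ then arises from the differing fundamental domains of $\Gamma$ and $\tilde{\Gamma}$ and consists of $d^{O(1)}$ phases of the form $a\{\alpha_i n\}\{\alpha_j n\}$ and $a\{\alpha_i n\}$, with $\alpha_i$ rational of denominator $N$ (using the primality of $N$ to absorb the $Q^{O(d)}$ denominators coming from $A^{-1}$) and $|a| \le Q^{O(d)}$; these fit \thref{lowerorderterms}.

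The main obstacle will be the careful accounting of denominators and fundamental-domain discrepancies. Without the substitution $n \mapsto 2n$, BCH would yield half-integer $Y$-coefficients incompatible with the nilcharacter formula; and without the primality of $N$, the change of basis would leave $Q^{O(d)}$-sized denominators in the frequencies of the correction terms. Verifying that, after doubling and exploiting primality, every correction term genuinely meets the stringent rationality and boundedness requirements of \thref{lowerorderterms} while the main term retains frequency exactly $4$, is the delicate bookkeeping step that occupies the bulk of the argument.
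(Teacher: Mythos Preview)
Your proposal is essentially the same approach as the paper's, with a cosmetic difference in presentation. Both proofs handle the first item identically (integer structure constants via bilinearity of the bracket, closure of $\tilde\Gamma$ via two-step BCH, complexity via Cramer's rule). For the second item, you pass to first-kind coordinates via $\log$ and change basis in the Lie algebra, whereas the paper stays in second-kind coordinates and uses the identity $\alpha n[\beta n]-\beta n[\alpha n]\equiv \alpha\beta n^2+2\alpha n[\beta n]\pmod 1$ to rewrite $F(g(n)\Gamma)$ in the basis-invariant form $e(-\tfrac{k}{2}\,[\sum_i\alpha_i n X_i,\sum_i[\alpha_i n]X_i]+P_1(n)+\text{L.O.T.})$; both manoeuvres exploit the same fact that the Lie bracket is independent of the choice of basis, and both require the substitution $n\mapsto 2n$ to clear the factor $\tfrac12$. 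One detail the paper adds that you omit: after obtaining $\tilde g$, it composes with the modular inverse $m$ of $2$ mod $N$ (replacing $\tilde\alpha_i$ by $2m\tilde\alpha_i$ and $Q$ by $Q(2m\,\cdot)$) to restore $N$-periodicity and force the vertical coefficient to be even, which is what actually pins down the frequency as $4$ rather than merely a multiple of it. Your appeal to primality of $N$ to ``absorb the $Q^{O(d)}$ denominators'' is the right instinct but is doing the same work as this modular-inverse step; making it precise amounts to exactly the paper's construction.
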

\begin{proof}
We first show the first item. We first note that since $\mathcal{X}$ is a Mal'cev basis, $[\exp(X_i), \exp(X_j)] = \exp([X_i, X_j]) \in \exp(\mathbb{Z} Y)$, so the structure constants of the Mal'cev basis $\mathcal{X}$ are integers. This implies that $\exp([X_i', X_j']) \in \exp(\mathbb{Z} Y)$ so $\tilde{\Gamma}$ is a group. Since we may find a bounded fundamental domain for $G/\tilde{\Gamma}$, it follows by Bolzano-Weierstrass and the Whitney embedding theorem that it is compact. Also, by Cramer's rule, $\mathcal{X}'$ has complexity $Q^{O(d^{O(1)})}$. \\\\
We now turn to the second item. Let 
$$\psi_{\mathcal{X}}(g(n)) = (\alpha_1 n, \dots, \alpha_d n, P(n))$$
and
$$\psi_{\mathcal{X}'}(g(n)) = (\tilde{\alpha}_1 n, \dots, \tilde{\alpha}_{d - 1}n, P(n))$$
Note that we have
$$\sum_i \tilde{\alpha_i} X_i' = \sum_i \tilde{\alpha_i} \sum_j a_{ij} X_j = \sum_j \left(\sum_i a_{ij}\tilde{\alpha_i}\right)X_j$$
with $a_{ij}$ integers at most $Q$,
so
$$\alpha_j = \sum_i a_{ij} \tilde{\alpha_i}.$$
In addition, we have (for a vector-valued $\text{[Lower order terms]}$)
$$\sum_i \tilde{\alpha_i} X_i' = \sum_j \alpha_j X_j$$
and
$$\sum_i [\tilde{\alpha_i} n] X_i' = \sum_i [\tilde{\alpha_i}n] \sum_j a_{ij} X_j = \sum_j [\alpha_j n] X_j + [\text{Lower order terms}] \cdot \mathcal{X}.$$
By the identity
$$\alpha n[\beta n] - \beta n[\alpha n] \equiv \alpha \beta n^2 + 2\alpha n[\beta n] \pmod{1}$$
it follows that there exists quadratic polynomials $P_1$ and $P_2$ such that
$$F(g(n)\Gamma) = e(-k/2\sum_{i, j} [\sum_i \alpha_i n X_i, \sum_i [\alpha_i n]X_i] + P_1(n) + [\text{Lower order terms}])$$
and
$$F(g(n)\Gamma) = e(-k/2\sum_{i, j} [\sum_i \tilde{\alpha_i} nX_i', \sum_i [\tilde{\alpha_i} n] X_i'] + P_2(n) + [\text{Lower order terms}]).$$
It follows that letting $Q = P_2 - P_1$ and
$$\psi_{\mathcal{X}'}(\tilde{g}(n)) = (\tilde{\alpha}_1 n, \dots, \tilde{\alpha}_d n, Q(n)),$$
we obtain (the purpose of working with $2n$ being that it cancels out with the factor of $\frac{1}{2}$ present in $k/2$)
$$F(g(2n)) = F(\tilde{g}(n)\tilde{\Gamma})e(\text{[Lower order terms]}).$$
By considering a modular inverse $m$ of $2$, and modifying $\tilde{g}(n)$ to be
$$((\tilde{\alpha}_1 2m)n, \dots, (\tilde{\alpha}_d 2m)n, Q(2mn))$$
it follows that $Q(2m \cdot)$ is divisble by $2$, and since the leading coefficient of the bracket part of $\tilde{F}(\tilde{g})$ is divisible by $4$, it follows that we can take our frequency to be $k = 4$.
\end{proof}
Finally, we will need a lemma that converts a bracket polynomial to a Fourier expanded nilcharacter.
\begin{lemma}\thlabel{u3fourierexpandednilcharacter}
Let $\alpha_1, \dots, \alpha_d, \beta_1, \dots, \beta_d \in \mathbb{R}$ be rationals with denominator $N$ and define a function $\phi\colon \mathbb{Z}/N\mathbb{Z} \to \mathbb{R}$ via
$$\phi(n) = \sum_{i = 1}^d a_{\alpha, \beta} \{\alpha \cdot n\}\{\beta \cdot n\} + \sum_{\alpha \in S} a_\alpha \{\alpha \cdot n\}.$$
Then there exists a Fourier expanded nilcharacter $F(g(n)\Gamma)$ of complexity $2$ and frequency $1$ such that
$$e(\phi(n)) = F(g(n)\Gamma) e([\text{Lower order terms}]).$$
\end{lemma}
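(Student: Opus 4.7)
The plan is a direct construction of $G/\Gamma$, the polynomial sequence $g(n)$, and the Malcev basis, built from the following algebraic identity. Expanding $\{\cdot\} = (\cdot) - [\cdot]$ and using that $[\alpha n][\beta n] \in \mathbb{Z}$ yields
\[
a\{\alpha n\}\{\beta n\} \equiv a\alpha\beta n^2 - a\alpha n[\beta n] - a\beta n[\alpha n] \pmod{1}.
\]
This rewrites each degree-two summand of $\phi$ as a pure polynomial in $n$ plus a \emph{symmetric} pair of asymmetric bracket terms. The linear terms $a_\alpha\{\alpha n\} = a_\alpha \alpha n - a_\alpha[\alpha n]$ split similarly: the polynomial piece $a_\alpha \alpha n$ is absorbed into the vertical coordinate $P(n)$, while the sawtooth $-a_\alpha[\alpha n]$ --- a function of $\{\alpha n\}$ alone on the torus --- is Fourier-expanded on $\mathbb{T}$ via \thref{onevarfouriercomplexity} into linear characters $e(k\alpha n)$ that are merged into $P(n)$ and lower order terms.

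The key step is to realize the \emph{symmetric} bracket combination $-a\alpha n[\beta n] - a\beta n[\alpha n]$ using the antisymmetrically indexed formula
\[
F(g(n)\Gamma) = e\Bigl(-\sum_{i<j} C_{[i,j]}\, \alpha_i n\, [\alpha_j n] + P(n)\Bigr)
\]
with small integer structure constants. The plan is to \emph{double} the horizontal generators: for each pair $(\alpha,\beta)$ appearing in $\phi$, introduce four horizontal generators $X_1^{(\alpha,\beta)}, X_2^{(\alpha,\beta)}, X_3^{(\alpha,\beta)}, X_4^{(\alpha,\beta)}$ subject only to $[X_1, X_2] = Y$ and $[X_3, X_4] = Y$ (all other commutators zero), so that every structure constant lies in $\{0, \pm 1\}$. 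Assigning the Malcev coordinates of $g(1)$ in these four slots as $(a\alpha, \beta, a\beta, \alpha)$ produces the desired symmetric contribution $-a\alpha n[\beta n] - a\beta n[\alpha n]$ in the exponent, with $k=1$ and complexity $2$. Aggregating across all pairs, and placing all the quadratic corrections $a_{\alpha,\beta}\alpha\beta n^2$ together with the linear pieces from the $\{\alpha n\}$-expansions into a single polynomial $P(n)$, completes the description of $F$.

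The main obstacle is the rationality management required to make $g(n)\Gamma$ periodic modulo $N$: although $\alpha, \beta$ are rational with denominator $N$, the real coefficient $a$ can render $a\alpha, a\beta$ irrational. The resolution, in the spirit of \thref{changeofvar}, is to replace the generators $X_1, X_3$ by the rescaled versions $X_1/a, X_3/a$ and to pass to the lattice $\tilde\Gamma$ generated by the rescaled exponentials; this restores periodicity while preserving the complexity and frequency bounds up to controlled corrections absorbed into the lower order terms. Secondary details --- such as arranging frequency $k=1$ in place of the $k=4$ that typically appears after a basis change, and ensuring the rescaled structure constants remain bounded by $2$ --- are handled by composing with the modular inverse of a small integer, exactly as in the proof of \thref{changeofvar}, at the cost of further lower order terms.
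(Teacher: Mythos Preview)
Your approach is close to the paper's but takes a wrong turn at the rationality step, and the proposed fix does not work.

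The expansion $a\{\alpha n\}\{\beta n\} \equiv a\alpha\beta n^2 - a\alpha n[\beta n] - a\beta n[\alpha n] \pmod 1$ and the four-generator scheme with $[X_1,X_2]=[X_3,X_4]=Y$ are fine. The problem is the horizontal coordinate assignment $(a\alpha,\beta,a\beta,\alpha)$: since $a$ is an arbitrary real, $a\alpha$ need not be rational with denominator $N$, so periodicity fails. Your rescaling fix --- replacing $X_1,X_3$ by $X_1/a, X_3/a$ --- does not stay within the class of Fourier expanded nilcharacters. The definition requires \emph{integer} structure constants $C_{[i,j]}$ bounded by the complexity; under $\tilde X_1 = X_1/a$ one gets $[\tilde X_1, X_2] = (1/a)Y$, and under $\tilde X_1 = aX_1$ one gets $[\tilde X_1, X_2] = aY$, neither an integer in general. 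The appeal to \thref{changeofvar} is misplaced: that lemma concerns \emph{integer} linear changes of Mal'cev basis, not real rescalings.

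The paper's route avoids this entirely by splitting the \emph{coefficient}, not the basis. Write $a_i = [a_i] + \{a_i\}$. The term $\{a_i\}\{\alpha_i n\}\{\beta_i n\}$ is, by \thref{lowerorderterms}, already a lower order term (it is of the form $a\{\alpha n\}\{\beta n\}$ with $|a|\le 1$), so it is absorbed directly --- no nilmanifold needed for it. For the remaining $[a_i]\{\alpha_i n\}\{\beta_i n\}$, the coefficient $[a_i]$ is an integer, so after your same expansion the horizontal coordinates may be taken as $([a_i]\alpha_i,\beta_i,[a_i]\beta_i,\alpha_i)$, all rational with denominator $N$. This lands in the elementary $(2d{+}1)$-dimensional nilmanifold with structure constants in $\{0,\pm 1\}$, hence complexity $2$ and frequency $1$, with no basis rescaling and no appeal to \thref{changeofvar}.
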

\begin{proof}
An application of the $U^3$ inverse theorem gives us a bracket polynomial of the form
$$\sum_i a_i\{\alpha_i n\}\{\beta_i n\}.$$
We write
$$a_i\{\alpha_i n\}\{\beta_i n\} = [a_i](\alpha_i n - [\alpha_i n])(\beta_i n - [\beta_i n]) + \{a_i\}\{\alpha_i n\}\{\beta_i n\}.$$
Defining 
$$G = \begin{pmatrix} 1 & \mathbb{R} & \cdots & \cdots  & \mathbb{R} \\ 0 &1 & \cdots & 0 & \mathbb{R} \\ 0 & 0 & 1 & \cdots & \mathbb{R} \\
0 & 0 & 0 & \ddots & \vdots \\
0 & 0& 0 & \cdots  & 1 \end{pmatrix},$$
$$\Gamma  = \begin{pmatrix} 1 & \mathbb{Z} & \cdots & \cdots  & \mathbb{Z} \\ 0 &1 & \cdots & 0 & \mathbb{Z} \\ 0 & 0 & 1 & \cdots & \mathbb{Z} \\
0 & 0 & 0 & \ddots & \vdots \\
0 & 0& 0 & \cdots  & 1 \end{pmatrix},$$
with $G$ $2d + 1$ dimensional and $X_1, \dots, X_d$ represent the coordinates (from left to right) of the first row and $X_{d + 1}, \dots, X_{2d}$ the coordinates (from top to bottom), and $Y$ the last coordinate, we see that a bracket polynomial of the form $[a_i](\alpha_i n - [\alpha_i n])(\beta_i n - [\beta_i n]) + \{a_i\}\{\alpha_i n\}\{\beta_i n\}$ can be realized (up to lower order terms periodic in $N$) as a Fourier expanded nilcharacter on $G/\Gamma$ with $d$ being proportional to the number of brackets in the sum.
\end{proof}
Thus, we may assume that $\chi_h(n) = F(g_h(n)\Gamma)$ for some Fourier expanded nilcharacter of frequency $4$.

\subsection{Three-step nilmanifold constructions}

This section is meant to give explicit constructions of (approximate) nilsequences certain degree $3$ bracket polynomials. It is meant to be skimmed on first reading. 
\begin{lemma}
Let $\alpha_1, \dots, \alpha_k, \beta_1, \dots, \beta_k, \gamma_1, \dots, \gamma_k, \alpha_1', \dots, \alpha_k', \beta_1', \dots, \beta_k' \in \mathbb{R}$. Consider
$$e(-\sum_{j = 1}^k\alpha_j n\{\beta_j n\}\{\gamma_j n\} - \sum_{j = 1}^\ell \alpha_j' n^2\{\beta_j' n\}).$$
There exists a nilmanifold $G/\Gamma$ of degree $3$, complexity $O(1)$, and dimension $O(k)$, and an approximate nilsequence $F(g(n)\Gamma)$ with $F$ $O(d)$-Lipschitz outside of the boundary of the standard fundamental domain $\psi^{-1}((-1/2, 1/2]^d)$ of $G/\Gamma$ such that
$$F(g(n)\Gamma) = e(-\sum_{j = 1}^k\alpha_j n\{\beta_j n\}\{\gamma_j n\} - \sum_{j = 1}^\ell \alpha_j' n^2\{\beta_j' n\}).$$
\end{lemma}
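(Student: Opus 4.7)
The plan is to construct $G/\Gamma$ as a direct product of elementary three-step nilmanifolds, one for each summand in the exponential, and then define $F$ as a product of coordinate-derived phases. Since the product of step $3$ nilmanifolds of bounded complexity has bounded complexity and dimension equal to the sum of the factor dimensions, and since the phase we want factors across summands, this reduces the task to constructing a single-summand factor of dimension $O(1)$ for each of the two summand types; taking the product then gives total dimension $O(k+\ell)$, which we absorb into $O(k)$.

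For a summand of the form $\alpha n\{\beta n\}\{\gamma n\}$, I would work with the three-step ``model'' nilmanifold $G_1 = U_4(\mathbb{R})/U_4(\mathbb{Z})$ of $4\times 4$ upper triangular unipotent matrices modulo its standard integer lattice, equipped with the lower central series filtration. This is step $3$, with horizontal torus spanned by $X_{12}, X_{23}, X_{34}$, middle layer spanned by $X_{13}, X_{24}$, and one-dimensional top layer spanned by $X_{14}$. A polynomial sequence $g(n)$ placing $\alpha n, \beta n, \gamma n$ on the three superdiagonals (with the remaining Mal'cev coordinates chosen to be the unique polynomials making $g \in \mathrm{poly}(\mathbb{Z}, G_1)$ with respect to the filtration) has the property that the $X_{14}$-coordinate of its fundamental-domain representative equals $-\alpha n\{\beta n\}\{\gamma n\}$ modulo $[\text{Lower order terms}]$ in the sense of \thref{lowerorderterms}. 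Setting $F$ to be the exponential of this coordinate produces the required approximate nilsequence, directly generalizing the elementary bracket quadratic construction preceding \thref{twostepbracket}.

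For a summand of the form $\alpha' n^2\{\beta' n\}$, the same group $G_1$ works, but with the polynomial sequence $g(n)$ taken to have a quadratic term in the appropriate layer: concretely, place $\alpha' n$ and $\beta' n$ on two of the superdiagonals of the first layer and $\alpha' n^2$ on the degree-two slot $X_{13}$, filling in the rest using the Leibman characterization of polynomial sequences. The identity $n[\beta n] = n\beta n - n\{\beta n\}$ then forces the top-layer coordinate of the fundamental-domain representative to equal $-\alpha' n^2\{\beta' n\}$ modulo lower-order bilinear-in-brackets terms, which are again absorbed into $[\text{Lower order terms}]$. Alternatively, one could use a three-generator step $3$ Heisenberg-type nilmanifold where the central commutator is $[X, [X, Y]]$, but the $U_4$-model already suffices.

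The main technical obstacle, and the only nonroutine step, is verifying the two fundamental-domain coordinate identities above. Each is a Baker--Campbell--Hausdorff computation inside $\mathfrak{u}_4$, organized by repeatedly right-multiplying by elements of $\Gamma_1$ to drag each Mal'cev coordinate into $(-1/2, 1/2]$ while tracking the induced corrections in higher-layer coordinates; this is the exact degree-three analogue of the identity \[(x_1, \dots, x_d, y_1, \dots, y_d, z) \mapsto (\{x_1\}, \dots, \{y_d\}, \{z - \vec{x}\cdot[\vec{y}]\})\] used in the elementary bracket quadratic construction, and the ``$[\text{Lower order terms}]$'' formalism is designed precisely to absorb the genuinely quadratic-in-$\{\cdot\}$ correction terms that appear. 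Once these identities are established, $F$ is a smooth exponential in bounded Mal'cev coordinates on the open fundamental cube, hence $O(d)$-Lipschitz there, with all discontinuities confined to the boundary as required. Taking the direct product over $j$ of these elementary factors and the corresponding product of $F$'s yields the desired $G/\Gamma$ of dimension $O(k)$ and complexity $O(1)$ together with the approximate nilsequence matching the prescribed phase.
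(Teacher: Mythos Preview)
Your product-of-elementary-factors strategy is sound and close in spirit to the paper's first proof, and $U_4$ is in fact enough for both summand types. The gap is in the specific identity you assert for the first type. With $g(n)$ having $(\alpha n,\beta n,\gamma n)$ on the superdiagonals of $U_4$ and $F=e(s_{14})$, the reduction to the fundamental domain yields
\[
s_{14}\equiv [\alpha n]\,\beta n\,\{\gamma n\}+\{\alpha n\}\{\beta n\{\gamma n\}\}\pmod 1,
\]
which is \emph{not} $-\alpha n\{\beta n\}\{\gamma n\}$ modulo \thref{lowerorderterms}: the second term has a nested bracket, and the first expands to $\alpha\beta n^2\{\gamma n\}-\{\alpha n\}\{\beta n\}\{\gamma n\}-\{\alpha n\}[\beta n]\{\gamma n\}$, the last piece again outside the allowed class. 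So your BCH computation, if carried out, would not produce the claimed identity.

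The repair is minor. Place the un-bracketed phase in the \emph{middle}: with $g(n)=e_{23}(\alpha n)e_{12}(\beta n)e_{34}(\gamma n)$ one gets $s_{12}=\{\beta n\}$, $s_{24}=\{\alpha n\{\gamma n\}\}$ and $s_{14}\equiv -\{\beta n\}[\alpha n\{\gamma n\}]$; then $F=e(s_{14}-s_{12}s_{24})$ gives $e(-\{\beta n\}\cdot\alpha n\{\gamma n\})=e(-\alpha n\{\beta n\}\{\gamma n\})$ exactly. Your treatment of the second summand type via $e_{13}(\alpha' n^2)e_{34}(\beta' n)$ already works verbatim: the top coordinate is $\{\alpha' n^2\{\beta' n\}\}$. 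For comparison, the paper's first proof avoids this delicacy by working in the free $3$-step group on three generators, whose fundamental-domain coordinates satisfy the clean formula $s_{ijk}=\{t_{ijk}-t_{ik}[t_j]-t_{ij}[t_i]+t_i[t_j][t_k]\}$ and hence directly produce $t_i[t_j][t_k]$-type phases, with genuine degree-$\le 2$ corrections handled by the earlier two-step constructions; the paper's second proof uses a single semidirect product $\mathbb{R}^k\rtimes(G\ltimes H)$ that realizes all summands at once. Your $U_4$ route is more economical in dimension once the placement and the choice of $F$ are corrected.
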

\begin{proof}
We give two proofs. The first will follow\cite{GTZ11}. We let the free $3$-step Lie group:
$$G = \{e_1^{t_1}e_2^{t_2} e_3^{t_3} e_{21}^{t_{21}}e_{211}^{t_{211}}e_{31}^{t_{31}}e_{311}^{t_{311}}e_{32}^{t_{32}}e_{322}^{t_{322}}e_{212}^{t_{212}}e_{312}^{t_{312}}e_{213}^{t_{213}}e_{313}^{t_{313}} e_{323}^{t_{323}} \}$$
where the $t_i, t_{ij}, t_{ijk}$ range over $\mathbb{R}$ and with the relations $[e_i, e_j] = e_{ij}$ and $[[e_i, e_j], e_k] = e_{ijk}$ and the Jacobi identity
$$[[e_i, e_j], e_k][[e_j, e_k], e_i][[e_k, e_i], e_j] = 1.$$
We also take the lattice $\Gamma$ to be the subgroup of the above group where the $t_i, t_{ij}, t_{ijk}$ range over $\mathbb{Z}$. Letting $s_i, s_{ij}, s_{ijk}$ be the coordinates in the fundamental domain of $G/\Gamma$, we see that
$$s_i = \{t_i\}, s_{ij} = \{t_{ij} - t_i[t_j]\}, s_{ijk} = \{t_{ijk} - t_{ik}[t_j] - t_{ij}[t_i] + t_i[t_j][t_k]\}.$$
Thus, this expresses $e_1^{\alpha n} e_2^{\beta n}e_3^{\gamma n}$ as $e([\alpha n][\beta n]\gamma n)$ with a bunch of lower order terms that we can take care of using the construction from earlier parts. \\\\
The second proof follows \cite{GTZ12}. Let $G_1$ be the subgroup of the above matrix group we constructed which is generated by the elements where only the first (top) row is nonzero. This can be generated by elements $x_i$, which correspond to elements where only the $i$th element (from the left) of the first row is nonzero, and $z$ which represents the rightmost coordinate of the first row. Let $y_i$ be the $i + 1$th element (starting from the top) of the last column (from the left). The approximate nilsequence we desire to replicate is thus
$$g(n) = x_1^{\alpha_1 n\{\beta_1 n\}} \cdots x_k^{\alpha_1 n\{\beta_1 n\}} x_{k + 1}^{\alpha_{1}' n^2} \cdots x_{k + \ell}^{\alpha_\ell' n^2}$$
$$ y_1^{\gamma_1 n} \cdots y_k^{\gamma_k n} y_{k + 1}^{\beta_1' n}\cdots y_{k + \ell}^{\beta_\ell' n}$$
and $F(x\Gamma) = e(\{z - x[y]\})$. To define this to be a three-step nilsequence, we define a group $H = \langle x_1, x_2, \dots, x_k, z \rangle$ and we consider the semidirect product $G \ltimes H$ with the action being conjugation. We define the action $\rho^k$ of $\mathbb{R}^k$ on $G \ltimes H$ by $\rho(t)(g, g_1) := (gg_1^t, g_1)$, and finally consider $G' = \mathbb{R}^k \rtimes_{\rho^k} (G \ltimes H)$. One can check that this is a three-step nilpotent Lie group with respect to the lower central series. We now define $\Gamma' = \mathbb{Z} \rtimes_{\rho^k} (\Gamma \ltimes (H \cap \Gamma))$ and consider the polynomial sequence
$$g'(n) = (0, (x_{k + 1}^{\alpha_1'n^2} \cdots x_{k + \ell}^{\alpha_\ell' n^2}y_1^{\gamma_1 n} \cdots y_k^{\gamma_k n} \cdots y_{k + \ell}^{\beta_{\ell}'n},  $$
$$x_1^{\alpha_1 n} \cdots x_k^{\alpha_k n}), (\beta_1 n, \beta_2 n, \dots, \beta_k n), (id, id)).$$
and the function
$$F'((t, (g, g'))\Gamma') = F(g\Gamma).$$
It follows then that
$$g'(n)\Gamma' = (\{\beta_1 n\}, \dots, \{\beta_k n\}, (g(n), x_1^{\alpha_1 n} \cdots x_k^{\alpha_k n}))\Gamma'$$
so that
$$F'(g'(n)\Gamma') = e(-\sum_{j}\alpha_j n\{\beta_j n\}[\gamma_j n] - \sum_{j} \alpha_j' n^2[\beta_j' n]).$$
Finally, to complete the nilmanifold construction, we use $[\gamma_j n] = \gamma_j n - \{\gamma_j n\}$ and $[\beta_j' n] = \beta_j' n - \{\beta_j' n\}$ and apply a very similar nilmanifold construction to the remaining terms we get under this expansion.
\end{proof}

\subsection{Sunflower type decomposition}
We will now deduce one of the primary improvements to \cite{GTZ11}. This corresponds to \cite[Section 7]{GTZ11}. \\\\
\subsubsection{A few reductions}
We now make a set of reductions. Our first claim is that
$$\chi_{h_1}(n)\chi_{h_2}(n + h_1 - h_4)\overline{\chi_{h_3}(n)\chi_{h_4}(n + h_1 - h_4)}$$
is a Fourier expanded nilcharacter. To see this, by standard bracket polynomial manipulations (see e.g., \cite{GTZ11} and Section 3), we have
$$\chi_{h_2}(n + h_1 - h_4)\overline{\chi_{h_4}(n + h_1 - h_4)} \equiv \chi_{h_2}(n)\overline{\chi_{h_4}(n)}e(\alpha_{h_{1},h_2, h_3, h_4} n^2 + \beta_{h_1, h_2, h_3, h_4}n + f(h)).$$
Next, we observe that by tensoring two free nilcharacters together, as in adjoining them $n \mapsto \chi_1(n)\chi_2(n)$ forms a free nilcharacter as well by taking a product of the corresponding nilsequences $F_1(g_1(n)\Gamma_1)F_2(g_2(n)\Gamma_2) = \tilde{F}(\tilde{g}(n)\tilde{\Gamma})$ with $\tilde{F}(x, y) = F_1(x)F(y)$, $\tilde{G} = G_1 \times G_2/\text{ker}(Y_1 + Y_2)$ where $Y_i$ is the vertical component of $G_i$. Let $\pi: G_1 \times G_2 \to \tilde{G}$ be the quotient map. We note that induced asymmetric bilinear form on $\tilde{G}$ is $k_1 \omega_1 + k_2 \omega_2$ where $k_i$ is the frequency of $F_i$. We thus have $\tilde{g} = \pi(g_1(n), g_2(n))$ and $\tilde{\Gamma} = \pi(\Gamma_1 \times \Gamma_2)$. Finally, we observe that modifying the quadratic term does not change whether or not a function is a free nilcharacter. Hence, we've realized
$$\chi_{h_1}(n)\chi_{h_2}(n + h_1 - h_4)\overline{\chi_{h_3}(n)\chi_{h_4}(n + h_1 - h_4)}$$
as a free nilcharacter on $G^{(4)} := G^4/\mathrm{ker}(Y_1 + Y_2 - Y_3 - Y_4)$.
\subsubsection{Sunflower iteration}
If $\chi_h(n) = F(g_h(n)\Gamma)$, we define $\tilde{\chi_h}(n) = F(g_h(n)\Gamma)\psi(g_h(n)\Gamma)$ where $\psi$ is a smooth cutoff function on the horizontal torus $\mathbb{R}^{d}/\mathbb{Z}^{d} \cong [-1/2, 1/2)^{d}$ supported on $[-1/2 + \delta^{O(d)^{O(1)}}, 1/2 - \delta^{O(d)^{O(1)}}]^{d}$, it follows that $$\|\chi_h - \tilde{\chi}_h\|_{L^1[N]} \le \delta^{O(d^{O(1)})},$$
so we may replace $\chi_h$ with $\tilde{\chi}_h$ at the cost of shrinking the right hand side of the inequality by a bit (but will still appear as $\delta^{O(d)^{O(1)}}$). \\\\
For simplicity, we shall relabel the $\delta^{\log(1/\delta)^{O(1)}}$ as just $\delta$ and $\tilde{\chi}_h$ as simply $\chi_h$ for the remainder of this section. Similarly to \cite{GTZ11}, for some subset $H \subseteq \mathbb{Z}_N$, $h \in H$ we let
\begin{itemize}
    \item $\Xi$ will denote the phases of the Fourier expanded nilcharacter;
    \item $\Xi_h$ be the $h$-dependent phases of the Fourier expanded nilcharacter;
    \item $\Xi_{*}$ be the $h$-independent phases as $h$;
    \item $\Xi_h^G$ be the subgroup in $G$ generated by the $h$-dependent dimensions;
    \item and $\Xi_*^G$ the subgroup of $G$ generated by the $h$-independent dimensions.
\end{itemize}
Given a nilmanifold $G/\Gamma$ with Mal'cev basis $(X_1, \dots, X_{d - 1}, Y)$ for the lower central series filtration with one-dimensional vertical component, and a polynomial sequence of the form $\psi(g(n)) = (\alpha_1n, \dots, \alpha_{d - 1} n, P(n))$, we denote
$$\psi_{horiz}(g(n)) = \sum_i \alpha_i X_i.$$
We shall also write
$$\psi_{horiz}(g) \equiv \sum_i \beta_i X_i \pmod{\Gamma}$$
to signify that $\psi_{horiz}(g)$ is equal to $\sum_i\beta_i X_i$ up to an element in the $\mathbb{Z}$-span of $(X_1, \dots, X_{d - 1})$. We are finally ready to state the sunflower step.
\begin{lemma}\thlabel{sunflower}
Suppose for $\delta |H|^3$ many additive quadruples $(h_1, h_2, h_3, h_4)$ inside $H^4$ with $|H| \ge \delta N$ that
$$|\mathbb{E}_{n \in [N]} \chi_{h_1}(n)\chi_{h_2}(n + h_1 - h_4)\overline{\chi_{h_3}(n)\chi_{h_4}(n + h_1 - h_4)}| \ge \delta.$$
Then there exists a subset $H' \subseteq H$ such that $|H'| \ge \delta^{O(d)^{O(1)}}|H|$ and for each $h \in H'$, there exists some $r > 0$ and fixed $\alpha_1, \dots, \alpha_r$ such that denoting $\psi_{horiz}(G) = \mathrm{span}(X_1, \dots, X_k, Z_1, \dots, Z_\ell)$ the Mal'cev basis with complexity at most $\delta^{-O(d)^{O(1)}}$, there exists some integer $|q| \le \delta^{-O(d)^{O(1)}}$ and some ordering of the Mal'cev basis $(X_1, \dots, X_k, Z_1, \dots, Z_\ell)$ such that denoting
$$\psi_{horiz}(g_h) = \sum_i \xi_h^i X_i + \sum_j \xi_*^j Z_j$$,
we have
$$g_h(q \cdot) \equiv \alpha_1 X_1 +  \cdots + \alpha_r X_r + \xi_h^{r + 1}Y_{r + 1} + \cdots + \xi_h^{k}Y_{k} + \xi_*^1Z_1 + \cdots + \xi_*^\ell Z_\ell \pmod{\Gamma}.$$
where $X_1, \dots, X_r, Y_{r + 1}, \dots, Y_{k}, Z_1, \dots, Z_\ell$ are linearly independent and each $Y_j$ is a $\delta^{-O(d)^{O(1)}}$-rational combination of $X_i$'s and we have that the asymmetric bilinear form $\omega$ restricted to $\mathrm{span}(Y_1, \dots, Y_k)$ vanishes.
\end{lemma}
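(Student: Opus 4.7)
The plan is to realize the quadruple product as a single nilcharacter on an iterated product group and then apply the two-step equidistribution theorem of \thref{twosteppolynomial}. Precisely, for fixed $(h_1,h_2,h_3,h_4)$, write
\[
\Phi_{\vec h}(n) := \chi_{h_1}(n)\chi_{h_2}(n+h_1-h_4)\overline{\chi_{h_3}(n)\chi_{h_4}(n+h_1-h_4)}.
\]
As discussed in \S8.3.1, $\Phi_{\vec h}$ is a Fourier-expanded nilcharacter on the two-step nilmanifold $G^{(4)}=G^4/\ker(Y_1+Y_2-Y_3-Y_4)$, with underlying polynomial sequence $g^{(4)}_{\vec h}(n)=(g_{h_1}(n),g_{h_2}(n+h_1-h_4),g_{h_3}(n),g_{h_4}(n+h_1-h_4))$ and a nonzero $G^{(4)}_{(2)}$-vertical frequency $\xi^{(4)}=(\xi,\xi,-\xi,-\xi)$.

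For every good additive quadruple, apply \thref{twosteppolynomial} on $G^{(4)}$. Since $N$ is large compared to $(M/\delta)^{O(d)^{O(1)}}$, the first alternative is ruled out and we obtain horizontal characters $\eta_1,\ldots,\eta_s$ and elements $w_1,\ldots,w_t$ in $(G^{(4)})_{\mathrm{horiz}}$ of size $\delta^{-O(d)^{O(1)}}$ satisfying $\langle\eta_j,w_i\rangle=0$, $\|\eta_j\circ g^{(4)}_{\vec h}\|_{C^\infty[N]}=0$, and $\|\xi^{(4)}([w_i,g^{(4)}_{\vec h}])\|_{C^\infty[N]}=0$. Decomposing $\eta_j=(\eta_j^1,\eta_j^2,\eta_j^3,\eta_j^4)$ and $w_i=(w_i^1,\ldots,w_i^4)$ and using that $\xi^{(4)}$ is supported on the kernel of $Y_1+Y_2-Y_3-Y_4$, the equations become a finite system of \emph{linear} constraints on the tuple $(\psi_{\mathrm{horiz}}(g_{h_1}),\ldots,\psi_{\mathrm{horiz}}(g_{h_4}))$ coming from the $\eta_j$'s, together with \emph{bilinear} (via $\omega$) constraints from the $w_i$'s. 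Since the number of possible $(\{\eta_j\},\{w_i\})$ up to this size bound is $\delta^{-O(d)^{O(1)}}$, one can pigeonhole and fix a single such collection which is witnessed by $\delta^{O(d)^{O(1)}}|H|^3$ quadruples $(h_1,h_2,h_3,h_4)$ satisfying $h_1+h_2=h_3+h_4$.

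The next step is the Furstenberg--Weiss style extraction: the fixed linear system in the four horizontal components, valid on a dense set of additive quadruples, can be split into an $h$-independent part (components that are forced to be constant across the quadruple, yielding the $\xi^j_*Z_j$ and the constant $\alpha_i X_i$ pieces) and an $h$-dependent part (components that vary with $h$). The Cauchy--Schwarz / popular-difference input that the quadruples form a dense subset of $H\times H\times H\times H$ constrained by $h_1+h_2=h_3+h_4$ then lets us, by fixing $h_3,h_4$ generically and varying $h_1,h_2$, transfer the constraints from the quadruple system to individual $g_h$ for $h$ in a refined set $H'\subseteq H$ with $|H'|\ge\delta^{O(d)^{O(1)}}|H|$. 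The crucial consequence of the bilinear $w_i$-constraints is that the asymmetric bilinear form $\omega$ vanishes identically on the span of those Mal'cev directions $Y_{r+1},\ldots,Y_k$ along which the horizontal component of $g_h$ still varies with $h$; this is precisely the Furstenberg--Weiss argument used in the sunflower step of \cite{GTZ11} adapted to the present refined formulation.

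The main obstacle I anticipate is carrying out the pigeonhole/transfer step efficiently enough to obtain polynomial-in-$\delta$ loss rather than the worse $\exp(-\exp(O(\log\log(1/\delta)^2)))$ loss of a naive Cauchy--Schwarz iteration; this is where Sah--Sawhney's more careful pigeonholing enters. Once the refined set $H'$ and the fixed linear/bilinear data are in hand, I would finish by a linear-algebra change of Mal'cev basis, absorbing denominators of the integral linear relations into a single integer $q\le\delta^{-O(d)^{O(1)}}$ and applying \thref{changeofvar} to replace $n$ by $qn$ (using primality of $N$ to invert $q$). The resulting decomposition $g_h(qn)\equiv\sum_{i\le r}\alpha_i X_i+\sum_{i>r}\xi_h^iY_i+\sum_j\xi^j_*Z_j\pmod{\Gamma}$ with $\omega|_{\mathrm{span}(Y_i)}=0$ is exactly the conclusion claimed, so the proof reduces to verifying the bounds on the basis change and the vanishing of $\omega$ on the $Y_i$ directions, both of which follow from the commutator equations $\xi([w_i,g_h])\equiv0$.
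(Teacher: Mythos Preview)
Your overall strategy matches the paper's: realize $\Phi_{\vec h}$ as a nilcharacter on $G^{(4)}$, apply the two-step equidistribution theorem (the paper uses \thref{twostepcor}, you use \thref{twosteppolynomial}; these are equivalent here), pigeonhole to a common isotropic datum, and extract. However, your extraction step has a genuine gap.

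You write that after fixing $h_3,h_4$ generically and varying $h_1,h_2$, the constraints transfer to individual $g_h$, and that $\omega$ vanishes on the $h$-varying directions ``from the commutator equations $\xi([w_i,g_h])\equiv 0$''. This does not work: fixing a \emph{single} pair $(h_3,h_4)$ gives linear relations forcing certain combinations of $\psi_{\mathrm{horiz}}(g_{h_1})$ to be constant, but gives no reason why the residual $h$-dependent directions should be $\omega$-isotropic among themselves. The equations $\xi([w_i,g_h])=0$ constrain $g_h$ relative to the fixed $w_i$, not the $Y_j$'s relative to each other.

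The paper's mechanism is this. After pigeonholing to a common isotropic subspace $V\subseteq\psi_{\mathrm{horiz}}(G^{(4)})$, form the two projections $V_{123}=\pi_{123}(V)$ and $V_{124}=\pi_{124}(V)$ onto the $(h_1,h_2,h_3,*)$ and $(h_1,h_2,h_4,*)$ coordinates respectively, and set
\[
W_1=\{v:(v,0,0,0)\in V_{123}\},\qquad W_2=\{v:(v,0,0,0)\in V_{124}\}.
\]
An element of $W_1$ lifts to $(v,0,0,z,0)\in V$ and an element of $W_2$ lifts to $(w,0,z',0,0)\in V$; since the third and fourth $G$-slots are disjoint, $\tilde\omega$ of these two lifts collapses to $\omega(v,w)$, which vanishes because $V$ is isotropic. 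Thus $\omega(W_1,W_2)=0$. The Sah--Sawhney pigeonhole is then not merely a device for controlling losses: one finds $\gg\delta^{O(d)^{O(1)}}N$ many $h_1$ that simultaneously sit in a quadruple $(h_1,h_2,h_3,\cdot)\in R$ \emph{and} a quadruple $(h_1,h_2',\cdot,h_4')\in R$ with $(h_2,h_3)$ and $(h_2',h_4')$ fixed. This forces $\psi_{\mathrm{horiz}}(g_{h_1})$ to satisfy the $h_1$-component relations coming from both $V_{123}$ and $V_{124}$, so the $h$-varying directions lie in $W=W_1\cap W_2$, on which $\omega$ vanishes. Row-reducing the collected $h_1$-component characters then gives the basis $(X_1,\dots,X_r,Y_{r+1},\dots,Y_k)$ with $Y_j\in W$.

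In short, the isotropy of the $Y$-directions is not a consequence of a single pigeonhole in $(h_3,h_4)$ but of the \emph{pair} of projections $V_{123},V_{124}$ together with the two-quadruple pigeonhole; your proposal is missing this step.
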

\begin{remark}
This lemma will be used (following the procedure in Section 7.2 to change bases $(X_1, \dots, X_k, Z_1, \dots, Z_\ell)$ to $(X_1. \dots, X_r, Y_{r + 1}, \dots, Y_k, Z_1, \dots, Z_\ell)$ and reordering so that $Y_i$'s appear before the $X_i$'s and the $Z_i$'s) with the ``new" $X_i$'s consisting of $Y_{r + 1}, \dots, Y_k$ and the ``new" $Z_j$'s consisting of $X_1, \dots, X_r, Z_1, \dots, Z_\ell$. Additionally, since we can accomplish this lemma in one single application of \cite[Theorem 8]{Len2}, it follows that we do not need to consider $h$-independent phases in the hypothesis for our argument of the main theorem.
\end{remark}
\begin{proof}
Let $\omega$ denote the asymmetric bilinear form associated to $G/\Gamma$ representing the commutator bracket. Let $d = k + \ell + 1$ be the dimension of $G$. Let $G_{(4)} = \{x \in G^{(4)}: \text{the } Z_i \text{ components of } x \text{ are equal}\}$. The commutator bracket on $G^{(4)}$ is of the form $\omega_1 + \omega_2 - \omega_3 - \omega_4$ where $\omega_i$ represents the commutator bracket on the $i$th coordinate of $G^4$. We let $\tilde{\omega}$ be the restriction of that commutator bracket to $G_{(4)}$. Then by \thref{twostepcor}, and Pigeonholing in $(h_1, h_2, h_3, h_4)$, one can find a $\delta^{-O(d)^{O(1)}}$-rational subspace $V$ with $\tilde{\omega}(V, V) = 0$ such that for a set $R$ of at least $\delta^{O(d)^{O(1)}} N^3$ many additive quadruples $(h_1, h_2, h_3, h_4)$, (up to an integer shift possibly depending on $h_1, h_2, h_3, h_4$) $\psi_{horiz}(g_{h_1}, g_{h_2}, g_{h_3}, g_{h_4}, g_*)$ lies in $V$. Let $V_{123} = \pi_{123}(V)$ be the projection into the first three coordinates and the $h$-independent coordinate and $V_{124} = \pi_{124}(V)$ be the projection of $V$ into the first, second, and fourth coordinates and the $h$-independent coordinate. We write
$$V_{123} = \{(g_1, g_2, g_3, g_*): \eta_{h_1}(g_1) + \eta_{h_2}(g_{2}) + \eta_{h_3}(g_3) + \eta_*(g_*) = 0 \forall \eta = (\eta_{h_1}, \eta_{h_2}, \eta_{h_3}, \eta_*) \in S\}$$
where $S$ consists of linearly independent integer vectors. Similarly, we write
$$V_{124} = \{(g_1, g_2, g_4, g_*): \eta_{h_1}'(g_1) + \eta_{h_2}'(g_{2}) + \eta_{h_4}'(g_4) + \eta_*'(g_*)= 0 \forall \eta' = (\eta_{h_1}', \eta_{h_2}', \eta_{h_4}', \eta_*') \in S'\}.$$
Since $V$ is $\delta^{-O(d)^{O(1)}}$-rational, it follows that $V_{123}$ is also $\delta^{-O(d)^{O(1)}}$-rational, so by Cramer's rule (e.g., \cite[Lemma A.8]{Len2}), it follows that we may take all $\eta_{h_i}$, $\eta_{h_i}'$, $\eta_*$, $\eta_*'$ to be of size $\delta^{-O(d)^{O(1)}}$. Let
$$W_1 = \{v \in \text{span}(X_1, \dots, X_k): (v, 0, 0, 0) \in V_{123}\}$$
$$W_2 = \{v \in \text{span}(X_1, \dots, X_k): (v, 0, 0, 0) \in V_{124}\}.$$
Note that $v$ lies in $W_1$ if and only if $\eta_{h_1}(v) = 0$ for all $\eta \in S$ and similarly $w \in W_2$ if and only if $\eta_{h_1}'(w) = 0$ for all $\eta' \in S'$. Furthermore, since $\tilde{\omega}(V, V) = 0$, and $v \in W_1$ and $w \in W_2$, it follows that $(v, 0, 0, 0) \in V_{123}$ lifts to $(v, 0, 0, z, 0)$ and $(w, 0, 0, 0) \in V_{124}$ lifts to $(w, 0, z', 0, 0)$ for elements $z, z' \in \text{span}(Z_1, \dots, Z_\ell)$. Thus, denoting $\tilde{v}$ and $\tilde{w}$ these lifts, we have $0 = \tilde{\omega}(\tilde{v}, \tilde{w}) = \omega(v, w)$. Hence $\omega(W_1, W_2) = 0$. Let $W = W_1 \cap W_2$. Note that an element $w \in W$ if and only if for each $\eta \in S$ and $\eta' \in S'$ that $\eta_{h_1}(w) = 0$ and $\eta_{h_1}'(w) = 0$. \\\\
By the Pigeonhole principle, there are thus $\delta^{O(d)^{O(1)}}N$ many $h_1$ which is part of at least $\delta^{O(d)^{O(1)}} N^2$ additive quadruples $(h_1, h_2, h_3, h_4)$ in $R$. Thus, there are at least $\delta^{O(d)^{O(1)}}N^5$ many elements $(h_1, h_2, h_3, h_2', h_4')$ such that $(h_1, h_2, h_3, h_1 + h_2 - h_3) \in R$ and $(h_1, h_2', h_1 + h_2' - h_4', h_4') \in R$. By the Pigeonhole principle, there exists choices $(h_2, h_3)$ and $(h_2', h_4')$ such that there are at least $\delta^{O(d)^{O(1)}}N$ many $h_1$ with $(h_1, h_2, h_3, h_1 + h_2 - h_3) \in R$ and $(h_1, h_2', h_1+ h_2' - h_4', h_4') \in R$. Let $H'$ denote this set of $h_1$'s. \\\\
Let the projection to $h_1$ component of elements in $S \cup S'$ be $(\tilde{\eta}^1, \dots, \tilde{\eta}^r)$ and suppose without a loss of generality that $\tilde{\eta}^i$ are linearly independent. Let us assume without a loss of generality that $\tilde{\eta}^1, \dots, \tilde{\eta}^r$ are Gaussian eliminated and (after possibly reordering the variables) are in row-reduced echelon form. This can be done while keeping similar bounds for these elements. Noticing that $h_2, h_3, h_2', h_4'$ are fixed, there exists fixed $h$-independent phases $\alpha_1, \dots, \alpha_r$ such that for each $h \in H'$, we have
$$\tilde{\eta}^i(g_h) + \alpha_i \equiv 0 \pmod{1}.$$
By scaling $g_h(\cdot)$ by $q$, we may assume that the first nonzero component of $\tilde{\eta}^i$ is an integer. Thus (since $\tilde{\eta}^1, \dots, \tilde{\eta}^r$ are in row-reduced echelon form), we may write
$$\psi_{horiz}(g_h(q \cdot)) \equiv \alpha_1 X_1 +  \cdots + \alpha_r X_r + \xi_h^{r + 1}Y_{r + 1} + \cdots + \xi_h^{k}Y_{k} + \xi_*^1Z_1 + \cdots + \xi_*^\ell Z_\ell \pmod{\Gamma}$$
as prescribed in the lemma. We claim that $Y_j \in W$ for each $j$. This is because $\tilde{\eta}^1, \dots, \tilde{\eta}^r$ are in row-reduced echelon form so if
$$g = \sum_j g_j X_j$$
and $\tilde{\eta}^i(g) = 0$, then we may write
$$\tilde{\eta}^i(qg) = \sum_j g_j Y_j.$$
Since $Y_j$'s are linearly independent, the set of all such $g$'s have the same span as the set of the $Y_j$'s and we note that $W = \bigcap_{i = 1}^r \text{ker}(\tilde{\eta}^i)$. Hence the result.
\end{proof}
Thus, we have:
\begin{proposition}[Sunflower decomposition]
There exists a set $H$ of size at most 
$$\delta^{\log(1/\delta)^{O(1)}} N$$ 
and associated Fourier expanded nilcharacters $\chi_h(n)$ associated to $h$ on a ($h$-independent) nilmanifold $G/\Gamma$ such that
$$|\mathbb{E}_{n \in \mathbb{Z}_N} \Delta_h \tilde{f}(n) \chi_h(n)| \ge \delta^{\log(1/\delta)^{O(1)}}$$
for some function $\tilde{f} = f(y \cdot )$ where $y$ is an integer relatively prime to $N$, and
the frequency set of $\chi_h$ decomposes as $\Xi_h$, an $h$-dependent set, and $\Xi_*$, an $h$-independent set both of size at most $\log(1/\delta)^{O(1)}$ such that $\omega(\Xi_h^G, \Xi_h^G) = 0$.
\end{proposition}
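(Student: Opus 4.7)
The plan is to chain together the ingredients developed in this section. Starting from the $U^4$ hypothesis $\|f\|_{U^4(\mathbb{Z}/N\mathbb{Z})} \ge \delta$, the identity $\|f\|_{U^4(\mathbb{Z}/N\mathbb{Z})}^{16} = \mathbb{E}_h \|\Delta_h f\|_{U^3(\mathbb{Z}/N\mathbb{Z})}^8$ combined with pigeonholing isolates a set of $\delta^{O(1)} N$ many $h$ on which $\|\Delta_h f\|_{U^3(\mathbb{Z}/N\mathbb{Z})} \ge \delta^{O(1)}$. For each such $h$ I would invoke \thref{bracketpolynomialu3} to obtain a degree-two bracket polynomial phase $\phi_h$ with at most $\log(1/\delta)^{O(1)}$ bracketed frequencies and
$$|\mathbb{E}_n \Delta_h f(n) e(\phi_h(n))| \ge \exp(-\log(1/\delta)^{O(1)}),$$
and then apply \thref{u3fourierexpandednilcharacter} to rewrite each $e(\phi_h)$, modulo lower-order terms, as a Fourier-expanded nilcharacter $\chi_h(n) = F(g_h(n)\Gamma)$ on a common two-step nilmanifold $G/\Gamma$ of dimension $d = \log(1/\delta)^{O(1)}$ and bounded complexity.

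Next I would apply \thref{additivequadruples} with $(f_1,f_2) = (\overline{f}, f)$ to harvest at least $\delta^{\log(1/\delta)^{O(1)}} N^3$ additive quadruples $(h_1,h_2,h_3,h_4)$ with $h_1 + h_2 = h_3 + h_4$ satisfying
$$\left|\mathbb{E}_n \chi_{h_1}(n)\chi_{h_2}(n + h_1 - h_4)\overline{\chi_{h_3}(n)\chi_{h_4}(n + h_1 - h_4)}\right| \ge \delta^{\log(1/\delta)^{O(1)}}.$$
After a smooth truncation of $F$ that forces each $\chi_h$ to vanish near the boundary of the fundamental domain at $L^1$ cost $\delta^{\log(1/\delta)^{O(1)}}$, the four-fold product above is realized as a Fourier-expanded nilcharacter on $G^{(4)} = G^4/\mathrm{ker}(Y_1+Y_2-Y_3-Y_4)$ via the tensor-product recipe recorded earlier in the subsection.

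I would then feed this input into \thref{sunflower}, which returns a subset $H' \subseteq H$ of density $\delta^{\log(1/\delta)^{O(1)}}$ in $H$, an integer $q$ of the same quasi-polynomial size, fixed constants $\alpha_1,\dots,\alpha_r$, and vectors $Y_{r+1},\dots,Y_k$ spanning an isotropic subspace for the commutator form $\omega$, together with a Mal'cev-basis ordering witnessing
$$\psi_{\mathrm{horiz}}(g_h(qn)) \equiv \alpha_1 X_1 + \cdots + \alpha_r X_r + \xi_h^{r+1} Y_{r+1} + \cdots + \xi_h^k Y_k + \xi_*^1 Z_1 + \cdots + \xi_*^\ell Z_\ell \pmod{\Gamma}$$
for every $h \in H'$. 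Applying \thref{changeofvar} to swap to the reordered basis $(Y_{r+1},\dots,Y_k, X_1,\dots,X_r, Z_1,\dots,Z_\ell)$ transports $\chi_h$ to a Fourier-expanded nilcharacter on a new two-step nilmanifold $\tilde{G}/\tilde{\Gamma}$ of complexity $\delta^{-\log(1/\delta)^{O(1)}}$; under this basis the $h$-dependent horizontal coordinates occupy the $Y$-block while the $h$-independent ones (including the $\alpha_i$, which become $h$-independent linear phases) occupy the $X,Z$-block. Labelling these blocks $\Xi_h$ and $\Xi_*$ respectively yields $|\Xi_h|, |\Xi_*| \le \log(1/\delta)^{O(1)}$, and the isotropy conclusion of \thref{sunflower} is precisely $\omega(\Xi_h^G, \Xi_h^G) = 0$.

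Finally I would dilate the correlation by $q$. Since $N$ is prime and one may assume $|q| \le \delta^{-\log(1/\delta)^{O(1)}} < N$ (else the trivial bound already defeats the target), $q$ admits a modular inverse in $\mathbb{Z}/N\mathbb{Z}$. Substituting $n \mapsto qn$ and reindexing $h \leftrightarrow qh'$ in the $h$-parameter converts $\Delta_{qh'} f(qn)$ into $\Delta_{h'}\tilde{f}(n)$ with $\tilde{f}(n) := f(qn)$; taking $y = q$ presents $\tilde{f}$ in the form $f(y\cdot)$ required by the proposition, and the resulting correlation inherits the lower bound $\delta^{\log(1/\delta)^{O(1)}}$. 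The main obstacle to flag is quantitative bookkeeping: every pigeonhole, basis change, and smooth truncation must preserve the quasi-polynomial shape $\delta^{\log(1/\delta)^{O(1)}}$, which is exactly why one invokes Sanders-strength $U^3$ inversion at the start and the one-shot \thref{sunflower} in place of the iterated Ratner factorization used in \cite{GTZ11}.
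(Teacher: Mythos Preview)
Your reconstruction is correct and matches the paper's approach essentially verbatim: the proposition is stated in the paper as a summary of the chain $U^4 \to U^3$ inverse (\thref{bracketpolynomialu3}) $\to$ Fourier-expanded nilcharacters (\thref{u3fourierexpandednilcharacter}) $\to$ \thref{additivequadruples} $\to$ smooth truncation and realization on $G^{(4)}$ $\to$ \thref{sunflower} $\to$ basis change via \thref{changeofvar} $\to$ absorb the scaling into $\tilde f = f(y\cdot)$. The only cosmetic difference is that the paper invokes \thref{additivequadruples} before passing to the nilcharacter representation rather than after, and the scaling parameter $y$ absorbs both the $q$ from \thref{sunflower} and the factor of $2$ from \thref{changeofvar}; neither affects the argument.
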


\subsection{Linearization}
We begin with the following lemma:
\begin{lemma}\thlabel{bracketlinear}
Suppose $H \subseteq \mathbb{Z}_N$ and $f: H \to (\widehat{\mathbb{Z}_N})^d$ has $\delta N^3$ additive quadruples it preserves. Then on a subset of $H$ of size at least $\exp(-O(\log^4(1/\delta)))N$, $f$ can be written as $f(n) = (f_1(n), \dots, f_d(n))$
$$f_j(n) = \sum_{i = 1}^\ell a_{ij} \{\alpha_i (n - k)\}$$
where $a_{ij} \in \mathbb{R}$, $\alpha_i \in \widehat{\mathbb{Z}_N}$, $k$ an integer, and $\ell \le O(\log^{O(1)}(1/\delta))$.
\end{lemma}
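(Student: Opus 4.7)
The plan is to view $f$ as a graph inside the product group and to combine the Balog--Szemer\'edi--Gowers theorem with the quasi-polynomial Freiman theorem of Sanders. Set
$$\Gamma_f := \{(h,f(h)) : h \in H\} \subseteq \mathbb{Z}_N \times (\widehat{\mathbb{Z}_N})^d.$$
The hypothesis that $f$ preserves $\delta N^3$ additive quadruples says precisely that the additive energy of $\Gamma_f$ in $\mathbb{Z}_N \times (\widehat{\mathbb{Z}_N})^d$ satisfies $E(\Gamma_f) \gtrsim \delta N^3$. By the Balog--Szemer\'edi--Gowers theorem, there is a subset $\Gamma' \subseteq \Gamma_f$ with $|\Gamma'| \gg \delta^{O(1)}|H|$ and doubling $|\Gamma'+\Gamma'| \le \delta^{-O(1)}|\Gamma'|$; in particular the projection $H_0$ of $\Gamma'$ to $\mathbb{Z}_N$ has size $\gg \delta^{O(1)}|H|$.

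Next, I would apply Sanders' quasi-polynomial Freiman theorem to $\Gamma'$ to obtain a proper coset progression $P = p_0 + \{\sum_{i=1}^\ell m_i (v_i,w_i) : |m_i| \le L_i\} \subseteq \mathbb{Z}_N \times (\widehat{\mathbb{Z}_N})^d$ of rank $\ell \le \log^{O(1)}(1/\delta)$ containing $\Gamma'$, with $|P| \le \exp(\log^{O(1)}(1/\delta))|H|$. Refining $\Gamma'$ to a further subset of density at least $\exp(-\log^{O(1)}(1/\delta))$ on which $P$ is injective (\emph{proper}) and pigeonholing on a fixed base point $k \in H_0$, every $n \in H_0$ in this refinement satisfies a unique representation
$$n - k \equiv \sum_{i=1}^\ell m_i(n) v_i \pmod{N}, \qquad f(n) - f(k) = \sum_{i=1}^\ell m_i(n) w_i,$$
with integers $|m_i(n)| \le L_i$.

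To convert this to the stated bracket form, I would choose for each $i$ a dual frequency $\alpha_i \in \widehat{\mathbb{Z}_N}$ such that $v_i \alpha_i$ is close to $1/(2L_i)$ modulo $1$; equivalently, rescaling the generators so that the map $n - k \mapsto (\{\alpha_i(n-k)\})_{i=1}^\ell$ recovers, up to an affine change of variables, the coefficients $m_i(n)/L_i$. Substituting this into the formula for $f(n) - f(k)$ and absorbing $f(k)$ into the $\alpha_i = 0$ coefficients (or treating $k$ as the shift parameter in $\{\alpha_i(n-k)\}$) gives
$$f_j(n) = \sum_{i=1}^\ell a_{ij}\{\alpha_i(n-k)\},$$
where $a_{ij} := L_i (w_i)_j \in \mathbb{R}$, on a set of size $\gg \exp(-\log^{O(1)}(1/\delta))N$; the exponent $4$ in $\exp(-O(\log^4(1/\delta)))$ is read off by summing the BSG polynomial loss with Sanders' cubic--to--quartic loss in his quasi-polynomial Freiman theorem.

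The main obstacle is bridging Sanders' \emph{structural} containment (the graph lies in a low-rank coset progression) with the explicit \emph{functional} bracket expression: one needs that for each $n$ in the refined set, the integer coordinates $m_i(n)$ are determined by $n$ alone (not by auxiliary choices), and that they are computable from fractional parts of a single set of frequencies $\alpha_i$ independent of $n$. Properness of the progression handles uniqueness, while an additional pigeonholing absorbs the side lengths into $(\alpha_i,a_{ij})$ so that $\ell, \alpha_i, a_{ij}$ are $n$-independent. Tracking the cumulative density loss through BSG, Sanders, propernessisation, and pigeonholing yields the $\exp(-O(\log^4(1/\delta)))N$ bound claimed in the lemma.
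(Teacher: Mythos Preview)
Your overall strategy---BSG on the graph, then Sanders, then coordinates---is the right shape, but the conversion from ``coordinates in a coset progression'' to ``bracket linear in $n$'' has a genuine gap that the paper closes with an extra ingredient you are missing.

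Concretely: after Sanders you have $(n,f(n)) = p_0 + \sum_i m_i(n)(v_i,w_i)$ with the $m_i(n)$ uniquely determined by $(n,f(n))$ via properness. You then propose picking $\alpha_i$ with $\alpha_i v_i$ close to $1/(2L_i)$ so that $\{\alpha_i(n-k)\}$ recovers $m_i(n)$. But $\alpha_i(n-k) = \sum_j m_j(n)\,\alpha_i v_j$, and you have no control over the cross-terms $\alpha_i v_j$ for $j\neq i$; there is no reason a ``dual basis'' of frequencies to an arbitrary list $v_1,\dots,v_\ell$ in $\mathbb{Z}_N$ should exist. Properness of $P$ gives uniqueness of the $m_i$, not a formula for them in terms of $n$ alone. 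Moreover, some generators could have $v_i=0$ (living entirely in the $(\widehat{\mathbb{Z}_N})^d$ factor), in which case $m_i(n)$ is visibly not recoverable from $n$.

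The paper avoids this by inserting two steps you skip. First, a fibre argument (\cite[Lemma 9.2]{GT08b}): using $|\Gamma+\Gamma|\le \delta^{-O(1)}|\Gamma|$ one bounds the set $A=\{\xi:(0,\xi)\in 8\Gamma-8\Gamma\}$ and then pigeonholes into a cube so that $f$ becomes an $8$-Freiman homomorphism on a dense subset $H''$. Second, Sanders is applied to $H''\subseteq\mathbb{Z}_N$ (not to the graph) to produce a Bohr set $B(S,\rho)\subseteq 2H''-2H''$; one then invokes the standard fact (\cite[Propositions 10.5, 10.8]{GT08b}) that a Bohr set contains a proper GAP whose generators have \emph{linearly independent} images under $\Phi(x)=(\{\alpha x\})_{\alpha\in S}$. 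That linear independence is precisely the ``dual basis'' mechanism you were hoping for, and it is a feature of Bohr sets, not of arbitrary coset progressions coming out of Sanders applied in the product group. The Freiman-homomorphism property is also what lets the paper transport the bracket formula from $B$ to a translate $x_0+B$ intersecting $H$ at the end. Without these two steps, your outline does not produce the stated bracket representation.
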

\begin{proof}[Proof of \thref{bracketlinear}]
Balog-Szemer\'edi-Gowers \cite[Proposition 7.3]{Gow01} (or rather as it is stated \cite[Theorem 5.2]{GT08b}) shows that we may pass to a subset $H'$ of size $\delta^{O(1)}N$ where denoting $\Gamma$ to be the graph of $f$ restricted to $H'$, we have $|\Gamma + \Gamma| \le \delta^{-O(1)}|\Gamma|$. To continue the proof of \thref{bracketlinear}, we need the following lemma: 
\begin{lemma}\thlabel{expandingfreiman}
There exists a subset $H'' \subseteq H'$ of cardinality at least $\delta^{O(1)}N$ such that $f$ is a $8$-Freiman homomorphism.
\end{lemma}
\begin{proof}[Proof of \thref{expandingfreiman}]
We follow the proof of \cite[Lemma 9.2]{GT08b}. Let $\Gamma$ be the graph of $f$. Let $A \subseteq (\widehat{\mathbb{Z}_N})^d$ be the set of all $\xi$ such that $(0, \xi) \in 8\Gamma - 8\Gamma$. Since $\Gamma$ is a graph, it follows that $|\Gamma + A| = |\Gamma||A|$. On the other hand, $|\Gamma + A| \le |9\Gamma - 8\Gamma| \le \delta^{-O(d)}N$. This tells us that $|A| \le \delta^{-O(1)}$. By \cite[Lemma 8.3]{GT08b}, we may find a set $T \subseteq \mathbb{Z}_N^d$ with $|T| \le O(\log_2(1/\delta))$ such that $A \cap B(T, 1/4) = \{0\}$ where $B(T, 1/4) = \{\xi \in \widehat{\mathbb{Z}_N}^d: \|t(\xi)\|_{\mathbb{R}/\mathbb{Z} \forall t \in T} < 1/4\}$. \\\\
Let $\Psi: \widehat{\mathbb{Z}_N}^d \to (\mathbb{R}/\mathbb{Z})^T$ be the homomorphism $\Psi(\xi) = (t(\xi))_{t \in T}$. We may cover $(\mathbb{R}/\mathbb{Z})^T$ by $2^{6|T|} \le 2^6 \delta^{-O(1)}$ many cubes of side length $\frac{1}{64}$. Since $|\Gamma| \ge \delta N$, it follows that there exists some cubes $Q$ for which
$$\Gamma' := \{(h, f(h)) \in \Gamma: \Psi(f(h)) \in Q\}$$
has cardinality $\delta^{O(1)}N$. By linearity of $t$, it follows that if $(0, \xi) \in 8\Gamma' - 8\Gamma'$ then $\|t(\xi)\|_{\mathbb{R}/\mathbb{Z}} \le \frac{16}{64}$, so $\xi \in B(T, 1/4)$. On the other hand, $\xi$ also lies in $A$, so $\xi = 0$ by construction of $T$. This allows us to conclude that $4\Gamma' - 4\Gamma'$ is a graph, or in other words, $f$ is an $8$-Freiman homomorphism on $H''$, the projection of $\Gamma'$ to the first coordinate.
\end{proof}
Since $f$ is an $8$-Freiman hoomorphism on $H''$, $f$ is a Freiman homomorphism on $2H'' - 2H''$, which by \cite{San12b} and \cite[Proposition 27]{Mil21} contains a Bohr set $B = B(S, \rho)$ where $|S| \le \log(1/\delta)^{O(1)}$ and $\rho \ge \delta^C$ for some constant $C > 0$. Thus, $f$ restricts to a Freiman homomorphism on a Bohr set $f:B \to (\widehat{\mathbb{Z}/N\mathbb{Z}})^d$. We now require the following lemma which is essentially \cite[Proposition 10.8]{GT08b} to finish the proof of \thref{bracketlinear}:
\begin{lemma}\thlabel{bohrbracket}
Suppose $f: B \to \widehat{\mathbb{Z}/N\mathbb{Z}}$ is a Freiman homomorphism. Then letting $B_\epsilon = B(S, \rho\epsilon)$, it follows that on $B_{|S|^{-O(|S|)}}$, one can write
$$f(n) = \sum_i a_i \{\alpha_i n\} + \gamma$$
where each $\alpha_i \in S$.
\end{lemma}
\begin{proof}[Proof of \thref{bohrbracket}]
By \cite[Proposition 10.5]{GT08b}, $B$ contains a symmetric proper generalized arithmetic progression $P = \{\ell_1 n_1 + \cdots + \ell_{d'} n_{d'}: n_i \in [-N_i, N_i]\}$ with $1 \le d' \le |S|$ such that $P$ contains $B_{|S|^{-O(|S|)}}$ and $(\{\alpha \ell_i\})_{\alpha \in \mathbb{R}^S}$ are linearly independent. It follows that we may write
$$f(\ell_1 n_1 + \cdots  + \ell_d n_d) = n_1f(\ell_1) + \cdots + n_df(\ell_d) + f(0) = n_1k_1 + \cdots + n_dk_d + \gamma.$$
Let $\Phi: B \to \mathbb{R}^S$ be $\phi(x) = (\{\alpha x\})_{\alpha \in S}.$  We see that
$$\Phi(\ell_1n_1 + \cdots + \ell_{d'} n_{d'}) = n_1\Phi(\ell_1) + \cdots + n_d\Phi(\ell_{d'}).$$
Since $\Phi(\ell_i)$ are linearly independent, it follows that (by scaling up a vector in the orthogonal complement of $\Phi(\ell_j)$ for $j \neq i$ but not in the orthogonal complement of $\Phi(\ell_i)$ for all $i$) there exists a vector $u_i$ such that $u_i \cdot \Phi(\ell_1n_1 + \cdots + \ell_{d'} n_{d'}) = \ell_i$. Thus, writing $u_i = (u_{i\alpha})_{\alpha \in S}$, we see that
$$\ell_i = \sum_{\alpha \in S} u_{i\alpha} \{\alpha x\}.$$
The lemma follows from this.
\end{proof}
Thus, each component of $f$ agrees with a bracket linear function with at most $d$ many phases on $B_{d^{-O(d)}}$. By the Pigeonhole principle, there exists some $x_0$ such that $|(x_0 + B_{1/2}) \cap H| \ge \delta^{O(1)}N$ where $B_{1/2}$ is the $\frac{1}{2}$-dilation of $B$ (say regularized as well). Let $A = (x_0 +B) \cap H$. Then for $h, h' \in A$, we have $(h - h', f(h) - f(h'))$ lies inside $\Gamma' - \Gamma'$, so $f(x_0 + h) - f(x_0 + h') = f(h - h')$, so there exists $\xi_0$ such that $f(x_0 + h) = \xi_0 + f(h)$. Thus, there exists some set of size at least $\delta^{O_\epsilon(\log^{3 + \epsilon}(1/\delta))}N$ such that $f$ is a bracket polynomial with $\log^{3 + \epsilon}(1/\delta)$ many bracket phases.
\end{proof}
We will also need the following lemma:
\begin{lemma}[Cauchy-Schwarz inequality for energy]\thlabel{CSenergy}
We have the following:
$$E(A_1, A_2, A_3, A_4) \le E(A_1)^{1/4}E(A_2)^{1/4}E(A_3)^{1/4}E(A_4)^{1/4}.$$
\end{lemma}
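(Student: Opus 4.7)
The plan is to prove this via the Fourier representation of additive energy. First I would recall that for finite sets $A_1, \ldots, A_4 \subseteq \mathbb{Z}/N\mathbb{Z}$, writing $r_{A,B}(x) = |\{(a,b) \in A \times B : a+b = x\}|$, we have
$$E(A_1, A_2, A_3, A_4) = \sum_x r_{A_1, A_2}(x) \, r_{A_3, A_4}(x),$$
and by Parseval/Plancherel this equals
$$\sum_\xi \widehat{1_{A_1}}(\xi) \widehat{1_{A_2}}(\xi) \overline{\widehat{1_{A_3}}(\xi)} \, \overline{\widehat{1_{A_4}}(\xi)}.$$
Similarly, $E(A_i) = \sum_\xi |\widehat{1_{A_i}}(\xi)|^4$ for each $i$.

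The main step is then a single application of the four-function H\"older inequality on the Fourier side. Taking absolute values and applying H\"older with exponents $(4,4,4,4)$ gives
$$\left| \sum_\xi \widehat{1_{A_1}}(\xi) \widehat{1_{A_2}}(\xi) \overline{\widehat{1_{A_3}}(\xi)} \, \overline{\widehat{1_{A_4}}(\xi)} \right| \le \prod_{i=1}^4 \left( \sum_\xi |\widehat{1_{A_i}}(\xi)|^4 \right)^{1/4} = \prod_{i=1}^4 E(A_i)^{1/4},$$
which is the desired inequality. Since $E(A_1, A_2, A_3, A_4)$ is a nonnegative real number, the absolute value on the left is superfluous.

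Alternatively, if one wishes to avoid Fourier analysis, one can perform Cauchy--Schwarz twice directly. First apply Cauchy--Schwarz to the sum $\sum_x r_{A_1,A_2}(x) r_{A_3,A_4}(x)$ to obtain $E(A_1, A_2, A_3, A_4) \le E(A_1, A_2)^{1/2} E(A_3, A_4)^{1/2}$, where $E(A,B) := \sum_x r_{A,B}(x)^2$. Then bound $E(A_1, A_2) \le E(A_1)^{1/2} E(A_2)^{1/2}$ (and similarly for $A_3, A_4$) by another Cauchy--Schwarz, either combinatorially (counting the number of ways to write an additive quadruple) or via the identity $E(A,B) = \sum_\xi |\widehat{1_A}(\xi)|^2 |\widehat{1_B}(\xi)|^2$ and Cauchy--Schwarz on that sum. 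Multiplying these estimates gives the claim. There is no real obstacle here; the only minor point to verify is the Fourier-analytic identity for mixed energy, which is standard.
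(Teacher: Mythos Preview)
Your proof is correct. The paper takes essentially your alternative route: it writes $E(A_1,A_2,A_3,A_4)$ as $\frac{1}{N^3}\sum_z |A_1\cap(A_3+z)|\,|A_2\cap(A_4+z)|$, applies Cauchy--Schwarz once to obtain $E(A_1,A_3)^{1/2}E(A_2,A_4)^{1/2}$, and then cites \cite[Lemma~2.9]{TV10} for the two-set bound $E(A,B)\le E(A)^{1/2}E(B)^{1/2}$. Your primary Fourier/H\"older argument is a slightly different and cleaner packaging---it does the whole thing in a single H\"older step rather than two nested Cauchy--Schwarz applications---but the content is the same, and your alternative is exactly the paper's argument (modulo which pair of indices gets grouped first, which is immaterial). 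The only thing to be mildly careful about is that the paper appears to use a normalized energy (note the $1/N^3$), so your Fourier identities should carry the matching normalization constants; these cancel on both sides and do not affect the inequality.
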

\begin{proof}
We see that
$$E(A_1, A_2, A_3, A_4) = \frac{1}{N^3}\sum_{z \in (A - C) \cap (B - D)} |A \cap (C + z)||B \cap (D + z)|.$$
By Cauchy-Schwarz, we may bound this by
$$\sqrt{\left(\frac{1}{N^3}\sum_{z \in A - C} |A \cap (C + z)|^2\right)\left(\frac{1}{N^3}\sum_{z \in B - D} |B \cap (D + z)|^2\right)}.$$
We see by \cite[Lemma 2.9]{TV10} that
$$\frac{1}{N^3}\sum_{z \in A - C} |A \cap (C + z)|^2 = E(A, C) \le E(A)^{1/2}E(C)^{1/2}$$
and similarly
$$\frac{1}{N^3}\sum_{z \in B - D} |B \cap (D + z)|^2 = E(B, D) \le E(B)^{1/2}E(D)^{1/2}.$$
\end{proof}
We now prove the linearization step:
\begin{lemma}\thlabel{linearizationstep}
Suppose
$$|\mathbb{E}_{n \in [N]} \chi_{h_1}(n)\chi_{h_2}(n + h_1 - h_4)\overline{\chi_{h_3}(n)\chi_{h_4}(n + h_1 - h_4)}| \ge \delta.$$
and that $\omega(\Xi_h^G, \Xi_h^G) = 0$. Then there exists a set $S \subseteq \widehat{\mathbb{Z}_N}$ such that $|S| \le \log(1/\delta)^{O(1)}$ and a subset $H' \subseteq H$ with $|H'| \ge \log(1/\delta)^{O(1)}$ such that for each $h \in H'$ there exists some $k \in \mathbb{Z}_N$ such that denoting $\xi_h^{struc}$ as a generic element of the form
$$\xi_h^{struc} = \sum_{\alpha \in S} a_\alpha \{\alpha(h - k)\}$$
where $a_\alpha \in \mathbb{R}$ so that each $h \mapsto \{\alpha(h - k)\}$ is a Freiman homomorphism on $H'$, we may write for some appropriate integer scaler $q$ with $|q| \le \delta^{-O(d)^{O(1)}}$ and some $r > 0$, such that if
$$\psi_{horiz}(g(n)) = \sum_i \xi_h^i n X_i + \sum_j \xi_*^j n Z_j,$$
we have (for some ordering of the basis $(X_1, \dots, X_k, Z_1, \dots, Z_\ell)$),
$$\psi_{horiz}(g_h(qn)) \equiv \sum_{j \le r} \xi_h^{struc, j} nX_j  + \sum_{j > r} \xi_h^j n Y_j + \sum_{j \ge r_1} \xi_*^j n \tilde{Z}_j \pmod{\Gamma}$$
where $\tilde{Z}_j$ is a $\delta^{-d^{O(1)}\log(1/\delta)^{O(1)}}$-integer span of $Z_j$ and $Y_j$ is the $\delta^{-d^{O(1)}\log(1/\delta)^{O(1)}}$-integer span of the $X_i$'s and the $X_i, Y_i, \tilde{Z}_i$ are linearly independent and the restriction of $\omega(Y_i, \tilde{Z}_j) = 0$ for each $i$ and $j$.
\end{lemma}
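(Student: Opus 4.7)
The plan is to combine bracket polynomial manipulations as in Section 3 with the Balog--Szemer\'edi--Gowers/Sanders machinery encapsulated in \thref{bracketlinear}. After the sunflower decomposition we may write each $\chi_{h_i}(n)$ as a periodic elementary bracket quadratic of the form $e\bigl(-\sum_j (\xi_{h_i})^j n[\beta^j n] - \sum_j (\xi_*)^j n[\gamma^j n] + P_{h_i}(n)\bigr)$ where $\beta, \gamma$ are $h$-independent. Using the shift identity
$$\alpha(n+h)[\beta(n+h)] - \alpha n[\beta n] \equiv \alpha\beta nh + \alpha n[\beta h] + \alpha h[\beta n] + [\text{Lower order terms}] \pmod 1$$
and the additive quadruple relation $h_1 - h_4 = h_3 - h_2$, the product
$\chi_{h_1}(n)\chi_{h_2}(n+h_1-h_4)\overline{\chi_{h_3}(n)\chi_{h_4}(n+h_1-h_4)}$
collapses, up to lower order terms, to a bracket expression involving only $\sigma := \xi_{h_1} + \xi_{h_2} - \xi_{h_3} - \xi_{h_4}$ paired against the $h$-independent frequencies $\xi_*$; crucially, the cross products $\xi_{h_i}\cdot\xi_{h_j}$ produce no irreducible bracket quadratic by the sunflower property $\omega(\Xi_h^G,\Xi_h^G)=0$.

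Eliminating the remaining lower order terms via \thref{onevarfouriercomplexity} and \thref{bilinearfouriercomplexity}, the hypothesis becomes, for $\delta^{O(d^{O(1)})} N^3$ additive quadruples, the existence of a frequency $\tilde\alpha$ with
$$\bigl|\mathbb{E}_{n} e\bigl(n\tilde\alpha + n\,\omega(\sigma,\{\xi_* n\}) + [\text{Lower order terms}]\bigr)\bigr| \ge \delta^{O(d^{O(1)})}.$$
Applying \thref{bracketpolynomialcorollary} and Pigeonholing in the output characters (as in the finish of \thref{twostepcase}) yields linearly independent $h$-independent horizontal characters $\eta_1,\dots,\eta_s$ of size $\delta^{-O(d^{O(1)})}$ such that for $\delta^{O(d^{O(1)})} N^3$ quadruples, $\eta_j(\sigma) \equiv 0 \pmod 1$ for each $j$. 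Equivalently, the composite map $h \mapsto \pi(\xi_h)$, where $\pi$ denotes projection onto the quotient by the lattice $\Lambda$ generated by the $\eta_j$, preserves a dense family of additive quadruples.

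I now invoke \thref{bracketlinear} applied to this composite map. This produces a subset $H' \subseteq H$ of size at least $\exp(-O(\log^{4}(1/\delta)))|H|$, an integer $k$, and a set $S \subseteq \widehat{\mathbb{Z}_N}$ of size $\log(1/\delta)^{O(1)}$ such that on $H'$,
$$\pi(\xi_h) = \sum_{\alpha \in S} a_\alpha \{\alpha(h-k)\},$$
with each map $h\mapsto\{\alpha(h-k)\}$ a Freiman homomorphism on $H'$. To package this into the stated basis, I choose a new Mal'cev basis $(X_1,\dots,X_r,Y_{r+1},\dots,Y_k,\tilde Z_1,\dots,\tilde Z_\ell)$ of the horizontal torus in which the first $r$ coordinates span a complement to $\bigcap_j \ker(\eta_j)$ while $Y_{r+1},\dots,Y_k$ span $\bigcap_j \ker(\eta_j) \cap \mathrm{span}(X_1,\dots,X_k)$, keeping $\tilde Z_j$ as a $\delta^{-O(d^{O(1)})}$-integer combination of the original $Z_j$'s. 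After scaling by an integer $q$ of size $\delta^{-O(d^{O(1)})}$ to clear denominators, the bracket-linear piece of $\xi_h$ coming from $\pi(\xi_h)$ lands in the $X_j$-coordinates (giving $\xi_h^{\text{struc},j}$ for $j\le r$), while the remaining $h$-dependent mass lies along the $Y_j$'s inside $\bigcap \ker(\eta_j)$. The isotropy $\omega(Y_i,\tilde Z_j) = 0$ is inherited directly from the sunflower vanishing $\omega(\Xi_h^G,\Xi_h^G)=0$ combined with $\pi(\xi_h)\subseteq\mathrm{span}(X_1,\dots,X_r)$.

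The main obstacle will be the final linear algebra step: ensuring simultaneously that (i) the bracket-linear structure from \thref{bracketlinear} is cleanly absorbed into the $X_j$ coordinates, (ii) the new basis elements $Y_j$, $\tilde Z_j$ have bounded complexity, and (iii) the isotropy condition $\omega(Y_i,\tilde Z_j)=0$ is preserved. The first two follow from a Cramer's-rule argument as in \cite[Lemma A.8]{Len2}; the third requires one additional Pigeonhole within the kernel subspace to arrange compatibility of the $\Lambda$-quotient basis change with the isotropic subspace produced by the sunflower step.
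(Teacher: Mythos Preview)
Your overall strategy is close to the paper's, but there is a genuine gap in the final isotropy claim. The sunflower step gives $\omega(\Xi_h^G,\Xi_h^G)=0$, i.e.\ vanishing of the bilinear form on pairs of $h$-\emph{dependent} directions. The conclusion of the linearization lemma, however, asserts $\omega(Y_i,\tilde Z_j)=0$ where the $Y_i$ lie in the $h$-dependent span and the $\tilde Z_j$ lie in the $h$-\emph{independent} span. This cross-isotropy is new information and is precisely the content the linearization step must extract; it does not follow from the sunflower vanishing combined with the inclusion $\pi(\xi_h)\subseteq\mathrm{span}(X_1,\dots,X_r)$.

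In the paper's argument this is obtained as follows. Applying the equidistribution theorem to $\tilde\chi_{h_1,h_2,h_3,h_4}$ produces an isotropic subspace $V$ (with respect to the modified form $\omega'$ that has $\omega'(Z_i,Z_j)=0$) together with annihilating characters $\eta^i=(\eta^i_h,\eta^i_*)$. One then takes any $w\in\mathrm{span}_{\mathbb Z}(X_1,\dots,X_k)$ orthogonal to all the $\eta^i_h$; since $(w,0)$ is then orthogonal to every $\eta^i$, one has $(w,0)\in V$, whence $\omega'((w,0),(\sigma,g_*))=0$. The sunflower vanishing kills the $\omega(w,\sigma)$ piece, leaving $\omega(w,g_*)\equiv 0\pmod 1$. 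These relations $\nu_j:=\omega(w_j,\cdot)$ on the $Z$-coordinates are what cut down $\mathrm{span}(Z_1,\dots,Z_\ell)$ to the $\tilde Z_j$'s, and it is the defining property $\omega(w_j,\tilde Z_\ell)=0$ that yields $\omega(Y_i,\tilde Z_j)=0$ (since the $Y_i$'s span the same kernel as the $w_j$'s). Your proposal extracts only the $\eta_j$'s from the equidistribution output and never constructs the companion elements $w_j$ nor the induced relations on $g_*$; without them the $\tilde Z_j$'s are not defined and the cross-isotropy cannot be established.
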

\begin{proof}
By using various bracket polynomial identities, we may rewrite 
$$\chi_{h_1}(n)\chi_{h_2}(n)\overline{\chi_{h_3}(n)\chi_{h_4}(n)}$$
as a free nilcharacter of the form $\tilde{\chi}_{h_1, h_2, h_3, h_4}(n)$ which has the same free nilcharacter of $\chi_h$ except that at each phase $\xi_h$, we replace it with $\xi_{h_1} + \xi_{h_2} - \xi_{h_3} - \xi_{h_4}$ and there are no phases of the form $\xi_*^i n[\xi_*^j n]$. Thus, there exists some $\alpha_{h_1, h_2, h_3, h_4}'$ and $\beta_{h_1, h_2, h_3, h_4}'$ such that
$$|\mathbb{E}_{n \in [N]} \tilde{\chi}_{h_1, h_2, h_3, h_4}(n)e(\alpha'_{h_1, h_2, h_3, h_4}n^2 + \beta'_{h_1, h_2, h_3, h_4}n)| \ge \delta^{O(d)^{O(1)}}.$$
Let $g_{h_1, h_2, h_3, h_4}$ denote the nilsequence corresponding to $\chi_{h_1, h_2, h_3, h_4}$ and \cite[Theorem 8]{Len2} shows that (up to an integer shift), $g_{h_1, h_2, h_3, h_4}$ lies in a subspace $V$ such that $\omega'(V, V) = 0$ where $\omega$ is the asymmetric bilinear form corresponding to $G/\Gamma$ and $\omega'(X_i, X_j) = \omega(X_i, X_j)$ and $\omega'(X_i, Z_j) = \omega(X_i, Z_j)$, and $\omega'(Z_i, Z_j) = 0$ (this last fact reflects there being no terms of the form $\xi_*^i [\xi_*^j]$). Letting $\eta^i = (\eta^i_h, \eta^i_*)$ be linearly independent and span the annihilators of $V$. We have
$$\eta^i(g_{h_1, h_2, h_3, h_4}) = \eta^i_h(\xi_{h_1} + \xi_{h_2} - \xi_{h_3} - \xi_{h_4}) + \eta^i_*(\xi_*) \equiv 0 \pmod{1}.$$
Thus, by a combination of \thref{CSenergy} and \thref{bracketlinear}, there exists a subset $H'$ of $H$ of size at least $\delta^{(d\log(1/\delta))^{O(1)}}|H|$ such that for each $h \in H'$, $\eta^i_h(\xi_h)$ is of the prescribed bracket polynomial form (and is a Freiman homomorphism) and such that for many additive quadruples, we have
$$\eta^i_h(\xi_{h_1} + \xi_{h_2} - \xi_{h_3} - \xi_{h_4}) + \eta^i_*(\xi_*) \equiv 0 \pmod{1}.$$
Let $w \in \text{span}_{\mathbb{Z}}(X_1, \dots, X_k)$ be orthogonal to all of the $\eta^i_h$. It follows that $(w, 0)$ is orthogonal to $\eta^i$ and thus $w \in V$. Hence
$$\omega'(w, g_{h_1, h_2, h_3, h_4}) = \omega(w, g_*) \equiv 0 \pmod{1}$$
where $g_*$ is the $h$-independent part of $g_{h_1, h_2, h_3, h_4}$. Choosing such $w_1, \dots, w_s$ of size at most $\delta^{-d^{O(1)}\log(1/\delta)^{O(1)}}$ to be in the span of the orthogonal subspace of the $\eta_i^h$'s, and reducing $r$ when necessary, we may assume that $\omega(w_i, \cdot)$ are linearly independent relations and that $\omega(w_i, g_*) \equiv 0 \pmod{1}$. Let $W$ be the subspace of $\psi_{horiz}(G)$ defined as $\{x: \eta^i_h(x) = 0, \omega(w_j, x) = 0 \forall i, j\}$. It follows that if $x, y \in W$ and $x$ lies in the span of $X_i$'s then $\omega(x, y) = 0$. This is because $x \in W$ implies that $\eta^i_h(x) = 0$, so $x$ can be generated by a combination of the $w_j$'s. However, $\omega(w_j, y) = 0$. Hence, $\omega(x, y) = 0$. Writing $\nu_1, \dots, \nu_s$ to be the relations defined by $\omega(w_j, \cdot)$, we have that
$$\eta_i^h(g_h) + \xi_h^{struc, i} \equiv 0 \pmod{1}$$
$$\nu_j(g_*) \equiv 0\pmod{1}.$$
We assume that $\eta_i$'s and $\nu_j$'s are written (after possibly reordering variables) in row-reduced echelon form. Thus, writing
$$\psi_{horiz}(g_h) = \sum_i \xi_h^i X_i + \sum_j \xi_*^j Z_j$$
and scaling $g_h$ by an appropriate integer, it follows that there exists some integer $q \le \delta^{-(\log(1/\delta)d)^{O(1)}}$ such that taking the linear relations above into account, we may write
$$\psi_{horiz}(g_hq) \equiv \sum_{i \le r} \xi_{h}^{struc, i} X_i + \sum_{i > r} \xi_h^i Y_i + \sum_j \tilde{\xi}_*^j \tilde{Z}_j \pmod{\Gamma}$$
where $\tilde{Z}_j$ is a $\delta^{-(d\log(1/\delta))^{O(1)}}$-integer combination of $Z_j$'s and $Y_i$ is a $\delta^{-(d\log(1/\delta))^{O(1)}}$-integer combination of the $X_i$'s and that the combination of $X_i, Y_i, Z_i$ are linearly independent and $\xi^{struc, i}_h$ are of the bracket polynomial form as specified above, and that $Y_i$ and $\tilde{Z}_j$ lie inside $W$. The last property we can guarantee because the subspace $\{x \in \text{span}(X_1, \dots, X_k): \eta_h^i(x) = 0\}$ is spanned by $Y_i$'s and the subspace $\{x \in \text{span}(Z_1, \dots, Z_\ell): \nu_j(x) = 0\}$ is spanned by $\tilde{Z}_j$'s.
The result follows.
\end{proof}
\begin{remark}
This proof can be made entirely free of use of ``bracket polynomial identities" and ``Fourier complexity lemmas." In particular, the step where we take our modified bracket polynomial $g_{h_1, h_2, h_3, h_4}$ and replace it with $g_h$ but with $\xi_h$ replaced with $\xi_{h_1} + \xi_{h_2} - \xi_{h_3} - \xi_{h_4}$ is unnecessary. One can quotient by the subspace of $\{\xi_{h_1} + \xi_{h_2} - \xi_{h_3} - \xi_{h_4} = 0\}$ and argue that any horizontal character on $G_{(4)}$ that annihilates that subspace must be of the form 
$$\eta^i(\xi_{h_1}, \xi_{h_2}, \xi_{h_3}, \xi_{h_4}, \xi_*) = \eta^i_h(\xi_{h_1} + \xi_{h_2} - \xi_{h_3} - \xi_{h_4}) + \eta^i_*(\xi_*).$$
\end{remark}
By another application of the procedure in Section 8.2, we may thus write
$$\psi(g_h(n)) = (\xi_h^1 n, \dots, \xi_h^\ell n, \xi_*^1 n, \dots, \xi_*^k n, 0, 0, \dots, 0, P_h(n))$$
where $\xi_h^i$ is of the form specified by the Lemma above and $\xi_*^j$ is $h$-independent and the $0$'s represent the coordinates $Z_1, \dots, Z_{r_1 - 1}$ which we eliminated in the above lemma. \\\\
This linearizes the entire bracket polynomial except for the vertical phase $P_h(n)$. We write $P_h(n) = \alpha_h n^2 + \beta_h n - P_h(1)n - {n \choose 2}\sum_{i < j} C_{[i, j]} g^i g^j $
where $(g^1, \dots, g^{k + \ell})$ is the ordering 
$$(\xi_h^1, \dots, \xi_h^\ell, \xi_*^1, \dots, \xi_*^k).$$
We note that the Mal'cev coordinates of $g_h(1)^n$ is 
$$(\xi_h^1 n, \dots, \xi_h^\ell n, \xi_*^1 n, \dots, \xi_*^k n, P_h(1)n + \frac{1}{2}{n \choose 2}\sum_{i < j} C_{[i, j]} g^i g^j).$$
To linearize the quadratic term in $P_h(n)$, it thus suffices to linearize $\alpha_h$ since if $i$ and $j$ correspond to some bases $X_{i'}$ and $X_{j'}$, then $[X_{i'}, X_{j'}] = C_{[i, j]} = 0$. We aim to linearize $\alpha_h$. It turns out that we do not need to linearize the linear term in $P_h(n)$ since we will be able to eliminate it via the symmetry argument. First we show that $\alpha_h$ is rational with denominator $2N$. To show this, we essentially follow the procedure in \thref{periodic}. Consider the sequence
$$\tau_k(g_h(n))\Gamma \times \Gamma  = (g_h(n + k), g_h(n)) \Gamma \times \Gamma = (g_h(1)^n g_h^{nlin}(n + k)[\{g_h(1)^k\}^{-1}, g(1)^{-n}], g(1)^n) \Gamma \times \Gamma$$
where $g_h^{nlin}(n) = g_h(n)g_h(1)^{-n}$. This restricts to a sequence in $G \times_{G_2} G/(\Gamma \times_{G_2 \cap \Gamma} \Gamma)$. Since $ (g_h(n + k), g_h(n)) \Gamma \times \Gamma$ is periodic, so is $(g_h(1)^n g_h^{nlin}(n + k)[\{g_h(1)^k\}^{-1}, g(1)^{-n}], g_h(1)^n) \Gamma \times \Gamma$ and its restriction. By \cite[Lemma A.3]{Len2}, each horizontal character $\eta$ on $G \times_{G_2} G$ decomposes as $\eta(g, g') = \eta_1(g) + \eta_1(g'g^{-1})$ where $\eta_1$ is a horizontal character on $G$ and $\eta_2$ is a character on $G_2/[G_2, G] = G_2$. Since $\tau_k$ is periodic modulo $N$, it follows that for any $\eta_2$, that
$$\eta_2(g_h^{nlin}(n + k)) - \eta_2(g_h^{nlin}(n)) + \eta_2([\{g_h(1)^k\}, g_h(1)^{n}]) \equiv 0 \pmod{1/N}.$$
Since $\{g_h(1)^k\}[g_h(1)^k] = g_h(1)^k$ and $[g_h(1)^k, g_h(1)^n] = 0$, it follows that
$$\eta_2([\{g_h(1)^k\}, g_h(1)^{n}]) = \eta_2([[g_h(1)^k]^{-1}, g_h(1)^{n}]).$$
Since the image of $g_h(1)^n$ under the quotient map $G \mapsto G/[G, G]$ lies inside $\Gamma /[G, G]$, it follows that 
$$\eta_2([[g_h(1)^k]^{-1}, g_h(1)^{n}]) \equiv 0 \pmod{1/N}.$$
Thus,
$$\eta_2(g_h^{nlin}(n + k)) - \eta_2(g_h^{nlin}(n)) \equiv \eta_2(2\alpha_hnk) + f(k) \equiv 0 \pmod{1/N}$$
for any $\eta_2$ and some function $f$. Hence, $\alpha_h$ is rational with denominator $2N$. Thus, using periodicity to scale $n$ by $2n$, we may work with $g_h(2n)$ instead of $g_h(n)$ so we may assume that $\alpha_h$ is rational with denominator $N$. \\\\
To linearize $\alpha_h$, we apply the linearization argument one more time, obtaining that there exists $\delta^{\log(1/\delta)^{O(\log\log(1/\delta))}} N^3$ many additive quadruples $(h_1, h_2, h_3, h_4)$ such that
$$\mathbb{E}_{n \in \mathbb{Z}/N\mathbb{Z}} e(\alpha_{h_1} n^2 + \alpha_{h_2}(n + h_1 - h_4)^2 + \beta_{h_1, h_2, h_3, h_4} n - \alpha_{h_3}n^2 - \alpha_{h_4}(n + h_1 - h_4)^2) \ge \delta^{\log(1/\delta)^{O(1)}}$$
for some phase $\beta_{h_1, h_2, h_3, h_4} \in \widehat{\mathbb{Z}/N\mathbb{Z}}$. This implies that
$$\alpha_{h_1} + \alpha_{h_2} \equiv \alpha_{h_3} + \alpha_{h_4} \pmod{1}.$$
Arguing as above, this implies that there exists some subset $H' \subseteq H$ and $k \in \mathbb{Z}/N\mathbb{Z}$ such that for each $h \in H'$,
$$\alpha_{h} = \sum_{i = 1}^{d'} a_i\{\alpha_i (h - k)\}$$
with $(\alpha_i)_{i = 1}^{d'} \in \widehat{\mathbb{Z}/N\mathbb{Z}}$ and $d' \le \log(1/\delta)^{O(1)}$. This linearizes $\alpha_h$. Thus, we have the following.
\begin{lemma}[Linearization]\thlabel{linearizationargument}
There exists a subset $H' \subseteq \mathbb{Z}_N$ of size at least $\delta^{O(\log(1/\delta))^{O(1)}}N$ such that the following holds:
\begin{itemize}
    \item For each $h \in H'$, there exists an associated periodic Fourier expanded nilcharacter $\chi_h$ for each $h \in H'$, $\chi_h$ is the exponential of a bracket polynomial with at most $\log(1/\delta)^{O(1)}$ terms which include
$$\xi_*^i n[ \xi_h^j n], \xi_h^j n[\xi_*^i n]$$
and a single term of the form
$$\xi_{h}^{struc} n^2$$
where $\xi_h^j$, and $\xi_h^{struc}$ are of the form
$$\sum_{\alpha \in S} a_\alpha \{\alpha(h - k)\}$$
where $a_\alpha \in \mathbb{R}$, $S \subseteq \widehat{\mathbb{Z}_N}$ is a subset of size at most $\log(1/\delta)^{O(1)}$, and $\xi_h^j, \xi_h^{struc}, \xi_*$ are all rational with denominator $N$ and $H' \subseteq k + B(S, \delta^{O(1)})$.
    \item There exists some nonzero $y \in \mathbb{Z}_N$ such that
    $$|\mathbb{E}_{n \in [N]} \Delta_h f(y n) \chi_h(n)| \ge \delta^{O(\log(1/\delta))^{O(1)}}.$$
\end{itemize}
\end{lemma}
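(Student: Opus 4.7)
The plan is to combine \thref{linearizationstep} (which linearizes the horizontal Mal'cev coordinates of $g_h$) with a further linearization of the quadratic coefficient $\alpha_h$ of the vertical phase $P_h(n)$, and then to repackage the result with the correlation inequality inherited from the sunflower step. First I would apply the change-of-basis procedure of \thref{changeofvar} to the output of \thref{linearizationstep} in order to put $\psi(g_h(n))$ into the normal form $(\xi_h^1 n,\dots,\xi_h^\ell n,\xi_*^1 n,\dots,\xi_*^k n,0,\dots,0,P_h(n))$, with the vanishing coordinates being the $Z_j$'s eliminated by \thref{linearizationstep}. At this point the horizontal phases $\xi_h^j$ are already bracket-linear in $h$ on a dense set, and the pairings $\xi_*^i n[\xi_h^j n]$ and $\xi_h^j n [\xi_*^i n]$ are the only cross-terms surviving, since the isotropy condition $\omega(Y_i,Y_j)=0$ guaranteed by the sunflower and linearization steps kills the $h$-dependent by $h$-dependent commutators.

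Next I would isolate $P_h(n) = \alpha_h n^2 + \beta_h n + (\text{basis correction terms})$ and argue that only $\alpha_h$ needs to be linearized: the correction terms $\binom{n}{2}\sum_{i<j}C_{[i,j]}g^ig^j$ vanish in the $h$-dependent block by the same isotropy, and the linear coefficient $\beta_h$ will be absorbed later by the symmetry and integration steps. To show $\alpha_h$ is rational with denominator $2N$, I would adapt the argument of \thref{periodic} by lifting $(g_h(n+k),g_h(n))$ to $G\times_{G_2}G$, decomposing any horizontal character there via \cite[Lemma A.3]{Len2} as $\eta(g,g')=\eta_1(g)+\eta_2(g'g^{-1})$, and using that $g_h(1)^n$ projects into $\Gamma/[G,G]$ to force $\eta_2([\{g_h(1)^k\},g_h(1)^n])\equiv 0\pmod{1/N}$. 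Extracting the $n$-coefficient of the resulting identity forces the denominator of $\alpha_h$ to divide $2N$, and a change of variables $n\mapsto 2n$ reduces to denominator $N$.

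Finally, I would run the linearization argument one more time on $\alpha_h$ alone. From the hypothesis, expanding the product inside $|\mathbb{E}_n\chi_{h_1}(n)\chi_{h_2}(n+h_1-h_4)\overline{\chi_{h_3}(n)\chi_{h_4}(n+h_1-h_4)}|\ge\delta$ and extracting the coefficient of $n^2$ in the exponent gives $\alpha_{h_1}+\alpha_{h_2}\equiv \alpha_{h_3}+\alpha_{h_4}\pmod 1$ for at least $\delta^{\log(1/\delta)^{O(1)}}N^3$ additive quadruples in $H'^4$. Feeding this into \thref{bracketlinear} (with \thref{CSenergy} used to combine this with the earlier additive-quadruple count for the $\xi_h^j$'s) yields on a further dense subset $H''\subseteq H'$ a representation $\alpha_h=\sum_{\alpha\in S}a_\alpha\{\alpha(h-k)\}$ with $|S|\le \log(1/\delta)^{O(1)}$. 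The second item of the lemma is immediate: we only passed to subsets of $H$ and performed an invertible change of variable $n\mapsto yn$ at the earlier sunflower stage, so the correlation $|\mathbb{E}_n \Delta_h f(yn)\chi_h(n)|\ge \delta^{O(\log(1/\delta))^{O(1)}}$ is preserved throughout.

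The main obstacle I anticipate is the bookkeeping needed to ensure that the various bracket-linear representations of $\xi_h^j$ and $\xi_h^{\text{struc}}$ (and the $\alpha_h$ linearization) are controlled by a \emph{common} frequency set $S$ and a single shift $k$, so that $H''\subseteq k+B(S,\delta^{O(1)})$ as required. Each invocation of \thref{bohrbracket} produces its own Bohr set, and one must intersect these Bohr sets, re-Pigeonhole, and repeatedly shrink $H''$ while maintaining the quasi-polynomial lower bound on its density; this is precisely the step where the Sanders-type Freiman theorem is being used in an essentially sharp way, so the losses must be tracked carefully to land the final bound at $\delta^{O(\log(1/\delta))^{O(1)}}N$.
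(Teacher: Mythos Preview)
Your proposal is correct and follows essentially the same route as the paper: apply \thref{linearizationstep} and the change-of-basis of \thref{changeofvar} to put $\psi(g_h(n))$ in the stated normal form, observe that the isotropy condition $\omega(Y_i,Y_j)=0$ kills the $h$-dependent by $h$-dependent commutator terms so only $\alpha_h$ needs further linearization, prove $\alpha_h$ has denominator $2N$ by the $G\times_{G_2}G$ argument, rescale $n\mapsto 2n$, and then re-run \thref{additivequadruples} to obtain $\alpha_{h_1}+\alpha_{h_2}\equiv\alpha_{h_3}+\alpha_{h_4}\pmod 1$ and feed into \thref{bracketlinear}. The bookkeeping obstacle you flag (merging the Bohr sets and shifts from the separate applications of \thref{bracketlinear} into a single $S$ and $k$) is real but routine---the paper itself glosses over it, simply taking the union of the frequency sets and Pigeonholing once more into a common translate of the intersected Bohr set, which costs only another factor of $\delta^{O(\log(1/\delta))^{O(1)}}$.
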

Note that we may unwind the scaling of $f$ by making a change of variables $n \mapsto pm$ where $p$ is the modular inverse of $y$. We can then absorb the factor of $p$ in the frequencies of $\chi_h$.
\subsection{Symmetry and integration argument}
Thus, we have obtained that (essentially) $\Delta_h f(x)$\footnote{We will assume here that we have unwinded the scaling of $\tilde{f}$ so we work with $f$.} correlates with sums of terms of the form
$$\left(\sum_{i = 1}^\ell a_i \{\alpha_i(h - k)\} n\right) [\beta n]$$
and
$$\beta n\left[\sum_{i = 1}^\ell a_i \{\alpha_i(h - k)\}n\right].$$
Writing $[\beta n] = \beta n -\{\beta n\}$, it follows up to lower order terms modulo $1$, we may write the first one as
$$\sum_{i = 1}^\ell a_i \beta n^2\{\alpha_i (h - k)\} - \{\beta n\} \sum_{i = 1}^\ell a_i n\{\alpha_i (h - k)\}$$
and the second one as (up to lower order terms modulo $1$)
$$\{\beta n\}\sum_{i = 1}^\ell a_i n \{\alpha_i (h - k)\}.$$
The point of these computations is to illustrate that for our situation, each of the $\alpha_i$'s and the $\beta$ will lie in $\widehat{\mathbb{Z}_N}$, so the phases in brackets will be periodic modulo $N$. We may also translate our set $H$ by $k$ to eliminate the $k$ term. \\\\
It follows that $\Delta_{h + k} f$ correlations with a bracket polynomial of the form $e(3T(h, n, n) + B(n) + \theta_h n)$ where
$$T(x, y, z) := \sum_{j = 1}^d \{\alpha_j x\}\frac{\beta_j}{6} y \{\gamma_j z\} +\{\alpha_j x\}\frac{\beta_j}{6} z \{\gamma_j y\} + \sum_{j = 1}^d \frac{\alpha_j'}{3} \{\beta_j' x\}yz$$
with $\alpha_j, \gamma_j, \beta_j' \in \widehat{\mathbb{Z}_N}$ and $B(n)$ is a degree two bracket polynomial of at most $2d$ phases and is of the form $B(n) = \sum_{j = 1}^d \lambda_j n[\mu_j n]$ (and $\theta_h n$ should be thought of as a ``lower order term"). We wish to show that $f$ correlates with a degree three bracket polynomial. One important observation is that this form is trilinear on the Bohr set with frequencies $S := \{1/N, \alpha_1, \dots, \alpha_d, \gamma_1, \dots, \gamma_d, \beta_1', \dots, \beta_d'\}$ and radius, say $1/10$ ($1/N$ is added to $S$ to ensure that $T$ is genuinely trilinear on the Bohr set with frequencies of $S$ and radius $1/10$). \\\\
We will now give a sketch of this step. By hypothesis, we only have local linearity in the first variable. Let us assume for simplicity that $T$ is trilinear.  We wish to write $3T(h, n, n)$ as a combination of $T(n + h, n + h, n + h) - T(n, n, n)$ plus lower order terms in $n$. If this can be done, then we may absorb the $T(n, n, n)$ and the $T(n + h, n + h, n + h)$ into $f(n)$ and $f(n + h)$, respectively, obtaining that
$$\mathbb{E}_h \|f_1(n)f_2(n + h)\|_{U^2} \gg_\epsilon 1$$
for $f_1$ a product of $f$ and bracket cubics. We can deduce that $f_1$ has large $U^3$ norm, so applying the $U^3$ inverse theorem, we can conclude the $U^4$ inverse theorem. \\\\
The only thing preventing us from showing this is that we have three terms $T(h, n, n) + T(n, h, n) + T(n, n, h)$, which may not necessarily be equal. Since we symmetrized the last two variables of $T$, we may assume that the latter two terms are equal. It is not actually possible to show that all three terms are equal, but rather, it is possible to show that the difference of two of them are of ``lower order," in the sense of being a two-step in $n$ and $h$ (or one step in $n$).\\\\
To show that $T$ is symmetric in its first two variables, we go back to the hypothesis
$$\mathbb{E}_h |\mathbb{E}_{n} \Delta_{h + k} f(n)\chi_h(n)|^2 \ge \delta^{O(\log(1/\delta))^{O(1)}}.$$
For simplicity, we will relabel the right hand side as simply $\delta$. By \cite[Proposition 6.1]{GTZ11}, it follows that
$$\mathbb{E}_{h_1 + h_2 = h_3 + h_4} |\mathbb{E}_{n} \chi_{h_1}(n) \chi_{h_2}(n + h_1 - h_4) \overline{\chi_{h_3}(n) \chi_{h_4}(n + h_1 - h_4)}|^2 \ge \delta^{O(1)}$$
or for some one-bounded function $b(h_1, h_2, h_3, h_4)$:
$$\mathbb{E}_{h_1 + h_2 = h_3 + h_4} \mathbb{E}_{n} \chi_{h_1}(n) \chi_{h_2}(n + h_1 - h_4) \overline{\chi_{h_3}(n) \chi_{h_4}(n + h_1 - h_4)}b(h_1, h_2, h_3, h_4) \ge \delta^{O(1)}$$
Writing $\chi_{h}(n) = e(3T(h, n, n) + \theta_h n)$ and using trilinearity and making a change of variables $(h_1, h_2, h_3, h_4) = (h, h + x + y, h + y, h + x)$, we obtain eventually (after Pigeonholing in $h$) that
$$|\mathbb{E}_{n, x, y} e(-6(T(x, y, n))b(n, x)b(n, y)b(n, x + y) b(x, y)| \gg \delta^{O(1)}$$
for one-bounded functions $b$ as in Gowers' notation \cite{Gow01} (where we identify $\mathbb{Z}/N\mathbb{Z}$ with the interval $[-(N - 1)/2, (N - 1)/2]$). Applying Cauchy-Schwarz in $x$ and $n$ gives
$$|\mathbb{E}_{n, x, y, y'} e(-6(T(x, y - y', n))b(n, y')b(n, y)b(n, x + y)b(n, x + y') b(x, y, y')| \gg \delta^{O(1)}.$$
Making a change of variables $z = x + y + y'$, we obtain
$$|\mathbb{E}_{n, z, y, y'} e(-6(T(z - y - y', y - y', n))b(n, y')b(n, y)b(n, z - y)b(n, z - y') b(z, y, y')| \gg \delta^{O(1)}.$$
Pigeonholing in $z$ and using trilinearity again, we obtain
$$|\mathbb{E}_{n, y, y'} e(-6(T(y, y', n) - T(y', y, n))) b(n, y)b(n, y')b(y, y')| \ge \delta^{O(1)}.$$
From here, we see that these one-bounded functions are of ``lower order" compared to the top term $e(T(y, y', n) - T(y', y, n))$; this allows us to conclude either with an application of the equidistribution theorem, or via some Bohr set argument similar to \cite{GTZ11} that $T(y, y', n) - T(y', y, n)$ is ``lower order" as well, implying that $T$ is symmetric in the first two variables up to lower order. This argument can either be made rigorous using a combination of bracket polynomial arithmetic argument and the equidistribution theorem found here, or by a Bohr set argument similar to the flavor of \cite{GTZ11}. We opt for the latter approach, since it is overall less annoying in the $U^4$ case though we imagine that the other approach generalizes better than the approach we opt for.
\begin{remark}
In \cite{GTZ11}, the authors work with the density assumption that there exists many additive quadruples $(h_1, h_2, h_3, h_4)$  such that
$$\mathbb{E}_n \chi_{h_1}(n) \chi_{h_2}(n + h_1 - h_4) \overline{\chi_{h_3}(n) \chi_{h_4}(n + h_1 - h_4)}$$
is large. Because of this, they need a further tool involving the bilinear Bogolyubov argument\footnote{The author was unable to understand their argument at the point where they use the bilinear Bogolyubov-type theorem; however, one can also follow their argument and use \cite{Mil21} to finish and obtain desirable quantitative bounds.}. We remove the use of bilinear Bogolyubov by taking advantage of additional averaging from the hypothesis
$$\mathbb{E}_{h_1 + h_2 = h_3 + h_4} |\mathbb{E}_{n} \chi_{h_1}(n)  \chi_{h_2}(n + h_1 - h_4) \overline{\chi_{h_3}(n)  \chi_{h_4}(n + h_1 - h_4)}|^2 \ge \delta^{O(1)}$$
rather than the density hypothesis of additive quadruples.
\end{remark}
We begin with the following lemma:

%We would like some genuine trilinearity in $T$. To accomplish this, we shall localize in the $h$ and $n$ variables. 

Thus, by applying \thref{FourierComplexity2} and \thref{onevarfouriercomplexity} (or rather the procedure of \thref{onevarfouriercomplexity}) and making the above manipulations, we may simplify
$$\mathbb{E}_{h_1 + h_2 = h_3 + h_4} |\mathbb{E}_{k, n} \chi_{h_1}(n) \chi_{h_2}(n + h_1 - h_4) \overline{\chi_{h_3}(n) \chi_{h_4}(n + h_1 - h_4)}|^2 \ge \delta^{O(1)}$$
to
$$|\mathbb{E}_{n, y, y'} e(-6(T(y, y', n) - T(y', y, n))) b(n, y)b(n, y')b(y, y')| \ge \delta^{O(d)^{O(1)}}.$$
Let us relabel $y'$ to $x$, so we instead end up with
$$|\mathbb{E}_{n, y, z} e(6(T(x, y, n) - T(y, x, n))) b(n, x)b(n, y)b(x, y)| \ge \delta^{O(d)^{O(1)}}.$$

\subsubsection{Bohr sets}
In the remainder of the proof, we will need to do various arithmetic operations over Bohr sets. In this section, we formulate notation we will use for Bohr sets. 
\begin{definition}
Given a subset $S \subseteq \widehat{\mathbb{Z}_N}$, and a real number $\rho \in (0, 1/2)$, we denote the \emph{Bohr set} with frequencies $S$ and radius $\rho$ to be $B(S, \rho) := \{x \in \mathbb{Z}_N: \|\alpha x \| < \rho \forall \alpha \in S\}$. 
\end{definition}
A Bohr set is \emph{regular} if whenever $\epsilon \le \frac{1}{100|S|}$, 
$$|B(S, \rho)| (1 - 100|S|\epsilon) \le |B(S, \rho (1 + \epsilon))| \le |B(S, \rho)|(1 + 100|S|\epsilon).$$
It was shown by Bourgain in \cite{Bou99} that regular Bohr sets are ubiquitous in the sense that there exists $\rho' \in [\rho/2, \rho]$ such that $B(S, \rho')$ is regular. If $S$ and $\rho$ are specified, we will shorten $B(S, \rho)$ to be $B$. Given a real number $\epsilon$ so that $\epsilon \rho \in (0, 1)$, we will denote by $B_\epsilon$ as $B(S, \epsilon \rho)$. We will also define the ``norms"
$$\|n\|_S = \sup_{\alpha \in S} \|\alpha n \|_{\mathbb{R}/\mathbb{Z}}.$$
\subsubsection{Finishing the symmetry argument}
Recall that we defined 
$$S = \{1/N, \alpha_1, \dots, \alpha_d, \gamma_1, \dots, \gamma_d, \beta_1', \dots, \beta_d'\}.$$ 
Let $B = B(S, \rho)$ be the Bohr set where all of the frequencies in $B$ are the bracket frequencies of $T$ where $\rho$ is selected in $[1/10, 1/5]$ such that $B(S, \rho)$ is regular. It follows that Pigeonholing in one of those translations, we have for some $n_0, x_0, y_0$ that
$$\sum_{n \in n_0 + B, x \in x_0 + B, y \in y_0 + B} e(6(T(x, y, n) - $$
$$T(y, x, n)))b(n, x)b(n, y)b(x, y)| \gg N^3 2^{-O(d)}\delta^{O(d)^{O(1)}}.$$
Making a change of variables, we have
$$\mathbb{E}_{n, x, y \in B} e(6(T(x - x^0, y - y^0, n - n_0)$$
$$- T(y - y^0, x - x^0, n - n_0)))b(n, x)b(n, y)b(x, y) \gg \delta^{O(d)^{O(1)}}.$$
Expanding out the bracket polynomials as $T(x - x^0, y - y^0, n - n_0) = T(x, y, n) + [\text{lower order terms}]$ (with the point of the lower order terms depending on fewer variables after applying a Fourier complexity lemma), and similarly for $T(y - y^0, x - x^0, n - n_0)$ and applying \thref{FourierComplexity2}, we therefore obtain
$$\mathbb{E}_{n, x, y \in B} e(6(T(x, y, n) - T(y, x, n)))b(n, x)b(n, y)b(x, y)| \gg \delta^{O(d)^{O(1)}}.$$
Fix elements $a, b, c \in B_{1/|S|100} = B(S, \rho/|S|100)$. It then follows that if $f$ is one-bounded, then
$$\mathbb{E}_{n \in B} f(n) = \mathbb{E}_{n \in B} \mathbb{E}_{k \in [-K, K]} f(n + ka) + O\left(K \frac{|S|\|a\|_S}{\rho}\right)$$
whenever $K \le \frac{\rho}{|S|\|a\|_S}$. We may thus write for $a, b, c \in B_{\delta^{O(d)^{O(1)}}/100|S|}$ that
$$|\mathbb{E}_{n, x, y \in B} \mathbb{E}_{k_1 \in [-K_1, K_1], k_2 \in [-K_2, K_2], k_3 \in [-K_3, K_3]}, e(6(T(x + k_1a, y + k_2b, n + k_3c) $$
$$- T(y + k_2b, x + k_1a, n + k_3c)))b(n + k_3c, x + k_1a)b(n + k_3c, y + k_2b)b(x + k_1a, y + k_2b)| \gg \delta^{O(d)^{O(1)}}$$
for $K_1 \le \frac{\delta^{O(d)^{O(1)}}\rho}{\|a\|_S}$, $K_3 \le \frac{\delta^{O(d)^{O(1)}}\rho}{\|b\|_S}$, $K_3 \le \frac{\delta^{O(d)^{O(1)}}\rho}{\|c\|_S}$. We note that if $f$ is locally linear on a Bohr set, and if $n + ka$ lies inside the Bohr set for each $k \in [-K, K]$ (say for $K$ significantly larger than $2$), then we may write $f(n + ka) = f(n) + kf(a)$. Thus, we may write $T(x + k_1a, y + k_2b, n + k_3c)$ as a polynomial in the variables $k_1, k_2, k_3$ with top term $k_1k_2k_3 T(a, b, c)$ and similarly, we may write $T(y + k_2b, x + k_1a, n + k_3c)$ as a polynomial in $k_1, k_2, k_3$ with top term $k_1k_2k_3 T(b, a, c)$. We will now Pigeonhole in values $n, x, y$ so that
$$\mathbb{E}_{k_1 \in [-K_1, K_1], k_2 \in [-K_2, K_2], k_3 \in [-K_3, K_3]}, e(6(T(x + k_1a, y + k_2b, n + k_3c) - T(y + k_2b, x + k_1a, n + k_3c)))$$
$$b(k_1, k_3)b(k_1, k_2)b(k_2, k_3)| \gg \delta^{O(d)^{O(1)}}.$$
Thus, applying trilinearity of $T$, we obtain
$$\mathbb{E}_{k_i \in [-K_i, K_i]} e(6k_1k_2k_3(T(a, b, c) - T(b, a, c)))b(k_1, k_3)b(k_1, k_2)b(k_2, k_3)| \gg \delta^{O(d)^{O(1)}}.$$
Let $\alpha = 6(T(a, b, c) - T(b, a, c))$. Applying Cauchy-Schwarz three times to eliminate all of the $b$'s gives
$$|\mathbb{E}_{k_i, k_i' \in [-K_i, K_i]} e(\alpha P(k_1, k_2, k_3, k_1', k_2', k_3'))| \gg \delta^{O(d)^{O(1)}}$$
where 
$$P(k_1, k_2, k_3, k_1', k_2', k_3') = k_1k_2k_3 - k_1'k_2k_3 - k_1k_2'k_3 - k_1k_2k_3' + k_1'k_2'k_3 + k_1'k_2k_3' + k_1k_2'k_3' - k_1'k_2'k_3'.$$
Pigeonholing in $k_1', k_2', k_3'$, we obtain
$$|\mathbb{E}_{k_i \in [-K_i, K_i]} e(\alpha k_1k_2k_3 + [\text{Lower Order Terms}])| \gg \delta^{O(d)^{O(1)}}.$$
This gives us by multidimensional polynomial equidistribution theorem \cite[Proposition 7]{Tao15} that there exists some $q = q_{a, b, c} \le \delta^{-O(d)^{O(1)}}$ such that
$$\|q\alpha\|_{\mathbb{R}/\mathbb{Z}} \ll \frac{\delta^{-O(d)^{O(1)}}}{K_1K_2K_3}.$$
This implies via a geometry of numbers argument as in \cite[Lemma 11.4]{GT08} or in \cite[Lemma 7.5]{Len22b} that denoting $\epsilon = \delta^{O(d)^{O(1)}}$ that if $a, b, c \in B_{\epsilon^{O(d)^{O(1)}}}$, then for some $q \le \epsilon^{-1}$ independent of $a, b, c$, that
$$\|6q(T(a, b, c) - T(b, c, a))\|_{\mathbb{R}/\mathbb{Z}} \le \|a\|_S\|b\|_S\|c\|_S\delta^{-O(d)^{O(1)}}.$$
This gives the symmetry result. The rest, now, is putting everything together. 
\subsubsection{Integrating the result}
We go back to the hypothesis
$$\mathbb{E}_h |\mathbb{E}_{n \in \mathbb{Z}_N} \Delta_{h + k} f(n)\chi_h(n)|^2 \ge \delta^{O(d)^{O(1)}}.$$
We first claim that there exists some $h_0 \in H$ such that $H \cap (h_0 + B_{\delta^{O(1)}})$ has size at least $\delta^{O(d)^{O(1)}}  N$. To show this, we observe (following \cite[p. 33]{GTZ11}) that
$$\sum_{n \in [N]} 1_H * 1_{B'} * 1_{B'}(n)1_H(n) = \sum_{n \in [N]} (1_H * 1_{B'}(n))^2 \ge \left(\sum_{n \in [N]} 1_H * 1_{B'} (n)\right)^2/N \ge \delta \rho_1^{2d} N^3$$
with $B' = B_{\delta^{O(1)}/2}$. On the other hand, we have that $1_{B'} * 1_{B'}(n) \le |B_{\delta^{O(1)}}| 1_{B_{\delta^{O(1)}}}(n) \le N1_{B_{\delta^{O(1)}}}(n)$. Hence, by the Pigeonhole principle, such $h_0$ exists. Thus, taking $H' = (H - h_0) \cap B$, it follows that for each $h' \in H'$,
$$\mathbb{E}_{h' \in B_{\delta^{O(1)}}} |\mathbb{E}_{n} \overline{f(n)}f(n + k + h_0 + h')e(3T(h_0 + h', n, n) + \theta_{h_0 + h'}n)|^2 \ge \delta.$$
We may use local trilinearity of $T$ and relabeling $\overline{f} = f_1$ and $f_2 = f(h_0 + k + \cdot)$, we have (after adjusting $\theta$ a bit)
$$\mathbb{E}_{h \in B_{\delta^{O(1)}}} |\mathbb{E}_{n} f_1(n)f_2(n + h) e(3T(h, n, n) + \theta_h n)|^2 \ge \delta^{O(d)^{O(1)}}.$$
Pigeonoling $n$ to be in a Bohr set $B_{\delta^{O(1)}}$ and adjusting the radius so that it is regular, we have by the various bracket polynomial manipulations from previous sections that for some phases $\tilde{\theta}_h \in \widehat{\mathbb{Z}_N}$,
$$\mathbb{E}_{h} |\mathbb{E}_{n \in B_{\delta^{O(1)}}} \tilde{f}_1(n)\tilde{f}_2(n + h) e(3T(h, n, n) + \tilde{\theta}_hn)|^2 \ge \delta^{O(d)^{O(1)}}$$
where $\tilde{f}_i$ are products of $f$ and at most $O(d)$ many degree $\le 2$ bracket polynomials which are periodic in $n$ and $h$ modulo $N$. Pigeonholing in $n$ and $h$ in a smaller Bohr set $B_{\delta^{O(d)^{O(1)}}}$ gives
$$\mathbb{E}_{h \in B_{\delta^{O(d)^{O(1)}}}} |\mathbb{E}_{n \in B_{\delta^{O(d)^{O(1)}}}} \tilde{f}_1(n + n_0 + h + h_0)\tilde{f}_2(n + n_0)e(3T(h + h_0, n + n_0, n + n_0) + \tilde{\theta_h}^1 n)|^2 \ge \delta^{O(d)^{O(1)}}.$$
The point is that on $B_{\delta^{O(1)}}$, $T$ is genuinely trilinear, so by \thref{FourierComplexity2}, we may write $3T(h + h_0, n + n_0, n + n_0) = 3T(h, n + n_0, n + n_0)$ plus a lower order bracket quadratic term in $n$, which we can absorb in $\tilde{\theta_h}^1$ and in $\tilde{f}_1$. We may also write $3T(h, n + n_0, n + n_0) = 3T(h, n, n) + 3T(h, n_0, n) + 3T(h, n, n_0) + 3T(h, n_0, n_0)$, but noting that $T$ is symmetric in the last two variables, we have
$$3T(h, n_0, n) + 3T(h, n, n_0) = 3T(n + h, n+ h, n_0) - 3T(n, n, n_0) - 3T(h, h, n_0).$$
In addition, using that $6q(T(h, n, n) - T(n, h, n)) \equiv O(\delta^{O(d)^{O(1)}}) \pmod{1}$, it follows that $$6qT(h, n, n) \equiv 2q(T(n + h, n + h, n+ h) - T(n, n, n) - T(h, h, h) - T(h, n, h)$$
$$ - T(n, h, h) - T(h, h, n)) + O(\delta^{O(d)^{O(1)}}) \pmod{1}.$$
%By rescaling $n$ and $h$ by $6q$ in our averaging, we can in fact specify that
%$$3T(h, n, n) \equiv $$
%$$T(n + h, n + h, n + h) - T(n, n, n) - T(n, h, h) - T(h, n, h) - T(h, h, n) + \alpha + O(\delta^{O(d)^{O(1)}}) \pmod{1}$$
%for some rational phase $\alpha$. 
This suggests that we should rewrite the argument as follows: let $\ell \equiv (6q)^{-1} \pmod{N}$. We start with
$$\mathbb{E}_{h} |\mathbb{E}_n \Delta_h f(n) \chi_h(n)|^2 \ge \delta^{O(d)^{O(1)}}.$$
Instead of Pigeonholing in $B_{\delta^{O(1)}}$ By Pigeonholing on a translate of $\ell^{-1}B_{\delta^{O(1)}}$, we obtain:
$$\mathbb{E}_{h} |\mathbb{E}_{n \in \ell^{-1} B_{\delta^{O(1)}}} \tilde{f}_1(n)\tilde{f}_2(n + h) e(3T(6q\ell h, 6q\ell n, 6q\ell n) + \tilde{\theta}_hn)|^2 \gg \delta^{O(d)^{O(1)}}.$$
We may then argue as before, finally obtaining $e((6q)^3 3T(\ell h, \ell n, \ell n)) = e(\tilde{T}(n + h, n + h, n + h) - \tilde{T}(n, n, n) + \tilde{\theta}_h^3 n + \alpha) + O(\delta^{O(d)^{O(1)}})$.  Absorbing the $n + h$ terms into $f(n + h + n_0 + h_0)$ and the $n$ terms into $f(n + n_0)$ and applying \thref{trivialfouriercomplexity} on the terms $\tilde{T}(n, h, h)$, $\tilde{T}(h, n, h)$, and $\tilde{T}(h, h, n)$, we obtain
$$\mathbb{E}_h \|F_1(n + h)F_2(n)\|_{U^2([N])}^2 \ge \delta^{O(d)^{O(1)}}$$
for functions $F_1$ and $F_2$ where $F_2$ is a product of $f$ and $O(d)$ many degree $\le 3$ bracket polynomials. Using the Cauchy-Schwarz-Gowers inequality, combined with the results above on three-step nilmanifold constructions gives us that there exists some approximate degree $\le 3$ nilsequence $F(g(n)\Gamma)$ of dimension at most $O(d)^{O(1)}$ and complexity at most $\delta^{-O(d)^{O(1)}}$ such that
$$|\mathbb{E}_{n \in [N]} \Delta_h f(n) F(g(n)\Gamma)| \ge \delta^{O(d)^{O(1)}}.$$
We may also smooth out $F 1_B$ as follows: writing $1_{B_{\delta^{O(d)^{O(1)}}}}(n) = 1_{U}((\alpha n)_{\alpha \in S})$ where $U$ is the open set $\{\|x_i\|_{\mathbb{R}/\mathbb{Z}} < \rho \delta^{O(d)^{O(1)}}\}$ where $B = B(S, \rho)$, we let $\phi$ be a smooth cutoff function on $\mathbb{R}^{|S|}/\mathbb{Z}^{|S|}$ supported on an $\epsilon$-neighborhood of $U$. By regularity, (assuming $\epsilon$ is sufficiently small) $\phi(n) F(g(n)\Gamma) = 1_B(n)F(g(n)\Gamma) + O_{L^1[N]}(\epsilon)$. One can check that $\phi(n)F(g(n)\Gamma)$ is smooth on a three-step nilmanifold of complexity $O(1)$ and $\phi$ can be chosen in a way so that $\phi(n)F(g(n)\Gamma)$ has Lipschitz parameter at most $O(d\epsilon^{-1})$. Letting $\epsilon = \delta^{O(d)^{O(1)}}$, we obtain the desired correlation with a Lipschitz-smooth nilsequence of parameter at most $\delta^{-O(d)^{O(1)}}$. One can then divide by the Lipschitz constant to ensure a Lipschitz parameter of $1$. \\\\
Substituting $\delta$ with $\delta^{O(\log(1/\delta))^{O(1)}}$ and $d$ with $O(\log(1/\delta))^{O(1)}$ gives us \thref{mainresult4}.
\appendix

\section{Further auxiliary lemmas}
In this section, we shall state auxiliary lemmas we use in the proof of our main theorem. Most of these results come from \cite{GT12}. The first two lemmas are similar to \cite[Proposition 9.2]{GT12} and \cite[Lemma 7.9]{GT12}, respectively.
\begin{lemma}[Factorization lemma I]\thlabel{factorization}
Let $G/\Gamma$ be a nilmanifold of dimension $d$, complexity $M$, and degree $k$ and let $g \in \mathrm{poly}(\mathbb{Z}, G)$ with $g(n)\Gamma$ be $N$-periodic. Suppose $\eta_1, \dots, \eta_r$ are a set of linearly independent nonzero horizontal characters of size at most $L$ with $\|\eta_i \circ g\|_{C^\infty[N]} = 0$ for each character $i$. Then there exists a factorization $g(n) = \epsilon(n)g_1(n)\gamma(n)$ where $\epsilon$ is constant, $g_1$ is a polynomial sequence in $$\tilde{G} = \bigcap_{i = 1}^r \operatorname{ker}(\eta_i),$$
and $\gamma \in \mathrm{poly}(\mathbb{Z}, G)$ is a $(ML)^{O_k(d^{O_k(1)})}$-rational. If $g(0) = \mathrm{id}_G$, then we may take and $\epsilon(0) = \gamma(0) = \mathrm{id}_G$. 
\end{lemma}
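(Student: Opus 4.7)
The plan is to proceed by induction along the lower central series $G = G_1 \supseteq G_2 \supseteq \cdots \supseteq G_{k+1} = \{\mathrm{id}_G\}$, at each step peeling off a polynomial-sequence factor taking rational values modulo $\Gamma$. First, I would use \cite[Lemma A.8]{Len2} (Cramer's rule in the horizontal torus) to extend the linearly independent horizontal characters $\eta_1,\dots,\eta_r$ to a $\mathbb{Z}$-basis $\eta_1,\dots,\eta_{d_\mathrm{horiz}}$ of the horizontal character lattice, where each newly-adjoined $\eta_j$ has size at most $(ML)^{O_k(d^{O_k(1)})}$. Dually, I would pick elements $u_1,\dots,u_r$ in a lift of $\Gamma/(\Gamma\cap[G,G])$ to $G$ satisfying $\eta_i(u_j)=\delta_{ij}$ (again of bounded height via Cramer), so that $G/[G,G]$ splits as $(\tilde G/[G,G])\oplus \mathrm{span}(u_1,\dots,u_r)$.

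The key observation is that the hypothesis $\|\eta_i\circ g\|_{C^\infty[N]}=0$ translates, under the monomial-basis convention of this paper, into the statement that every non-constant coefficient of $\eta_i(g(n))$ as a polynomial in $n$ lies in $\mathbb{Z}$. Writing $g(n)=g(0)\,h(n)$ with $h(0)=\mathrm{id}_G$, the polynomials $\eta_i\circ h$ are integer-valued with zero constant term for $1\le i\le r$. Modulo $[G,G]$ we can therefore factor the horizontal projection of $h(n)$ as $\tilde h(n)\cdot u_1^{m_1(n)}\cdots u_r^{m_r(n)}$ for integer polynomials $m_i\in\mathbb{Z}[n]$ and some $\tilde h(n)\in\tilde G/[G,G]$. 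Setting $\gamma^{(1)}(n) := u_1^{m_1(n)}\cdots u_r^{m_r(n)}$, the sequence $g(n)\gamma^{(1)}(n)^{-1}$ now has horizontal projection in $\tilde G/[G,G]$, with residual obstruction lying in $G_2$.

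I would then iterate the same splitting down the filtration: having arranged the projection to $G/G_{i+1}$ to lie in the image of $\tilde G$, the remaining deviation lives in the abelian quotient $G_i/G_{i+1}$, where I would apply an analogous rational/integral decomposition (now working against a basis of $G_i/G_{i+1}$ compatible with $\tilde G\cap G_i$) to absorb further rational contributions into a new correction $\gamma^{(i)}$. Multiplying the $\gamma^{(i)}$ together into one rational polynomial sequence $\gamma$, letting $\epsilon := g(0)$, and defining $g_1$ to be the remaining $\tilde G$-valued factor yields the desired decomposition. If $g(0)=\mathrm{id}_G$, then $\epsilon=\mathrm{id}_G$ is immediate, and each $\gamma^{(i)}$ vanishes at $n=0$ by construction, so $\gamma(0)=\mathrm{id}_G$.

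The main obstacle will be the complexity bookkeeping across the $k$ filtration levels: each application of the splitting forces us to commute our new rational factor past previously-chosen ones, and the resulting Baker--Campbell--Hausdorff corrections live one level deeper and must themselves be absorbed into the rationality of $\gamma$ at that level. The hope is that since each successive quotient $G_i/G_{i+1}$ is abelian of rank at most $d$ and admits a rational basis of complexity $(ML)^{O_k(d^{O_k(1)})}$ by \cite[Lemma A.7]{Len2}, the BCH corrections only compound polynomially at each level, so after $k$ iterations the overall rationality of $\gamma$ remains of the target form $(ML)^{O_k(d^{O_k(1)})}$. A secondary subtlety is to check at each stage that the running $\gamma$ is still a genuine element of $\mathrm{poly}(\mathbb{Z},G)$ with respect to the given filtration, which follows from standard closure of polynomial sequences under multiplication and from the fact that each building block $u_j^{m_j(n)}$ is polynomial in its abelian direction.
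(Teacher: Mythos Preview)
Your first step is essentially the whole proof, and the paper's argument is precisely that step carried out in Mal'cev coordinates. The crucial point you seem to have overlooked is that horizontal characters annihilate $[G,G]$, so $[G,G]\subseteq\tilde G=\bigcap_i\ker(\eta_i)$. Hence once you have arranged $\eta_i(g\,\gamma^{(1)}(\cdot)^{-1})=0$ for all $i$ (i.e.\ the horizontal projection lies in $\tilde G/[G,G]$), the sequence $g\,\gamma^{(1)}(\cdot)^{-1}$ already takes values in $\tilde G$ outright; there is no ``residual obstruction lying in $G_2$'' to iterate away. Your proposed descent through the lower central series, with its attendant BCH bookkeeping, is therefore vacuous.

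Concretely, the paper writes $\psi(g(n))=\sum_j\binom{n}{j}t_j$ in Mal'cev coordinates, observes that $\|\eta_i\circ g\|_{C^\infty[N]}=0$ forces $\eta_i\cdot t_j\in\mathbb{Z}$ for each $j\ge 1$, and uses Cramer's rule once per Taylor coefficient to pick a rational $v_j$ (of height $(ML)^{O(d^2)}$) with $\eta_i\cdot v_j=\eta_i\cdot t_j$. Setting $\psi(\gamma(n))=\sum_{j\ge 1}\binom{n}{j}v_j$ and $g_1=g\gamma^{-1}$ finishes in one stroke. This is exactly your construction of $\gamma^{(1)}$ via dual elements $u_1,\dots,u_r$, just phrased in coordinates rather than group-theoretically; the two are interchangeable. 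What your write-up buys is nothing beyond the first step, and the anticipated ``main obstacle'' of compounding BCH corrections never materializes.
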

\begin{proof}
    By setting $\epsilon(n) = g(0)$, we may work with the assumption that $g(0) = \mathrm{id}_G$. We may thus write in coordinates that
    $$\psi(g(n)) = \sum_i {n \choose i}t_i$$
    where $t_i$ are vectors representing the coordinates of the degree $i$ component of $g$ in Mal'cev coordinates. By Cramer's rule, we may pick a rational vector $v$ with height at most $(ML)^{O(d^2)}$ such that $\eta_i \cdot v = \eta_i \cdot \psi(g(n))$. We define a polynomial sequence $\gamma$ with
    $$\psi(\gamma(n)) = \sum_{i \ge 1} {n \choose i} v_i.$$
    Thus, the polynomial sequence $g_1(n) := g(n)\gamma(n)^{-1}$ lies inside $\tilde{G}$ and by construction $\gamma(n)\Gamma$ is $(ML)^{O_k(d)^{O_k(1)}}$-rational. 
\end{proof}
Before we state the next lemma, we recall the definition of $g_2$ from \cite[Definition 4.1]{Len2} via given a polynomial sequence $g \in \mathrm{poly}(\mathbb{Z}, G)$, we define $g_2(n) := g(n)g(1)^{-n}$.
\begin{lemma}[Factorization lemma II]\thlabel{factorization2}
    Let $G/\Gamma$ be a nilmanifold of dimension $d$, complexity $M$, and degree $k$ and let $g \in \mathrm{poly}(\mathbb{Z}, G)$ with $g(n)\Gamma$ be $N$-periodic. Suppose $\eta_1, \dots, \eta_r$ are a set of linearly independent nonzero horizontal characters on $G_2$ which annihilate $[G, G]$ of size at most $L$ with $\|\eta_i \circ g_2\|_{C^\infty[N]} = 0$ for each character $i$. Then we may write $g(n) = \epsilon(n)g_1(n)\gamma(n)$ where $\epsilon$ is constant, $g_1(n) \in \mathrm{poly}(\mathbb{Z}, \tilde{G})$ with $\tilde{G}$ given the filtration
    $$\tilde{G}_0 = \tilde{G}_1 = G, \tilde{G}_2 = \bigcap_{i = 1}^r \operatorname{ker}(\eta_i), \text{ and } \tilde{G}_i = \tilde{G}_2 \cap G_i$$
    for all $i \ge 2$, and $\gamma$ is $(ML)^{O_k(d^{O(1)})}$-rational, and $g_1(0) = \gamma(0) = 1$. 
\end{lemma}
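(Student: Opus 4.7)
The plan is to mirror the proof of \thref{factorization}, but applied inside the subgroup $G_2$ and at the level of the nonlinear part $g_2$. First I would absorb $g(0)$ into $\epsilon$, reducing to the case $g(0) = \mathrm{id}_G$. Under this reduction, $g_2(n) = g(n) g(1)^{-n}$ lies in $G_2$ (since modulo $G_2$ the group is abelian, so $g(n) \equiv g(1)^n \pmod{G_2}$) and satisfies $g_2(0) = g_2(1) = \mathrm{id}_G$. Expanding in the Mal'cev--Taylor basis adapted to the filtration,
\[
g_2(n) = \prod_{j=2}^{k} g_{2,j}^{\binom{n}{j}}, \qquad g_{2,j} \in G_j,
\]
so one can work component-by-component in each $G_j$.

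Next I would construct a rational correction $\gamma \in \mathrm{poly}(\mathbb{Z}, G_2)$ matching $g_2$ on each $\eta_i$. Since each $\eta_i$ annihilates $[G,G]$, it descends to a linear functional on each quotient $G_j/([G,G]\cap G_j)$, and the hypothesis $\|\eta_i \circ g_2\|_{C^\infty[N]} = 0$ implies that the coefficients of the polynomial $\eta_i(g_2(n))$ in either the monomial or binomial basis are rationals whose denominators are bounded by a power of $N$ (with a bounded multiplicative loss in the conversion between the two bases). Applying Cramer's rule to the system $\{\eta_i\}_{i=1}^r$ inside each $G_j$ produces rational elements $\gamma_j \in G_j$ of Mal'cev height at most $(ML)^{O_k(d^{O_k(1)})}$ satisfying $\eta_i(\gamma_j) = \eta_i(g_{2,j})$ for every $i,j$. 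I then set
\[
\gamma(n) := \prod_{j=2}^{k} \gamma_j^{\binom{n}{j}},
\]
which is a polynomial sequence in $G_2$ with $\gamma(0) = \gamma(1) = \mathrm{id}_G$ and overall rationality $(ML)^{O_k(d^{O_k(1)})}$.

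Finally I would define $g_1(n) := g(n) \gamma(n)^{-1}$ and verify that $g_1 \in \mathrm{poly}(\mathbb{Z}, \tilde{G}_\bullet)$. Since $\gamma$ is valued in $G_2$, the images of $g$ and $g_1$ agree modulo $G_2$, so $g_1(1) = g(1)$ and $\partial g_1$ lies in $G = \tilde{G}_1$ automatically. For the higher-order filtration conditions, expanding modulo $V := G_2/[G,G]$ gives $\bar{g}_{1,j} = \bar{g}_{2,j} - \bar{\gamma}_j$ for $j \ge 2$, which is annihilated by each $\eta_i$ by construction; combined with the a priori inclusion $g_{1,j} \in G_j \subseteq G_2$, this forces $g_{1,j} \in G_j \cap \bigcap_i \ker(\eta_i) = \tilde{G}_j$, as required. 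The main obstacle is checking that $\tilde{G}_\bullet$ genuinely forms a filtration (i.e.\ $[\tilde{G}_i, \tilde{G}_j] \subseteq \tilde{G}_{i+j}$), which reduces to verifying the containments $[G, \tilde{G}_j] \subseteq [G,G] \cap G_{j+1} \subseteq \tilde{G}_{j+1}$ using the hypothesis $\eta_i([G,G]) = 0$; once this is in place, the rationality bounds propagate through Cramer's rule exactly as in \thref{factorization}.
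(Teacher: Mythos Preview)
Your proposal is correct and follows essentially the same approach as the paper: reduce to $g(0)=\mathrm{id}_G$, expand $g_2$ in binomial coordinates inside $G_2$, use Cramer's rule on the $\eta_i$ to build a rational $\gamma$ matching $g_2$ on each $\eta_i$, and set $g_1 = g\gamma^{-1}$, verifying membership in $\tilde G_\bullet$ modulo $[G,G]$. One small inaccuracy: the condition $\|\eta_i\circ g_2\|_{C^\infty[N]}=0$ forces the monomial (hence binomial) coefficients of $\eta_i(g_2(n))$ to be \emph{integers}, not merely rationals with denominator a power of $N$; the rationality bound on $\gamma$ then comes purely from Cramer's rule applied to the integer vectors $\eta_i$, independent of $N$.
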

\begin{proof}
    By setting $\epsilon(n) = g(0)$, we may assume that $g(0) = \mathrm{id}_G$. We write in Mal'cev coordinates that
    $$\psi_{G_2}(g_2(n)) = \sum_{i \ge 2} {n \choose i} t_i$$
    where $t_i$ are vectors representing the coordinates of the degree $i$ component of $g$ in Mal'cev coordinates. By Cramer's rule, we may pick a rational vector $v$ in $G_2$ with height at most $(ML)^{O(r)^2}$ such that $\eta_i \cdot v = \eta_i \cdot \psi(g(n))$ and such that the linear component of $v$, i.e., the first $d - d_1$ components when written as Mal'cev bases, is zero. We define $\gamma$ via
    $$\psi(\gamma(n)) = \sum_{i \ge 2} {n \choose i} v_i.$$
    By construction, $\gamma$ is $(ML)^{O_k(d^{O_k(1)})}$-rational. We now define $g_1(n) = g(n)\gamma(n)^{-1}$. We observe that
    $$g_{1, 2}(n) = g_1(n)g_1(1)^{-n} = g_2(n)\gamma(n)^{-1} \pmod{[G, G]}$$
    so $g_1(n) \in \mathrm{poly}(\mathbb{Z}, \tilde{G})$ as desired.
\end{proof}

\begin{lemma}[Removing the Rational Term]\thlabel{removerational}
    Suppose $N$ is prime and $Q$ is an integer. Suppose $G/\Gamma$ is a nilmanifold of dimension $d$, complexity $M$, degree $K$, and $g(n)$ is a polynomial sequence on $G$ with $g(n)\Gamma$ periodic. Suppose we may write $g(n) = \epsilon g_1(n)\gamma(n)$ with $g_1(0) = \gamma(0) = \mathrm{id}_G$, $g_1$ has image in a $Q$-rational subgroup $G'$ of $G$ and $\gamma$ is $Q$-rational. Then either $N \ll (MQ)^{O_k(d)^{O_k(1)}}$ or there exists some $g'$ and $\gamma'$ such that $g(n) = \epsilon g'(n) \gamma'(n)$ where $g'$ is $N$-periodic and has image in $G'$, and the image of $\gamma'$ lies in $\Gamma$. 
\end{lemma}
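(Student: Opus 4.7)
The plan is to construct a polynomial sequence $\gamma' \in \mathrm{poly}(\mathbb{Z}, \Gamma)$ satisfying $\gamma(n)\gamma'(n)^{-1} \in G'$ for every $n \in \mathbb{Z}$, and then set $g'(n) := g_1(n)\gamma(n)\gamma'(n)^{-1}$. This immediately gives $g(n) = \epsilon g_1(n)\gamma(n) = \epsilon g'(n)\gamma'(n)$ with $g'(n) \in G'$, and $g'(n)\Gamma$ inherits $N$-periodicity from $g(n)\Gamma$ because $\gamma'(n) \in \Gamma$. The whole difficulty thus lies in constructing such a $\gamma'$.

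First I would record the standard consequence of $Q$-rationality of $\gamma$: by clearing denominators in Mal'cev coordinates, the coset sequence $\gamma(n)\Gamma$ is periodic in $n$ with some period $Q^* \le Q^{O_k(d^{O_k(1)})}$. Combined with $N$ prime and $N \gg (MQ)^{O_k(d)^{O_k(1)}}$, we obtain $\gcd(N, Q^*) = 1$ (otherwise we are in the first alternative of the lemma). Next, from the hypothesis $g(n+N)\Gamma = g(n)\Gamma$ together with $g_1 \in \mathrm{poly}(\mathbb{Z}, G')$, we extract that for each $n$ there exists $\mu_n \in \Gamma$ with $\gamma(n)\mu_n\gamma(n+N)^{-1} \in G'$; equivalently, $\gamma(n+N) \in G'\gamma(n)\Gamma$. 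Starting from $\gamma(0) = \mathrm{id}_G$ and iterating, $\gamma(kN) \in G'\Gamma$ for every integer $k$. Coupling this with the $Q^*$-periodicity $\gamma(n + Q^*) \in \gamma(n)\Gamma$ and the fact that $\{kN \bmod Q^*\}$ ranges over all of $\mathbb{Z}/Q^*\mathbb{Z}$ (by coprimality), we conclude $\gamma(n) \in G'\Gamma$ for every $n$.

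The third step is to upgrade this pointwise containment to a polynomial $\gamma'(n) \in \Gamma$. Choose a Mal'cev basis for $G$ adapted to $G'$, so that $G'$ corresponds to the vanishing of a distinguished subset of coordinate functions. The containment $\gamma(n) \in G'\Gamma$ forces, in the coordinate directions transverse to $G'$, the Mal'cev coordinates of $\gamma(n)$ to agree with an integer modulo corrections absorbed by $G'$; tracing through the $Q$-rationality and the period $Q^*$ via Cramer's rule, these integer corrections assemble into a single polynomial sequence $\gamma'(n) \in \mathrm{poly}(\mathbb{Z}, \Gamma)$ with rationality bounded by $(MQ)^{O_k(d^{O_k(1)})}$. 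By construction $\gamma(n)\gamma'(n)^{-1}$ has trivial coordinates in the directions transverse to $G'$ (BCH corrections living in $G'$ by the filtered structure), hence lies in $G'$, and the factorization $g = \epsilon g'\gamma'$ is established.

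The main obstacle is the explicit assembly of $\gamma'$ as a polynomial sequence rather than a pointwise choice: this is where the coprimality $\gcd(N, Q^*) = 1$ and the careful bookkeeping of Mal'cev coordinates adapted to $G'$ are both essential. Without the size hypothesis on $N$, either $\gamma(n)$ fails to lie in $G'\Gamma$ at some integer $n$, or the integer polynomial assembly step fails to close at rationality $(MQ)^{O_k(d^{O_k(1)})}$, and we fall into the first alternative $N \ll (MQ)^{O_k(d)^{O_k(1)}}$.
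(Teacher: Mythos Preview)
Your approach is genuinely different from the paper's, and your Step~3 is where the argument is incomplete. You correctly establish that $\gamma(n)\in G'\Gamma$ for all $n$ (Steps~1 and~2 are fine), but upgrading this pointwise containment to a factorization $\gamma(n)=\tilde\gamma(n)\gamma'(n)$ with $\tilde\gamma\in\mathrm{poly}(\mathbb{Z},G')$ and $\gamma'\in\mathrm{poly}(\mathbb{Z},\Gamma)$ is nontrivial: $G'\Gamma$ is not a subgroup in general, $G'$ need not be normal, and ``the integer corrections assemble into a single polynomial sequence'' hides an inductive argument through the filtration with BCH bookkeeping that you have not actually carried out. It is plausible this can be done, but as written it is a sketch, not a proof.

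The paper sidesteps this entirely with a one-line rescaling trick. From $Q$-rationality of $\gamma$ one gets an integer $H\ll (MQ)^{O_k(d^{O_k(1)})}$ with $\gamma(Hn)\in\Gamma$ for all $n$. Taking $H'$ to be the inverse of $H$ modulo $N$ (which exists since $N$ is prime and large), one simply sets $g'(n):=g_1(H'Hn)$ and $\gamma'(n):=\gamma(H'Hn)\gamma_1(n)^{-1}$, where $\gamma_1(n)\in\Gamma$ records the difference $g(H'Hn)=g(n)\gamma_1(n)$ coming from $N$-periodicity of $g(\cdot)\Gamma$. Then $g'$ lands in $G'$ automatically (it is $g_1$ reparametrized), $\gamma'$ lands in $\Gamma$ automatically (both factors do), and $g'(\cdot)\Gamma$ is $N$-periodic because $g_1(H\cdot)\Gamma$ is. No Mal'cev-coordinate surgery is needed. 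Your route, if completed, would give a ``static'' factorization without reparametrizing $n$, which is conceptually appealing but costs you the hard Step~3; the paper's rescaling buys the conclusion for free at the price of replacing $g_1$ by a dilate.
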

\begin{proof}
    Since $\gamma$ is $Q$-rational, it follows from \cite[Lemma B.14]{Len2} that $\gamma(H \cdot)$ has image in $\Gamma$ for some positive integer $H \ll (MQ)^{O_k(d)^{O_k(1)}}$. Suppose it is not true that $N \ll (MQ)^{O_k(d)^{O_k(1)}}$. Then letting $H'$ be the multiplicative inverse of $H$ modulo $N$, it follows that
    $$g(H'Hn) = g(n)\gamma_1(n)$$
    for some polynomial sequence $\gamma_1$ where $\gamma_1(n) \in \Gamma$ for all $n$. In addition, we see that $g_1(H \cdot)\Gamma$ is $N$-periodic since $\gamma(H \cdot)$ has image in $\Gamma$ and $g$ is $N$-periodic. Hence, $g_1(H'H \cdot)\Gamma$ is $N$-periodic. Thus,
    $$g(H'Hn) = g(n)\gamma_1(n) = \epsilon g_1(H'Hn)\gamma(H'Hn).$$
    Letting $g'(n) = g_1(H'Hn)$ and $\gamma'(n) = \gamma(H'Hn)\gamma_1(n)^{-1}$, we obtain the desired result.
\end{proof}

We will also need the following two lemmas about periodic nilsequences:
\begin{lemma}\thlabel{periodictorus}
Let $g\colon \mathbb{Z} \to \mathbb{R}/\mathbb{Z}$ be a polynomial of degree $k$ with $g$ periodic modulo $N > k!$ a prime. Then $\|g\|_{C^\infty[N]} = 0$. 
\end{lemma}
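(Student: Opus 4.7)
Plan. I proceed by induction on $k$; the base case $k=0$ is vacuous because the defining supremum ranges over $1\le i\le k$. For the inductive step the natural representation is the binomial (falling-factorial) basis, $g(n)=\sum_{j=0}^{k}\beta_j\binom{n}{j}$, since integer-valued polynomials on $\mathbb{Z}$ are exactly $\mathbb{Z}$-linear combinations of the $\binom{n}{j}$, and this basis is well adapted to the periodicity condition $g(n+N)-g(n)\in\mathbb{Z}$. Applying Vandermonde's identity $\binom{n+N}{j}=\sum_{i}\binom{N}{i}\binom{n}{j-i}$ gives
\[
g(n+N)-g(n)\;=\;\sum_{\ell=0}^{k-1}\binom{n}{\ell}\Bigl(\sum_{i\ge 1}\binom{N}{i}\,\beta_{\ell+i}\Bigr).
\]
By periodicity this polynomial is integer-valued on $\mathbb{Z}$, and since the $\binom{n}{\ell}$ form a $\mathbb{Z}$-basis of the integer-valued polynomials, each bracketed coefficient $\sum_{i\ge 1}\binom{N}{i}\beta_{\ell+i}$ lies in $\mathbb{Z}$.

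The key arithmetic input is the elementary fact that $N\mid\binom{N}{i}$ for every $1\le i\le N-1$ when $N$ is prime. The hypothesis $N>k!\ge k$ ensures $k\le N-1$, so this divisibility applies to every $\binom{N}{i}$ appearing in the system. A downward induction on $\ell$, starting from the leading equation $N\beta_k\in\mathbb{Z}$ and using the already-established divisibility of $\beta_{\ell+2},\ldots,\beta_k$ at each subsequent step, then propagates to yield $N\beta_j\in\mathbb{Z}$ for every $j\ge 1$.

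To finish I translate back to the monomial basis via the Stirling expansion $\binom{n}{j}=\tfrac{1}{j!}\sum_i s(j,i)\,n^i$. The hypothesis $N>k!$ now plays its second role: since $\gcd(j!,N)=1$ for every $j\le k$, the factor of $1/j!$ is invertible modulo $N$, so after possibly adjusting $g$ by an integer-valued polynomial (which does not change $g$ modulo $1$) the combination $N^i\alpha_i$ becomes an integer for every $i\ge 1$, giving $\|g\|_{C^\infty[N]}=0$. The main obstacle is precisely this final change-of-basis bookkeeping: extracting clean monomial-basis bounds from the binomial-basis divisibility requires carefully combining $N\beta_j\in\mathbb{Z}$ with the coprimality $\gcd(k!,N)=1$ to clear all factorial denominators simultaneously.
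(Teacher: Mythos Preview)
Your argument is correct. The paper's own proof is a one-line citation to \cite[Lemma~1.4.1]{Tao12}, so there is no in-paper argument to compare against; what you have written is precisely the standard proof underlying that reference. The passage to the binomial basis, the Vandermonde expansion, and the downward induction using $N\mid\binom{N}{i}$ for $1\le i\le N-1$ are all exactly right and yield $N\beta_j\in\mathbb{Z}$ for every $j\ge 1$.

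The one step that deserves a sentence more of detail is the final conversion, which you yourself flag. The cleanest way to execute it is this: for each $j$, replace $\tfrac{\beta_j}{1}\binom{n}{j}=\tfrac{c_j}{N}\binom{n}{j}$ by $\tfrac{c_j (j!)^{-1}}{N}\,n(n-1)\cdots(n-j+1)$, where $(j!)^{-1}$ is the inverse of $j!$ modulo $N$. The difference is $\tfrac{c_j}{N}\bigl(\tfrac{1}{j!}-(j!)^{-1}\bigr)n(n-1)\cdots(n-j+1)$, and since $1-j!\,(j!)^{-1}$ is a multiple of $N$ this difference is an integer multiple of $\binom{n}{j}$, hence integer-valued. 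After this adjustment every monomial coefficient $\alpha_i$ lies in $\tfrac{1}{N}\mathbb{Z}$ (not merely $\tfrac{1}{N^i}\mathbb{Z}$), which is stronger than what you stated and is exactly what the paper uses downstream.

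One interpretive remark: the lemma only makes sense if $\|g\|_{C^\infty[N]}$ is read as an infimum over real lifts of $g$ (equivalently, after adjusting by an integer-valued polynomial), since for a map $\mathbb{Z}\to\mathbb{R}/\mathbb{Z}$ the monomial coefficients are not uniquely determined. Your ``adjusting $g$ by an integer-valued polynomial'' is exactly this choice of lift, and your reading is the intended one; the literal reading $N^i\|\alpha_i\|_{\mathbb{R}/\mathbb{Z}}=0$ for a \emph{fixed} lift would force $\alpha_i\in\mathbb{Z}$, which already fails for $g(n)=n/N$.
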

\begin{proof}
This follows from \cite[Lemma 1.4.1]{Tao12}.
\end{proof}

Before we state the below lemma, we recall \cite[Definition 4.1]{Len2} of $g_2(n)$ given a polynomial sequence $g(n)$ with $g(0) = \mathrm{id}_G$: $g_2(n) = g(n)g(1)^{-n}$.
\begin{lemma}\thlabel{periodic}
    Let $N > 100$ be prime and $G/\Gamma$ be a nilmanifold of degree $k$ and $g \in \mathrm{poly}(\mathbb{Z}, G)$ with $g(n)\Gamma$ be a periodic modulo $N$ with $N$ larger than $k!$. Then for any horizontal characters $\eta\colon G/[G, G] \to \mathbb{R}$ and $\eta^2\colon G_2/[G, G_2] \to \mathbb{R}$, we have (abusively descending $g$)
    $$\|N\eta \circ g\|_{C^\infty[N]} = 0 \text{ and } \|k!^2N\eta^2\circ g\|_{C^\infty[N]} = 0$$
\end{lemma}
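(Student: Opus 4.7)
The key observation is that periodicity modulo $\Gamma$ of $g(n)\Gamma$ translates directly into periodicity modulo $1$ of $\eta\circ g$. Indeed, $g(n+N)\Gamma = g(n)\Gamma$ means $g(n+N)g(n)^{-1}\in\Gamma$, so applying any horizontal character $\eta\colon G\to\mathbb R/\mathbb Z$ (which sends $\Gamma$ to $\mathbb Z$) gives $\eta(g(n+N))\equiv \eta(g(n))\pmod 1$. Thus the polynomial $p:=\eta\circ g\colon\mathbb Z\to\mathbb R/\mathbb Z$ of degree $\le k$ is $N$-periodic modulo $1$. The task reduces to a number-theoretic statement: \emph{if $p(n)=\sum_{i=0}^{k}\alpha_i n^i$ satisfies $p(n+N)\equiv p(n)\pmod 1$ for all $n$, and $N>k!$ is prime, then $N\alpha_i$ lies in $\tfrac{1}{k!}\mathbb Z$ for each $1\le i\le k$}.

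I would work in the Taylor (falling-factorial) basis: write $p(n)=\sum_{i=0}^{k} c_i\binom{n}{i}$. Using the Vandermonde identity $\binom{n+N}{i}=\sum_{j=0}^{i}\binom{n}{j}\binom{N}{i-j}$, one obtains
\[
p(n+N)-p(n)=\sum_{j=0}^{k-1}\binom{n}{j}\Big(\sum_{i=j+1}^{k} c_i\binom{N}{i-j}\Big).
\]
Since $\{\binom{n}{j}\}_{j\ge 0}$ forms a basis for integer-valued polynomials (equivalently, the transform $q\mapsto(\Delta^j q(0))_j$ inverts it), the vanishing of the left-hand side mod $1$ for every $n\in\mathbb Z$ is equivalent to $\sum_{i>j} c_i\binom{N}{i-j}\in\mathbb Z$ for every $j$. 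This is a triangular linear system for the $c_i$'s.

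The crucial arithmetic input is that $N\mid\binom{N}{\ell}$ for $1\le\ell\le k$: indeed $\binom{N}{\ell}=\frac{N}{\ell!}(N-1)(N-2)\cdots(N-\ell+1)$, and because $N$ is prime with $N>k!\ge \ell!$ we have $\gcd(\ell!,N)=1$, so $\ell!$ divides the product $(N-1)\cdots(N-\ell+1)$, making $\binom{N}{\ell}/N$ an integer. Armed with this, downward induction on $j$ from $j=k-1$ to $j=0$ gives the claim $Nc_i\in\mathbb Z$ for all $1\le i\le k$: at step $j$, the relation reads $Nc_{j+1}+\sum_{i\ge j+2}c_i\binom{N}{i-j}\equiv 0\pmod 1$, and each later term factors as $(Nc_i)\cdot\bigl(\binom{N}{i-j}/N\bigr)\in\mathbb Z$ by the inductive hypothesis combined with the divisibility above. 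Finally I convert back to the monomial basis via $\binom{n}{i}=\frac{1}{i!}\sum_j s(i,j)\,n^j$, which gives $\alpha_j\in\frac{1}{k!\,N}\mathbb Z$, and hence $\|k!\,N\alpha_j\|_{\mathbb R/\mathbb Z}=0$ for every $j\ge 1$. This yields $\|k!\,N\,\eta\circ g\|_{C^{\infty}[N]}=0$, matching the stated conclusion up to the asserted $k!$-type factor.

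For the $G_2$-vertical character $\eta^2$, the argument is parallel but applied to the nonlinear part $g_2(n):=g(n)g(1)^{-n}$, which takes values in $G_2$. The preliminary step is to show that $\eta^2\circ g_2$ is $N$-periodic modulo $1$ up to an integer rescaling: this uses that $\eta^2$ annihilates $[G,G_2]$, so $\eta^2(g(1)^{-n})$ contributes only linearly in $n$, while $\eta^2(g(n+N))-\eta^2(g(n))\in\mathbb Z$ comes from the $\Gamma$-periodicity of $g\Gamma$ plus the fact that commutators of $\Gamma$-elements with $g(1)^n$ land in $[G,G_2]\cdot \Gamma$. The same binomial-basis analysis then applies to $\eta^2\circ g_2$, and the extra factor of $k!$ (yielding $k!^2 N$ instead of $k!N$) arises from this preliminary renormalization. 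The main potential obstacle is bookkeeping this renormalization carefully—one must verify that $\Gamma\cap G_2$ absorbs the commutator corrections when projecting onto the $G_2$-component, which is where the second factor of $k!$ enters.
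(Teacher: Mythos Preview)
Your argument for the first conclusion (the horizontal character $\eta$) is correct and is essentially what the paper does: the paper defers to the cited \thref{periodictorus}, which carries out the same binomial-basis computation you spell out.

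For the second conclusion your route diverges from the paper's. The paper does \emph{not} argue directly that $\eta^2\circ g_2$ is periodic. Instead it passes to the fibered square $G^\square=G\times_{G_2}G$, notes that the auxiliary sequence $g_h(n)=(\{g(1)^h\}^{-1}g_2(n+h)g(1)^n\{g(1)^h\},\,g(n))$ is periodic modulo $\Gamma^\square$, applies the first part on $G^\square$, and after decomposing a horizontal character on $G^\square$ obtains the two-variable cocycle identity $Q(n+h)-Q(n)-Q(h)\in\tfrac1N\ZZ$ for all integers $n,h$, where $Q=\eta^2\circ g_2$. Vandermonde expansion in the binomial basis (first in $n$, then in $h$) then yields the binomial coefficients of $Q$ in $\tfrac1N\ZZ$, and the final $k!$ comes from the change to monomial basis. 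Your direct strategy is viable and in fact cleaner, but the sketch as written has genuine errors: the expressions $\eta^2(g(1)^{-n})$ and $\eta^2(g(n+N))-\eta^2(g(n))$ are undefined because $g(1)^{-n}$ and $g(n)$ do not lie in $G_2$, and the claim that commutators $[\gamma,g(1)^n]$ with $\gamma\in\Gamma$ lie in $[G,G_2]\cdot\Gamma$ is false in general (they lie only in $[G,G]$).

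The correct version of your computation (assuming $g(0)=\mathrm{id}_G$) is: write $g_2(n+N)g_2(n)^{-1}=g(n)\bigl(\gamma_n\,g(1)^{-N}\bigr)g(n)^{-1}$ with $\gamma_n:=g(n)^{-1}g(n+N)\in\Gamma$; observe $\gamma_n g(1)^{-N}\in G_2$ and that conjugation by $g(n)$ is trivial on $G_2/[G,G_2]$, so $\eta^2$ of the left side equals $\eta^2(\gamma_n g(1)^{-N})$; then use $g(1)^{-N}=g(N)^{-1}g_2(N)$ with $g(N)\in\Gamma$ to get $\gamma_n g(1)^{-N}=(\gamma_n g(N)^{-1})\cdot g_2(N)$ with $\gamma_n g(N)^{-1}\in\Gamma\cap G_2$, whence $\eta^2(\gamma_n g(1)^{-N})\equiv\eta^2(g_2(N))=Q(N)\pmod 1$. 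Thus $Q(n+N)-Q(n)-Q(N)\in\ZZ$ for all $n$, so $Q(n)-\tfrac nN Q(N)$ is genuinely $N$-periodic modulo $1$, and your Part 1 argument applies to it. Note that with this corrected computation only \emph{one} basis change occurs, so your ``second factor of $k!$'' does not actually arise; the source you named for it (``preliminary renormalization'') is not where the paper's extra $k!$ comes from either.
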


\begin{proof}
    The first part of the statement follows from \thref{periodictorus}. For the second part, we recall the definition of $G^\square := G \times_{G_2} G$ and $\Gamma^\square := \Gamma \times_{G_2 \cap \Gamma} \Gamma$, from \cite[Definition 4.2]{Len2} (see also the proof of \thref{twostepcase}). For $h \in \mathbb{N}$, we define $g_h(n) = (\{g(1)^h\}^{-1}g_2(n + h)g(1)^{-n}\{g(1)^h\}, g(n))$. It follows that $g_h(n)\Gamma^\square$ is $N$-periodic. \\\\
    Let $\eta$ be a horizontal character on $G^\square$. By \cite[Lemma A.3]{Len2}, we may decompose $\eta(g', g) = \eta^1(g) + \eta^2(g'g^{-1})$ where $\eta^1\colon G/[G, G] \to \mathbb{R}$ and $\eta^2\colon G_2/[G_2, G] \to \mathbb{R}$. Hence,
    $$\eta(g_h(n)) = \eta^1(g(n)) + \eta^2([g(1)^n, \{g(1)^h\}] g_2(n + h)g_2(n)^{-1}).$$
    We see that $\eta^1(g(n)) \equiv 0 \pmod{1/N}$, and $\eta^2([g(1)^n, \{g(1)^h\}]) = \eta^2([g(1)^n, [g(1)^h]]) \equiv 0 \pmod{1/N}$. Hence, it follows that letting $Q(n) = \eta^2(g_2(n))$, we have that
    $$Q(n + h) - Q(n) - Q(h) \equiv 0 \pmod{1/N}.$$
    Let
    $$Q(n) = \sum_{i \ge 2} \alpha_i {n \choose i}.$$
    We see from Vandermonde's identity that
    $${n + h \choose i} = \sum_{0 \le t \le i} {n \choose t}{h  \choose i - t}$$
    so
    $$Q(n + h) = \sum_{t = 0}^k {n \choose t} \sum_{k \ge i \ge \max(t, 2)} \alpha_i {h \choose i - t}$$
    $$Q(n + h) - Q(n) - Q(h) = {n \choose 1} \sum_{i \ge 2}^k \alpha_i {h \choose i - 1} + \sum_{t = 2}^k {n \choose t}\sum_{k \ge i > t} \alpha_i {h \choose i - t}.$$
    It follows that for each $t \ge 2$,
    $$\sum_{k \ge i > t} \alpha_i {h \choose i - t} \equiv 0 \pmod{1/(k!N)}$$
    and from P\'olya's classification of integer polynomials \cite{P20}, it follows that $\alpha_i \equiv 0 \pmod{k! N}$. In addition, we see that
    $$\sum_{i \ge 2}^k \alpha_i {h \choose i - 1} \equiv 0 \pmod{1/(k!N)}$$
    which also implies that $\alpha_2 \equiv 0 \pmod{k!N}$. Changing bases of $Q$ to monomials $n^I$ instead of binomials ${n \choose I}$ costs another factor of $k!$, so
    $$\|k!^2N\eta^2 \circ g\|_{C^\infty[N]} = 0.$$
\end{proof}
\begin{remark}
    Note that it is not necessarily true that $g((10^k k)!n)$ has all of its coordinates with denominator $N$ as the following example shows:
    $$g(n) = \begin{pmatrix} 1 & a\alpha & a\beta & a\alpha\beta \\ 0 & 1 & 0 & \beta \\ 0 & 0 & 1 & \alpha \\ 0 & 0 & 0 & 1 \end{pmatrix}^n$$
    with $a \in \mathbb{Z}$, $\alpha, \beta \in \widehat{\mathbb{Z}/N\mathbb{Z}}$. One can check that if $G$ is the group where we replace $a\alpha, b\beta, \alpha, \beta, a\alpha\beta$ with arbitrary real numbers, and $\Gamma$ is the subgroup where all of the corresponding entries are integers, then $g(n)\Gamma$ is $N$-periodic if $\alpha$ and $\beta$ have denominator $N$. Note that this example corresponds to the $N$-periodic bracket polynomial $n \mapsto e(a\{\alpha n\}\{\beta n\})$. The example also shows that if we were to add a quadratic part to the upper right corner of $g$, in order for the nilsequence to stay $N$-periodic, the coefficient of the quadratic part must have denominator $N$. This is equivalent to saying that if we were to add higher order parts to $g$, then the horizontal component of $g_2$ must have denominator $N$, as the lemma claims.
\end{remark}

\section{Fourier compleity lemmas}
The purpose of this section is to collect results needed in Section 3 and Section 8 regarding ``Fourier expansion" lemmas for bracket polynomials. Before we state and prove them, we require the following definition.
\begin{definition}
We define the $L^p[N]$ (likewise $L^p([N] \times [H])$) $\delta$-\emph{Fourier complexity} of a function $f: [N] \to \mathbb{C}$ to be the infimum of all $L$ such that
$$f(n) = \sum_i a_i e(\xi_in) + g$$
where $\|g\|_{L^p[N]} \le \delta$ and $\sum_i |a_i| = L$.
\end{definition}
\begin{lemma}[Fourier Complexity Lemma]\thlabel{onevarfouriercomplexity}
    Let 
    $$\alpha_1, \dots, \alpha_d, \beta_1, \dots, \beta_d, \gamma_1, \dots, \gamma_d, \gamma_1', \dots, \gamma_d' \in \mathbb{R}$$ 
    and let $\delta > 0$ a real number and $N > 0$ an integer. Then either $N \ll (\delta/2^dk)^{-O(d)^2}$ or else 
    $$e(k_1\{\alpha_1 n + \gamma_1\}\{\beta_1 n + \gamma_1'\} + k_2\{\alpha_2 n + \gamma_2\} \{\beta_2 n + \gamma_2'\} + \cdots k_d\{\alpha_d n + \gamma_d\}\{\beta_dn + \gamma_d'\})$$
    has $L^1([N])$-$\delta$-Fourier complexity at most $(\delta/2^dk)^{-O(d^{2})}$ for $|k_i| \le k$ integers. Furthermore, if $\alpha_i, \beta_i$ are rational with denominator $N$ and $N$ is prime, then the phases in the Fourier expansion can also be taken to be rational with denominator $N$ (and consequently $g$ is periodic modulo $N$).
\end{lemma}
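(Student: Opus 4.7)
My plan is to Fourier-expand each bracket-quadratic factor $e(k_i\{\alpha_i n + \gamma_i\}\{\beta_i n + \gamma_i'\})$ separately and then multiply the resulting truncated expansions. I would view each factor as the evaluation along $n$ of the $\mathbb{T}^2$-function $F_i(x, y) := e(k_i\{x\}\{y\})$. The obstruction to rapid Fourier decay is the discontinuity of $F_i$ on $\{\{x\}=\pm 1/2\}\cup\{\{y\}=\pm 1/2\}$, so I would first smooth $F_i$ by multiplying by $\chi(x)\chi(y)$, where $\chi\colon \mathbb{T}\to [0, 1]$ is a periodic bump function supported at distance at least $\eta$ from $\pm 1/2$, equal to $1$ outside the $2\eta$-neighborhood, with $\|\chi^{(L)}\|_\infty \ll \eta^{-L}$. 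The smoothed $\tilde F_i$ is then $C^L(\mathbb{T}^2)$ with $\|\tilde F_i\|_{C^L}\ll (k/\eta)^{O(L)}$, so its Fourier coefficients $c^{(i)}_{a, b}$ obey $|c^{(i)}_{a, b}|\ll (k/\eta)^{O(L)}(1+|a|+|b|)^{-L}$ by integration by parts. Taking $L = 4$ and truncating to $|a|, |b|\le M$ for $M = (kd/(\eta\delta))^{O(1)}$ yields an $L^\infty$ approximation error at most $\delta/(10 d)$ and total single-factor $L^1$-Fourier complexity at most $\sum_{|a|, |b|\le M}|c^{(i)}_{a, b}|\ll (k/\eta)^{O(1)}$.

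Next I would bound the $L^1([N])$ discrepancy between $F_i$ and $\tilde F_i$ along the orbit, which is at most the density of $n\in [N]$ for which $(\{\alpha_i n + \gamma_i\}, \{\beta_i n + \gamma_i'\})$ lies in the $O(\eta)$-measure boundary neighborhood. This splits as a union of one-dimensional conditions $\|\alpha_i n + \gamma_i\| \le 2\eta$ or $\|\beta_i n + \gamma_i'\| \le 2\eta$, which by an Erd\H{o}s--Tur\'an / Weyl-type argument has density $O(\eta)$ plus error controlled by exponential sums $\tfrac{1}{N}\sum_{n\in[N]}e(a\alpha_i n)$ and analogously for $\beta_i$, for $|a|\ll 1/\eta$. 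Either $N\ll(\delta/2^d k)^{-O(d^2)}$ (the first branch of the stated dichotomy) or these sums are negligible and the total per-factor $L^1$-error is $O(\eta)$. In the ``furthermore'' regime where $\alpha_i, \beta_i$ have denominator $N$ with $N$ prime, an elementary exact count over $\mathbb{Z}/N\mathbb{Z}$ replaces this step entirely. Choosing $\eta\sim \delta/d$ keeps the per-factor error at $O(\delta/d)$.

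Finally, multiplying the $d$ truncated single-factor Fourier expansions produces an expansion of the full product with $L^1$-Fourier complexity at most $\prod_i(k/\eta)^{O(1)} = (2^d k/\delta)^{O(d)}\le (\delta/2^d k)^{-O(d^2)}$, while the $L^1([N])$-errors telescope (using that each original factor has modulus $1$ and each truncated approximant has modulus $1 + O(\delta/d)$) to at most $d\cdot O(\delta/d) = O(\delta)$. The resulting Fourier phases are linear combinations $\sum_i (a_i\alpha_i + b_i\beta_i)n$, which inherit denominator $N$ from the $\alpha_i, \beta_i$ and so give the ``furthermore'' clause, including periodicity modulo $N$. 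The main obstacle is the equidistribution bound of the second paragraph: the smoothing, Fourier decay, and telescoping are essentially bookkeeping, but quantifying how often the orbit lands near the discontinuity is what forces the dichotomy $N\ll(\delta/2^d k)^{-O(d^2)}$ through the one-dimensional exponential-sum error terms.
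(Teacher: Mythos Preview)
Your argument for the ``furthermore'' clause (periodic case, $\alpha_i,\beta_i$ rational with prime denominator $N$) is correct and is exactly what the paper does: smooth away a $\delta$-neighborhood of the discontinuity, use that the orbit $\{\alpha_i n\}$ is exactly equidistributed in $\mathbb{Z}/N\mathbb{Z}$ so the boundary set has density $O(\delta/d)$, then Fourier expand. The paper also notes that only this case is needed for the applications.

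The general case, however, has a genuine gap. You claim that either $N$ is small or the exponential sums $N^{-1}\sum_{n\in[N]} e(a\alpha_i n)$ for $|a|\ll 1/\eta$ are negligible, so that Erd\H{o}s--Tur\'an controls the boundary density by $O(\eta)$. This dichotomy is false: the sum has modulus $\min(1,1/(N\|a\alpha_i\|))$, which depends on $\|a\alpha_i\|$, not just on $N$. If $\alpha_i$ is (or is close to) a rational with small denominator $q$ --- e.g.\ $\alpha_i=1/3$ --- then for $a=q$ the sum is $1$ regardless of $N$, and the density of $n$ with $\|\alpha_i n + \gamma_i - 1/2\|<\eta$ can be $\approx 1/q$ rather than $O(\eta)$ (if one of the $q$ cluster points happens to sit near $\pm 1/2$). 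In that scenario your smoothing introduces an $L^1$-error of order $1/q$, not $O(\eta)$, and the telescoping blows up.

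The paper handles precisely this obstruction by an iteration: for each phase $\alpha_i$ (and $\beta_i$), if the boundary set is too large then Vinogradov's lemma produces a small $q_i$ with $\|q_i\alpha_i\|$ tiny, and one partitions $[N]$ into $O(q_i)$ arithmetic progressions of common difference $q_i$ on which $\{\alpha_i n + \gamma_i\}$ varies by at most $\epsilon^{1/2}$; on each progression one freezes that fractional part to a constant. After iterating over all $2d$ phases one has $\epsilon^{-O(d)}$ progressions, on each of which the remaining unfrozen coordinates genuinely equidistribute, and only then does one smooth and Fourier expand. This partition-into-progressions step is the missing idea in your proposal; without it the general case does not go through.
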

\begin{proof}
Let 
$$\phi(n, h) = k_1\{\alpha_1 n + \gamma_1 \}\{\beta_1 n + \gamma_1'\} + k_2\{\alpha_2 n + \gamma_2\} \{\beta_2 n + \gamma_2'\} + \cdots k_d\{\alpha_dn + \gamma_d\}\{\beta_dn + \gamma_d'\}.$$ 
The function $e(\phi(n, h))$ resembles a degree one nilsequence on a torus $\mathbb{T}^{2d}$ except with possible discontinuities at the endpoints $x_i = \frac{1}{2} \pmod{1}$. To remedy this possible issue, we must set ourselves in a position so that the endpoints contribute very little to the $L^1([N])$ norm. In the periodic nilsequences case, this will end up being true since $N$ is prime and $\alpha_i$ and $\beta_i$ will always have denominator $N$. In this case, one can multiply $e(\phi(n))$ by smooth cutoff supported supported in $(-1/2, 1/2)$ and equal to one on $[-1/2 + (\delta/2^dk), 1/2 - (\delta/2^dk)]^{2d}$ of derivative at most $(\delta/2^dk)^{-1}$ and Fourier expand everything. The terms for which we have ignored from taking cutoff contribute an $L^1([N])$ norm at most $\delta/2^{d - 1}$ since $\{n \in [N]: \|\alpha_i n - \frac{1}{2}\|_{\mathbb{R}/\mathbb{Z}} < \delta\}$ has at most $2\delta N$ many elements (and similarly for each $\beta_i$) since each $\alpha_i$ has denominator $N$. Since each Fourier phase is an integer combination of the $\alpha_i$'s and $\beta_i$'s, it follows that each element of the Fourier expansion is rational of denominator $N$. Thus, we have obtained the desired Fourier complexity bounds with the desired properties. For the sake of this paper, we will only need the periodic case; for completeness sake, we give a proof of the more general Fourier complexity lemma. \\\\
In the general case, this is proven via an induction on dimensions procedure so our goal is to show that this does not lead to double exponential losses in parameters. This involves carefully quantifying the procedure in \cite[Appendix E]{GTZ11} and making sure that losses are only single exponential in dimension. Morally, the reason why we are able to obtain single exponential bounds in dimension is that we only desire equidistribution on each of the phases $\alpha_i, \beta_i$ individually, and not the joint equidistribution in $\mathbb{R}^{2d}$; this makes it so that the bounds don't feed back into themselves in iteration. For example, in the case where $N$ is prime and each $\alpha_i$ are equal, the suggested procedure works without any further modification since $\alpha n$ for $\alpha$ nonzero is well-equidistributed in $S^1$. Let $1 > \epsilon > 0$ be a quantity to be determined later. If $\{ n \in [N]: \|\alpha_1 n + \gamma_1 - \frac{1}{2}\|_{\mathbb{R}/\mathbb{Z}} < \epsilon\}$ has more than $\sqrt{\epsilon}N$ many elements, then an application of Vinogradov's Lemma shows that there exists some integer $q_1 \le \sqrt{\epsilon}^{-1}$ such that $\|q_1 \alpha_1\|_{\mathbb{R}/\mathbb{Z}} \le 100\frac{\sqrt{\epsilon}}{H}$. We now divide $[N]$ into progressions of common difference $q_1$ such that on each progression, $\{\alpha_1 n\}$ varies by at most $\epsilon^{1/2}$. This can be done as follows:
\begin{itemize}
    \item First, divide $[N]$ into progressions of common difference $q_1$.
    \item On each progression, we have that $\alpha_1 n$ varies by at most $\sqrt{\epsilon}$ modulo $1$; however, it is possible that $\{\alpha_1 n + \gamma_1\}$ ``lapped over" $1/2$ along the progression, so we must divide each progression into two subprogressions, one before $\{\alpha_1 n\}$ lapped over $1/2$ and one after it lapped over $1/2$. 
    \item The number of progressions we obtain is at most $200q_1$.
\end{itemize}
Thus, we may divide $[N] = \bigsqcup P_i$ into such progressions. We shall want to ``prune out" the small progressions and write $[N] = \bigsqcup P_i \sqcup E$ where $P_i'$ are sufficiently large and $E$ is a small error. If $|P_i| \le \epsilon^{1 + 1/2} N$, then we include it in $E$ and otherwise, we do nothing. Thus, we can ensure that $|P_i| \ge \epsilon^{1 + 1/2} N$ and $|E| \le 200q_1\epsilon^{1 + 1/2} N \le 200\epsilon N$. \\\\
We now iterate in the following manner: if $\{ n \in [N]: \|\alpha_2 n + \gamma_2 - \frac{1}{2}\|_{\mathbb{R}/\mathbb{Z}} < \epsilon\}$ has more than $\sqrt{\epsilon}N$ many elements, then (assuming that $\epsilon$ is sufficiently small) there exists a progression $P$ (as in the decomposition above) such that $\{h \in P: \|\alpha_2 n + \gamma_2 - \frac{1}{2}\|_{\mathbb{R}/\mathbb{Z}} < \epsilon\}$ has more than $\frac{\sqrt{\epsilon}}{2} |P|$ many elements, then writing $P = q_1 \cdot [N_1] + r_1$, it follows that there are more than $\frac{\sqrt{\epsilon}}{2}N_1$ many elements in $\{n \in [N_1]: \|q_1\alpha_2n + \gamma\|_{\mathbb{R}/\mathbb{Z}} < \epsilon\}$ for some $\gamma$. Letting $n$ and $m$ be two such elements, it follows that $\|q_1\alpha_2(n - m)\|_{\mathbb{R}/\mathbb{Z}} \le \|-\gamma - q_1\alpha_2 m\|_{\mathbb{R}/\mathbb{Z}} + \|q_1\alpha_2n + \gamma\|_{\mathbb{R}/\mathbb{Z}} \le 2\epsilon$, so there are more than $\frac{\sqrt{\epsilon}}{2}|P|$ many elements such $n \in [N_1]$ such that $\|q_1\alpha_2\|_{\mathbb{R}/\mathbb{Z}} < 2\epsilon$. Thus, (assuming that $N$ is sufficiently large with respect to $\epsilon$) there exists some $q_2 \le 2\sqrt{\epsilon}^{-1}$ such that $\|q_1q_2 \alpha_2\|_{\mathbb{R}/\mathbb{Z}} \le \frac{200\sqrt{\epsilon}}{|P|}$. We now continue down a similar path, dividing each $P_i$ into subprogressions instead of dividing $[N]$ into subprogressions, this time obtaining an error set of size at most $4\epsilon|P_i|$. \\\\
Under an iteration (making sure to choose $\epsilon$ so that $\frac{\sqrt{\epsilon}}{200^{2d}} > \epsilon$), we would obtain at most $\epsilon^{-d}200^{2d}$ many progressions each of size at least $\epsilon^{3d}N$ and common difference at most $\epsilon^{-d}$ such that
\begin{itemize}
    \item either $n \mapsto \{\alpha_i n + \gamma_i\}$ or $n \mapsto \{\beta_i n + \gamma_i'\}$ varies by at most $\epsilon^{1/2}$ on each progression;
    \item or the set $\{ n \in [N]: \|\alpha_i n + \gamma_i - \frac{1}{2}\|_{\mathbb{R}/\mathbb{Z}} < \epsilon\}$ has at most $\sqrt{\epsilon}N$ many elements (or similarly the set $\{n \in [N]: \|\beta_i n + \gamma_i' - \frac{1}{2}\|_{\mathbb{R}/\mathbb{Z}} < \epsilon\}$ has at most $\sqrt{\epsilon}N$ many elements);
\end{itemize}
and an error set $E$ of size at most $200^{2d} \epsilon 2N$. For the $\alpha_i$'s or $\beta_i$'s that lie in the second case, we introduce a smooth cutoff $\varphi_{j}$ of derivative at most $\epsilon^{-1}$ in the dimensions that correspond to the $\alpha_i$'s or $\beta_i$'s. Thus, letting $F(\vec{x}, \vec{y}) = e(k_1 x_1y_1 + \cdots + k_d x_dy_d)$ and letting the above decomposition be $[N] = \bigsqcup_i P_i \sqcup E$, it follows that we may express
$$e(\phi(n, h)) = F(\{\alpha n\}, \{\beta n\}) = \varphi(\{\alpha n\}, \{\beta n\}) F(\{\alpha n\}, \{\beta n\}) + O_{L^1([N]), \le 1}(2d\epsilon^{1/2})$$
where $O_{L^1([N]), \le C}(k)$ denotes a function in $L^1([N])$ whose norm is at most $Ck$ (i.e., $\le 1$ denotes that the implicit constant is $\le 1$). We now take
$$\phi(\{\alpha n\}, \{\beta n\}) F(\{\alpha n\}, \{\beta n\}) = \sum_i 1_{P_i}(n)\varphi(\{\alpha n\}, \{\beta n\}) F(\{\alpha n\}, \{\beta n\}) + O_{L^1([N]), \le 1}(200^{2d}2\epsilon).$$
For each of the remaining phases $\alpha_i$, $\beta_j$ that are not covered by first case in the above bulleted points, we may take some phase $\alpha_i^0$ or $\beta_j^0$ such that $|\{\alpha_in + \gamma_i\} - \{\alpha_i^0\}| \le \epsilon^{1/2}$. Making sure to choose $\epsilon$ so that $k\epsilon^{1/2} < 1/100$, it follows that we may replace $F(\{\alpha n\}, \{\beta n\})$ by some $\tilde{F}(\{\alpha n\}, \{\beta n\})$ which is equal to $\{\alpha_i^0\}$ and $\{\beta_j^0\}$ on please $i$ in the $\vec{x}$ variable and place $j$ in the $\vec{y}$ variable. We thus have
$$\sum_i 1_{P_i}(n)\varphi(\{\alpha n\}, \{\beta n\}) F(\{\alpha n\}, \{\beta n\}) $$
$$= \sum_i 1_{P_i}(n)\varphi(\{\alpha n\}, \{\beta n\}) \tilde{F}(\{\alpha n\}, \{\beta n\}) + O_{L^\infty([N]), \le 1}(2dk \epsilon^{1/2})$$
(here, we are also implicitely using the inequality $|e(\epsilon) - 1| \le |\epsilon|$). Now, each $P_i$ can be written as a product of $P_i^1 \times P_i^2$ (one in $h$ variable, one in $n$ variable). We may also write $P_i^j$ as a product of an interval $1_I$ and some $1_{n \equiv a \pmod{q}}$. The interval we can approximate in $L^1[N]$ (or respectivly, $L^1[N]$) with a $g(n/N)$ where $g$ is a Lipschitz function with norm at most $\epsilon^{-1/2}$ up to an error of at most $2\epsilon^{1/2}|I|$. Each $1_{n \equiv a \pmod{q}}$ can also be thought of as a Lipschitz function on the torus with norm at most $q$ (which in our case will be at most $\epsilon^{-d}$). Thus, we may write:
$$\sum_i \tilde{F}_{P_i}(\{\alpha n\}, \{\beta n\}) + O_{L^1([N]), \le 1}(4\epsilon^{1/2})$$
where each $\tilde{F}_{P_i}$ is a product of $\tilde{F}$ and $g$ and $1_{n \equiv a \pmod{q}}$ functions associated to the progression $P_i$ and has Lipchitz norm at most $(2d + 4)\epsilon^{-1/2}$ and is defined on a $2d + 4$ dimensional torus $\mathbb{R}^{2d + 4}/\mathbb{Z}^{2d + 4}$. Fourier expanding each $\tilde{F}_{P_i}$ via \cite[Lemma A.6]{Len2} (taking $\delta = \frac{\epsilon^{d + 3/2}}{3(2d + 4)200^{2d}}$), we obtain
$$\sum_i \tilde{F}_{P_i}(\{\alpha n\}, \{\beta n\}) = \sum_{\xi} a_\xi e(\xi \cdot n) + O_{L^\infty([N]), \le 1}(\epsilon)$$
where (in fact we can take $|a_\xi| \le 1$) 
$$\sum_\xi |a_\xi| \le O(\epsilon)^{-O(d)^2}.$$
Summing up all the errors together gives an $L^1([N])$ error of at most $O(dk\epsilon^{1/2})$. Now choosing $\epsilon$ so that (each of the conditions we placed on $\epsilon$ are satisfied before and) $dk\epsilon^{1/2} \ll \delta$, we obtain the claim.
\end{proof}

\begin{lemma}[Bilinear Fourier Complexity Lemma]\thlabel{bilinearfouriercomplexity}
    Let 
    $$\alpha_1, \dots, \alpha_d, \beta_1, \dots, \beta_d, \gamma_1, \dots, \gamma_d, \gamma_1', \dots, \gamma_d' \in \mathbb{R}$$ 
    and let $\delta > 0$ a real number and $N, H > 0$ integers. Then either $N \ll (\delta/3^dk)^{-O(d)^2}$, or $H \ll (\delta/3^dk)^{-O(d)^2}$ or else 
    $$e(k_1\{\alpha_1 h + \gamma_1\}\{\beta_1 n + \gamma_1'\} + k_2\{\alpha_2 h + \gamma_2\} \{\beta_2 n + \gamma_2'\} + \cdots k_d\{\alpha_dn + \gamma_d\}\{\beta_dh + \gamma_d'\})$$
    has $L^1([N] \times [H])$-$\delta$-Fourier complexity at most $(\delta/3^dk)^{-O(d^{2})}$ for $|k_i| \le k$ real. Furthermore, if $N, H$ are prime and $\alpha_i$ are rational with denominator $N$ and $\beta_i$ are rational with denominator $H$, then the phases in the Fourier expansion can be taken to be $e(c n + dh)$ where $c$ is rational with denominator $N$ and $d$ is rational with denominator $H$.
\end{lemma}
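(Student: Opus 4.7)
The proof will adapt the argument for the one-variable case \thref{onevarfouriercomplexity} to the bilinear setting. The function
$$e(\phi(n,h)) = e\!\left(\sum_{i=1}^d k_i \{\alpha_i h + \gamma_i\}\{\beta_i n + \gamma_i'\}\right)$$
is naturally of the form $F(\{\alpha_1 h+\gamma_1\},\dots,\{\alpha_d h+\gamma_d\},\{\beta_1 n+\gamma_1'\},\dots,\{\beta_d n+\gamma_d'\})$, where $F(\vec x,\vec y) = e(\sum_i k_i x_i y_i)$ is a smooth function on $[-1/2,1/2]^{2d}$ with Lipschitz norm at most $O(dk)$. The only obstruction to directly Fourier-expanding is the discontinuity of $\{\cdot\}$ at $1/2 \pmod 1$, and the plan is the same as in the one-variable case: kill the discontinuity with a smooth cutoff, then Fourier-expand the resulting smooth function on the torus $\mathbb{T}^{2d}$.

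In the periodic case, where $N,H$ are prime and the $\alpha_i$ (resp.\ $\beta_i$) have denominator $N$ (resp.\ $H$), I would multiply $e(\phi(n,h))$ by a smooth cutoff $\varphi$ on $\mathbb{T}^{2d}$ supported in $(-1/2,1/2)^{2d}$ and equal to $1$ on $[-1/2+\eta,1/2-\eta]^{2d}$ for $\eta \asymp \delta/(2^d dk)$, with derivative at most $\eta^{-1}$. Since each $\alpha_i$ has denominator $N$, the set $\{h \in [H]:\|\alpha_i h+\gamma_i-\tfrac12\|_{\mathbb{R}/\mathbb{Z}}<\eta\}$ has size $O(\eta H + 1)$, and similarly for the $\beta_i$ on $[N]$; summing over the $2d$ coordinates shows that the cutoff produces an $L^1([N]\times[H])$-error of at most $\delta/2$ once $N,H$ exceed the stated thresholds. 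Fourier-expanding the resulting smooth function $\varphi \cdot F$ on $\mathbb{T}^{2d}$ via \cite[Lemma A.6]{Len2} yields the expansion, and each Fourier mode has the form $e(cn+dh)$ where $c$ is an integer combination of the $\beta_i$ (hence rational with denominator $H$) and $d$ an integer combination of the $\alpha_i$ (hence rational with denominator $N$), giving both the complexity bound and the rationality conclusion.

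For the general case without the rationality hypothesis, I would carry out the iterative Vinogradov-type dichotomy exactly as in \thref{onevarfouriercomplexity}, but in parallel on both variables: for each index $i$, either the set $\{h\in[H]: \|\alpha_i h+\gamma_i-\tfrac12\|_{\mathbb{R}/\mathbb{Z}}<\epsilon\}$ has fewer than $\sqrt\epsilon H$ elements (in which case a smooth cutoff in the $x_i$ coordinate handles that dimension), or else Vinogradov's lemma yields an integer $q_i \le \epsilon^{-1/2}$ with $\|q_i\alpha_i\|_{\mathbb{R}/\mathbb{Z}} \le 100\sqrt\epsilon/H$, permitting a partition of $[H]$ into subprogressions of common difference $q_i$ on each of which $\{\alpha_i h+\gamma_i\}$ varies by at most $\sqrt\epsilon$. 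The identical dichotomy runs for each $\beta_i$ on $[N]$. After $d$ such steps in each variable, $[N]\times[H]$ decomposes as $\bigsqcup_j (P_j^1 \times P_j^2) \sqcup E$ into at most $\epsilon^{-O(d)}$ product progressions, plus an error set of measure $O(2^d \epsilon)$. On each product progression the function $F$ is uniformly close to a function that depends only on the smoothly cut-off coordinates, and each indicator $\mathbf 1_{P_j^1\times P_j^2}$ can be approximated by a Lipschitz function on a torus of dimension $O(d)$; Fourier-expanding via \cite[Lemma A.6]{Len2} concludes.

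The main obstacle, just as in \thref{onevarfouriercomplexity}, is keeping the bounds single-exponential in $d$ rather than double-exponential. This requires: (i) in the iterative stage, pruning progressions of size below $\epsilon^{O(d)}\min(N,H)$ into the error set so that the number of progressions stays at $\epsilon^{-O(d)}$ and the accumulated error is controlled; (ii) in the Fourier-expansion stage, applying \cite[Lemma A.6]{Len2} on an $O(d)$-dimensional torus to incur only an $\epsilon^{-O(d^2)}$ complexity cost; and (iii) verifying that the two variables decouple cleanly so that the bilinearity does not introduce extra dimensional dependence. Because each bracket pair pairs one $\alpha_i h$ factor with one $\beta_i n$ factor, the dichotomies in the $h$ and $n$ directions proceed independently, and taking $\epsilon = (\delta/3^d k)^{c}$ for a suitable small constant $c$ yields the claimed bound $(\delta/3^d k)^{-O(d^2)}$.
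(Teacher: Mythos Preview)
Your proposal is correct and follows essentially the same approach as the paper's own proof: both handle the periodic case by a smooth cutoff plus torus Fourier expansion via \cite[Lemma A.6]{Len2}, and the general case by the nested Vinogradov dichotomy on each phase (partitioning into subprogressions when a phase is poorly equidistributed, inserting a cutoff when it is well equidistributed), with the same care taken to prune small progressions and keep the progression count at $\epsilon^{-O(d)}$ so that the final Fourier expansion costs only $\epsilon^{-O(d^2)}$. Your observation that the $h$- and $n$-dichotomies decouple into a product decomposition $\bigsqcup_j P_j^1\times P_j^2$ is exactly the point, and matches the paper's argument.
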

%Omit the proof?
\begin{proof}
The proof is very similar to that of \thref{onevarfouriercomplexity}. Let 
$$\phi(n, h) = k_1\{\alpha_1 h + \gamma_1 \}\{\beta_1 n + \gamma_1'\} + k_2\{\alpha_2 h + \gamma_2\} \{\beta_2 n + \gamma_2'\} + \cdots k_d\{\alpha_dh + \gamma_d\}\{\beta_dn + \gamma_d'\}.$$ 
The function $e(\phi(n, h))$ resembles a degree one nilsequence on a torus $\mathbb{T}^{2d}$ except with possible discontinuities at the endpoints $x_i = \frac{1}{2} \pmod{1}$. To remedy this possible issue, we must set ourselves in a position so that the endpoints contribute very little to the $L^1([N] \times [H])$ norm. This is not problematic if $H = N$ and $\alpha_i, \beta_i$ are rational with denominator $N$ with $N$ prime. By arguing similarly as \thref{onevarfouriercomplexity}, we obtain the desired conclusion in this case. \\\\
Let $1 > \epsilon > 0$ be a quantity to be determined later. If $\{h \in [H]: \|\alpha_1 h + \gamma_1 - \frac{1}{2}\|_{\mathbb{R}/\mathbb{Z}} < \epsilon\}$ has more than $\sqrt{\epsilon}H$ many elements, then an application of Vinogradov's Lemma shows that there exists some integer $q_1 \le \sqrt{\epsilon}^{-1}$ such that $\|q_1 \alpha_1\|_{\mathbb{R}/\mathbb{Z}} \le 100\frac{\sqrt{\epsilon}}{H}$. We now divide $[H]$ into progressions of common difference $q_1$ such that on each progression, $\{\alpha_1 h\}$ varies by at most $\epsilon^{1/2}$. This can be done as follows:
\begin{itemize}
    \item First, divide $[H]$ into progressions of common difference $q_1$.
    \item On each progression, we have that $\alpha_1 h$ varies by at most $\sqrt{\epsilon}$ modulo $1$; however, it is possible that $\{\alpha_1 h + \gamma_1\}$ ``lapped over" $1/2$ along the progression, so we must divide each progression into two subprogressions, one before $\{\alpha_1 h\}$ lapped over $1/2$ and one after it lapped over $1/2$. 
    \item The number of progressions we obtain is at most $200q_1$.
\end{itemize}
Thus, we may divide $[H] = \bigsqcup P_i$ into such progressions. We shall want to ``prune out" the small progressions and write $[H] = \bigsqcup P_i \sqcup E$ where $P_i'$ are sufficiently large and $E$ is a small error. If $|P_i| \le \epsilon^{1 + 1/2} H$, then we include it in $E$ and otherwise, we do nothing. Thus, we can ensure that $|P_i| \ge \epsilon^{1 + 1/2} N$ and $|E| \le 200q_1\epsilon^{1 + 1/2} H \le 200\epsilon H$. \\\\
We now iterate in the following manner: if $\{h \in [H]: \|\alpha_2 h + \gamma_2 - \frac{1}{2}\|_{\mathbb{R}/\mathbb{Z}} < \epsilon\}$ has more than $\sqrt{\epsilon}H$ many elements, then (assuming that $\epsilon$ is sufficiently small) there exists a progression $P$ (as in the decomposition above) such that $\{h \in P: \|\alpha_2 h + \gamma_2 - \frac{1}{2}\|_{\mathbb{R}/\mathbb{Z}} < \epsilon\}$ has more than $\frac{\sqrt{\epsilon}}{2} |P|$ many elements, then writing $P = q_1 \cdot [H_1] + r_1$, it follows that there are more than $\frac{\sqrt{\epsilon}}{2}N_1$ many elements in $\{h \in [H_1]: \|q_1\alpha_2h + \gamma\|_{\mathbb{R}/\mathbb{Z}} < \epsilon\}$ for some $\gamma$. Letting $n$ and $m$ be two such elements, it follows that $\|q_1\alpha_2(n - m)\|_{\mathbb{R}/\mathbb{Z}} \le \|-\gamma - q_1\alpha_2 m\|_{\mathbb{R}/\mathbb{Z}} + \|q_1\alpha_2n + \gamma\|_{\mathbb{R}/\mathbb{Z}} \le 2\epsilon$, so there are more than $\frac{\sqrt{\epsilon}}{2}|P|$ many elements such $h \in [H_1]$ such that $\|q_1\alpha_2\|_{\mathbb{R}/\mathbb{Z}} < 2\epsilon$. Thus, (assuming that $N$ is sufficiently large with respect to $\epsilon$) there exists some $q_2 \le 2\sqrt{\epsilon}^{-1}$ such that $\|q_1q_2 \alpha_2\|_{\mathbb{R}/\mathbb{Z}} \le \frac{200\sqrt{\epsilon}}{|P|}$. We now continue down a similar path, dividing each $P_i$ into subprogressions instead of dividing $[N]$ into subprogressions, this time obtaining an error set of size at most $4\epsilon|P_i|$. \\\\
Under an iteration (making sure to choose $\epsilon$ so that $\frac{\sqrt{\epsilon}}{200^{2d}} > \epsilon$), we would obtain at most $\epsilon^{-d}200^{2d}$ many progressions each of size at least $\epsilon^{3d}NH$ and common difference at most $\epsilon^{-d}$ such that
\begin{itemize}
    \item either $h \mapsto \{\alpha_i h + \gamma_i\}$ or $n \mapsto \{\beta_i n + \gamma_i'\}$ varies by at most $\epsilon^{1/2}$ on each progression;
    \item or the set $\{h \in [H]: \|\alpha_i h + \gamma_i - \frac{1}{2}\|_{\mathbb{R}/\mathbb{Z}} < \epsilon\}$ has at most $\sqrt{\epsilon}H$ many elements (or similarly the set $\{n \in [N]: \beta_i n + \gamma_i' - \frac{1}{2}\|_{\mathbb{R}/\mathbb{Z}} < \epsilon\}$ has at most $\sqrt{\epsilon}N$ many elements);
\end{itemize}
and an error set $E$ of size at most $200^{2d} \epsilon 2NH$. For the $\alpha_i$'s or $\beta_i$'s that lie in the second case, we introduce a smooth cutoff $\varphi_{j}$ of derivative at most $\epsilon^{-1}$ in the dimensions that correspond to the $\alpha_i$'s or $\beta_i$'s. Thus, letting $F(\vec{x}, \vec{y}) = e(k_1 x_1y_1 + \cdots + k_d x_dy_d)$ and letting the above decomposition be $[N] \times [H] = \bigsqcup_i P_i \sqcup E$, it follows that we may express
$$e(\phi(n, h)) = F(\{\alpha h\}, \{\beta n\}) = \varphi(\{\alpha h\}, \{\beta n\}) F(\{\alpha h\}, \{\beta n\}) + O_{L^1([N] \times [H]), \le 1}(2d\epsilon^{1/2})$$
where $O_{L^1([N] \times [H]), \le C}(k)$ denotes a function in $L^1([N] \times [H])$ whose norm is at most $Ck$ (i.e., $\le 1$ denotes that the implicit constant is $\le 1$). We now take
$$\phi(\{\alpha h\}, \{\beta n\}) F(\{\alpha h\}, \{\beta n\}) = \sum_i 1_{P_i}(n, h)\varphi(\{\alpha h\}, \{\beta n\}) F(\{\alpha h\}, \{\beta n\}) + O_{L^1([N] \times [H]), \le 1}(200^{2d}2\epsilon).$$
For each of the remaining phases $\alpha_i$, $\beta_j$ that are not covered by first case in the above bulleted points, we may take some phase $\alpha_i^0$ or $\beta_j^0$ such that $|\{\alpha_ih + \gamma_i\} - \{\alpha_i^0\}| \le \epsilon^{1/2}$. Making sure to choose $\epsilon$ so that $k\epsilon^{1/2} < 1/100$, it follows that we may replace $F(\{\alpha h\}, \{\beta n\})$ by some $\tilde{F}(\{\alpha h\}, \{\beta n\})$ which is equal to $\{\alpha_i^0\}$ and $\{\beta_j^0\}$ on please $i$ in the $\vec{x}$ variable and place $j$ in the $\vec{y}$ variable. We thus have
$$\sum_i 1_{P_i}(n, h)\varphi(\{\alpha h\}, \{\beta n\}) F(\{\alpha h\}, \{\beta n\}) $$
$$= \sum_i 1_{P_i}(n, h)\varphi(\{\alpha h\}, \{\beta n\}) \tilde{F}(\{\alpha h\}, \{\beta n\}) + O_{L^\infty([N] \times [H]), \le 1}(2dk \epsilon^{1/2})$$
(here, we are also implicitely using the inequality $|e(\epsilon) - 1| \le |\epsilon|$). Now, each $P_i$ can be written as a product of $P_i^1 \times P_i^2$ (one in $h$ variable, one in $n$ variable). We may also write $P_i^j$ as a product of an interval $1_I$ and some $1_{n \equiv a \pmod{q}}$. The interval we can approximate in $L^1[N]$ (or respectivly, $L^1[H]$) with a $g(n/N)$ where $g$ is a Lipschitz function with norm at most $\epsilon^{-1/2}$ up to an error of at most $2\epsilon^{1/2}|I|$. Each $1_{n \equiv a \pmod{q}}$ can also be thought of as a Lipschitz function on the torus with norm at most $q$ (which in our case will be at most $\epsilon^{-d}$). Thus, we may write:
$$\sum_i \tilde{F}_{P_i}(\{\alpha h\}, \{\beta n\}) + O_{L^1([N] \times [H]), \le 1}(4\epsilon^{1/2})$$
where each $\tilde{F}_{P_i}$ is a product of $\tilde{F}$ and $g$ and $1_{n \equiv a \pmod{q}}$ functions associated to the progression $P_i$ and has Lipchitz norm at most $(2d + 4)\epsilon^{-1/2}$ and is defined on a $2d + 4$ dimensional torus $\mathbb{R}^{2d + 4}/\mathbb{Z}^{2d + 4}$. Fourier expanding each $\tilde{F}_{P_i}$ via \cite[Lemma A.5]{Len2} (taking $\delta = \frac{\epsilon^{d + 3/2}}{3(2d + 4)200^{2d}}$), we obtain
$$\sum_i \tilde{F}_{P_i}(\{\alpha h\}, \{\beta n\}) = \sum_{\xi} a_\xi e(\xi \cdot (n, h)) + O_{L^\infty([N] \times [H]), \le 1}(\epsilon)$$
where (in fact we can take $|a_\xi| \le 1$) 
$$\sum_\xi |a_\xi| \le O(\epsilon)^{-O(d)^2}.$$
Summing up all the errors together gives an $L^1([N] \times [H])$ error of at most $O(dk\epsilon^{1/2})$. Now choosing $\epsilon$ so that (each of the conditions we placed on $\epsilon$ are satisfied before and) $dk\epsilon^{1/2} \ll \delta$, we obtain the claim.
\end{proof}

We will also need a related lemma below:
\begin{lemma}[``Trivial" Fourier Complexity Lemma]\thlabel{trivialfouriercomplexity}
Let $a, \alpha \in \mathbb{R}^d$m $1/10 > \delta > 0$ real, and $N > 0$ a prime. Suppose that $a$ and $\alpha$ are rational with denominator $N$. Then either $N \ll \delta^{-O(d)^{O(1)}}$ or
$$e(a_1\{\alpha_1 n\} + \cdots + a_d\{\alpha_dn\})$$
has $L^1[N]$ $\delta$-Fourier complexity at most $\delta^{-O(d)^{O(1)}}$ and one can take the phases in the definition of Fourier complexity to be rational with denominator $N$.
\end{lemma}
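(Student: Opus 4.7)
The plan is to write $f(n) := e(\sum_i a_i \{\alpha_i n\})$ as $F(\alpha_1 n, \dots, \alpha_d n)$ for a discontinuous function $F$ on the torus $(\mathbb{R}/\mathbb{Z})^d$, approximate $F$ by a smooth function $\tilde{F}$, and then Fourier expand $\tilde{F}$. Any $\alpha_i = 0$ contributes $\{\alpha_i n\} = 0$ and may be dropped, so I assume all $\alpha_i \ne 0$. Since $N$ is prime and $\alpha_i$ has denominator $N$, the map $n \mapsto \alpha_i n \pmod 1$ is a bijection from $\mathbb{Z}/N\mathbb{Z}$ to $\tfrac{1}{N}\mathbb{Z}/\mathbb{Z}$. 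Using the representative $\bar{x}_i \in [0,1)$ for $x_i \in \mathbb{R}/\mathbb{Z}$, define $F(x_1,\dots,x_d) := e(\sum_i a_i \bar{x}_i)$, so that $f(n) = F(\alpha_1 n, \dots, \alpha_d n)$. The main obstacle is that $F$ is genuinely discontinuous on $(\mathbb{R}/\mathbb{Z})^d$ (unless every $a_i$ is an integer), since $e(a_i \bar{x})$ does not descend continuously from $[0,1)$ to $\mathbb{R}/\mathbb{Z}$; the trick is to combine a smooth cutoff with the fact that $\alpha_i n$ avoids the discontinuity for most $n$.

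Set $\eta := \delta/(16d)$ and choose a smooth cutoff $\psi_\eta\colon \mathbb{R}/\mathbb{Z} \to [0,1]$ equal to $1$ on $[\eta, 1 - \eta]$, vanishing on a neighborhood of the identified endpoint $0 \equiv 1$, and satisfying $\|\psi_\eta^{(j)}\|_\infty \ll \eta^{-j}$ for $j \le 2$. Because $\psi_\eta$ vanishes near the endpoints, each $h_i(x) := \psi_\eta(x)\, e(a_i \bar{x})$ descends to a smooth function on $\mathbb{R}/\mathbb{Z}$, so $\tilde{F}(x_1,\dots,x_d) := \prod_i h_i(x_i)$ is smooth on $(\mathbb{R}/\mathbb{Z})^d$. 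Let $\tilde{f}(n) := \tilde{F}(\alpha_1 n, \dots, \alpha_d n)$. Whenever $\{\alpha_i n\} \in [\eta, 1 - \eta]$ for every $i$, one has $\tilde{f}(n) = f(n)$; the bad set $B := \{n : \|\alpha_i n\|_{\mathbb{R}/\mathbb{Z}} < \eta \text{ for some } i\}$ has cardinality at most $d(2\eta N + 1)$ by the bijection property and a union bound. Hence $\|f - \tilde{f}\|_{L^1[N]} \le 2|B|/N \le 4d\eta + 2d/N$, which is at most $\delta$ provided $N \gg d/\delta$; this yields the dichotomy appearing in the statement.

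It remains to Fourier expand $\tilde{F}$. By the tensor product structure, $\hat{\tilde{F}}(\xi_1,\dots,\xi_d) = \prod_i \hat{h}_i(\xi_i)$, reducing matters to bounding $\sum_{k \in \mathbb{Z}} |\hat{h}_i(k)|$. Integrating by parts twice (the boundary terms vanish because $\psi_\eta$ does) gives $|\hat{h}_i(k)| \ll \|\psi_\eta''\|_{L^1} / (a_i - k)^2 \ll \eta^{-1}/(a_i - k)^2$ whenever $|a_i - k| \gg 1$; combining this with the trivial bound $|\hat{h}_i(k)| \le 1$ and optimizing the split at $|a_i - k| \sim \eta^{-1/2}$ yields $\sum_k |\hat{h}_i(k)| \ll \eta^{-O(1)}$, and therefore $\sum_\xi |\hat{\tilde{F}}(\xi)| \ll \eta^{-O(d)} = \delta^{-O(d)}$. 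Writing $\tilde{f}(n) = \sum_\xi \hat{\tilde{F}}(\xi)\, e\bigl((\xi \cdot \alpha)\, n\bigr)$, each phase $\xi \cdot \alpha = \sum_i \xi_i \alpha_i$ is an integer combination of rationals of denominator $N$, hence itself of denominator $N$. Combined with the $L^1[N]$ error estimate from the previous paragraph, this gives a $\delta$-Fourier decomposition with complexity $\delta^{-O(d)}$, which fits into the target bound $\delta^{-O(d)^{O(1)}}$.
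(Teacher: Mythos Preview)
Your proof is correct and follows essentially the same approach as the paper: view the expression as a (discontinuous) function on the torus, multiply by a smooth cutoff away from the discontinuity, use the bijection $n \mapsto \alpha_i n \pmod 1$ (valid since $N$ is prime and $\alpha_i$ has denominator $N$) to bound the $L^1$-error from the cutoff, and then Fourier expand so that the phases are integer combinations of the $\alpha_i$. The only notable difference is that the paper first writes $a_i = [a_i] + \{a_i\}$ to extract a linear phase $([a]\cdot\alpha)n$ and reduce to bounded coefficients before applying a general Lipschitz Fourier expansion, whereas you bypass this reduction by exploiting the tensor product structure and integrating by parts in one dimension so that the decay is in $|a_i - k|$ rather than $|k|$; both variants give the claimed bound.
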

\begin{proof}
We separate $a = [a] + \{a\}$. Thus,
$$a_1\{\alpha_1 n\} + \cdots + a_d\{\alpha_dn\} \equiv ([a] \cdot \alpha) n + \{a\} \cdot \{\alpha n\} \pmod{1}.$$
The function $e(\{a\} \cdot \{\alpha n\} + [a] \cdot \alpha n)$ resembles a function on $\mathbb{R}^{d + 1}/\mathbb{Z}^{d + 1}$ which is smooth everywhere except on the boundary. To remedy this issue, we multiply this function by a smooth cutoff $\phi$ which is supported in $(-1/2, 1/2)$ and equal to one on $[-1/2 + (\delta/4d), 1/2 + (\delta/4d)]$ (and has derivative at most $8d^2/\delta$). The point is that outside the support of $\phi$, the measure of $((\{\alpha n\})_{n \in \mathbb{R}^d/\mathbb{Z}^d}, \alpha n)$ is at most $\delta/2$, and when we multiplied by $\phi$, the function becomes smooth with Lipschitz constant at most $O(8d^3/\delta)$. Thus, Fourier expanding this function, we notice that the Fourier phases are an integer linear combination of the $\alpha_i$'s, which ensures both the Fourier complexity and the rational with denominator $N$ properties of the Fourier expansion.
\end{proof}
\begin{remark}
Again, one can prove this lemma in general, not assuming that $\alpha_i$ have denominator $N$; we did not do so since this is rather cumbersome.
\end{remark}
We need one last Fourier complexity lemma.
\begin{lemma}[Fourier Complexity Lemma II]\thlabel{FourierComplexity2}
Either $\delta^{-O(d)^{O(1)}} \ge N$ or one may write
$$e(\sum_i \{a_i(\vec{n})\}\{b_i(\vec{m})\}) = \sum_{\alpha, \beta \in \frac{1}{2}\mathbb{Z}^{2d}, |\alpha|,|\beta| \le \delta^{-O(d)^{O(1)}}} a_{\alpha, \beta} e(\alpha \cdot \{a_i(\vec{n})\} + \beta \cdot \{b_i(\vec{m})\}) + O_{\le 1}(\delta)$$
where $|a_{\alpha, \beta}| \le 1$. 
\end{lemma}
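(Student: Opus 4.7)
The plan is to obtain the expansion by Fourier analysis of the function $F(x, y) := e(\sum_{i=1}^d x_i y_i)$ on the cube $[-1/2, 1/2]^{2d}$. Since $F$ is not $\mathbb{Z}^{2d}$-periodic, the natural approach is to \emph{double} the domain: expanding $F$ on the torus $(\mathbb{R}/2\mathbb{Z})^{2d}$ rather than $(\mathbb{R}/\mathbb{Z})^{2d}$ produces frequencies in $\tfrac{1}{2}\mathbb{Z}^{2d}$, as required by the statement.

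First I would introduce a smooth cutoff $\phi\colon \mathbb{R} \to [0, 1]$, supported in $(-1/2, 1/2)$, equal to $1$ on $[-1/2 + \eta, 1/2 - \eta]$ with $\eta := c\delta/d$ for a sufficiently small absolute constant $c$, and satisfying the derivative bounds $\|\phi^{(k)}\|_\infty \ll k!/\eta^k$. I then define
$$\Phi(x, y) := \prod_{i=1}^d \phi(x_i)\phi(y_i) \cdot e\Bigl(\sum_{i=1}^d x_i y_i\Bigr).$$
The function $\Phi$ is smooth and compactly supported in $(-1/2, 1/2)^{2d}$, so its extension by zero to $[-1, 1]^{2d}$ descends to a smooth function on the torus $(\mathbb{R}/2\mathbb{Z})^{2d}$. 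Expanding it as a Fourier series gives
$$\Phi(x, y) = \sum_{(\alpha, \beta) \in \tfrac{1}{2}\mathbb{Z}^{2d}} \widehat{\Phi}(\alpha, \beta)\, e(\alpha \cdot x + \beta \cdot y),$$
and repeated integration by parts yields decay of the form $|\widehat{\Phi}(\alpha, \beta)| \le C_K \eta^{-O(dK)} (1 + |\alpha| + |\beta|)^{-K}$ for every integer $K$; truncating to $|\alpha|, |\beta| \le (d/\delta)^{O(d)} = \delta^{-O(d)^{O(1)}}$ gives an $L^\infty$ error of at most $\delta/2$ and a decomposition of exactly the form required by the conclusion.

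The remaining task is to control $F - \Phi$ in $L^1$ under the pushforward measure $(\vec{n}, \vec{m}) \mapsto (\{a(\vec{n})\}, \{b(\vec{m})\})$. By construction, the cutoff product $\prod_i \phi(x_i)\phi(y_i)$ equals $1$ whenever every coordinate lies in $[-1/2 + \eta, 1/2 - \eta]$, so the only tuples contributing to the error are those for which some $\{a_i(\vec{n})\}$ or $\{b_i(\vec{m})\}$ lies within $\eta$ of $\pm 1/2$. In the tame setting relevant to Section~8, where the phases are rational with denominator $N$, each exceptional set $\{\vec{n} : |\{a_i(\vec{n})\}| \ge 1/2 - \eta\}$ has density at most $2\eta + O(1/N)$, and summing over $i$ the total $L^1$ error is bounded by $O(d\eta) + O(d/N) \le \delta/2$ provided $N \gg d/\delta$, which is exactly the dichotomy stated in the lemma.

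The chief technical obstacle is this final boundary-control step, since it requires the phases $\{a_i(\vec{n})\}, \{b_i(\vec{m})\}$ to be sufficiently well-distributed away from the boundary $\pm 1/2$. In the structured setting of the paper this follows immediately from rationality with denominator $N$, since then $\{a_i(\vec{n})\}$ takes values in $\frac{1}{N}\mathbb{Z}/\mathbb{Z}$. In full generality one would have to run a Vinogradov-type pigeonhole splitting the domain into arithmetic progressions along which each $\{a_i(\vec{n})\}$ varies by at most $O(\eta^{1/2})$, in direct parallel to the corresponding step in the proof of \thref{bilinearfouriercomplexity}; this produces an additional factor of $\eta^{-O(d)}$ in the complexity but does not affect the form of the final bound.
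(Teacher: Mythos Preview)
Your approach is correct and follows the same core idea as the paper --- doubling the period to $(\mathbb{R}/2\mathbb{Z})^{2d}$ so that the Fourier frequencies land in $\tfrac{1}{2}\mathbb{Z}^{2d}$, then truncating. The paper's proof is essentially a one-line sketch of exactly this.

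There is, however, one simplification you are missing. You take the cutoff $\phi$ to be supported in $(-1/2,1/2)$ and equal to $1$ only on $[-1/2+\eta,1/2-\eta]$; this forces you to control the boundary error $F-\Phi$ separately via the distribution of $\{a_i(\vec n)\}$ near $\pm 1/2$, which in turn is what brings in the $N$-dichotomy and the extra Vinogradov-type argument you describe. The paper instead extends $e(\sum x_iy_i)$ from $[-1/2,1/2]^{2d}$ to a smooth function compactly supported in $(-1,1)^{2d}$ --- that is, the cutoff lives in the buffer zone $[-1,1]\setminus[-1/2,1/2]$ and equals $1$ on \emph{all} of $[-1/2,1/2]$. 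Since $\{a_i(\vec n)\},\{b_i(\vec m)\}$ always lie in $[-1/2,1/2]$, the extended function agrees with $e(\sum\{a_i(\vec n)\}\{b_i(\vec m)\})$ \emph{exactly}, and the only error is the $L^\infty$ Fourier truncation. This sidesteps the boundary analysis entirely and makes the argument a couple of lines; your version works but does more than is needed.
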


\begin{proof}
We follow the procedure in \cite[Appendix E]{GTZ11}. We take the function $f: [-1, 1]^{2d} \to \mathbb{C}$ by defining it to be $e(\sum_{i = 1}^d x_i y_i)$ on $[-1/2, 1/2]$ and extending it to a function on $[-1, 1]$ with compact support. We then Fourier expand this function and plug in $x_i = \{a_i(\vec{n})\}$ and $y_i = \{b_i(\vec{m})\}$ and finish.
\end{proof}

\section{A proof of the refined bracket polynomial lemma}
In this section, we shall give another proof of the refined bracket polynomial lemma from \cite{Len2}, although our proof is more complicated and our bounds are slightly weaker (and for brevity, we only prove the single parameter case). We believe that our proof better motivates the statement of the refined bracket polynomial lemma and would like to emphasize that the bounds obtained here are single exponential in dimension, and thus are enough for any application here as well as for \cite{Len2}. Below is the version of the refined bracket polynomial lemma we work with.
\begin{lemma}[Single Variable Refined Bracket Polynomial Lemma]\thlabel{refinedappendix1}
Let $N, \delta, M, K > 0$ be fixed with $0 < \delta < 1/10$ and $a, \alpha \in \mathbb{R}^d$ and $\beta \in \mathbb{R}$, with $|a| \le M$. Suppose there are at least $\delta N$ many $n \in [N]$ such that
$$\|\beta + a \cdot \{\alpha n\}\|_{\mathbb{R}/\mathbb{Z}} \le \frac{K}{N}.$$
Then either $N \ll (MK/\delta)^{O(d)^{O(1)}}$ or else there exists $d \ge r \ge 0$, $w_1, \dots, w_r \in \mathbb{Z}^d$ and $\eta_1, \dots, \eta_{d - r} \in \mathbb{Z}^d$ such that $w_i, \eta_j$ are linearly independent, $|w_i|, |\eta_j| \le (\delta/M)^{-O(d)^{O(1)}}$, $\langle w_i, \eta_j \rangle = 0$, and
$$|w_i \cdot a| \le \frac{(\delta/MK)^{-O(d)^{O(1)}}}{N}$$
$$\|\eta_j \cdot \alpha \|_{\mathbb{R}/\mathbb{Z}} \le \frac{(\delta/MK)^{-O(d)^{O(1)}}}{N}.$$
\end{lemma}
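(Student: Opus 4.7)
The plan is to prove the lemma by induction on $d$, with the key engine being Fourier analysis on $\mathbb{R}/\mathbb{Z}$ together with Vinogradov's lemma to convert character-sum estimates into integer Diophantine relations on $\alpha$ and $a$ separately.

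First I would Fourier-expand a smooth cutoff of the interval $[-K/N, K/N]\subset\mathbb{R}/\mathbb{Z}$. Choose $\psi \geq 0$ with $\psi(0) \geq 1$, $\spt \psi \subset [-2K/N, 2K/N]$, and Fourier coefficients $\hat\psi$ concentrated on $|k| \ll N/K$ with $\sum|\hat\psi(k)| \ll 1$. The density hypothesis gives $\sum_n \psi(\beta + a \cdot \{\alpha n\}) \gg \delta N$, and pigeonholing in the Fourier mode produces some $k \ne 0$ with $|k| \ll N/K$ for which
\[
\Big| \mathbb{E}_{n \in [N]} e\bigl(k a \cdot \{\alpha n\}\bigr) \Big| \gg \delta K/N,
\]
since the $k=0$ contribution is only $O(K/N)$ and can be discarded once $\delta \gg K/N$.

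Next I would invoke a variant of \thref{trivialfouriercomplexity} adapted to the possibly-irrational case (using the smooth boundary cutoff technique from Appendix B), to expand
\[
e(k a \cdot \{\alpha n\}) \;=\; \sum_{|m| \le L} c_m \, e\bigl((m \cdot \alpha) n\bigr) \;+\; O_{L^\infty}(\varepsilon),
\]
where $L$ and $\sum|c_m|$ are at most $(MK/\delta)^{O(d^{O(1)})}$ and $\varepsilon$ is chosen small enough to be absorbed into the previous estimate. Pigeonholing in $m$ then yields a nonzero integer vector $m$ of controlled height such that $|\mathbb{E}_{n \in [N]} e((m \cdot \alpha) n)| \gg (\delta K/M)^{O(d^{O(1)})}$, and a direct application of Vinogradov (\cite[Lemma A.11]{Len2}) produces an integer $q$ of similar size with $\|q (m \cdot \alpha)\|_{\mathbb{R}/\mathbb{Z}} \ll L/N$. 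Setting $\eta := qm$ gives one of the horizontal relations, namely $\|\eta \cdot \alpha\|_{\mathbb{R}/\mathbb{Z}} \ll (MK/\delta)^{O(d^{O(1)})}/N$, with $|\eta|$ appropriately bounded.

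To complete the conclusion I would iterate after a Smith-normal-form change of basis of $\mathbb{Z}^d$ that turns the freshly obtained $\eta$ into a scalar multiple of a coordinate vector. In the new basis, $\{\alpha n\}$ has one coordinate that is forced (by $\|\eta\cdot\alpha\|\ll 1/N$) to stay within $O(1/N)$ of the linear function $(\eta\cdot\alpha/q)n$ modulo $1$, so this coordinate can be absorbed into the $\beta$ term or else it contributes a $w$-side relation $|w\cdot a|\ll 1/N$ depending on whether the corresponding component of $a$ is near-integer. In the remaining case, the hypothesis descends to the same statement with $d$ replaced by $d-1$, the inductive hypothesis applies, and the lifted vectors are produced by Cramer's rule (as in \cite[Lemma A.8]{Len2}) with heights that grow only single-exponentially in $d$.

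The main obstacle I anticipate is the bookkeeping: at each stage the pigeonholed Fourier mode yields either a new $\eta$ (equidistribution failure of $\alpha$) or a new $w$ (near-integrality of $ka$ in some direction), and through the change-of-basis step one must preserve the orthogonality $\langle w_i,\eta_j\rangle=0$ and the linear independence within each family while simultaneously controlling the heights of all vectors involved. Because each iteration strictly decreases the effective dimension of the remaining bracket-polynomial condition, the recursion terminates in at most $d$ steps, and the final height bound $(MK/\delta)^{O(d^{O(1)})}$ follows by compounding the per-step losses. The bounds obtained this way will be somewhat weaker than those in \cite{Len2}, but still single-exponential in $d$ as required.
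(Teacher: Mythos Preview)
There is a genuine gap in your step 3. When you Fourier expand $e(ka\cdot\{\alpha n\})$ as a function on $\mathbb{T}^d$, the dominant Fourier modes are concentrated near $m\approx ka$: after splitting $ka=[ka]+\{ka\}$, the factor $e([ka]\cdot\{\alpha n\})=e(([ka]\cdot\alpha)n)$ shifts the spectrum by $[ka]$, while only the remaining factor $e(\{ka\}\cdot\{\alpha n\})$ has bounded Lipschitz norm and hence Fourier support of bounded width. Since $|k|$ may be as large as $N/K$, one has $|[ka]|\lesssim MN/K$, so the integer vector you actually extract from pigeonholing satisfies $|\eta|\sim MN/K$ rather than $|\eta|\le(\delta/M)^{-O(d^{O(1)})}$. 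Such an $\eta$ is useless for the stated conclusion, and feeding it into the next inductive step makes the parameters grow by a factor of $N$ at each stage. (The Fourier complexity bound in \thref{trivialfouriercomplexity} controls $\sum_m|c_m|$, not $\max|m|$; these are very different here.)

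The paper's argument avoids this by a different mechanism. Rather than Fourier expanding $e(ka\cdot\{\alpha n\})$, it applies Minkowski's first theorem directly (\thref{iterationlemma}) to locate a small integer vector $\eta$ lying in a narrow tube around the direction of $a$. This delivers two things at once: a bound $|\eta|\le(\delta/3^dMd)^{-O(d)}$ that is independent of $N$, and the alignment $\eta=ta+O(\text{tube width})$. The alignment is what prevents the iteration from blowing up: after eliminating one coordinate via $\eta$, the new vector has components $\eta_{j'}a_i-\eta_i a_{j'}$, and the tube condition forces each of these to be $O(|a|\cdot\text{width})\le M$, so the bound $M$ on $|a|$ does not increase. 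Your Smith-normal-form reduction does not come with any such alignment, so even if you had produced a small $\eta$, you would still face the first obstacle flagged in the paper's Section C.1---namely $|\tilde a|\le 2|a|\,|\eta|$ compounding to double-exponential losses over $d$ steps.
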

As in \cite[Section 3]{Len2} and below in Section 3, we deduce the above from the following:
\begin{lemma}\thlabel{refineddenominator}
Let $N, \delta, M, K > 0$ be fixed with $0 < \delta < 1/10$ and $a, \alpha \in \mathbb{R}^d$ and $\beta \in \mathbb{R}$, with $|a| \le M$ and $\alpha$ having denominator $N$. Suppose there are at least $\delta N$ many $n \in [N]$ such that
$$\|\beta + a \cdot \{\alpha n\}\|_{\mathbb{R}/\mathbb{Z}} \le \frac{K}{N}.$$
Then either $N \ll (MK/\delta)^{O(d)^{O(1)}}$ or else there exists $d \ge r \ge 0$, $w_1, \dots, w_r \in \mathbb{Z}^d$ and $\eta_1, \dots, \eta_{d - r} \in \mathbb{Z}^d$ such that $w_i, \eta_j$ are linearly independent, $|w_i|, |\eta_j| \le (\delta/M)^{-O(d)^{O(1)}}$, $\langle w_i, \eta_j \rangle = 0$, and
$$|w_i \cdot a| \le \frac{(\delta/MK)^{-O(d)^{O(1)}}}{N}$$
$$\|\eta_j \cdot \alpha \|_{\mathbb{R}/\mathbb{Z}} = 0.$$
\end{lemma}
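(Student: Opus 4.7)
The plan is to Fourier-expand the bracket polynomial $e(a \cdot \{\alpha n\})$ and exploit the denominator-$N$ structure of $\alpha$ to convert approximate Fourier relations into the exact relations $\eta_j \cdot \alpha \in \mathbb{Z}$ required in the conclusion. The argument will be iterative, peeling off one linear constraint at a time, and the key simplification over the general-$\alpha$ case is that every Fourier phase that appears is automatically rational with denominator dividing $N$, so the dichotomy ``$\xi \in \mathbb{Z}$ vs.\ $\xi \notin \mathbb{Z}$'' is discrete rather than quantitative.

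After an initial reduction to $a \in (1/N)\mathbb{Z}^d$ by rational approximation of $a$ (at the cost of replacing $K$ by $K + O(d)$, and absorbing an $O(|w|/N)$ error into the final bound on $|w \cdot a|$), I would apply the trivial Fourier complexity lemma \thref{trivialfouriercomplexity} with tolerance $\delta/100$ to write
\[
e(-\beta - a \cdot \{\alpha n\}) \;=\; \sum_{k \in \mathcal{K}} c_k \, e((k \cdot \alpha)\, n) + r(n),
\]
where $|\mathcal{K}|, \sum_k |c_k| \leq C_1 := (M/\delta)^{O(d)^{O(1)}}$, $\|r\|_{L^1[N]} \leq \delta/100$, and each Fourier phase $k \cdot \alpha$ automatically lies in $(1/N)\mathbb{Z}$ because $\alpha \in (1/N)\mathbb{Z}^d$. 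In the non-trivial regime $K/N < 1/10$, the hypothesis yields $\bigl|\sum_n 1_S(n)\, e(-\beta - a \cdot \{\alpha n\})\bigr| \geq \delta N/2$, and combining with the expansion (absorbing the $r$-error) produces via pigeonhole some $k_0 \in \mathcal{K}$ with $|\widehat{1_S}(\xi_0)| \geq \delta/(4 C_1)$, where $\xi_0 := k_0 \cdot \alpha$.

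A dichotomy on $\xi_0 \in (1/N)\mathbb{Z}$ now drives the iteration. If $\xi_0 \in \mathbb{Z}$ and $k_0 \neq 0$, then $k_0$ is immediately a valid $\eta$-vector with $k_0 \cdot \alpha \in \mathbb{Z}$ exactly, and I extract it. If $\xi_0 \notin \mathbb{Z}$, then $\xi_0$ is a non-trivial character of $\mathbb{Z}/N$, and the largeness of $|\widehat{1_S}(\xi_0)|$ localizes $S$ to a union of cosets of the kernel of $n \mapsto e(\xi_0 n)$; passing to the densest such coset yields a sub-progression of density $\gtrsim \delta/C_1$, on which $e(a \cdot \{\alpha n\})\, e(-\xi_0 n)$ admits a Fourier expansion of smaller effective complexity. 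Reapplying the argument to this sub-progression continues the iteration, and sufficient localization also furnishes $w$-vectors with $|w \cdot a| \leq C_2/N$ by a Vinogradov-type inequality (using that on an arithmetic progression the bracket polynomial reduces to a genuine linear form in $n$ up to controllable error).

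Iterating at most $d$ times produces the required $r$ vectors $w_i$ and $d-r$ vectors $\eta_j$. The principal obstacle is the bookkeeping: the extracted $w$'s and $\eta$'s must remain linearly independent, satisfy the orthogonality $\langle w_i, \eta_j \rangle = 0$, and retain the size bound $|w_i|, |\eta_j| \leq (M/\delta)^{O(d)^{O(1)}}$ as parameters degrade across iterations. I would arrange this by, at each stage, quotienting by the already-extracted relations and reapplying the Fourier complexity lemma on the resulting lower-dimensional problem, using Cramer's rule (as in \cite[Lemma A.8]{Len2}) to keep sizes under control. The ``magic argument'' of \cite{Len2} avoids this iteration via a cleaner global argument, but the iterative proof proposed here exposes more clearly how each layer of bracket structure is being peeled off, consistent with the pedagogical aim of this appendix.
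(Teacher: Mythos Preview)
Your proposed argument has a genuine gap at the pigeonholing step. After the Fourier expansion you obtain some $k_0$ with $|\widehat{1_S}(\xi_0)| \ge \delta N/(4C_1)$, where $\xi_0 = k_0 \cdot \alpha$. But this is information about the \emph{set} $S$, not about $a$ or $\alpha$. In the branch $\xi_0 \in \mathbb{Z}$, the bound is vacuous: $\widehat{1_S}(\xi_0) = |S|$ for every such $k_0$, and in particular $k_0 = 0$ always satisfies it, so nothing forces a nonzero $\eta$-vector to exist. In the branch $\xi_0 \notin \mathbb{Z}$, when $N$ is prime the character $n \mapsto e(\xi_0 n)$ has trivial kernel, so there are no nontrivial cosets to localise to; even for composite $N$, restricting $S$ to a sub-progression does not change the shape of the hypothesis $\|\beta + a \cdot \{\alpha n\}\| \le K/N$, and you have no progress measure that forces termination in $O(d)$ steps. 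The extraction of the $w_i$'s is likewise unexplained: a Vinogradov-type lemma controls a single linear phase, not the bracket expression $a \cdot \{\alpha n\}$, and you never isolate a direction in which $a$ is provably small.

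The paper's proof proceeds quite differently. It first passes to the exponential-sum formulation of \thref{refinedappendix2} by introducing an auxiliary averaging variable (so the hypothesis becomes $|\mathbb{E}_n e(\beta n + a n \cdot \{\alpha h\})|$ large for many $h$), which is what permits the use of the bilinear Fourier complexity lemma and prevents the iterative density loss $\delta \mapsto \delta^2$. The key step is then \thref{iterationlemma}: Minkowski's first theorem produces an integer vector $\eta$ lying in a thin \emph{tube} about the line through $a$, not merely near $a$, and this tube geometry is exactly what guarantees that the modified vector $\tilde{a}$ (with components $q a_i - \eta_i a_{j'}$) has $\|\tilde{a}\|_\infty \le M$ rather than growing. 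The $w_i$'s are not produced at each step; they emerge only at the end of the iteration as the accumulated change-of-basis vectors, automatically orthogonal to the $\eta^j$'s by construction. Your Fourier-coefficient approach does not supply a substitute for either the tube geometry or the two-variable reformulation, and without those the bounds degrade doubly exponentially in $d$ even if the logic could be repaired.
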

We deduce this from:
\begin{lemma}\thlabel{refinedappendix2}
Let $N, N_1, \delta,M,  K > 0$ fixed with $0 < \delta < 1/10$, $M > 1$, and $a, \alpha \in \mathbb{R}^d$ and $\beta \in \mathbb{R}$, with $|a| \le M$ and $\alpha$ having denominator $N_1$, a prime. Suppose there are at least $\delta N$ many $h \in [N]$ such that
$$|\mathbb{E}_{n \in [N]} e(\beta n + an \cdot \{\alpha h\})| \ge K^{-1}.$$
Then either $N \ll (MK/\delta)^{O(d)^{O(1)}}$ or else there exists $d \ge r \ge 0$, $w_1, \dots, w_r \in \mathbb{Z}^d$ and $\eta_1, \dots, \eta_{d - r} \in \mathbb{Z}^d$ such that $w_i, \eta_j$ are linearly independent, with $\langle w_i, \eta_j \rangle = 0$, $|w_i|, |\eta_j| \le (\delta/M)^{-O(d)^{O(1)}}$, and
$$|w_i \cdot a| \le \frac{(\delta/(MK))^{-O(d)^{O(1)}}}{N}$$
$$\|\eta_j \cdot \alpha \|_{\mathbb{R}/\mathbb{Z}} = 0.$$
\end{lemma}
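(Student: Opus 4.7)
The plan is to reduce the exponential-sum hypothesis to a genuine Diophantine closeness condition, then extract the required orthogonal decomposition of $\mathbb{Z}^d$ via differencing and geometry of numbers. Since $\mathbb{E}_{n \in [N]} e((\beta + a \cdot \{\alpha h\}) n)$ is a linear exponential sum in $n$, the standard estimate $|\mathbb{E}_{n \in [N]} e(\gamma n)| \leq 1/(2N \|\gamma\|_{\mathbb{R}/\mathbb{Z}})$ immediately yields $\|\beta + a \cdot \{\alpha h\}\|_{\mathbb{R}/\mathbb{Z}} \leq K/(2N)$ for every $h$ in the hypothesised set $H$ with $|H| \geq \delta N$. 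After normalizing $|\beta| \leq 1/2$, the nearest integer $m_h$ to $\beta + a \cdot \{\alpha h\}$ lies in a set of at most $O(M\sqrt d)$ values; pigeonholing in $m_h$ yields a subset $H_0 \subseteq H$ with $|H_0| \gg \delta N/(M\sqrt d)$ on which $|a \cdot \{\alpha h\} - c| \leq K/(2N)$ holds as a genuine real inequality (not merely modulo $1$) for a single constant $c \in \mathbb{R}$.

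Taking pairwise differences on $H_0$ produces $|a \cdot (\{\alpha h\} - \{\alpha h'\})| \leq K/N$ for all $h, h' \in H_0$. Since $\alpha$ has denominator $N_1$, the set of integer vectors
\[V := \{\, N_1 (\{\alpha h\} - \{\alpha h'\}) : h, h' \in H_0 \,\} \subseteq \mathbb{Z}^d \cap (-N_1, N_1)^d\]
satisfies $|a \cdot v| \leq KN_1/N$ for every $v \in V$, and $|V| \gg \delta \min(N, N_1)/(M\sqrt d)$ since $h \mapsto \{\alpha h\}$ factors through $\mathbb{Z}/N_1\mathbb{Z}$. Let $r$ denote the rank of the $\mathbb{Z}$-span $\Lambda_V$ of $V$. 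I would then apply an LLL-style short-basis construction (equivalently, Minkowski's second theorem) to $\Lambda_V$ together with the symmetric convex body $\{v : |v \cdot a| \leq L_0/N,\ |v|_\infty \leq L_0\}$ for an appropriate $L_0 = (\delta/M)^{-O(d)^{O(1)}}$, and similarly to the annihilator sublattice $\Lambda_V^\perp \cap \mathbb{Z}^d$ of rank $d - r$. This produces linearly independent $w_1, \ldots, w_r \in \Lambda_V$ and $\eta_1, \ldots, \eta_{d-r} \in \Lambda_V^\perp \cap \mathbb{Z}^d$ of polynomial size with $|w_i \cdot a| \leq (\delta/(MK))^{-O(d)^{O(1)}}/N$, and the orthogonality $\langle \eta_j, w_i \rangle = 0$ is automatic from the construction.

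Finally, to confirm $\|\eta_j \cdot \alpha\|_{\mathbb{R}/\mathbb{Z}} = 0$: since $\eta_j \perp V$, we have $\eta_j \cdot (\{\alpha h\} - \{\alpha h'\}) = 0$ for $h, h' \in H_0$, whence $\eta_j \cdot \alpha(h - h') \in \mathbb{Z}$. Writing $\eta_j \cdot \alpha = p_j/N_1$, this forces $p_j(h - h') \equiv 0 \pmod{N_1}$ for every pair in $H_0$. Since $N_1$ is prime, either $p_j \equiv 0 \pmod{N_1}$, giving $\eta_j \cdot \alpha \in \mathbb{Z}$ as desired; or else all of $H_0$ lies in a single residue class modulo $N_1$, which forces $|H_0| \leq N/N_1 + 1$ and hence $N_1 \ll M\sqrt d/\delta$. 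In the latter regime one may simply take each $\eta_j$ to be $N_1$ times a standard basis vector of the annihilator, which is automatically of polynomial size, and the conclusion follows trivially.

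The main obstacle in the plan is the quantitative geometry-of-numbers step: producing $w_i \in \Lambda_V$ of size bounded polynomially in $\delta^{-1}, M, K$ rather than by the naive bound $N_1$ inherited from $V \subseteq (-N_1,N_1)^d$, when $N_1$ may be much larger. Overcoming this requires exploiting the richness $|V| \gg \delta \min(N,N_1)/(M\sqrt d)$ via a short-basis/Siegel-lemma argument on $\Lambda_V$, handling the complementary large-$N_1$ regime separately by the residue-class dichotomy above, and is precisely the source of the weaker bounds than those in \cite[Lemma 3.4]{Len2}.
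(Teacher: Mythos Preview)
Your initial reduction from the exponential-sum hypothesis to the Diophantine condition $\|\beta + a\cdot\{\alpha h\}\|_{\mathbb{R}/\mathbb{Z}} \le K/(2N)$ is correct and matches the paper's starting point. However, your global geometry-of-numbers strategy on the lattice $\Lambda_V$ has a genuine gap that is not merely a matter of bounds.

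The problem is your identification $r = \operatorname{rank}(\Lambda_V)$. Consider $d=2$, $\alpha = (1/N_1, 2/N_1)$, $a = (2,-1)$, $\beta = 0$. Then $a\cdot\{\alpha h\} \in \{0,1\}$ for all $h$, so the hypothesis holds for every $h\in[N_1]$. One checks that $\Lambda_V = \mathbb{Z}(1,2) + \mathbb{Z}(0,N_1)$ has full rank $2$, so your annihilator $\Lambda_V^\perp \cap \mathbb{Z}^2$ is trivial and you produce no $\eta_j$ at all. Yet for any vector $w_2 \in \mathbb{Z}^2$ independent of $(1,2)$ one has $|w_2\cdot a| \ge 1$, so taking $r=2$ is impossible unless $N$ is bounded. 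The correct output here is $r=1$ with $w_1=(1,2)$ and $\eta_1=(2,-1)$, but $\eta_1$ does \emph{not} annihilate $\Lambda_V$, so your mechanism for producing it and verifying $\|\eta_1\cdot\alpha\|_{\mathbb{R}/\mathbb{Z}}=0$ breaks down. The correct $r$ is governed by how many successive minima of $\Lambda_V$ (with respect to the slab $\{|v\cdot a|\text{ small}\}$) are small, not by $\operatorname{rank}(\Lambda_V)$; and the $\eta_j$ must come from the annihilator of that smaller sublattice, for which the argument $\eta_j\cdot(\{\alpha h\}-\{\alpha h'\})=0$ is no longer available.

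The paper's proof avoids this by an entirely different, iterative mechanism. At each step it uses Minkowski's \emph{first} theorem to locate a single integer vector $\eta^{j+1}$ inside a thin tube around the current $a$-vector $\tilde a_j$; the crucial observation is that because $\eta^{j+1}$ lies in this tube, the substitution $\tilde a_{j+1,i} := q_{j+1}\tilde a_{j,i} - \eta^{j+1}_i \tilde a_{j,j'}$ satisfies $|\tilde a_{j+1}| \le |\tilde a_j|$ (not merely $|\tilde a_{j+1}| \le |\eta^{j+1}|\,|\tilde a_j|$, which would blow up). This allows a dimension-lowering iteration with no loss in $M$, and the $w_i$ are reconstructed at the end from the accumulated $\eta^j$'s. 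The lower-order bracket terms that arise are handled not by pigeonholing (which would cost $\delta\mapsto\delta^2$ per step) but by the bilinear Fourier complexity lemma, pushing the loss into $K$ where it iterates additively rather than multiplicatively. Your direct approach does not seem to recover this structure, and the obstacle you flag in your final paragraph is precisely the one that the tube-plus-iteration idea is designed to circumvent.
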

\begin{proof}[Proof of \thref{refinedappendix1} assuming \thref{refinedappendix2}]
 We note that
$$\|\beta + a \cdot \{\alpha h\}\|_{\mathbb{R}/\mathbb{Z}} \le \frac{K}{N}$$
implies that for $n \in [N_1]$ with $N_1 \le \epsilon N$ implies (since $\sqrt{(\cos(x) - 1)^2 + \sin(x)^2} \le |x|$ which rearranges to $|x|/2 \ge |\sin(x/2)|$) that
$$|e(\beta n + an \cdot \{\alpha h\}) - 1| \le \frac{nK}{N} \le \epsilon K.$$
This implies that
$$\mathbb{E}_{n \in N_1}|e(\beta n + an \cdot \{\alpha h\}) - 1|^2 \le \epsilon^2 K^2$$
so expanding out and bounding $\text{Re}(a) \le |a|$, we have
$$2 - 2|\mathbb{E}_{n \in [N_1]} e(\beta n + an \cdot \{\alpha h\})| \le \epsilon^2 K^2$$
and thus
$$|\mathbb{E}_{n \in [N_1]} e(\beta n + an \cdot \{\alpha h\})| \ge 1 - \frac{\epsilon^2 K^2}{2}.$$
Letting $\epsilon = K^{-1}$ gives us
$$|\mathbb{E}_{n \in [N_1]} e(\beta n + an \cdot \{\alpha h\})| \ge \frac{1}{2}.$$
Thus, applying \thref{refinedappendix2}, we obtain linearly independent $w_1, \dots, w_r$ and $\eta_1, \dots, \eta_{d - r}$ such that
$$|w_i \cdot a| \le \frac{(\delta/ 3^dM)^{-O(d)^{O(1)}}}{N}$$
$$\|\eta_j \cdot \alpha \|_{\mathbb{R}/\mathbb{Z}} = 0$$
as desired.   
\end{proof} 
It remains to deduce \thref{refinedappendix2}. Before we do so, we offer an informal description of the argument.
\subsection{A brief description of the argument}
Since the proof of the refined bracket polynomial lemma is rather complicated, we will give a simplified description of them here. Suppose
$$\|a \cdot \{\alpha h\} + \beta\|_{\mathbb{R}/\mathbb{Z}} \approx 0$$
where where $a, \alpha, \beta$ are as in \thref{iterationlemma}. An attempt to prove these refined bracket polynomial lemmas comes by iterating \cite[Proposition 5.3]{GT12}. The conclusion of \cite[Proposition 5.3]{GT12} shows that either $|a| \approx 0$ or there exists some integer vector $\eta \ll_{\delta, M} 1$ such that $\|\eta \cdot \alpha\|_{\mathbb{R}/\mathbb{Z}} = 0$. Let us assume for simplicity that the first coordinate of $\eta$, denoted as $\eta_1$ is equal to $1$. We thus have
$$\alpha_1  + \eta_2 \alpha_2 + \eta_3 \alpha_3 + \cdots + \eta_d \alpha_d \equiv 0 \pmod{1}.$$
Thus, we have
$$\|\tilde{a} \cdot \{\alpha h\} + \gamma h + \beta + a_1P(h)\|_{\mathbb{R}/\mathbb{Z}} \approx 0$$
where $\tilde{a} = (0, a_2 \eta_1 - a_1 \eta_2, a_3 \eta_1 - a_1\eta_3, \dots, a_d \eta_1 - a_1 \eta_d)$ and
$$P(h) = \{\alpha_1 h\} + \eta_2\{\alpha_2 h\} + \cdots + \eta_d \{\alpha_d h\}.$$
$P(h)$ takes at most $2d|\eta|$ many values so by Pigeonholing in one of those values, we can iterate this procedure to obtain a qualitative analogue of \thref{refinedappendix1}. \\\\
There are two problems that occur if one tries to iterate in that manner. The first problem is that $|\tilde{a}|$ might be far larger than $|a|$ causing us to need to increase the parameter $M$ when we apply \thref{iterationlemma} again. We certainly have the bound of $|\tilde{a}| \le 2|a||\eta|$, but this is not good enough for our iteration. An inspection of the bounds in \thref{iterationlemma} shows that this does give an iteration which would lead to double exponential bounds in dimension, which is problematic for us. The second problem is that when we Pigeonhole $h$ in one of the values of the expression $P(h)$, we will have a density decrease \emph{at best} of $\delta \mapsto \delta^2$. This would also iterate to bounds double exponential in dimension. \\\\
To overcome the first issue, we use the geometry of numbers. Using Minkowski's first theorem, one can show that the nonzero vector $\eta$ must lie in a small tube pointed in the direction of $a$. This can be used to guarantee that upon iteration, the size of $a$ doesn't actually get bigger, but rather gets smaller. This is because if $\eta$ lies in a tube of width $\epsilon$ in the direction of $a$, we write
$$\eta = ta + O_{\le 1}(\epsilon)$$
(where $O_{\le 1}$ denotes that the implicit constant is $\le 1$) then
$$\eta_i = ta_i + O_{\le 1}(\epsilon).$$
Thus
$$\eta_1 a_i - a_1 \eta_i = O_{\le 2}(|a|\epsilon).$$
Incidentally, something similar can also be shown using Green and Tao's method in \cite[Proposition 5.3]{GT12} of Fourier analysis. A use of the Fourier uncertainty principle on something similar to the function $F(x)$ constructed in the proof of \cite[Proposition 5.3]{GT12} also shows that the nonzero vector $\eta$ must lie in a tube in the direction of $a$, though due to irregularities with the Fourier transform, one cannot guarantee that the tube has as small width as what we guarantee using Minkowski's first theorem. This would lead to a slight increase in the size of $a$, rather than a decrease that Minkowski's theorem guarantees. Fortunately, the increase in $a$ is small enough for the argument to still carry through. \\\\
To overcome the second issue, we observe that for each $j$, $\{h: P(h) = j \}$ behaves like a Bohr set in that it has ``bounded Fourier complexity." If we Pigeonhole in $h$, we lose this information. In order to utilize this information, we convert the problem from \thref{refinedappendix1} to \thref{refinedappendix2}. This will allow us to use via \thref{bilinearfouriercomplexity} the fact that $e(a_1nP(h)) = e(\{a_1n\}P(h))$ (the equality follows since $P(h)$ takes integer values) has bounded Fourier complexity. This will allow us to transfer the losses obtained from $\delta$ to losses in $K$. The variable $K$ will turn out to iterate far better than $\delta$ does, because we can ``remember" bracket polynomials of the form $\{a_1n\}P(h)$ from previous steps of the iteration (so on step $j$, we will deal $j$ many bracket polynomials that resemble $\{a_1n\}P_1(h) + \cdots + \{a_j n\}P_j(h)$), so the variable $K$ doesn't actually increase iteratively but increases like $K_j = (K_1/(\delta_j/3^dM2d))^{-O(jd)^3}$ where $K_j$ is the value of $K$ in the $j$th step of the iteration. We are now ready for the deduction of the refined bracket polynomial lemma. 
\subsection{Deduction of \thref{refinedappendix2}:}
We state the lemma which we iterate with.
\begin{lemma}\thlabel{iterationlemma}
Let $a, \alpha \in \mathbb{R}^d$ and $N, N_1, K, \delta > 0$ with $N_1$ a prime integer and $\alpha$ having denominator $N_1$, $\delta < 1/10$. Suppose there are at least $\delta N_1$ many $h \in [N_1]$ such that
$$\|\beta + a \cdot \{\alpha h\}\|_{\mathbb{R}/\mathbb{Z}} \le \frac{K}{N}.$$
Then either $N \ll (\delta/3^ddM)^{-O(1)}$ or $N_1 \ll (\delta/3^ddM)^{-O(1)}$ or $(\delta/3^dMd)^{4d}\|a\|_\infty \le K/N$ or there exists an integer vector $\eta$ of size at most $(\delta/3^dMd)^{-O(d)}$ in a $(\delta/3^dMd)$-tube in the direction of $a$ such that $\|\eta \cdot \alpha\|_{\mathbb{R}/\mathbb{Z}} = 0$.
\end{lemma}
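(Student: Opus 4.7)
The plan is to adapt the Fourier-analytic proof of \cite[Proposition 5.3]{GT12}, augmented by Minkowski's first theorem, to produce $\eta$ in a tube aligned with $a$. Morally, the hypothesis $\|\beta + a \cdot \{\alpha h\}\|_{\mathbb{R}/\mathbb{Z}} \le K/N$ for $\delta N_1$ many $h$ forces the orbit $\{\{\alpha h\} : h \in [N_1]\} \subset \mathbb{T}^d$ (a subgroup of order dividing $N_1$) to concentrate in the slab $H = \{x \in \mathbb{T}^d : \|a \cdot x + \beta\|_{\mathbb{R}/\mathbb{Z}} \le K/N\}$, whose Fourier support is a tube around the line through $a$; consequently any integer vector $\eta$ detecting this concentration must lie in that tube.

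First, I would perform a standard van der Corput / second-moment step: squaring the hypothesis and pigeonholing over the $3^d$ possible values of the integer-vector discrepancy $\{\alpha h\} - \{\alpha h'\} - \{\alpha(h-h')\} \in \{-1,0,1\}^d$ reduces matters to the homogeneous statement that $\|a \cdot \{\alpha k\} + c\|_{\mathbb{R}/\mathbb{Z}} \le 2K/N$ for $\ge \delta^2 N_1/3^d$ many $k$, for a single constant $c$. Next, I would run Fourier inversion in the spirit of \cite[Proposition 5.3]{GT12}: the key algebraic identity $e(-\xi \cdot \{\alpha h\}) = e(-\xi \cdot \alpha h)$ for $\xi \in \mathbb{Z}^d$ (valid because $\xi \cdot [\alpha h] \in \mathbb{Z}$) identifies the Fourier coefficient at $\xi$ of the empirical distribution of $\{\alpha h\}$ with a character sum on $\mathbb{Z}/N_1\mathbb{Z}$. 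The concentration $\{\alpha h\} \in H$ combined with the tube-shaped Fourier support of $\mathbf{1}_H$ then forces the existence of $\xi \in \mathbb{Z}^d$ of size $\le (\delta/(3^d M d))^{-O(1)}$ with $\|\xi \cdot \alpha\|_{\mathbb{R}/\mathbb{Z}}$ very small; the primality of $N_1$ (assuming the escape clause $N_1 \ll (\delta/(3^d M d))^{-O(1)}$ fails) upgrades this to the exact equality $\|\xi \cdot \alpha\|_{\mathbb{R}/\mathbb{Z}} = 0$.

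The crucial step is upgrading to the tube condition. I would apply Minkowski's first theorem to the lattice $\Lambda = \{\eta \in \mathbb{Z}^d : \|\eta \cdot \alpha\|_{\mathbb{R}/\mathbb{Z}} = 0\}$ and the symmetric convex body $T$ of length $L = (\delta/(3^d M d))^{-O(d)}$ in the direction $a/|a|$ and transverse width $W = \delta/(3^d M d)$, yielding a nonzero lattice vector $\eta \in T \cap \Lambda$ provided $\mathrm{vol}(T) > 2^d \det(\Lambda)$. The Fourier / tube-support analysis shows that, as long as the escape clause $(\delta/(3^d M d))^{4d}\|a\|_\infty \le K/N$ fails, the transverse determinant of $\Lambda$ (its density perpendicular to $a$) is small enough for Minkowski to apply; the escape clause precisely captures the degenerate case in which $a$ is so small that the hypothesis is trivially satisfied and carries no structural information on $\alpha$.

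The main obstacle I expect is the quantitative balancing in Minkowski's theorem: $W$ must be small enough to force $\eta$ nearly parallel to $a$, yet $T$ voluminous enough to contain a nonzero element of $\Lambda$. Carefully tracking the transverse density of $\Lambda$ in terms of $\|a\|_\infty$, $K$, and $N$ is what produces the somewhat unusual exponent $4d$ in the escape clause, and is the delicate coupling step between the Fourier estimate and the geometry-of-numbers estimate that makes the single-exponential bounds in $d$ come out correctly.
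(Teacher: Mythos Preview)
Your overall architecture --- pigeonhole to a homogeneous inequality, then combine with Minkowski --- matches the paper, but you have the order of operations reversed in the crucial step, and this creates a real gap.

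The paper does \emph{not} run any Fourier analysis here, and it applies Minkowski to the full lattice $\mathbb{Z}^d$, not to your annihilator lattice $\Lambda=\{\eta\in\mathbb{Z}^d:\|\eta\cdot\alpha\|_{\mathbb{R}/\mathbb{Z}}=0\}$. After pigeonholing the integer part $\ell$ and a sign pattern (this is where the $3^d$ and the $dM$ factors come from), one has $|a\cdot\{\alpha h\}|<2\|a\|_\infty(\delta/3^dMd)^{4d}$ for many $h$. Minkowski applied to $\mathbb{Z}^d$ and the tube $T$ of width $(\delta/3^dMd)^2$ and length $\asymp (\delta/3^dMd)^{-2d}$ around $a$ immediately produces a nonzero integer $\eta\in T$, since $\det(\mathbb{Z}^d)=1$. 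Writing $\eta=ta+O((\delta/3^dMd)^2)$ with $t\le |\eta|/|a|$ and multiplying the displayed inequality by $t$ gives $|\eta\cdot\{\alpha h\}|$ small; since $\eta$ is integral this means $\|(\eta\cdot\alpha)h\|_{\mathbb{R}/\mathbb{Z}}$ is small for a positive proportion of $h$, and primality of $N_1$ (via Vinogradov) upgrades this to $\eta\cdot\alpha\equiv 0$. So the annihilation property is a \emph{consequence} of the tube geometry, not an input to it.

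Your plan instead tries to apply Minkowski to $\Lambda$ and asserts that the Fourier step controls its ``transverse determinant''. But $\Lambda$ is the kernel of $\xi\mapsto N_1(\xi\cdot\alpha)\bmod N_1$, so $[\mathbb{Z}^d:\Lambda]$ equals either $1$ or $N_1$, and generically $\det(\Lambda)=N_1$. Finding one bounded $\xi\in\Lambda$ from the Fourier argument tells you nothing about $\det(\Lambda)$: take $d=2$, $\alpha=(1/N_1,0)$, so $\Lambda=N_1\mathbb{Z}\times\mathbb{Z}$; the Fourier step produces $\xi=(0,1)$, yet a tube of width $W<1$ around $a=(1,0)$ contains no nonzero point of $\Lambda$ until its length exceeds $N_1$. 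Thus Minkowski on $\Lambda$ would force $|\eta|$ to scale with $N_1$, destroying the bound $(\delta/3^dMd)^{-O(d)}$ in the conclusion. The escape clauses do rule out this particular example, but not because they bound $\det(\Lambda)$ --- rather because in that example the hypothesis itself forces $N$ small. The paper's route sidesteps the issue entirely by never needing any information about $\Lambda$.

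In short: drop the Fourier step, apply Minkowski to $\mathbb{Z}^d$ to get $\eta$ in the tube, and then read off $\eta\cdot\alpha\equiv 0$ from $\eta\approx ta$ together with the pigeonholed hypothesis. The $4d$ exponent in the escape clause arises exactly so that $t\cdot 2\|a\|_\infty(\delta/3^dMd)^{4d}$ plus the tube error stays below the Vinogradov threshold.
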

\begin{remark}
The fact that $\eta$ lies in a small tube in the direction of $a$ is crucial for our iteration and is the chief difference between this lemma and the bracket polynomial lemma \cite[Proposition 5.3]{GT12} of Green and Tao.
\end{remark}

\begin{proof}
Suppose
$$\|\beta + a \cdot \{\alpha h\}\|_{\mathbb{R}/\mathbb{Z}} < \frac{K}{N}.$$
If $(\delta/3^dMd)^{4d} \|a\|_{\infty} \ge K/N$, then
$$\|\beta + a \cdot \{\alpha h\}\|_{\mathbb{R}/\mathbb{Z}} < \|a\|_{\infty} (\delta/3^dMd)^{4d}.$$
By the Pigeonhole principle, there exists $\delta/dM$ proportion of $h$ and some integer $\ell$ such that
$$\beta + a \cdot \{\alpha h\} = \ell + O_{< 1} (\|a\|_{\infty} (\delta/3^dMd)^{4d})$$
and by Pigeonholing in a sign pattern, we see that there exists a $\delta/3^ddM$ proportion of $h$ such that
$$\beta + a \cdot \{\alpha h\} = \ell + O_{< 1} (\|a\|_{\infty} (\delta/3^dMd)^{4d})$$
and $\{\alpha h\}$ has the same sign. Taking the difference of any two $h$'s, we see that
\begin{equation}
|a \cdot \{\alpha h\}| < 2\|a\|_\infty (\delta/3^dMd)^{4d}. \tag{1}
\end{equation}
Let $T$ be the tube of width $(\delta/3^dMd)^2$ and length $2^{2d + 1}d^d\sqrt{d\pi} C (\delta/3^dM2d)^{-2d}$ in the direction of $a$. By Minkowski's first theorem, there exists a lattice point $\eta$ inside $T$. Multiplying (1) by $t$ so that $ta$ is $(\delta/3^dMd)^2$-close to $\eta$, we see that $t \le 2^{d + 1}(\delta/3^dMd)^{-2d}d/\|a\|_\infty$ so
$$|\eta \cdot \{\alpha h\}|= O_{\le 2} (2^{2d + 1}d^d\sqrt{d\pi}C(\delta/3^dMd)^{2d} + (\delta/3^dMd)^2)$$
which if $(\delta/3^dMd) < 2(2^{2d + 1}d^d\sqrt{d\pi}C(\delta/3^dMd)^{2d} + (\delta/3^dMd)^2)$, which happens if $\delta \ll_C 1$, then this implies that $\|\eta \cdot \alpha\|_{\mathbb{R}/\mathbb{Z}} = 0$.
\end{proof}

\subsection{Iterating \thref{iterationlemma}}
To iterate this, we define some notation. On step $j$ of the iteration, we will work with the following setup:
\begin{itemize}
\item We have
$$|\mathbb{E}_{n \in [N]} e((\tilde{a}_j) n \cdot \{\alpha h\} + P_j(n, h))| \ge K^{-1}$$
for $\delta_j N'$ many $h \in \mathbb{Z}/N'\mathbb{Z}$ where $P_j(n, h)$ denotes a bracket polynomial in $n$ and $h$ with at most $O(jd)$ many lower order terms each of the form
$$a\{\alpha h + \beta\}\{\alpha'n + \beta'\}.$$
\item There will also exist $0', \dots, (j - 1)'$, distinct integers in $\{1, \dots, d\}$ representing the components of a vector in $\mathbb{R}^d$.
\item We will require $|\tilde{a}_j| \le M_j$ and having components $0', 1', \dots, (j - 1)'$ equal to zero.
\item There will exist linearly independent vectors in $\mathbb{Z}^d$ $\eta^1, \dots, \eta^j$, with $\eta^k$ having size at most $(\delta_k/3^ddM_j)^{-O(d)}$ and $\eta^k$ having zero components $0', \dots, (k - 1)'$ equal to zero with the property that $\|\eta^k \cdot \alpha\|_{\mathbb{R}/\mathbb{Z}} = 0$.
\item The base case of the iteration will have $j = 0$ with $(\tilde{a}_0) = a$, $P_0 = 0$, $\delta_0 = \delta$, and $M_0 = M$. 
\end{itemize}
Thus, in order to carry out the iteration given the data in step $j$, we must construct 
$$P_{j + 1}, M_{j + 1}, \delta_{j + 1}, \eta^{j + 1}, (\tilde{a}_{j + 1}), \text{ and } j'.$$
We will also be considering additional parameters $K_j$ and $q_{j}$, which we will construct in the argument. \\\\
\textbf{Step 1: Constructing $\delta_{j + 1}$ and $K_j$.} Applying \thref{bilinearfouriercomplexity} with $\delta = \min(\delta_j^2/4, K^{-2}/4) = K^{-2}/4$ to $P_{j}(n, h)$ and denoting
$$f(n, h) = e((\tilde{a})_{j} n \cdot \{\alpha h\} + P_{j}(n, h))$$
we may find $g(n, h)$ of Fourier complexity at most $(K/(\delta/3^dM2d))^{O(jd)^3}$ such that
$$\mathbb{E}_{h} \mathbb{E}_n |f(n, h) - e((\tilde{a})_{j} n \cdot \{\alpha h\})g(n, h)| \le K^{-2}/4.$$
Thus, by Markov's inequality, for at least $\delta_jN_j/2$ many $h$, we have
$$|\mathbb{E}_n f(n, h) - e((\tilde{a})_{j} n \cdot \{\alpha h\})g(n, h)| \le K^{-1}/2.$$
Thus, by the triangle inequality, and using the fact that $g$ has bounded Fourier complexity, we obtain phases $\xi$ and $\zeta$ such that
$$|\mathbb{E}_n e((\tilde{a})_{j} n \cdot \{\alpha h\} + \xi n + \zeta h)| \ge (K/(\delta_j/3^dM_j2d))^{-O(jd)^3}.$$
for $\frac{\delta_j}{2} N'$ many $h$. \textbf{We let $\delta_{j + 1} = \frac{\delta_j}{2}$}. Thus, it follows that
$$\|(\tilde{a})_{j} \cdot \{\alpha h\} + \xi\|_{\mathbb{R}/\mathbb{Z}} \le \frac{(K/(\delta_j/3^dM_j2d))^{-O(jd)^3}}{N} := \frac{K_j}{N}.$$
\textbf{Step 2: Constructing $\eta^{j+1}$.} We now apply \thref{iterationlemma} with $K = K_j$, $N = N$, and $N_1 = N'$. We obtain either
\begin{itemize}
\item two degenerate cases of when $N$, or $N' \ll (\delta_j/3^ddM_j)^{-O(1)}$;
\item or $(\delta_j/3^dM_j2d)^{4d}\|(\tilde{a})_{j}\|_\infty \le K_j/N$, in which case we end the iteration (see subsection below);
\item or some integer vector $\eta^{j + 1}$ inside the tube of width $(\delta_j/3^dM_j2d)$ around $(\tilde{a})_{j }$ of size at most $(\delta_j/3^dM_j2d)^{-O(d)}$ and with zero components in its $0', \dots, (j - 1)'$th coordinates and such that 
$$\|\eta^{j + 1} \cdot \alpha\|_{\mathbb{R}/\mathbb{Z}} = 0.$$
\end{itemize}
\textbf{Step 3: Constructing $j'$, $\tilde{a}_{j + 1}$, $q_{j + 1}$, and $P_{j + 1}$.} Pick some $j'$ so that the $j'$th component of $\eta^{j + 1}$, which we denote by $\eta^{j + 1}_{j'}$ is nonzero. Since modular inverses exist for nonzero elements modulo $\mathbb{Z}/N'\mathbb{Z}$, we may replace $h$ by $\eta^{j + 1}_{j'} k$. Since this is rather cumbersome, we write $\eta^{j + 1}_{j'}$ as $q_{j + 1}$. Thus, writing
$$\{\alpha q_{j + 1}k \} = q_{j + 1}\{\alpha k\} - (q_{j + 1}\{\alpha k\} - \{\alpha q_{j + 1}k\})$$
we see that denoting the $i \neq j'$ component of $(\tilde{a})_{j + 1}$ to be $(\tilde{a}_i)_{j} q_{j + 1} - (\tilde{a}_{j'})_{j}\eta^{j + 1}_{i}$ and the $j'$th component equal to zero,
$$(\tilde{a})_{j} n\cdot \{\alpha q_{j + 1}k\} \equiv (\tilde{a})_{j + 1} \cdot \{\alpha k\} + $$
$$\{(\tilde{a})_{j} n\} \cdot (-q_{j + 1}\{\alpha k\} + \{\alpha q_{j + 1}k\}) +  \{(\tilde{a}_{j'})_{j} n\}\left(q_{j + 1}\{\alpha_{j'} k\} + \sum_{i \neq j'} \eta^{j + 1}_i \{\alpha_i k\}\right) \pmod{1}. $$
Thus,
$$(\tilde{a})_{j} n \cdot \{\alpha h\} + P_{j}(n, h) \equiv (\tilde{a})_{j + 1} n \cdot \{\alpha k\}  + P_{j + 1}(n, k) \pmod{1}$$
where
$$P_{j + 1}(n, k) = P_{j}(n, q_{j + 1}k) + $$
$$\{(\tilde{a})_{j} n\} \cdot (-q_{j + 1}\{\alpha k\} + \{\alpha q_{j + 1}k \}) +  \{(\tilde{a}_{j'})_{j} n\}\left(q_{j + 1}\{\alpha_{j'} k\} + \sum_{i \neq j'} \eta^{j + 1}_i \{\alpha_i k\}\right).$$
\textbf{Step 4: Constructing $M_{j + 1}$.} We lastly construct $M_{j + 1}$. We claim that one can take $M_{j + 1} = M_j$. Since $\eta^{j + 1}$ lies in a $(\delta_j/3^dM_j2d)$-tube around $(\tilde{a})_{j}$, there exists some real number $t$ such that
$$\eta^{j + 1} = t(\tilde{a})_{j} + O_{\le 1}(\delta_j/3^dM_j2d)$$
and so for each $i$,
$$\eta_{j'}^{j + 1} = t(\tilde{a}_{j'})_{j} + O_{\le 1}(\delta_j/3^dM_j2d)$$
$$\eta_i^{j + 1} = t(\tilde{a}_i)_{j} + O_{\le 1}(\delta_j/3^dM_j2d).$$
Multiplying the first equation by $(\tilde{a}_i)_{j}$ and the second equation by $(\tilde{a}_{j'})_{j}$ and subtracting the two, we obtain
$$|\eta_{j'}^{j + 1}(\tilde{a}_i)_{j} - \eta_i^{j + 1} (\tilde{a}_{j'})_{j}| \le 2|a|\delta_j/3^dM_j2d \le 1 \le M_j.$$
\textbf{Step 5: Gathering up all the relevant data.}
We thus have 
$$(\delta_{j + 1}, M_{j + 1}, K_j) = (\delta_j/2, M_j, (K/(\delta_j/3^dM_j2d))^{-O(jd)^3}).$$
Under iteration in $d$, we obtain 
\begin{equation}\label{iterationbounds}
\delta_{j} \ge \delta/2^d, M_j = M,\text{ and } K_j \le (K/(\delta_j/3^dM_j2d))^{-O(d^6)}.    
\end{equation}

\subsection{End of the iteration and completing the proof of \thref{refinedappendix2}}
Suppose we ended the iteration on step $m$. This is the case where $(\delta_m/3^dM_m2d)^{4d}\|(\tilde{a})_{m}\|_\infty \le K_m/N$. We note that $(\tilde{a})_{m}$ has each of its components linear in $a$ with coefficients in $\mathbb{Z}$. Let $w_1^m \cdot a, \dots, w_j, \dots, w_{d}^m \cdot a$ be its components where we omit index $j$ if $j \in \{0', \dots, (m - 1)'\}$. We can show by induction on $k \le m$ that 
\begin{itemize}
\item[1.] We can take for each $i \not\in \{0', \dots, k'\}$ that $w^{k + 1}_i = q_{k + 1}w_i^{k} - \eta^{k + 1}_i w_{k'}^k$ and;
\item[2.] if $\eta^j \cdot x = 0$ for all $j = 1, \dots, k$, then one can write $q_1q_2 \cdots q_k x = \sum_{i \not\in \{0', \dots, (k - 1)'\}} w_i^k x_i.$
\end{itemize}
For the base case, one can simply take $w_i^0 = e_i$, which is the unit vector with $0$'s in all its components and a $1$ in its $i$th component. The second point of the statement obviously holds. For the first point, note that $(\tilde{a}_i)_1 = q_1a_i - \eta^1_i a_{0'}$. Noting that $a_i = a \cdot e_i$ and $a_{0'} = a \cdot e_{0'}$, the first point of the claimed statement follows. Supposing the induction hypothesis holds for $k \ge 0$, we see that
$$q_1 \cdots q_k x = \sum_{i \not\in \{0', \dots, (k - 1)'\}} w_i^k x_i.$$
We see that since $\eta^{k + 1} \cdot x = 0$ (and $\eta^{k + 1}$ has no coefficients in $\{0', \dots, (k - 1)'\}$) that we may write $q_{k + 1} x_{k'} = \sum_{i \not\in \{0', \dots, k'\}} -\eta^{k + 1}_i x_i$, we see that we may write
$$q_1 \cdots q_{k + 1} x = \sum_{i \not\in \{0', \dots, k'\}} (q_{k + 1}w_i^{k} - \eta^{k + 1}_i w_{k'}^k)x_i.$$
We in addition see that the $i$th component of $(\tilde{a})_k$ is $q_{k + 1}(\tilde{a}_i)_{k - 1} - \eta^{k + 1}_i (\tilde{a}_{k'})_{k - 1}$ and since $(\tilde{a}_i)_k = w^k_i \cdot a$, we see that we can take $w^{k + 1}_i = q_{k + 1}w_i^{k} - \eta^{k + 1}_i w_{k'}^k$. The induction argument follows. \\\\
Hence, the subspace of annihilators of $\eta^1, \dots, \eta^{m}$ is lies in the span of $\{w_j^m\}_{j \not\in \{0', \dots, (m - 1)'\}}$. This tells us by dimension counting reasons that the annihilators of $\eta^1, \dots, \eta^m$ coincide with the subspace spanned by $\{w_j^m\}_{j \not\in \{0', \dots, (m - 1)'\}}$ and shows that $\{w_j^m\}_{j \not\in \{0', \dots, (m - 1)'\}}$ are linearly independent and are annihilated by $\eta^1, \dots, \eta^m$. The inductive argument also shows that $|w_i^k| \le  (\delta_{j - 1}/3^dM_k2d)^{-O(dk)}$. Finally, we note that $(\delta_m/3^dM_m2d)^{4d}\|(\tilde{a})_{m - 1}\|_\infty \le K_m/N$ implies that for all $j$,
$$|w_j^m \cdot a| \le \frac{(2K \delta^{-1}M_m3^d)^{O(m^4d^4)}}{N}.$$
Letting the $w_i$'s be these $w_i^m$'s, and noting from (\ref{iterationbounds}) that the bounds on $\delta_j$, $M_j$ and $K_j$ are well-controlled \thref{refinedappendix2} follows.
\begin{remark}
One can make the further reduction of taking $a$ to have prime denominator in \thref{refineddenominator}. This would mean that we are in the substantially simpler case of \thref{bilinearfouriercomplexity} where all the phases have prime denominator. We chose not to make this reduction since we wished to present \thref{bilinearfouriercomplexity} in full generality.
\end{remark}
\bibliographystyle{amsplain0.bst}
\bibliography{main.bib}

\end{document}